\DeclareMathOperator{\id}{id}
\DeclareMathOperator{\el}{el}
\DeclareMathOperator{\op}{op}
\DeclareMathOperator{\Lan}{Lan}
\DeclareMathOperator{\Mod}{\mathbf{Mod}}
\DeclareMathOperator{\fp}{fp}
\DeclareMathOperator{\End}{End}
\DeclareMathOperator{\Spec}{Spec}
\DeclareMathOperator{\Sym}{Sym}
\DeclareMathOperator{\sgn}{sgn}
\DeclareMathOperator{\coev}{coev}
\DeclareMathOperator{\ev}{ev}
\DeclareMathOperator{\Ind}{Ind}
\DeclareMathOperator{\slm}{\mathrm{SymLaxMon}}
\DeclareMathOperator{\ssm}{\mathrm{SymStrMon}}
\DeclareMathOperator{\cell}{\mathrm{Cell}}
\DeclareMathOperator{\sh}{Sh}
\DeclareMathOperator{\colim}{colim}
\DeclareMathOperator{\Lex}{\mathbf{Lex}}
\DeclareMathOperator{\CAT}{\mathbf{CAT}}
\DeclareMathOperator{\Set}{\mathbf{Set}}
\DeclareMathOperator{\Ab}{\mathbf{Ab}}
\DeclareMathOperator{\Comod}{\mathbf{Comod}}
\DeclareMathOperator{\CAlg}{\mathrm{CAlg}}
\newcommand{\ca}[1]{\mathscr{#1}}
\DeclareMathOperator{\U}{\mathbbm{1}}
\newcommand{\dt}{\mathop{\bar{\otimes}}}
\newcommand{\ten}[1]{\mathop{{\otimes}_{#1}}}
\newcommand{\defl}{\mathrel{\mathop:}=}
\theoremstyle{plain}
\newtheorem{thm}{Theorem}[subsection]
\newtheorem*{thm*}{Theorem}
\newtheorem{prop}[thm]{Proposition}
\newtheorem{lemma}[thm]{Lemma}
\newtheorem{cor}[thm]{Corollary}
\theoremstyle{definition}
\newtheorem{example}[thm]{Example}
\newtheorem{rmk}[thm]{Remark}
\newtheorem{dfn}[thm]{Definition}
\newtheoremstyle{citing}{}{}{\itshape}{}{\bfseries}{.}{ }{\thmnote{#3}}
\theoremstyle{citing}
\newtheoremstyle{citingdfn}{}{}{}{}{\bfseries}{.}{ }{\thmnote{#3}}
\theoremstyle{citingdfn}
\numberwithin{equation}{section}
\keywords{Grothendieck tensor category, flat homology theory}
\subjclass[2020]{55N20, 18G80, 18M05}
\author{Daniel Sch\"appi}
\thanks{This research was supported by the DFG grant: SFB 1085 ``Higher invariants.''}
\address{Fakult{\"a}t f{\"u}r Mathematik,
Universit{\"a}t Regensburg,
93040 Regensburg,
Germany}
\email{daniel.schaeppi@ur.de}
\title{Flat replacements of homology theories}
\begin{document}

\begin{abstract}
 
 To a homology theory one can associate an additive site and a new homological functor with values in the category of additive sheaves on that site. If this category of sheaves can be shown to be equivalent to a category of comodules of a Hopf algebroid, then we obtain a new homology theory by composing with the underlying module functor. This new homology theory is always flat and we call it a flat replacement of the original theory. For example, Pstr\k{a}gowski has shown that complex cobordism is a flat replacement of singular homology. In this article we study the basic properties of the sites associated to homology theories and we prove an existence theorem for flat replacements.

\end{abstract}

\maketitle

\tableofcontents

\section{Introduction}

   Let $\mathrm{SH}$ denote the stable homotopy category. If $E$ is a commutative ring object in $\mathrm{SH}$, then $E_{\ast} X \defl \pi_{\ast} (E \wedge X)$ defines a homological functor with values in modules over the graded ring $E_{\ast} \defl \pi_{\ast} (E)$. In \cite{PSTRAGOWSKI}, Pstr\k{a}gowski observed that we can associate an additive site to $E$ as follows. The objects of $\ca{A}$ are the finite spectra $X$ such that $E_{\ast} X$ has a dual. The additive Grothendieck topology is induced by the singleton coverage consisting of $E_{\ast}$-epimorphisms: morphisms $p \colon X \rightarrow Y$ such that $E_{\ast} p$ is an epimorphism. If $E$ is an Adams-type homology theory, for example, $E=\mathrm{S}$, $\mathrm{MU}$, $\mathrm{MO}$, $\mathrm{MSp}$, $\mathrm{K}$, $\mathrm{KO}$, then the category $\sh(\ca{A})$ of additive sheaves on $\ca{A}$ is equivalent to the category of comodules of the Hopf algebroid $(E_{\ast},E_{\ast} E)$ (see \cite[Remark~3.26]{PSTRAGOWSKI}).
   
 Let $J \colon \ca{A} \rightarrow \mathrm{SH}$ denote the inclusion. By sending $X \in \mathrm{SH}$ to the additive sheaf associated to $\mathrm{SH}(J-,X)$, we obtain a new homological functor with values in the category $\sh(\ca{A})$. We write $[\ca{A}^{\op},\Ab]$ for the category of additive presheaves and $L \colon [\ca{A}^{\op},\Ab] \rightarrow \sh(\ca{A})$ for the associated sheaf functor. If $E_{\ast} \colon \ca{A} \rightarrow \Mod_{E_{\ast}}$ induces an \emph{exact} functor
 \[
 \sh(\ca{A}) \rightarrow \Mod_{E_{\ast}}
 \]
 via left Kan extension, then the composite
 \[
\xymatrix{ \mathrm{SH} \ar[r] & [\ca{A}^{\op},\Ab] \ar[r]^-{L} & \sh(\ca{A}) \ar[r] & \Mod_{E_{\ast}} }
 \]
 is naturally isomorphic to the original functor $E_{\ast}$. In this case, $E$ is \emph{topologically flat} in the sense of \cite[Definition~1.4.5]{HOVEY} (see Proposition~\ref{prop:Adams_quasi_ring_characterization}). If $E$ is topologically flat and $E^{\prime}$ is Landweber exact over $E$, then $E^{\prime}$ is also topologically flat by \cite[Theorem~1.4.9]{HOVEY}. Landweber exact homology theories have been widely studied, for example the elliptic cohomology theories of \cite{LANDWEBER_RAVENEL_STONG}, the Johnson--Wilson spectra $E(n)$ and Morava $E$-theory $E_n$ are all Landweber exact over $\mathrm{MU}$ or $\mathrm{BP}$ (see \cite[Example~0.1]{HOVEY_STRICKLAND}). The above condition also makes sense for other triangulated categories $\ca{T}$ in place of $\mathrm{SH}$, hence we say that the homology theory $E_{\ast}$ is \emph{$\mathrm{SH}$-flat} if the left Kan extension above is exact.

 If the left Kan extension is \emph{not} exact, it can still happen that $\sh(\ca{A})$ is equivalent to the category $\Comod(A,\Gamma)$ of comodules of a flat commutative Hopf algebroid $(A,\Gamma)$. In this case, we get a new homological functor with values in $A$-modules by composing with the underlying module functor. This new homology theory is always $\mathrm{SH}$-flat. We call it a \emph{flat replacement} of $E_{\ast}$. By Brown representability, the new homology theory is of the form $\pi_{\ast}(E^{\prime} \otimes -)$ for some spectrum $E^{\prime}$. In general, this spectrum will only be a quasi-ring spectrum (that is, the usual ring axioms hold only up to phantom maps). In specific cases, one can however find a ring spectrum which gives a flat replacement.
 
 For example, Pstr\k{a}gowski has shown that complex cobordism $\mathrm{MU}$ is a flat replacement of singular homology $\mathrm{H}\mathbb{Z}$. This is based on a result of Conner and Smith \cite[\S 3]{CONNER_SMITH} that $\mathrm{H}\mathbb{Z}_{\ast} X$ is finitely generated projective if and only $\mathrm{MU}_{\ast} X$ is finitely generated projective (see Theorem~\ref{thm:complex_cobordism}). Motivated by this example, Pstr\k{a}gowski asked if it is possible to prove a general existence theorem for flat replacements.
 
 In this article, we focus on two goals. Firstly, we define sites associated to homological functors whose domain is an arbitrary tensor triangulated category instead of $\mathrm{SH}$ and we study their basic properties in \S \ref{section:sites}. Examples of such sites have been used in \cite{SCHAEPPI_MOTIVES} to give unconditional constructions of Tannakian categories of motives. The categories of additive sheaves of these sites are symmetric monoidal finitely presentable abelian categories which are generated by duals and the dual objects are finitely presentable. We call such categories \emph{Grothendieck tensor categories}. We thus obtain a novel class of examples of Grothendieck tensor categories built from a tensor triangulated category equipped with a lax monoidal homological functor.
 
 The second half of this article is devoted to the existence theorem for flat replacements. We show that a $\mathrm{SH}$-flat replacement exists if the coefficient ring $E_{\ast}$ is connected (that is, it has no non-trivial idempotents) and $2$ is a unit in $E_{\ast}$ (see Theorem~\ref{thm:refined_existence_of_Adams_replacement}). This theorem shows for example that $\mathrm{H} \mathbb{Z}[\frac{1}{2}]$ and $\mathrm{H}\mathbb{Z}_{(p)}$ for $p$ an odd prime have $\mathrm{SH}$-flat replacements. In these cases, the arguments of Conner--Smith can be localized to show that $\mathrm{MU}[\frac{1}{2}]$ and $\mathrm{MU}_{(p)}$ give $\mathrm{SH}$-flat replacements of $\mathrm{H} \mathbb{Z}[\frac{1}{2}]$ and $\mathrm{H}\mathbb{Z}_{(p)}$. Since all flat replacements have by construction the same Bousfield class, the existence theorem gives a spectrum with the same Bousfield class as (localized) complex cobordism starting only from singular homology.
 
 The existence theorem applies also to other triangulated categories in place of $\mathrm{SH}$. For example, let $R$ be a countable $\mathbb{Z}[\frac{1}{2}]$-algebra. Then the unbounded derived category $\ca{D}(R)$ is a triangulated category which satisfies Brown representability. Examples of commutative ring objects in $\ca{D}(R)$ are given by commutative differential graded $R$-algebras $E$. The homology theory represented by $E$ is in this case given by $H_{\ast}(E \mathop{\otimes^{L}} -)$. The existence theorem for flat replacements shows that $E$ has a $\ca{D}(R)$-flat replacement if the coefficient ring $H_{\ast}(E)$ is connected.

 A specific case is $R=\mathbb{Q}[x,y] \slash (xy)$ and $E=R \slash y$, concentrated in degree zero. This is not $\ca{D}(R)$-flat and one can show that no flat $R$-algebra, concentrated in degree zero, gives a flat replacement of $E$ (see Proposition~\ref{prop:no_flat_algebra_gives_flat_replacement}). Is it possible to find a concrete description of a flat replacement of $R \slash y$? It would be interesting to compute some explicit examples of flat replacements in the context of derived categories.

 The proof of the existence theorem for flat replacements uses a version of the Tannakian recognition theorem of \cite{DELIGNE, DELIGNE_TENSORIELLES} and its generalizations \cite{SCHAEPPI_GEOMETRIC, SCHAEPPI_COLIMITS}. It is based on the following observation. If $B$ is a faithfully flat algebra in the symmetric monoidal category $\sh(\ca{A})$ with the property that $B$ is a projective generator of the category $\sh(\ca{A})_B$ of $B$-modules in $\sh(\ca{A})$, then the free $B$-module functor defines a faithful and exact strong monoidal left adjoint from $\sh(\ca{A})$ to some category of modules. In this situation, it follows that $\sh(\ca{A})$ is equivalent to the category of comodules of a flat commutative Hopf algebroid (cf.\ Remark~\ref{rmk:Hopf_algebroid}). We construct such an algebra $B$ using the notion of \emph{locally free objects}: these are objects $X$ for which there exists a faithfully flat algebra $A$ such that the free $A$-module on $X$ is isomorphic to a direct sum of shifts of the unit object. The key step in the proof of the existence theorem lies in establishing the fact that all sheaves associated to representable presheaves are locally free in this sense.
 
\section*{Acknowledgements}
 
 The notion of flat replacements and the fact that complex cobordism is a flat replacement of singular homology are due to Piotr Pstr\k{a}gowski. Piotr asked me if it would be possible to use techniques from (generalized) Tannaka duality to establish an existence theorem for flat replacements. I am very grateful to Piotr for several helpful conversations about this topic. Proposition~\ref{prop:non-zero_detection} is due to Kevin Coulembier. My original proof of this proposition only worked under restrictive conditions on the coefficient ring of the homology theory. I thank Niko Naumann for helpful discussions.
\section{Grothendieck tensor categories associated to homology theories}

\subsection{Coefficient categories}\label{section:coefficient_categories}
 Recall that an object $C$ of a category $\ca{C}$ is called \emph{finitely presentable} if the hom-functor $\ca{C}(C,-) \colon \ca{C} \rightarrow \Set$ preserves filtered colimits. A category is called \emph{locally finitely presentable} if it is cocomplete and it has a strong generator consisting of finitely presentable objects.

\begin{dfn}\label{dfn:Grothendieck_tensor}
 A \emph{Grothendieck tensor category} is a locally finitely presentable abelian symmetric monoidal closed category $\ca{C}$ such that the unit object $\U \in \ca{C}$ is finitely presentable and the duals form a strong generator of $\ca{C}$. A Grothendieck tensor category is called a \emph{coefficient category} if the unit $\U$ is in addition projective.
\end{dfn}

Examples of coefficient categories are additive presheaf categories endowed with the Day convolution monoidal structure, specifically the category of graded abelian groups or the category of Mackey functors of a finite group. In fact, all coefficent categories are of this form.  Recall that a symmetric monoidal category is called \emph{rigid} if every object has a dual.

\begin{prop}\label{prop:coeff_cats_presheaf}
 If $\ca{K}$ is a coefficient category, then there exists an essentially small $\Ab$-enriched rigid symmetric monoidal category $\ca{P}$ and an equivalence between $\ca{K}$ and the additive presheaf category $[\ca{P}^{\op},\Ab]$ with the Day convolution symmetric monoidal structure. 
\end{prop}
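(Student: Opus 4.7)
The plan is to take $\ca{P}$ to be the full subcategory of $\ca{K}$ on the dualizable objects, and to identify $\ca{K}$ with $[\ca{P}^{\op},\Ab]$ via the restricted Yoneda embedding. Closure of the dualizable objects under tensor product, unit, and dual is straightforward and gives $\ca{P}$ the structure of an $\Ab$-enriched rigid symmetric monoidal category.

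The first substantive step is to show that every dualizable object $X$ is finitely presentable and projective in $\ca{K}$. For both properties, the key tool is the natural isomorphism
\[
\ca{K}(X,Y) \;\cong\; \ca{K}(\U, X^\vee \otimes Y).
\]
Since $X^\vee \otimes -$ is a left adjoint, it preserves filtered colimits and epimorphisms; since $\U$ is finitely presentable and projective, $\ca{K}(\U,-)$ does as well. Composing yields that $\ca{K}(X,-)$ preserves filtered colimits and epimorphisms, so $X$ is finitely presentable and projective. Local finite presentability of $\ca{K}$ then forces $\ca{P}$ to be essentially small.

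At this stage $\ca{P}$ is an essentially small, strongly generating full subcategory of $\ca{K}$ consisting of finitely presentable projective objects. The restricted Yoneda functor $\Phi \colon \ca{K} \to [\ca{P}^{\op},\Ab]$, $Y \mapsto \ca{K}(J(-),Y)$ (where $J$ is the inclusion), therefore preserves all colimits (tiny-ness of finitely presentable projectives in the additive setting), is exact, and is conservative by the strong-generator hypothesis. Its left adjoint $L$, obtained by left Kan-extending $J$ along the Yoneda embedding, sends representables to the objects of $\ca{P}$, so the unit $\eta\colon \id \to \Phi L$ is the identity on representables; cocontinuity of both $\Phi$ and $L$ upgrades this to $\Phi L \cong \id$ globally. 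The triangle identity $\Phi\varepsilon \cdot \eta\Phi = \id_{\Phi}$ combined with invertibility of $\eta$ and conservativity of $\Phi$ shows that the counit $\varepsilon$ is also invertible, giving the equivalence $\ca{K} \simeq [\ca{P}^{\op},\Ab]$.

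Finally, to upgrade this equivalence to a symmetric monoidal one with respect to Day convolution, I would invoke the universal property of Day convolution: the strong symmetric monoidal inclusion $J \colon \ca{P} \to \ca{K}$, together with cocontinuity of the tensor product of $\ca{K}$ in each variable, extends uniquely to a cocontinuous strong symmetric monoidal functor $[\ca{P}^{\op},\Ab] \to \ca{K}$, which must coincide with the equivalence $L$ constructed above. The main obstacle is the interplay in the equivalence step: it is really the combination of projectivity of $\U$, the closed monoidal structure, and the strong-generator hypothesis that forces $\ca{P}$ to consist of tiny objects and thereby rules out a merely sheaf-theoretic description in favour of an honest presheaf category.
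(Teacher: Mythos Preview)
Your proposal is correct and follows essentially the same route as the paper: take $\ca{P}$ to be the full subcategory of dualizable objects, observe these are small projective (finitely presentable and projective) because $\U$ is, and then identify $\ca{K}$ with $[\ca{P}^{\op},\Ab]$ monoidally. The paper simply cites \cite[Theorem~5.26]{KELLY_BASIC} for the equivalence and \cite[Theorem~5.1]{IM_KELLY} for the Day-convolution compatibility, whereas you have unpacked both arguments by hand; the content is the same.
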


\begin{proof}
 We can let $\ca{P}$ be the full subcategory of duals in $\ca{K}$. The fact that $\ca{K}$ is a coefficient category implies that $\ca{P}$ consists of small projective objects, hence that the inclusion of $\ca{P}$ in $\ca{K}$ induces the desired equivalence by \cite[Theorem~5.26]{KELLY_BASIC}. Compatibility with the Day convolution symmetric monoidal structure follows from \cite[Theorem~5.1]{IM_KELLY}. 
\end{proof}

 We are interested in homology theories with values in coefficient categories (for example graded abelian groups or Mackey functors) which are suitably compatible with the monoidal struture. In this first section, we study only the latter (compatibility with the monoidal structure). For the remainder of this section, we fix an $\Ab$-enriched monoidal category $\ca{B}$, a rigid $\Ab$-enriched category $\ca{P}$, and an additive symmetric strong monoidal functor $K \colon \ca{P} \rightarrow \ca{B}$.  
 
 The examples we have in mind for $\ca{B}$ are the stable homotopy category, Voevodsky's triangulated category of mixed motives, or the category of genuine $G$-spectra for a compact Lie group $G$. A general possible choice for $K \colon \ca{P} \rightarrow \ca{B}$ is the inclusion of the (non-full) smallest $\Ab$-enriched symmetric monoidal subcategory consisting of invertible objects. In fact, we can further restrict to objects in a subgroup of the Picard group of $\ca{B}$ (for example to obtain $\mathrm{RO}(G)$-graded homology theories frequently studied in equivariant stable homotopy theory). Another example would be to take the Burnside category for $\ca{P}$, resulting in Mackey functor valued homology theories.
 
 The homology theories we are interested all arise from the following construction. We write $\ca{K}=[\ca{P}^{\op},\Ab]$ for the coefficient category associated to $\ca{P}$ and we denote the Day convolution tensor product by $\dt$. Recall that this tensor product is defined as the coend
 \[
 V \dt W (e) \defl \int^{p,q \in \ca{P}} Vp \otimes Wq \otimes \ca{P}(e,p\otimes q)
 \]
 of abelian groups. We will sometimes write the tensor product of abelian groups simply as concatenation and the tensor product in $\ca{C}$ as $\cdot$ to increase the legibility of large diagrams. Evaluation in $\U$ defines a symmetric lax monoidal functor
 \[
 U\defl \ev_{\U} \colon \ca{K} \rightarrow \Ab
 \]
 and given a $\ca{K}$-enriched category $\mathbb{A}$, we call $U_{\ast} \mathbb{A}$ the \emph{underlying $\Ab$-enriched category} of $\mathbb{A}$.
 
 Given $X, Y \in \ca{B}$, we write $\mathbb{B}(X,Y)$ for $\ca{B}\bigl(K(-)\otimes X,Y\bigr) \colon \ca{P}^{\op} \rightarrow \Ab$. From the definition of the Day convolution tensor products, we get unique morphisms $\circ$ making the diagram
 \[
 \xymatrix@C=-8pt{ \ca{B}(Kp \cdot Y, Z)  \ca{B}(Kq \cdot X,Y)  \ca{P}(e,p \cdot q) \ar[r] \ar[d]_{\id \otimes Kp \cdot - \otimes (- \cdot X) \circ K } & \mathbb{B}(Y,Z) \dt \mathbb{B}(X,Y)(e) \ar@{-->}[d]^{\circ} \\
 \ca{B}(Kp \cdot Y,Z) \ca{B}(Kp \cdot  Kq \cdot X, Kp \cdot  Y) \ca{B}(Ke \cdot X, Kp \cdot Kq \cdot X) \ar[r]_-{\circ} & \mathbb{B}(X,Z)(e) }
 \]
 commutative and morphisms $\id_X \colon \ca{P}(-,\U) \rightarrow \mathbb{B}(X,X)$ corresponding by Yoneda to the isomorphism $K\U \otimes X \cong X$.
 
 \begin{prop}\label{prop:enrichment_from_K}
  The objects $\mathbb{B}(X,Y) \in \ca{K}$ with composition and identities as above define a $\ca{K}$-enriched category $\mathbb{B}$ whose underlying $\Ab$-enriched category is (isomorphic to) $\ca{B}$.
 \end{prop}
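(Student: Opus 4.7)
The argument splits into four tasks: verifying that the composition morphism $\circ$ is well-defined, checking the associativity and unit axioms, and identifying the underlying $\Ab$-enriched category with $\ca{B}$. The existence and uniqueness of $\circ \colon \mathbb{B}(Y,Z) \dt \mathbb{B}(X,Y) \to \mathbb{B}(X,Z)$ should follow at once from the universal property of the coend defining Day convolution: the displayed square specifies $\circ$ on each building block indexed by $p,q,e \in \ca{P}$, and I would verify dinaturality in $p$ and $q$ as a straightforward bookkeeping exercise using the functoriality of $K$ and the monoidal structure on $\ca{B}$.

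For associativity, the plan is to apply the universal property of the iterated Day convolution computing $\mathbb{B}(Z,W) \dt \mathbb{B}(Y,Z) \dt \mathbb{B}(X,Y)$ to reduce the equality of the two bracketings to a pointwise check on generators $g \otimes f \otimes h \otimes \alpha$ with $\alpha \colon e \to p_1 \otimes p_2 \otimes p_3$. Both bracketings should then unwind to the same composite
\[
Ke \otimes X \to Kp_1 \otimes Kp_2 \otimes Kp_3 \otimes X \to Kp_1 \otimes Kp_2 \otimes Y \to Kp_1 \otimes Z \to W
\]
in $\ca{B}$, where the first arrow combines $K\alpha$ with the strong monoidal constraint of $K$. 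Equality of the two reassociations is then formal from the pentagon axiom in $\ca{B}$ together with the coherence of the strong monoidal structure on $K$. The unit axioms follow similarly: $\id_X$ corresponds under Yoneda to the isomorphism $K\U \otimes X \cong X$ obtained from the strong monoidality of $K$, and the universal property of Day convolution reduces both unit laws to the compatibility of this isomorphism with the unitors in $\ca{B}$.

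For the identification of the underlying $\Ab$-enriched category, I would compute $U\mathbb{B}(X,Y) = \ev_{\U} \mathbb{B}(X,Y) = \ca{B}(K\U \otimes X, Y)$, which is canonically isomorphic to $\ca{B}(X,Y)$ via $K\U \cong \U$. Specialising the diagram defining $\circ$ to $e = p = q = \U$ recovers the composition of $\ca{B}$, and $\id_X$ restricts to the identity morphism of $X$ in $\ca{B}$, so the comparison is an isomorphism of $\Ab$-enriched categories. The main obstacle is organisational rather than conceptual: the associativity diagram mixes three coend universal properties with the associator of $\ca{B}$ and the comparison constraints of $K$, but each face commutes by naturality or coherence, so a careful but routine diagram chase suffices.
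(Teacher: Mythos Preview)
Your proposal is correct and follows essentially the same approach as the paper: the paper's proof simply states that the associativity and unit laws follow from the corresponding axioms for $\ca{B}$ together with basic properties of coends, and that the isomorphism $U_{\ast}\mathbb{B}\cong\ca{B}$ is induced by $K\U\otimes X\cong X$. Your plan is a more detailed expansion of exactly this sketch.
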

 
 \begin{proof}
  The relevant associativity and unit laws follow from the corresponding axioms for the category $\ca{B}$ together with basic properties of coends. The isomorphism between $U_{\ast} \mathbb{B}$ and $\ca{B}$ is induced by the natural isomorphism $K\U \otimes X \cong X$.
 \end{proof}

 \begin{lemma}\label{lemma:copowers}
  The $\ca{K}$-category $\mathbb{B}$ has copowers $\ca{P}(-,p) \odot X$ for all $p \in \ca{P}$, $X \in \ca{B}$, given by $\ca{P}(-,p) \odot X=Kp \otimes X$.
 \end{lemma}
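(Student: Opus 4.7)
The plan is to verify the defining universal property of copowers by establishing a natural (in $Y \in \mathbb{B}$) isomorphism
\[
\mathbb{B}(Kp \otimes X, Y) \cong [\ca{P}(-,p), \mathbb{B}(X,Y)]
\]
in $\ca{K}$, where $[-,-]$ denotes the internal hom of the Day convolution closed structure on $\ca{K} = [\ca{P}^{\op}, \Ab]$.

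First I would compute the right-hand side pointwise. For any $e \in \ca{P}$, the enriched Yoneda lemma together with the standard fact that Day convolution of representables is representable at the tensor product gives
\[
[\ca{P}(-,p), \mathbb{B}(X,Y)](e) \cong \ca{K}\bigl(\ca{P}(-,e) \dt \ca{P}(-,p), \mathbb{B}(X,Y)\bigr) \cong \ca{K}\bigl(\ca{P}(-,e \otimes p), \mathbb{B}(X,Y)\bigr) \cong \mathbb{B}(X,Y)(e \otimes p).
\]
Unfolding the definition $\mathbb{B}(X,Y)(e \otimes p) = \ca{B}\bigl(K(e \otimes p) \otimes X, Y\bigr)$ and applying the strong monoidal coherence isomorphism $K(e \otimes p) \cong Ke \otimes Kp$ identifies this with $\ca{B}(Ke \otimes Kp \otimes X, Y) = \mathbb{B}(Kp \otimes X, Y)(e)$. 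These identifications are manifestly natural in $e$, so they assemble into an isomorphism $\mathbb{B}(Kp \otimes X, Y) \cong [\ca{P}(-,p), \mathbb{B}(X,Y)]$ in $\ca{K}$.

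Second I would check that this isomorphism is compatible with composition in the $\ca{K}$-enriched sense, i.e.\ that it is induced by a morphism $\ca{P}(-,p) \rightarrow \mathbb{B}(X, Kp \otimes X)$ playing the role of the unit of the copower. Concretely, that unit should be the image under Yoneda of $\id_{Kp \otimes X} \in \ca{B}(Kp \otimes X, Kp \otimes X) = \mathbb{B}(X, Kp \otimes X)(p)$. Tracing through the construction of the composition $\circ$ in $\mathbb{B}$ from Proposition~\ref{prop:enrichment_from_K} (together with the universal property of the coend defining $\dt$), one verifies that precomposition with this unit recovers the explicit formula above, which is enough to identify $Kp \otimes X$ with the copower $\ca{P}(-,p) \odot X$ by the usual Yoneda-style argument for enriched weighted (co)limits.

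The proof is essentially a direct computation, so there is no real obstacle; the only subtlety worth dwelling on is the coherent use of the strong monoidal structure on $K$ to pass between $K(e \otimes p) \otimes X$ and $Ke \otimes Kp \otimes X$, and the verification that the resulting isomorphism matches the one induced by the intended unit morphism $\ca{P}(-,p) \rightarrow \mathbb{B}(X, Kp \otimes X)$. Both follow from unwinding the definitions and invoking naturality of the coend.
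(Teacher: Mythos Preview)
Your proposal is correct and follows essentially the same approach as the paper: both identify the unit of the copower as the Yoneda image of $\id_{Kp\otimes X}$ and verify that the induced map $\mathbb{B}(Kp\otimes X,Y)\to[\ca{P}(-,p),\mathbb{B}(X,Y)]_{\mathrm{Day}}$ is an isomorphism via the same chain of identifications (Yoneda, Day convolution of representables, strong monoidality of $K$). The paper merely organizes the steps in the reverse order, first defining the unit and using the weak Yoneda lemma to obtain a $\ca{K}$-natural $\alpha_Y$, then checking it is an isomorphism by the computation you give.
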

 
 \begin{proof}
 The identity in $\ca{B}(Kp \otimes X, Kp \otimes X)$ corresponds to $\ca{P}(-,p) \rightarrow \mathbb{B}(X,Kp \otimes X)$ by the $\Ab$-enriched Yoneda lemma. By the weak Yoneda lemma for $\ca{K}$, this induces a $\ca{K}$-natural transformation $\alpha_Y \colon \mathbb{B}(Kp \otimes X,Y) \Rightarrow [\ca{P}(-,p),\mathbb{B}(X,Y)]_{\mathrm{Day}}$ (that is, the $\alpha_Y$ are $\ca{K}$-natural in $Y$).
 
 From the definition of composition in $\mathbb{B}$ in terms of composition in $\ca{B}$ it follows that composing with $\alpha_Y$ induces the composite
\begin{align*}
 \ca{K} \bigl( \ca{P}(-,q), \mathbb{B}(Kp \otimes X,Y) \bigr)  & \cong \ca{B}(Kq \otimes Kp \otimes X,Y) \\ 
 &\cong   \ca{K}\bigl(\ca{P}(-,q\otimes p),\ca{B}(X,Y)\bigr) \\
 &\cong \ca{K}\bigl(\ca{P}(-,q),[\ca{P}(-,p),\mathbb{B}(X,Y)]_{\mathrm{Day}}\bigr)
\end{align*}
 of natural isomorphisms. Thus $\alpha$ is a $\ca{K}$-natural isomorphism exhibiting $Kp\otimes X$ as the desired copower.
 \end{proof}
 
\begin{prop}\label{prop:K-category_symmetric_monoidal}
 The tensor product of $\ca{B}$ lifts to a $\ca{K}$-functor 
 \[
 -\otimes - \colon \mathbb{B} \dt \mathbb{B} \rightarrow \mathbb{B}
 \]
 making $\mathbb{B}$ a symmetric monoidal $\ca{K}$-category.
\end{prop}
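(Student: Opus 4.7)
The plan is to lift the tensor product of $\ca{B}$ to a $\ca{K}$-functor by defining it directly on hom-objects using the universal property of Day convolution, and then to transport the symmetric monoidal coherence of $\ca{B}$ to $\mathbb{B}$ via the weak Yoneda lemma for $\ca{K}$. Concretely, for $X, X', Y, Y' \in \ca{B}$, I would specify the component
\[
\mathbb{B}(X,Y) \dt \mathbb{B}(X',Y') \longrightarrow \mathbb{B}(X \otimes X', Y \otimes Y')
\]
by giving, via the defining coend of $\dt$, a dinatural family of maps of abelian groups
\[
\ca{B}(Kp \otimes X, Y) \otimes \ca{B}(Kq \otimes X', Y') \otimes \ca{P}(e, p \otimes q) \longrightarrow \ca{B}(Ke \otimes X \otimes X', Y \otimes Y'),
\]
sending $(f, g, h)$ to the composite obtained by first applying $Kh$, the strong monoidal isomorphism $K(p \otimes q) \cong Kp \otimes Kq$, and the symmetry of $\ca{B}$ to reorganise $Kp$ next to $X$ and $Kq$ next to $X'$, and then forming $f \otimes g$. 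Dinaturality in $p, q$ follows from naturality of the structure isomorphisms of $K$ and $\ca{B}$.

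Next, I would verify the two $\ca{K}$-functor axioms for this assignment. Unit preservation is immediate from the unit constraint of $K$ as a strong monoidal functor, combined with the fact that $\id_X \otimes \id_{X'} = \id_{X \otimes X'}$ in $\ca{B}$. For composition preservation, one unfolds the defining coend diagrams for composition in $\mathbb{B}$ and for $\otimes$, and observes that after reorganising copies of $Kp$ and $Kq$ using the symmetry of $\ca{B}$, the asserted equality becomes precisely the interchange law $(f' \otimes g') \circ (k \otimes l) = (f' \circ k) \otimes (g' \circ l)$ in the monoidal category $\ca{B}$.

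Finally, I would upgrade $\mathbb{B}$ to a symmetric monoidal $\ca{K}$-category by taking the associator, unitors, and symmetry on objects to be those of $\ca{B}$. By Lemma~\ref{lemma:copowers}, $\mathbb{B}$ has copowers $\ca{P}(-,p) \odot X = Kp \otimes X$, so the weak Yoneda lemma identifies $\ca{K}$-natural transformations between $\ca{K}$-representable functors on tuples of objects of $\mathbb{B}$ with ordinary natural transformations in $\ca{B}$ after evaluating against objects of the form $K(-) \otimes X$. Hence $\ca{K}$-naturality of the proposed associator, unitors, and symmetry reduces to their ordinary naturality in $\ca{B}$, and the pentagon, triangle, and hexagon axioms at the level of objects of $\mathbb{B}$ are literally those of $\ca{B}$. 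The main obstacle is the coend bookkeeping in the second step: one must carefully interleave the composition coend of $\mathbb{B}$ with the tensor product coend so that $\ca{K}$-functoriality of $\otimes$ appears as a direct consequence of the interchange law in $\ca{B}$.
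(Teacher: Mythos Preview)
Your proposal is correct and follows essentially the same strategy as the paper: lift the tensor product by specifying its action on hom-objects via the coend description, and then reduce $\ca{K}$-naturality of the constraints to ordinary coherence in $\ca{B}$. The only packaging difference is that the paper defines the one-variable functors $-\otimes Z$ and $Z\otimes -$ separately and verifies Kelly's bifunctor compatibility condition \cite[Diagram~(1.21)]{KELLY_BASIC}, whereas you give the two-variable map $\mathbb{B}(X,Y)\dt\mathbb{B}(X',Y')\to\mathbb{B}(X\otimes X',Y\otimes Y')$ directly; both verifications unwind to the same interchange-type identities in $\ca{B}$, and the paper's invocation of the coherence theorem for the constraints is exactly your weak-Yoneda reduction phrased more tersely.
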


\begin{proof}
 Given $Z \in \ca{B}$, we define the $\ca{K}$-functors $-\otimes Z$ (respectively $Z \otimes -$) via the function
 \[
 \ca{B}(Kp \otimes X,Y ) \rightarrow \ca{B}\bigl(Kp \otimes (X \otimes Z),Y\otimes Z\bigr)
 \]
 which sends $\beta$ to $\beta \otimes Z \circ \alpha_{Kp,X,Z}$ (respectively the evident function obtained by using the symmetry in the case of $Z \otimes -$). The relevant diagram (see \cite[Diagram~(1.21)]{KELLY_BASIC}) commutes since for all $\beta \colon Kp \cdot X \rightarrow X^{\prime}$ and all $\gamma \colon Kq \cdot Y \rightarrow Y^{\prime}$ the diagram
 \[
 \xymatrix{Kp \cdot Kq \cdot X \cdot Y \ar[d]_{\cong} \ar[r]^-{\cong} & (Kp \cdot X) \cdot (Kq \cdot Y) \ar[rr]^-{\id \otimes \gamma} && Kp \cdot X \cdot Y^{\prime} \ar[d]^{\beta \otimes Y^{\prime}} \\  
 Kq \cdot Kp \cdot X \cdot Y \ar[r]_{\id \otimes \beta \otimes \id}  & Kq \cdot X^{\prime} \cdot Y \ar[r]_-{\cong} & X^{\prime } \cdot Kq \cdot Y \ar[r]_-{\id \otimes \gamma} & X^{\prime} \cdot Y^{\prime}}
 \]
in $\ca{B}$ is commutative. 

 It follows from the coherence theorem for symmetric monoidal categories that the associator, the symmetry, and the unit constraints in $\ca{B}$ are $\ca{K}$-natural for this tensor product, so $\mathbb{B}$ is a symmetric monoidal $\ca{K}$-category.
\end{proof}

 Our next goal is to understand the precise relationship between additive functors $\ca{B} \rightarrow \Ab$ and $\ca{K}$-functors $\mathbb{B} \rightarrow \mathbb{K}$, where $\mathbb{K}$ stands for the self-enrichment of $\ca{K}$ given by $[-,-]_{\mathrm{Day}}$. We are particularly interested in functors which are compatible with the symmetric monoidal structure.
 
 \begin{dfn}\label{dfn:K-functor_from_Ab-functor}
 Let $F \colon \ca{B} \rightarrow \Ab$ be an additive functor. We write $\bar{F} \colon \ca{B} \rightarrow \ca{K}$ for the additive functor which sends $X$ to $F\bigl(K(-)^{\vee} \otimes X \bigr)$. For $X,Y \in \ca{B}$, we write
 \[
 \mathbb{F}_{X,Y} \colon \mathbb{B}(X,Y) \rightarrow [\bar{F}X,\bar{F}Y]_{\mathrm{Day}}
 \]
 for the morphism in $\ca{K}$ corresponding by adjunction to the morphism out of the Day convolution tensor product given for each $e,p,q \in \ca{P}$ by the morphisms
 \[
 \ca{B}(Kp \otimes X,Y) F(Kq^{\vee} \otimes X) \ca{P}(e,p\otimes q) \rightarrow F(Ke^{\vee} \otimes  X)
 \]
 which send $\alpha \otimes x \otimes \varphi$ to $F(K \varphi^{\vee} \otimes \alpha \circ Kq^{\vee} \otimes \mathrm{coev} \otimes X)(x)$.
 \end{dfn}
 
 In order to establish basic properties of the morphisms defined above, we use the coherence theorem of symmetric monoidal categories. This is conveniently encoded in the formalism of string diagrams, see \cite{JOYAL_STREET_TENSOR}. The evaluation and coevaluation of a pair of dual objects is depicted using cups and caps, so that the triangle identities amount to the statement that ``wiggles'' in the diagram can be straightened. We use the convention that tensor products of objects are written from top to bottom and morphisms are written from left to right. For example, given $\alpha \colon X\otimes Y \rightarrow X^{\prime} \otimes Y^{\prime}$ and $\beta \colon Y^{\prime} \otimes Z \rightarrow Z^{\prime}$, we write
 \begin{center}
 \begin{tikzpicture}
 \node[circle, inner sep=0pt, minimum size=24pt] (A) [draw] at (1,1.5) {\small $\alpha$};
 \node[circle, inner sep=0pt, minimum size=24pt] (B) [draw] at (2,0.5) {\small $\beta$};
 \begin{knot}
 \strand[thick] (A) to  (3,1.5);
 \strand[thick] (B) to (3,0.5);
 \strand[thick] (A) to (B);
 \strand[thick] (0,2) to [out=0, in=135] (A);
 \strand[thick] (0,1) to [out=0, in=225] (A);
 \strand[thick] (0,0) to (1,0) to [out=0, in=225] (B);
 \end{knot}
 \node[above right] at (0,0) {$Z$};
  \node[above right] at (0,1) {$Y$};
   \node[above right] at (0,2) {$X$};
   \node[above left] at (3,1.5) {$X^{\prime}$};
   \node[above left] at (3,0.5) {$Z^{\prime}$};
 \end{tikzpicture}
\end{center}  
for the composite $ X^{\prime} \otimes \beta \circ \alpha \otimes Z$. Since we are working in a symmetric monoidal category, we can freely exchange the two types of crossings of strands.

 We also need the following basic fact about coends.
 
 \begin{lemma}\label{lemma:coends}
  Given objects $f,\ell,p,q \in \ca{P}$, composition in $\ca{P}$ induces isomorphisms
  \[
  \int^{e\in \ca{P}} \ca{P}(e,p\otimes q) \otimes \ca{P}(\ell, e\otimes f) \cong \ca{P}(\ell,p\otimes q \otimes f)
  \]
  and
  \[
  \int^{g\in \ca{P}} \ca{P}(g,q\otimes f) \otimes \ca{P}(\ell, p\otimes g) \cong \ca{P}(\ell,p\otimes q \otimes f)
  \]
  of abelian groups.
 \end{lemma}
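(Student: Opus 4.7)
The plan is to recognize both isomorphisms as instances of the co-Yoneda lemma (density formula) in the $\Ab$-enriched setting: for any additive functor $F \colon \ca{P} \to \Ab$ and any $x \in \ca{P}$, there is a natural isomorphism
\[
\int^{e \in \ca{P}} \ca{P}(e,x) \otimes Fe \cong Fx,
\]
with inverse sending $y \in Fx$ to the class of $\id_x \otimes y$, and forward direction induced by the action of a representative $\varphi \colon e \to x$ on $Fe$.

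For the first isomorphism, I would apply this to the covariant functor $F \colon \ca{P} \to \Ab$ defined on objects by $F(e) \defl \ca{P}(\ell, e \otimes f)$, whose action on morphisms $\varphi \colon e \to e'$ is postcomposition with $\varphi \otimes \id_f$. Taking $x = p \otimes q$ yields precisely
\[
\int^{e \in \ca{P}} \ca{P}(e, p \otimes q) \otimes \ca{P}(\ell, e \otimes f) \cong \ca{P}(\ell, (p \otimes q) \otimes f),
\]
and the explicit description of the co-Yoneda isomorphism confirms that it sends the class of $\varphi \otimes \psi$ to $(\varphi \otimes \id_f) \circ \psi$, i.e.\ to composition. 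For the second isomorphism, the same argument applies to the covariant functor $G(g) \defl \ca{P}(\ell, p \otimes g)$, acting on morphisms by postcomposition with $\id_p \otimes (-)$, and evaluated at $x = q \otimes f$; this gives $\ca{P}(\ell, p \otimes (q \otimes f))$, which equals $\ca{P}(\ell, p \otimes q \otimes f)$ after applying the associator in $\ca{P}$.

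There is no real obstacle here: the statement is essentially the density formula for representable presheaves of abelian groups, combined with the functoriality of the tensor product of $\ca{P}$ in each argument. The only thing to verify carefully is that the two evident actions of $\ca{P}$ on the hom-set $\ca{P}(\ell, - \otimes f)$ (respectively $\ca{P}(\ell, p \otimes -)$) used above are the ones induced by the functoriality of $\otimes$, which is immediate.
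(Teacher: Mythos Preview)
Your proposal is correct and takes essentially the same approach as the paper: the paper's proof simply cites the Yoneda isomorphisms in the form of \cite[Formula~(3.71)]{KELLY_BASIC}, which is precisely the enriched co-Yoneda (density) formula you spell out. Your argument is a slightly more explicit unpacking of the same one-line proof.
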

 
 \begin{proof}
  Both isomorphisms are immediate consequences of the Yoneda isomorphisms in the form of \cite[Formula~(3.71)]{KELLY_BASIC}.
 \end{proof}
 
 \begin{lemma}\label{lemma:K-functor_from_Ab-functor}
  In the situation of Definition~\ref{dfn:K-functor_from_Ab-functor}, the morphisms $\mathbb{F}_{X,Y}$ define a $\ca{K}$-functor $\mathbb{F} \colon \mathbb{B} \rightarrow \mathbb{K}$ with underlying additive functor given by $\bar{F} \colon \ca{B} \rightarrow \ca{K}$. If $\alpha \colon F \Rightarrow G$ is a natural transformation, then the components
  \[
  \bar{\alpha}_X \defl \alpha_{K(-)^{\vee}\otimes X} \colon F\bigl(K(-)^{\vee} \otimes X \bigr) \rightarrow G\bigl(K(-)^{\vee} \otimes X \bigr)
  \]
  define a $\ca{K}$-natural transformation $\bar{\alpha} \colon \mathbb{F} \Rightarrow \mathbb{G}$.
 \end{lemma}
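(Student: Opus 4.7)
The plan is to verify three things: that the family $\mathbb{F}_{X,Y}$ defines a $\ca{K}$-functor (i.e., respects composition and identities), that passing to the underlying $\Ab$-enriched category recovers $\bar{F}$, and that $\bar{\alpha}$ is $\ca{K}$-natural. All three reduce after adjunction to equalities of morphisms in $\ca{B}$ to which $F$ is applied, and these equalities follow from coherence for symmetric monoidal categories together with the strong monoidality of $K$. Concretely, $\mathbb{F}_{X,Y}$ is adjoint to a morphism $\mathbb{B}(X,Y) \dt \bar{F}X \to \bar{F}Y$ in $\ca{K}$, so by the universal property of the Day convolution coend all required diagrams can be checked on their coend building blocks, where each side is expressed as $F$ applied to a single morphism in $\ca{B}$ built from $\alpha$, $K\varphi^\vee$, coevaluations and strong-monoidal structure maps.

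Before addressing functoriality I would verify that the defining formula of Definition~\ref{dfn:K-functor_from_Ab-functor} is dinatural in $p$ and $q$, so that it actually descends to a morphism out of the coend; this is a short check using the naturality of $\coev$ and the coherence of $K$. For the unit axiom, taking $p = \U$ and $\alpha$ equal to the unit constraint $K\U \otimes X \cong X$, the composite $Kq^\vee \otimes X \to Kq^\vee \otimes K\U^\vee \otimes K\U \otimes X \to Kq^\vee \otimes X$ straightens by the triangle identities to the identity, leaving only the contribution of $K\varphi^\vee$; this is precisely the identity action of $\bar{F}X$ in $\mathbb{K}$. The composition axiom is the main calculation: both composites correspond to morphisms
\[
\ca{B}(Kp \otimes Y, Z) \otimes \ca{B}(Kq \otimes X, Y) \otimes F(Kr^\vee \otimes X) \otimes \ca{P}(\ell, p \otimes q \otimes r) \to F(K\ell^\vee \otimes Z)
\]
after applying Lemma~\ref{lemma:coends} twice to collapse the nested Day convolution coends. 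On both sides the morphism inside $F$ is built from $K\varphi^\vee$, the two given morphisms $\alpha$ and $\beta$, and applications of $\coev$; drawn as string diagrams the two morphisms differ only in how the cups are routed past the boxes for $\alpha$ and $\beta$, so the equality becomes manifest after straightening wiggles. This is the step I expect to require the most bookkeeping, though no genuinely new ideas are needed.

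For the identification of the underlying functor, evaluating at $\U \in \ca{P}$ and using the isomorphism $\mathbb{B}(X,Y)(\U) \cong \ca{B}(X,Y)$ of Proposition~\ref{prop:enrichment_from_K} reduces the claim to checking that a morphism $\beta \colon X \to Y$ is sent by the component of $\mathbb{F}_{X,Y}$ at $\U$ to the family $F(Kq^\vee \otimes \beta)$; specialising $p = \U$ and $\varphi = \id$ in the defining formula and straightening the coevaluation wiggle yields exactly this family, which by construction is $\bar{F}\beta$. Finally, $\ca{K}$-naturality of $\bar{\alpha}$ becomes, after passing to adjoints, an equality of two morphisms $\mathbb{B}(X,Y) \dt \bar{F}X \to \bar{G}Y$ in $\ca{K}$; on the coend building blocks the two composites differ only by whether the component of $\alpha$ is applied before or after the morphism in $\ca{B}$ built from $\alpha$, $\varphi$ and the coevaluation, so the identity follows immediately from ordinary naturality of $\alpha \colon F \Rightarrow G$ at that morphism.
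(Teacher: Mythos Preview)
Your proposal is correct and follows essentially the same approach as the paper: pass by adjunction to the action maps, unwind on the coend building blocks, and use Lemma~\ref{lemma:coends} together with string-diagram coherence to identify the two composites, with $\ca{K}$-naturality of $\bar{\alpha}$ reducing to ordinary naturality of $\alpha$. The paper is terser about the unit axiom and the dinaturality check you mention, but more explicit about the two string diagrams arising in the composition axiom; otherwise the arguments match.
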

 
 \begin{proof}
  Commutativity of the square
  \[
   \vcenter{
  \xymatrix{\mathbb{B}(Y,Z) \dt \mathbb{B}(X,Y) \ar[r]^-{\circ} \ar[d]_{\mathbb{F} \otimes \mathbb{F}} & \mathbb{B}(X,Z) \ar[d]^{\mathbb{F}} \\ [\bar{F}Y,\bar{F}Z] \dt [\bar{F} X, \bar{F} Y] \ar[r]_-{\circ} & [\bar{F} X, \bar{F} Z] }
  }
  \]
  corresponds by adjunction to commutativity of the square
 \begin{equation}\label{eqn:K-functoriality}
\vcenter{
\xymatrix@C=40pt{ \mathbb{B}(Y,Z) \dt \mathbb{B}(X,Y) \dt \bar{F} X \ar[r]^-{\circ  \otimes \bar{F}X } \ar[d]_{\mathrm{act}} & \mathbb{B}(X,Z)\dt \bar{F}X \ar[d]^{\mathrm{act}} \\ \mathbb{B}(Y,Z) \dt \bar{F}Y \ar[r]_-{\mathrm{act}} & \bar{F}Z }
}
 \end{equation}
 where $\mathrm{act}$ denotes the action of Definition~\ref{dfn:K-functor_from_Ab-functor}.
 
 The top composite of \eqref{eqn:K-functoriality} restricts to the morphism
 \[
 \ca{B}(Kp \cdot Y, Z) \ca{B}(Kq \cdot X, Y) \bar{F}(Kf^{\vee} \cdot X) \ca{P}(g,q\otimes f) \ca{P}(\ell, p \otimes g) \rightarrow F(K\ell^{\vee} \cdot Z)
 \]
 which sends $\beta \otimes \alpha \otimes x \otimes \tau \otimes \sigma$ to the image of $x$ under $F$ applied to the morphism on the left below
 \begin{equation}\label{eqn:defining_action_in_B}
 \vcenter{\hbox{
\begin{tikzpicture}[baseline=-5ex]
\node [above right] (A) at (-0.5,1.5) {$K f^{\vee}$};
\node [above right] (B) at (-0.5,0) {$X$};
\node[circle, inner sep=0pt, minimum size=24pt] (C) [draw] at (1,1.5) {\small $K \tau^{\vee}$};
\node[circle, inner sep=0pt, minimum size=24pt] (D) [draw] at (1,0) {\small $\alpha$};
\node[circle, inner sep=0pt, minimum size=24pt] (E) [draw] at (3,1.5) {\small $K\sigma^{\vee}$};
\node[circle, inner sep=0pt, minimum size=24pt] (F) [draw] at (3,0) {\small $\beta$};
\node[above left] (G) at (4.5,1.5) {$K \ell^{\vee}$};
\node[above left] (H) at (4.5,0) {$Z$};
\begin{knot}[clip width=5, clip radius=8pt]
\strand[thick] (-0.5,1.5) to (C);
\strand[thick] (C) to (E);
\strand[thick] (E) to (4.5,1.5);
\strand[thick] (-0.5,0) to (D);
\strand[thick] (D) to (F);
\strand[thick] (F) to (4.5,0);
\strand[thick] (D) to [out=135, in=270] (0,0.75) to [out=90,in=225] (C);
\strand[thick] (F) to [out=135, in=270] (2,0.75) to [out=90,in=225] (E);
\end{knot}
\end{tikzpicture}
\qquad
\begin{tikzpicture}
 \node[circle, inner sep=0pt, minimum size=24pt] (A) [draw] at (1,0) {\small $ K \varphi$};
 \node[circle, inner sep=0pt, minimum size=24pt] (B) [draw] at (1,1.5) {\small $K \psi^{\vee}$};
 \node[circle, inner sep=0pt, minimum size=24pt] (C) [draw] at (2,-0.75) {\small $\alpha$};
 \node[circle, inner sep=0pt, minimum size=24pt] (D) [draw] at (3,0) {\small $\beta$};
 \begin{knot} [clip width=5, clip radius=8pt]
 \strand[thick] (-0.5,-0.75) to (C);
  \strand[thick] (A) to (C);
 \strand[thick] (C) to (D);
 \strand[thick] (A) to [ out=45, in=180] (2,0.6) to [out=0, in=135] (D);
 \strand[thick] (A) to [out=135, in=270] (0,0.75) to [out=90,in=225] (B);
 \strand[thick] (-0.5,1.5) to (B);
 \strand[thick] (B) to (4,1.5);
 \strand[thick] (D) to (4,0);
 \node [above left] at (4,0) {$Z$};
 \node [above left] at (4,1.5) {$K \ell^{\vee}$};
 \node [above right] at (-0.5,-0.75) {$X$};
 \node [above right] at (-0.5,1.5) {$Kf^{\vee}$};
 \node[above] at (2,0.6) {$Kp$};
 \end{knot}
\end{tikzpicture}
 }}
 \end{equation}
 in $\ca{B}$.
 
  The bottom composite of \eqref{eqn:K-functoriality} restricts to the morphism
  \[
 \ca{B}(Kp \cdot Y, Z) \ca{B}(Kq \cdot X, Y) \ca{P}(e, p\otimes q) \bar{F}(Kf^{\vee} \cdot X) \ca{P}(\ell, e \otimes f) \rightarrow F(K\ell^{\vee} \cdot Z)
 \]
 which sends $\beta \otimes \alpha \otimes \varphi \otimes x \otimes \psi$ to the image of $x$ under $F$ applied to the morphism given by the string diagram on the right in \eqref{eqn:defining_action_in_B}. Using the fact that $K$ is symmetric strong monoidal and Lemma~\ref{lemma:coends}, one checks that the two morphisms in \eqref{eqn:defining_action_in_B} induce the same morphism out of the coend, so \eqref{eqn:K-functoriality} commutes. Compatibility of $\mathbb{F}$ with identities follows more directly, so $\mathbb{F}$ is indeed a $\ca{K}$-functor. That the underlying additive functor of $\mathbb{F}$ is $\bar{F}$ follows directly from the definition.
 
 The $\ca{K}$-naturality square
 \[
 \xymatrix{\mathbb{B}(X,Y) \ar[r]^{\mathbb{F}_{X,Y}} \ar[d]_{\mathbb{G}_{X,Y}} & [\bar{F}X,\bar{F} Y] \ar[d]^{(\bar{\alpha}_Y)_{\ast}} \\ 
 [\bar{G}X, \bar{G}Y] \ar[r]_-{(\bar{\alpha}_X)^{\ast}} & [\bar{F}X, \bar{G}Y] }
 \]
 corresponds by adjunction to a square which commutes by (unenriched) naturality of $\alpha$.  
 \end{proof}
 
 \begin{dfn}\label{dfn:lax_monoidal_structure}
 Let $(F,\varphi^F,\varphi^F_0) \colon \ca{B} \rightarrow \Ab$ be a symmetric lax monoidal functor. Given objects $X,Y \in \ca{B}$,  we write $\bar{\varphi}^F_{X,Y} \colon \bar{F}X \dt \bar{F}Y \rightarrow \bar{F}(X\otimes Y)$ for the morphism in $\ca{K}$ induced by the homomorphism
 \[
 F(Kp^{\vee} \cdot X) F(Kq^{\vee} \cdot Y) \ca{P}(e,p\otimes q) \rightarrow F(Ke^{\vee} \cdot X \cdot Y)
 \]
 of abelian groups sending $x\otimes y \otimes \varphi$ to the image of $\varphi^F_{X,Y}(x\otimes y) \in F(Kp^{\vee} \cdot X \cdot Kq^{\vee} \cdot Y)$ under the homomorphism given by applying $F$ to the morphism
  \begin{center} 
 \begin{tikzpicture}
  \node[circle, inner sep=0pt, minimum size=24pt] (A) [draw] at (3,2.5) {\small $K\varphi^{\vee}$};
  \begin{knot}[clip width=8, clip radius=8pt]
  \strand[thick] (-0.5,3) to (0,3) to [out=0,in=180] (2,2) to [out=0,in=225] (A);
  \strand[thick] (-0.5,2) to (0,2) to [out=0, in=180] (2,1) to [out=0, in=180] (4.5,1);
  \strand[thick] (-0.5,1) to (0,1) to [out=0, in=180] (2,3) to [out=0, in=135] (A);
  \strand[thick] (A) to (4.5,2.5);
  \strand[thick] (-0.5,0) to (0,0) to (4.5,0);
  \flipcrossings{1,2}
  \node[above right] at (-0.5,0) {$Y$};
  \node[above right] at (-0.5,1) {$Kq^{\vee}$};
  \node[above right] at (-0.5,2) {$X$};
  \node[above right] at (-0.5,3) {$Kp^{\vee}$};
  \node[above left] at (4.5,2.5) {$K\ell^{\vee}$};
  \node[above left] at (4.5,1) {$X$};
  \node[above left] at (4.5,0) {$Y$};
  \end{knot}
 \end{tikzpicture}
\end{center}
 in $\ca{B}$. We write $\bar{\varphi}_0^F \colon \ca{P}(-,\U) \rightarrow \bar{F} \U$ for the morphism which corresponds to
 \[
 \xymatrix{ \mathbb{Z} \ar[r]^-{\varphi_0^F} & F\U \ar[r]^-{F(\cong)} & F(K\U^{\vee} \otimes \U) }
 \]
 by Yoneda.
 \end{dfn}
 
 We write $\slm_{\Ab}(\ca{B},\Ab)$ for the category of symmetric lax monoidal additive functors and $\slm_{\ca{K}}(\mathbb{B},\mathbb{K})$ for the category of symmetric lax monoidal $\ca{K}$-functors.
 
 \begin{prop}\label{prop:symmetric_lax_monoidal_K-functor}
  The assignment which sends an object  
  \[
  (F,\varphi^F,\varphi^F_0) \in \slm_{\Ab}(\ca{B},\Ab)
  \]
 to $(\mathbb{F},\bar{\varphi}^F,\bar{\varphi}^F_0)$ (see Definition~\ref{dfn:lax_monoidal_structure}) and which sends a symmetric monoidal natural transformation $\alpha \colon F \Rightarrow G$ to $\bar{\alpha} \colon \mathbb{F} \Rightarrow \mathbb{G}$ (see Definition~\ref{dfn:K-functor_from_Ab-functor}) gives a well-defined functor $\Phi \colon \slm_{\Ab}(\ca{B},\Ab) \rightarrow \slm_{\ca{K}}(\mathbb{B},\mathbb{K})$.
 \end{prop}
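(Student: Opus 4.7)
The plan is to check in turn that (a) the data $(\mathbb{F},\bar{\varphi}^F,\bar{\varphi}^F_0)$ form a symmetric lax monoidal $\ca{K}$-functor, (b) for a symmetric monoidal natural transformation $\alpha$, the $\ca{K}$-natural transformation $\bar{\alpha}$ produced in Lemma~\ref{lemma:K-functor_from_Ab-functor} is monoidal, and (c) the resulting assignment is functorial. Step (c) is immediate from Lemma~\ref{lemma:K-functor_from_Ab-functor}, since $\overline{\beta \circ \alpha}_X$ and $\bar{\id}_X$ are defined componentwise by evaluating at $K(-)^{\vee} \otimes X$ and unenriched functoriality of the assignment $F \mapsto \bar{F}$ is then immediate from the definition.

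For (a), the $\ca{K}$-naturality of $\bar{\varphi}^F$ in each variable unpacks by adjunction to a diagram in $\Ab$ whose commutativity follows from (unenriched) naturality of $\varphi^F$, together with Lemma~\ref{lemma:coends} used to identify the two induced maps out of the coend defining the Day convolution. The bulk of the work lies in the three coherence axioms for a symmetric lax monoidal structure. My plan is to translate each axiom, via the adjunction between Day convolution and the internal hom, into an equality of morphisms out of a multi-variable coend of abelian groups of the form $F(Kp^{\vee} \cdot X) \otimes F(Kq^{\vee} \cdot Y) \otimes F(Kr^{\vee} \cdot Z) \otimes \ca{P}(e,p\otimes q \otimes r) \to F(Ke^{\vee} \cdot X \cdot Y \cdot Z)$, and show equality on each summand by a string diagram computation in $\ca{B}$. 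Because $K$ is symmetric strong monoidal, the duals $K(-)^{\vee}$ assemble into a symmetric strong monoidal functor on $\ca{P}^{\op}$, and the coherence theorem for symmetric monoidal categories lets us straighten any two such string diagrams to the same canonical form; the axioms for $(F,\varphi^F,\varphi^F_0)$ at the level of $\ca{B}$ then take care of the rest. For unitality one additionally uses the Yoneda identification that defines $\bar{\varphi}^F_0$.

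For (b), unpacking $\ca{K}$-monoidality of $\bar{\alpha}$ by adjunction gives, for each $p,q,e \in \ca{P}$ and each summand $F(Kp^{\vee} \cdot X) \otimes F(Kq^{\vee} \cdot Y) \otimes \ca{P}(e,p \otimes q)$ of $\bar{F}X \dt \bar{F}Y$, the requirement that two maps into $G(Ke^{\vee} \cdot X \cdot Y)$ agree. Both of these are obtained by the same string diagram manipulation in $\ca{B}$ as in Definition~\ref{dfn:lax_monoidal_structure}, applied to an element pushed across $\alpha$ either before or after the lax monoidal constraint, and (unenriched) monoidality of $\alpha$ yields the equality. The compatibility of $\bar{\alpha}$ with the units $\bar{\varphi}^F_0$ and $\bar{\varphi}^G_0$ is the statement that $\alpha_{\U}\circ \varphi_0^F = \varphi_0^G$ transported through the isomorphism $K\U^{\vee} \otimes \U \cong \U$, which is unital monoidality of $\alpha$.

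The main obstacle is the associativity axiom for $\bar{\varphi}^F$: the two routes around the square differ by a rearrangement of cups, caps, and a single $K\varphi^{\vee}$ labelled box appearing in two different positions corresponding to the two ways of parenthesising a three-fold Day convolution. I expect to resolve this by choosing compatible representatives using the two coend isomorphisms of Lemma~\ref{lemma:coends} and then straightening the resulting diagrams in $\ca{B}$ via the triangle identities and the strong monoidality of $K$; all remaining routine identities are consequences of the coherence theorem for symmetric monoidal categories as in \cite{JOYAL_STREET_TENSOR}.
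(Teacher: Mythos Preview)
Your proposal is correct and follows essentially the same approach as the paper: both reduce each axiom via the Day adjunction to an equality of maps out of a coend, use Lemma~\ref{lemma:coends} to identify representatives for the multi-variable tensor products (notably for associativity), and then verify the resulting identities by string-diagram manipulations in $\ca{B}$ together with the corresponding axioms for $(F,\varphi^F,\varphi^F_0)$ and $\alpha$. The paper additionally spells out one of the $\ca{K}$-naturality checks explicitly with two string diagrams, but the structure of the argument is the same as yours.
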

 
 \begin{proof}
  It is clear that the assignment is functorial if it is well-defined, so we need to check that $\bar{\varphi}^{F}$ is $\ca{K}$-natural in both variables, that $(\mathbb{F},\bar{\varphi}^F,\bar{\varphi}^F_0)$ is a symmetric lax monoidal $\ca{K}$-functor, and that $\bar{\alpha}$ is symmetric monoidal if $\alpha$ is symmetric monoidal. Each of these assertions can be checked in a manner similar to the proof of Lemma~\ref{lemma:K-functor_from_Ab-functor}.
  
  For example, $\ca{K}$-naturality of $\bar{\varphi}^F$ in the first variable is equivalent to commutativity of the diagrams
  \[
  \vcenter{
  \xymatrix@C=40pt{
   \mathbb{B}(X,Y) \ar[r]^-{-\dt \bar{F}Z \circ \mathbb{F}} \ar[d]_{\mathbb{F} \circ (-\otimes Z)} & [\bar{F} X \dt \bar{F}Z, \bar{F}Y \dt \bar{F}Z] \ar[d]^{(\bar{\varphi}^{F}_{Y,Z})_{\ast}} \\
   [\bar{F}(X\otimes Z) , \bar{F}(Y \otimes Z)] \ar[r]_-{(\bar{\varphi}^F_{X,Z})^{\ast}} & [\bar{F} X \dt \bar{F}Z,\bar{F}(Y \otimes Z)]
  }}
  \]
  for all $X,Y,Z \in \ca{K}$. By using Lemma~\ref{lemma:coends}, this follows from the fact that, given morphisms $\sigma \colon \ell \rightarrow p \otimes g$, $\tau \colon g \rightarrow q \otimes f$, $\varphi \colon e \rightarrow p \otimes q$, and $\psi \colon \ell \rightarrow e \otimes f$ in $\ca{P}$ such that $\varphi \otimes f \circ \psi=p\otimes \tau \circ \sigma$, the two morphisms
   \begin{center} 
 \begin{tikzpicture}
  \node[circle, inner sep=0pt, minimum size=24pt] (A) [draw] at (3,2.5) {\small $K\tau^{\vee}$};
  \node[circle, inner sep=0pt, minimum size=24pt] (B) [draw] at (4.5, 2.5) {\small $K \sigma^{\vee}$};
  \node[circle, inner sep=0pt, minimum size=24pt] (C) [draw] at (4.5,1) {\small $\alpha$};
  \begin{knot}[clip width=8, clip radius=8pt]
  \strand[thick] (-0.5,3) to (0,3) to [out=0,in=180] (2,2) to [out=0,in=225] (A);
  \strand[thick] (-0.5,2) to (0,2) to [out=0, in=180] (2,1) to [out=0, in=180] (C);
  \strand[thick] (-0.5,1) to (0,1) to [out=0, in=180] (2,3) to [out=0, in=135] (A);
  \strand[thick] (A) to (B);
   \strand[thick] (C) to [out=135, in=270] (3.5,1.75) to [out=90,in=225] (B);
  \strand[thick] (B) to (6,2.5);
  \strand[thick] (C) to (6,1);
  \flipcrossings{1,2}
  \node[above right] at (-0.5,1) {$Kf^{\vee}$};
  \node[above right] at (-0.5,2) {$X$};
  \node[above right] at (-0.5,3) {$Kq^{\vee}$};
  \node[above left] at (6,2.5) {$K\ell^{\vee}$};
  \node[above left] at (6,1) {$Y$};
  \end{knot}
 \end{tikzpicture}
\end{center}
and
\begin{center}
\begin{tikzpicture}
\node[circle, inner sep=0pt, minimum size=24pt] (A) [draw] at (1.5,2.5) {\small $K\varphi^{\vee}$};
\node[circle, inner sep=0pt, minimum size=24pt] (B) [draw] at (1.5,1) {\small $\alpha$};
\node[circle, inner sep=0pt, minimum size=24pt] (C) [draw] at (4.5,1.5) {\small $K\psi^{\vee}$};
\begin{knot}[clip width=8, clip radius=8pt]
\strand[thick] (0,2.5) to (A);
\strand[thick] (0,1) to (B);
\strand[thick] (0,0) to (1.5,0) to [out=0, in=180] (3.5,2) to [out=0, in=135] (C);
\strand[thick] (B) to [out=0, in=180] (3.5,0) to (6,0);
\strand[thick] (A) to [out=0,in=180] (3.5,1) to [out=0,in=225] (C);
\strand[thick] (C) to (6,1.5);
\strand[thick] (B) to [out=135, in=270] (0.5,1.75) to [out=90,in=225] (A);
\end{knot}
\node[above right] at (0,2.5) {$Kq^{\vee}$};
\node[above right] at (0,1) {$X$};
\node[above right] at (0,0) {$Kf^{\vee}$};
\node[above left] at (6,1.5) {$K \ell^{\vee}$};
\node[above left] at (6,0) {$Y$};
\end{tikzpicture}
\end{center}
 in $\ca{B}$ are equal for all $\alpha \colon Kp \otimes X \rightarrow Y$. Commutativity of the square expressing $\ca{K}$-naturality of $\bar{\varphi}^F$ in the second variable is checked by an analogous computation (it is slightly more involved since there are more instances of the symmetry isomorphism in this case).
 
 Each of the axioms asserting compatibility of $\bar{\varphi}^F$ and $\bar{\varphi}^F_0$ with the $\ca{K}$-monoidal structure follows from the corresponding axiom for $\varphi^{F}$ and $\varphi^{F}_{0}$. For the axiom involving the associator (and thus a triple tensor product), one again uses Lemma~\ref{lemma:coends} to show this.
 
 Finally, the two axioms for $\bar{\alpha}$ to be a symmetric monoidal $\ca{K}$-natural transformation reduce to the corresponding axioms for $\alpha$.
 \end{proof}
 
 The base change 2-functor
 \[
 U_{\ast} \colon \ca{K}\mbox{-}\CAT \rightarrow \Ab\mbox{-}\CAT
 \]
 induced by the symmetric lax monoidal functor $U=\mathrm{ev}_{\U} \colon \ca{K} \rightarrow \Ab$ is compatible with the symmetric monoidal structure, so it sends symmetric lax monoidal $\ca{K}$-functors to symmetric lax monoidal additive functors. The composite
 \[
 \xymatrix{ \slm_{\ca{K}} (\mathbb{B}, \mathbb{K}) \ar@{-->}[rd]_{\Psi} \ar[r]^-{(U_{\ast})_{\mathbb{B},\mathbb{K}}} & \slm_{\Ab}(\ca{B},\ca{K}) \ar[d]^{\slm_{\Ab}(\ca{B},U)} \\  & \slm_{\Ab}(\ca{B},\Ab)}
 \]
 defines a functor going in the opposite direction of the functor $\Phi$ of Proposition~\ref{prop:symmetric_lax_monoidal_K-functor}.
 
 \begin{thm}\label{thm:K-functor_Ab-functor_equivalence}
  The functor $\Psi$ above and the functor
  \[
  \Phi \colon \slm_{\Ab}(\ca{B},\Ab) \rightarrow \slm_{\ca{K}}(\mathbb{B},\mathbb{K})
  \]
  of Proposition~\ref{prop:symmetric_lax_monoidal_K-functor} are mutually inverse equivalences.
 \end{thm}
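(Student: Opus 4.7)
The plan is to exhibit natural isomorphisms $\id \cong \Psi\Phi$ and $\Phi\Psi \cong \id$ that respect the symmetric lax monoidal structures. The direction $\Psi\Phi \cong \id$ is essentially formal: for $F \in \slm_{\Ab}(\ca{B},\Ab)$, by construction $(\Psi\Phi F)(X) = \bar{F}X(\U) = F(K\U^{\vee} \otimes X)$, and strong monoidality of $K$ together with self-duality of $\U$ in $\ca{P}$ gives a canonical isomorphism with $F(X)$. To check compatibility with the lax structure, I would unpack the definition and verify that the composite $F(X) \otimes F(Y) \to (\bar{F}X \dt \bar{F}Y)(\U) \xrightarrow{\bar{\varphi}^F_{X,Y}(\U)} \bar{F}(X \otimes Y)(\U)$ (where the first map is the lax structure of $U=\mathrm{ev}_{\U}$) recovers $\varphi^{F}_{X,Y}$; this amounts to restricting the coend formula of Definition~\ref{dfn:lax_monoidal_structure} to the $p=q=\U$ summand, where the string diagram collapses via the unit constraints.

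The real content is the other direction. Starting with $\mathbb{G} \colon \mathbb{B} \to \mathbb{K}$, we have $(\Phi\Psi\mathbb{G})(X)(p) = \mathbb{G}(Kp^{\vee} \otimes X)(\U)$, and the goal is a natural isomorphism with $\mathbb{G}(X)(p)$. My strategy is to produce a canonical isomorphism
\[
\gamma_{p,X} \colon \ca{P}(-,p) \dt \mathbb{G} X \xrightarrow{\cong} \mathbb{G}(Kp \otimes X)
\]
in $\ca{K}$; evaluation at $\U$ together with the shift formula $(\ca{P}(-, p) \dt V)(\U) \cong V(p^{\vee})$ (immediate from Yoneda and rigidity of $\ca{P}$) then yields $\mathbb{G}(Kp \otimes X)(\U) \cong \mathbb{G}(X)(p^{\vee})$, and substituting $p \mapsto p^{\vee}$ gives the claim.

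To construct $\gamma_{p,X}$, I would use that $Kp \otimes X = \ca{P}(-,p) \odot X$ is a copower in $\mathbb{B}$ (Lemma~\ref{lemma:copowers}) and that $\ca{P}(-,p) \in \ca{K}$ is dualizable with dual $\ca{P}(-,p^{\vee})$ (the coend $\ca{P}(-,p) \dt \ca{P}(-,p^{\vee}) \cong \ca{P}(-, p \otimes p^{\vee})$ is the unit $\ca{P}(-,\U)$ of Day convolution by rigidity of $\ca{P}$). Applying $\mathbb{G}_{X, Kp \otimes X}$ to $\id_{Kp \otimes X}$ (viewed as an element of $\mathbb{B}(X, Kp \otimes X)(p)$) produces the comparison map $\gamma_{p,X}$. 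For the inverse, I would apply $\mathbb{G}_{Kp \otimes X, X}$ to $\ev \otimes X \colon Kp^{\vee} \otimes Kp \otimes X \to X$ in $\mathbb{B}(Kp \otimes X, X)(p^{\vee})$ and transpose across the adjunction $(\ca{P}(-,p^{\vee}) \dt -) \dashv (\ca{P}(-, p) \dt -)$ to obtain $\mathbb{G}(Kp \otimes X) \to \ca{P}(-,p) \dt \mathbb{G}X$. Mutual inverseness reduces, via $\ca{K}$-functoriality applied to the composition law in $\mathbb{B}$, to the triangle identities for the duality between $Kp$ and $Kp^{\vee}$ in $\ca{B}$.

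The main obstacle is verifying that the resulting isomorphism $\Phi\Psi\mathbb{G} \cong \mathbb{G}$ is compatible with the symmetric lax monoidal structures, i.e., that it identifies $\bar{\varphi}^{\Psi\mathbb{G}}$ and $\bar{\varphi}^{\Psi\mathbb{G}}_{0}$ with the constraints of $\mathbb{G}$. This reduces to a diagram chase involving the coend formulas of Definition~\ref{dfn:lax_monoidal_structure}, Lemma~\ref{lemma:coends}, and the shift isomorphism; it should yield to the string diagrammatic techniques already developed in Lemma~\ref{lemma:K-functor_from_Ab-functor} and Proposition~\ref{prop:symmetric_lax_monoidal_K-functor}, and no genuinely new ideas are required beyond careful bookkeeping.
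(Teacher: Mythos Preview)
Your approach is correct and shares its key insight with the paper's proof: both hinge on the fact that copowers by $\ca{P}(-,p)$ are absolute (since $\ca{P}(-,p)$ is dualizable in $\ca{K}$), so every $\ca{K}$-functor $\mathbb{G}$ admits a canonical isomorphism $\ca{P}(-,p)\dt\mathbb{G}X \cong \mathbb{G}(Kp\otimes X)$. Your map $\gamma_{p,X}$ is exactly this colimit comparison morphism, and your explicit inverse is one way to witness its invertibility. The paper, however, organizes the argument differently: after observing $\Psi\Phi\cong\id$ (hence $\Psi$ is essentially surjective), it invokes the 2-out-of-3 property and reduces to showing that $\Psi$ is full and faithful. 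Faithfulness follows immediately from the comparison isomorphisms, and fullness amounts to lifting a monoidal natural transformation $\beta\colon\Psi\mathbb{F}\Rightarrow\Psi\mathbb{G}$ to a $\ca{K}$-natural $\gamma$ by conjugating with $\bar{\mathbb{F}}$ and $\bar{\mathbb{G}}$; $\ca{K}$-naturality then comes for free from the Gordon--Power equivalence with $\ca{P}$-actions. The payoff of the paper's route is that one never has to verify that the isomorphism $\Phi\Psi\mathbb{G}\cong\mathbb{G}$ is itself $\ca{K}$-natural and monoidal for \emph{every} $\mathbb{G}$---the bookkeeping you flag as the ``main obstacle'' is replaced by the (somewhat lighter) verification that a single lifted transformation $\gamma$ is monoidal, which the paper carries out via the diagram labeled \eqref{eqn:monoidal}. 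Your direct construction works, but expect the monoidal compatibility check for $\Phi\Psi\cong\id$ to be at least as involved as the paper's fullness argument.
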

 
 \begin{proof}
 Note that $\Psi \Phi(F)$ sends $X$ to $F(K\U^{\vee} \otimes X)$. By applying the natural isomorphism $K\U^{\vee} \otimes X \cong X$, we get an isomorphism $\Psi \Phi (F) \cong F$ which is easily seen to be symmetric monoidal and natural in $F$. This shows in particular that $\Psi$ is essentially surjective on objects. Using a 2-by-3 argument, it thus suffices to check that $\Psi$ is full and faithful.
 
 To see this, it is convenient to use the biequivalence between $\ca{P}$-copowered $\ca{K}$-categories and categories with a $\ca{P}$-action (see \cite[Theorem~3.4]{GORDON_POWER}). Namely, the $\ca{K}$-category $\mathbb{B}$ has copowers $\ca{P}(-,p) \odot X$ given by $Kp \otimes X$, while $\mathbb{K}$ has copowers given by
 \[
 \ca{P}(-,p) \dt M \cong [\ca{P}(-,p^{\vee}),M]_{\mathrm{Day}}
 \]
 for all $M \in \ca{K}$. The natural isomorphisms
 \begin{align*}
  [\ca{P}(-,p^{\vee}),M](q) & \cong \ca{K}\bigl( \ca{P}(-,q), [\ca{P}(-,p^{\vee}),M] \bigr) \\
 & \cong \ca{K}\bigl( \ca{P}(-,q\otimes p^{\vee}),M\bigr) \\
  & \cong M(q\otimes p^{\vee})  
 \end{align*}
 show that the $\ca{P}$-action on $\ca{K}$ is given by $p \cdot M=M(-\otimes p^{\vee})$. We use the same notation to denote the $\ca{P}$-action on $\ca{B}$, that is, the notation $p \cdot X$ is a shorthand for $Kp \otimes X \in \ca{B}$.
 
 Since every object in $\ca{P}$ has a dual, these copowers are \emph{absolute} weighted colimits (see \cite{STREET_ABSOLUTE}), so for each $\ca{K}$-functor $\mathbb{G} \colon \mathbb{B} \rightarrow \mathbb{K}$, the colimit comparison morphism
 \[
 \bar{\mathbb{G}}_{p,X} \colon p \cdot \mathbb{G} X \rightarrow \mathbb{G}(p\cdot X)
 \]
 is an isomorphism for all $p \in \ca{P}$ and all $X \in \ca{B}$.
 
 Using this, it is now straightforward to show that $\Psi$ is faithful. Indeed, suppose $\mathbb{F}, \mathbb{G} \colon \mathbb{B} \rightarrow \mathbb{K}$ are $\ca{K}$-functors and $\alpha,\beta \colon \mathbb{F} \Rightarrow \mathbb{G}$ are $\ca{K}$-natural transformations such that
 \[
 (\alpha_X)_{\U}=(\beta_X)_{\U} \colon \mathbb{F}X( \U ) \rightarrow \mathbb{G}X(\U)
 \]
 for all $X \in \ca{B}$. Then we have in particular $ (\alpha_{p \cdot X})_{\U}=(\beta_{p \cdot X})_{\U} $ for all $p \in \ca{P}$, so the commutative diagram
 \[
 \xymatrix{ \mathbb{F}(p \cdot X)(\U) \ar[r]^-{\bar{\mathbb{F}}} \ar[d]_{(\alpha_{p \cdot X})_{\U}} & (p \cdot \mathbb{F}X)(\U) \ar[r]^{\cong} \ar[d]^{(p \cdot \alpha_X)_{\U}} & \mathbb{F}X(p^{\vee}) \ar[d]^{(\alpha_X)_{p^{\vee}}} \\
  \mathbb{G}(p \cdot X)(\U) \ar[r]_-{\bar{\mathbb{G}}} & (p \cdot \mathbb{G}X)(\U) \ar[r]_{\cong} & \mathbb{G}X(p^{\vee})
   }
 \]
 and the analogous diagram for $\beta$ show that $\alpha_X=\beta_X$ for all $X \in \ca{B}$, hence that $\Psi$ is faithful.
 
 It remains to check that $\Psi$ is full. Thus let $(\mathbb{F},\varphi^{\mathbb{F}},\varphi^{\mathbb{F}}_0)$ and $(\mathbb{G},\varphi^{\mathbb{G}},\varphi^{\mathbb{G}}_0)$ be symmetric lax monoidal $\ca{K}$-functors $\mathbb{B} \rightarrow \mathbb{K}$ and let $\beta \colon \Psi \mathbb{F} \Rightarrow \Psi \mathbb{G}$ be a symmetric monoidal natural transformation. We need to show that there exists a symmetric monoidal $\ca{K}$-natural transformation $\gamma \colon \mathbb{F} \Rightarrow  \mathbb{G}$ such that $\Psi \gamma=\beta$. For $X \in \ca{B}$, we define $\gamma_X$ by the collection $(\gamma_X)_p$ of homomorphisms making the diagram
 \[
 \xymatrix{ \mathbb{F}(p^{\vee} \cdot X )(\U) \ar[r]^-{\bar{\mathbb{F}}} \ar[d]_{\beta_{Kp^{\vee}\otimes X }} & \mathbb{F}X(\U \otimes p^{\vee\vee} ) \ar[r]^-{\cong} & \mathbb{F}X(p) \ar@{-->}[d]_{\exists ! }^{(\gamma_X)_p}  \\
 \mathbb{G}(p^{\vee} \cdot X)(\U) \ar[r]_-{\bar{\mathbb{G}}} & \mathbb{G}X(\U \otimes p^{\vee\vee}) \ar[r]_-{\cong} & \mathbb{G}X(p)
  }
 \]
 commutative. From this definition it is straightforward to check that $\gamma$ is compatible with the $\ca{P}$-actions, hence that it defines a $\ca{K}$-natural transformation by \cite[Theorem~3.4]{GORDON_POWER}. Moreover, it is clear that $\Psi \gamma =\beta$, so we only need to check that $\gamma$ is symmetric monoidal. Compatibility with $\varphi^F_0$ and $\varphi^G_0$ follows directly from the corresponding fact for $\beta$.
 
 We have reduced the problem to showing that the diagram
 \[
 \xymatrix{ \mathbb{F} X \dt \mathbb{F}Y \ar[d]_{\gamma_X \dt \gamma_Y} \ar[r]^-{\varphi^{\mathbb{F}}_{X,Y} } & \mathbb{F}(X \otimes Y)  \ar[d]^{\gamma_{X \otimes Y}} \\ 
\mathbb{G}X \dt \mathbb{G}Y \ar[r]_-{\varphi^{\mathbb{G}}_{X,Y}} & \mathbb{G}(X \otimes Y) 
  }
 \]
 is commutative for all $X,Y \in \ca{B}$. By precomposing with the universal morphism
 \[
 \mathbb{F}X(p) \otimes \mathbb{F}Y(q) \otimes \ca{P}(e,p\otimes q) \rightarrow \mathbb{F}X \dt \mathbb{F}Y(e)
 \]
 and a Yoneda argument in $e \in \ca{P}$, this reduces to commutativity of the diagram
 \begin{equation}\label{eqn:monoidal}
\vcenter{
 \xymatrix{ p \cdot \mathbb{F}X (\U) \otimes q \cdot \mathbb{F}Y(\U) \ar[d]_{p \cdot (\gamma_X)_{\U} \otimes q \cdot (\gamma_Y)_{\U}} \ar[r] & p \cdot \bigl( q\cdot \mathbb{F}(X\otimes Y)(\U) \bigr) \ar[d]^{p \cdot \bigl(q \cdot (\gamma_{X\otimes Y})_{\U} \bigr)} \\
p \cdot \mathbb{G}X (\U) \otimes q \cdot \mathbb{F}Y(\U) \ar[r] & p \cdot \bigl( q\cdot \mathbb{G}(X\otimes Y)(\U) \bigr) 
 }}
 \end{equation}
 for all $p,q \in \ca{P}$.
 
 Since $\varphi^{\mathbb{F}}$ is $\ca{K}$-natural in both variables, the diagram
 \[
 \xymatrix{ p \cdot ( q \cdot \mathbb{F} X \dt \mathbb{F}Y) \ar[r] \ar[d]_{p \cdot (q \cdot \varphi^{\mathbb{F}}_{X,Y} )} & p \cdot \bigl(\mathbb{F}X \dt \mathbb{F} (q \cdot Y)\bigr) \ar[r] \ar[d]^{p \cdot \varphi^{\mathbb{F}}_{X,q\cdot Y}} & \mathbb{F} (p\cdot X) \dt \mathbb{F} (q \cdot Y) \ar[d]^{\varphi^{\mathbb{F}}_{p \cdot X, q\cdot Y}} \\
    p \cdot ( q \cdot \mathbb{G} X \dt \mathbb{G}Y) \ar[r]  & p \cdot \bigl(\mathbb{G}X \dt \mathbb{G} (q \cdot Y)\bigr) \ar[r] & \mathbb{G} (p\cdot X) \dt \mathbb{G} (q \cdot Y)
  }
 \]
 (whose unlabelled arrows are colimit comparison morphisms) is commutative. Combining this with the analogous diagram for $\varphi^{\mathbb{G}}$ and the definition of the symmetric lax monoidal structure of $\Psi \mathbb{F}$ and $\Psi \mathbb{G}$ on can check that \eqref{eqn:monoidal} is indeed commutative.
 \end{proof}
 
 A second application of the theory of absolute weighted colimits gives the following proposition. Recall that a lax monoidal functor $F$ is called \emph{normal} if $\varphi_0 \colon \U \rightarrow F\U$ is an isomorphism.
 
 \begin{prop}\label{prop:normal_lax_implies_iso}
 Let $\mathbb{B}^{\prime}$ be a monoidal $\ca{K}$-category and $(\mathbb{F},\varphi,\varphi_0 )\colon \mathbb{B} \rightarrow \mathbb{B}^{\prime}$ a normal lax monoidal functor. Then the morphisms
 \[
\varphi_{Kp,X} \colon \mathbb{F} Kp \otimes \mathbb{F}X \rightarrow \mathbb{F}(Kp \otimes X)
 \]
 and
 \[
 \varphi_{X,Kp} \colon \mathbb{F} X \otimes \mathbb{F} Kp \rightarrow \mathbb{F}(X \otimes Kp)
 \]
 are isomorphisms for all $p \in \ca{P}$ and $X \in \mathbb{B}$.
 \end{prop}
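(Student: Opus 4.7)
The strategy is to exploit absoluteness of the copowers $\ca{P}(-,p) \odot X = Kp \otimes X$ in $\mathbb{B}$. As noted in the proof of Theorem~\ref{thm:K-functor_Ab-functor_equivalence}, these are absolute weighted colimits because $\ca{P}$ is rigid, and hence are preserved by every $\ca{K}$-functor. Thus the colimit comparison morphism
\[
\bar{\mathbb{F}}_{p,X} \colon p \cdot \mathbb{F}X \longrightarrow \mathbb{F}(Kp \otimes X)
\]
is an isomorphism for every $p \in \ca{P}$ and every $X \in \mathbb{B}$. Specialising to $X = \U$ and combining with the normality isomorphism $\varphi_0 \colon \U \cong \mathbb{F}\U$ yields a canonical isomorphism $\mathbb{F}Kp \cong p \cdot \U$ in $\mathbb{B}'$, where $p \cdot (-)$ now denotes the copower in $\mathbb{B}'$.

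Since $\mathbb{B}'$ is a monoidal $\ca{K}$-category, both functors $- \otimes \mathbb{F}X$ and $\mathbb{F}X \otimes -$ from $\mathbb{B}'$ to itself are $\ca{K}$-functors and hence also preserve the absolute copowers by $\ca{P}(-,p)$. Concatenating these isomorphisms produces a chain
\[
\mathbb{F}Kp \otimes \mathbb{F}X \;\cong\; (p \cdot \U) \otimes \mathbb{F}X \;\cong\; p \cdot (\U \otimes \mathbb{F}X) \;\cong\; p \cdot \mathbb{F}X \;\xrightarrow{\bar{\mathbb{F}}_{p,X}}\; \mathbb{F}(Kp \otimes X),
\]
and the analogous chain for $\varphi_{X,Kp}$ is obtained by using $\mathbb{F}X \otimes -$ in place of $- \otimes \mathbb{F}X$ and the copower on the right. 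Note in particular that the construction for $\varphi_{X,Kp}$ does not require any braiding on $\mathbb{B}'$.

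The principal obstacle is to verify that the constructed isomorphism is actually equal to $\varphi_{Kp,X}$ (and likewise on the other side), rather than merely exhibiting \emph{some} isomorphism between source and target. I expect this coherence check to combine three ingredients: first, the description of $\bar{\mathbb{F}}_{p,Y}$ as the value of $\mathbb{F}$ at the copower-universal map, via the equivalence between $\ca{P}$-copowered $\ca{K}$-categories and categories with a $\ca{P}$-action from \cite[Theorem~3.4]{GORDON_POWER}; second, the $\ca{K}$-naturality of $\varphi$ in both variables, which allows one to slide the copower structure morphisms across the lax coherence cell; third, the unit axiom for a lax monoidal functor, which under normality forces $\varphi_{A,\U}$ (and $\varphi_{\U,A}$) to be an isomorphism. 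Together these should reduce the required identity to the unit axiom for $\mathbb{F}$ applied at $\U$, completing the verification.
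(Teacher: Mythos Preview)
Your proposal is correct and follows essentially the same route as the paper: the paper packages the argument into a single commutative rectangle whose vertical arrows are the absolute-copower comparison isomorphisms and whose horizontal arrows are $\ca{P}(-,p)\odot\varphi_{\U,X}$ (top) and $\varphi_{\ca{P}(-,p)\odot\U,\,X}$ (bottom), so that normality of $\mathbb{F}$ forces the bottom arrow to be invertible. The coherence check you anticipate is exactly the commutativity of this rectangle, which, as you say, comes from the $\ca{K}$-naturality of $\varphi$.
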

 
 \begin{proof}
 Note that $Kp \cong Kp \otimes \U \cong \ca{P}(-,p) \odot \U$ by Lemma~\ref{lemma:copowers}. Since $\ca{P}(-,p)$ has a dual in $\ca{K}$, the colimit $\ca{P}(-,p) \odot \U$ is absolute (see \cite{STREET_ABSOLUTE}). Thus all the vertical morphisms in the diagram
 \[
 \xymatrix@C=60pt{
 \ca{P}(-,p) \odot ( \mathbb{F} \U \otimes \mathbb{F}X ) \ar[d]_{\overline{-\otimes \mathbb{F}X }} \ar[r]^-{\ca{P}(-,p) \odot \varphi_{\U,X}} & \ca{P}(-,p) \odot \mathbb{F}(\U \otimes X) \ar[d]^{\overline{\mathbb{F}}}  \\
 \bigl( \ca{P}(-,p) \odot \mathbb{F} \U \bigr) \otimes \mathbb{F}X   \ar[d]_{- \otimes \mathbb{F}X \circ \overline{\mathbb{F}} } &  \mathbb{F}\bigl(\ca{P}(-,p) \odot (\U \otimes X)\bigr) \ar[d]^{\mathbb{F} \circ \overline{-\otimes X}}  \\ 
 \mathbb{F}\bigl(\ca{P}(-,p)\odot \U \bigr) \otimes \mathbb{F}X  \ar[r]_-{\varphi_{\ca{P}(-,p)\odot \U,X }} & \mathbb{F}\Bigl(\bigl( \ca{P}(-,p) \odot \U \bigr) \otimes X \Bigr) }
 \]
 are isomorphisms.
 
 From the normality of $\mathbb{F}$ it follows that $\varphi_{\U,X}$ is an isomorphism as well, hence that $\varphi_{Kp,X}$ is an isomorphism for all $p \in \ca{P}$. The claim about $\varphi_{X,Kp}$ follows analogously.
 \end{proof}
 
 For any symmetric lax monoidal $\ca{K}$-functor $\mathbb{F} \colon \mathbb{B} \rightarrow \mathbb{K}$, we get a commutative algebra $B \defl \mathbb{F} \U \in \mathbb{K}$ and a canonical lift
 \[
 \mathbb{F}^{\mathrm{norm}} \colon \mathbb{B} \rightarrow \mathbb{K}_{B}
  \]
 to the symmetric monoidal $\ca{K}$-category of $B$-modules in $\mathbb{K}$. By construction, $\mathbb{F}^{norm}$ is \emph{normal} lax monoidal. We call $B$ the \emph{coefficient ring} of $\mathbb{F}$. The following corollary summarizes the main results of this section.
 
 \begin{cor}\label{cor:lax_implies_some_isos}
 Let $\ca{B}, \ca{P}$ be $\Ab$-enriched symmetric monoidal categories such that every $p \in \ca{P}$ has a dual and let $K \colon \ca{P} \rightarrow \ca{B}$ be a symmetric strong monoidal functor. Let $\ca{K} \defl [\ca{P}^{\op},\Ab]$ be the category of additive presheaves on $\ca{P}$ endowed with the Day convolution symmetric monoidal structure.
 
 Let $(F,\varphi,\varphi_0) \colon \ca{B} \rightarrow \Ab$ be a symmetric lax monoidal functor and let
 \[
 (\bar{F},\bar{\varphi},\bar{\varphi}_0) \colon \ca{B} \rightarrow \ca{K}
 \]
 be the induced symmetric lax monoidal functor sending $X$ to $F\bigl(K(-)^{\vee} \otimes X\bigr)$ (see Definitions~\ref{dfn:K-functor_from_Ab-functor} and \ref{dfn:lax_monoidal_structure}). Let $B =\bar{F} \U$ be the coefficient ring of $\bar{F}$ and write $\bar{F}^{\mathrm{norm}} \colon \ca{B} \rightarrow \ca{K}_{B}$ for the canonical lift to $B$-modules. Then the morphisms
 \[
\bar{\varphi}_{Kp,X} \colon \bar{F} Kp \ten{B} \bar{F}X \rightarrow \bar{F}(Kp \otimes X)
 \]
 and
 \[
\bar{\varphi}_{X,Kp} \colon \bar{F} X \ten{B} \bar{F} Kp \rightarrow \bar{F}(X \otimes Kp)
 \]
 are isomorphisms for all $p \in \ca{P}$ and all $X \in \ca{B}$.
 \end{cor}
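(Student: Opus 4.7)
The plan is to apply Proposition~\ref{prop:normal_lax_implies_iso} to the $\ca{K}$-enriched avatar of the data produced by Theorem~\ref{thm:K-functor_Ab-functor_equivalence}. The functor $\Phi$ of Proposition~\ref{prop:symmetric_lax_monoidal_K-functor} sends $(F,\varphi,\varphi_0)$ to a symmetric lax monoidal $\ca{K}$-functor $(\mathbb{F},\bar{\varphi},\bar{\varphi}_0) \colon \mathbb{B} \to \mathbb{K}$ whose underlying symmetric lax monoidal additive functor is precisely $(\bar{F},\bar{\varphi},\bar{\varphi}_0)$ by Lemma~\ref{lemma:K-functor_from_Ab-functor} and Definition~\ref{dfn:lax_monoidal_structure}. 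In particular, the coefficient ring $B = \mathbb{F}\U \in \mathbb{K}$ coincides with the object $\bar{F}\U$ of $\ca{K}$.

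I form the canonical normal lift $\mathbb{F}^{\mathrm{norm}} \colon \mathbb{B} \to \mathbb{K}_B$ described in the paragraph preceding the corollary. Because the symmetric lax monoidal base change $U_\ast$ commutes with the formation of commutative algebras and module $\ca{K}$-categories, it identifies the underlying additive category of $\mathbb{K}_B$ with $\ca{K}_B$ (whose tensor product is $\ten{B}$) and sends the $\ca{K}$-enriched normal lift $\mathbb{F}^{\mathrm{norm}}$ to the additive normal lift $\bar{F}^{\mathrm{norm}} \colon \ca{B} \to \ca{K}_B$. Applying Proposition~\ref{prop:normal_lax_implies_iso} to the normal symmetric lax monoidal $\ca{K}$-functor $\mathbb{F}^{\mathrm{norm}} \colon \mathbb{B} \to \mathbb{K}_B$ (with the proposition's $\mathbb{B}^{\prime}$ taken to be $\mathbb{K}_B$) then yields that the structure morphisms $\varphi^{\mathrm{norm}}_{Kp,X}$ and $\varphi^{\mathrm{norm}}_{X,Kp}$ are isomorphisms in $\mathbb{K}_B$ for all $p \in \ca{P}$ and all $X \in \ca{B}$. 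Passing to underlying morphisms via $U_\ast$ identifies these with the maps $\bar{\varphi}_{Kp,X} \colon \bar{F}Kp \ten{B} \bar{F}X \to \bar{F}(Kp \otimes X)$ and $\bar{\varphi}_{X,Kp} \colon \bar{F}X \ten{B} \bar{F}Kp \to \bar{F}(X \otimes Kp)$ of the corollary, which are therefore also isomorphisms in $\ca{K}_B$.

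The only nontrivial step is the bookkeeping in the second paragraph: one must verify that the canonical normal lift construction is compatible with the equivalence of Theorem~\ref{thm:K-functor_Ab-functor_equivalence}, so that $U_\ast \mathbb{F}^{\mathrm{norm}} = \bar{F}^{\mathrm{norm}}$. This is a routine consequence of the fact that the symmetric lax monoidal base change $U_\ast$ sends $B$-modules in $\mathbb{K}$ to $B$-modules in $\ca{K}$ and intertwines the free and forgetful functors on both sides; no substantive difficulty is encountered here.
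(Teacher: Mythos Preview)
Your proof is correct and follows essentially the same route as the paper: lift $(F,\varphi,\varphi_0)$ to the symmetric lax monoidal $\ca{K}$-functor $\mathbb{F}$ via Proposition~\ref{prop:symmetric_lax_monoidal_K-functor}, pass to its normal lift $\mathbb{F}^{\mathrm{norm}} \colon \mathbb{B} \to \mathbb{K}_B$, and apply Proposition~\ref{prop:normal_lax_implies_iso}. The paper's version is terser and simply asserts that $\bar{F}^{\mathrm{norm}}$ is the underlying additive functor of $\mathbb{F}^{\mathrm{norm}}$; your extra paragraph making the compatibility with $U_\ast$ explicit is fine but not strictly necessary, and the appeal to Theorem~\ref{thm:K-functor_Ab-functor_equivalence} is not really needed here since Proposition~\ref{prop:symmetric_lax_monoidal_K-functor} alone already gives the required $\ca{K}$-functor.
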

 
 \begin{proof}
 By Proposition~\ref{prop:symmetric_lax_monoidal_K-functor}, $\bar{F}^{\mathrm{norm}}$ is the underlying additive functor of the normal lax monoidal $\ca{K}$-functor $\mathbb{F}^{\mathrm{norm}} \colon \mathbb{B} \rightarrow \mathbb{K}_{B}$. The claim thus follows directly from Proposition~\ref{prop:normal_lax_implies_iso}.
 \end{proof}
 
 \subsection{The additive site of a homology theory}\label{section:sites} 
 
 Throughout this section, we fix a $\otimes$-triangulated category $\ca{T}$, that is, a triangulated category with an (additive) symmetric monoidal structure such that $X \otimes -$ is triangulated for all $X \in \ca{T}$. We also assume that $\ca{P}$ is a rigid symmetric monoidal category and $K \colon \ca{P} \rightarrow \ca{T}$ is symmetric strong monoidal functor. We will often be interested in the case where $\ca{P}$ is a Picard category, in which case all its objects are invertible. For example, $\ca{T}$ could be the stable homotopy category and $\ca{P}$ the Picard category generated by a single \emph{odd} invertible object $\ell$ (meaning that the symmetry $\ell \otimes \ell \rightarrow \ell \otimes \ell$ is equal to $-\id$) and $K$ the functor sending $\ell$ to $S^1$. In this case, the coefficient category $\ca{K}=[\ca{P}^{\op},\Ab]$ is the category of $\mathbb{Z}$-graded abelian groups, with symmetry given by the Koszul sign convention.
  
 We will further assume that $\ca{T}$ is symmetric monoidal \emph{closed}, and that the triangulation is compatible with the symmetric monoidal closed structure. In other words, the axioms (TC1) and (TC2) of \cite{MAY} hold or, equivalently, the axioms of \cite[A.2]{HOVEY_PALMIERI_STRICKLAND} hold (see \cite[Remark~4.2]{MAY}). The important fact we need is that the internal hom preserves distinguished triangles in the second variable and it preserves distinguished triangles up to sign in the first variable. We call such a category a \emph{closed $\otimes$-triangulated} category. Furthermore, we make the following standing assumption about the compatibility of $K$ and the triangulated structure:
 \begin{equation}\label{eqn:s1_in_image}
  \text{There exists an object } \ell \in \ca{P} \; \text{with} \; K\ell \cong S^{1}
 \end{equation}
 (in other words, the object $S^{1}=\U[1]$ lies in the image of $K$). We write $\cell(\ca{P})$ for the smallest thick subcategory of $\ca{T}$ containing  the image $\{Kp \in \ca{T} \; \vert \; p \in \ca{P} \}$ of $K$. Thus $\cell(\ca{P})$ is the closure of $\{Kp \in \ca{T} \; \vert \; p \in \ca{P} \}$ under shifts, retracts, and cofibers (mapping cones).
 
 \begin{lemma}\label{lemma:cell_P_rigid}
  The category $\cell(\ca{P})$ is closed under finite direct sums, finite tensor products, and duals.
 \end{lemma}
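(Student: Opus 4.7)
The plan is to exploit the universal property of $\cell(\ca{P})$ as the smallest thick subcategory containing $K(\ca{P})$: to prove closure under an operation, I exhibit a thick subcategory of $\ca{T}$ that contains $K(\ca{P})$ and witnesses the desired property, and conclude that it contains $\cell(\ca{P})$.

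For finite direct sums, I observe that for any $X,Y\in\ca{T}$ the zero map $Y[-1]\to X$ completes to a distinguished triangle $Y[-1]\to X\to X\oplus Y\to Y$. So $X\oplus Y$ is a cofiber of a morphism between objects of $\cell(\ca{P})$ (using closure under shifts), hence lies in $\cell(\ca{P})$. For tensor products, I use a standard two-variable argument: since $W\otimes-$ is triangulated by assumption and clearly preserves retracts and shifts, the full subcategory $\ca{S}_W\defl\{X\in\ca{T}\mid W\otimes X\in\cell(\ca{P})\}$ is thick for every $W$. For $W=Kp$, strong monoidality of $K$ gives $Kp\otimes Kq\cong K(p\otimes q)\in\cell(\ca{P})$, so $\ca{S}_{Kp}\supseteq K(\ca{P})$ and hence $\ca{S}_{Kp}\supseteq\cell(\ca{P})$. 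This yields $Kp\otimes Y\in\cell(\ca{P})$ for every $Y\in\cell(\ca{P})$ and $p\in\ca{P}$; by the symmetry of $\otimes$, $\ca{S}_Y\supseteq K(\ca{P})$ for each such $Y$, so $\ca{S}_Y\supseteq\cell(\ca{P})$, giving closure under arbitrary binary tensor products inside $\cell(\ca{P})$.

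For duals, I consider the full subcategory
\[
\ca{D}\defl\bigl\{X\in\cell(\ca{P})\mid X\text{ is dualizable in }\ca{T}\text{ with dual }X^{\vee}\in\cell(\ca{P})\bigr\}
\]
and aim to show it is thick and contains $K(\ca{P})$. The containment is automatic: since $\ca{P}$ is rigid and $K$ is strong monoidal, $Kp$ is dualizable in $\ca{T}$ with dual $K(p^{\vee})\in K(\ca{P})$. Closure of $\ca{D}$ under shifts and retracts is routine from $(X[n])^{\vee}\cong X^{\vee}[-n]$ and from the fact that duals distribute over direct summands. The crux is closure under cofibers: given a distinguished triangle $X\to Y\to Z\to X[1]$ with $X,Y\in\ca{D}$, applying $[-,\U]$ yields (by the compatibility (TC1), (TC2) of the triangulation with the closed structure) a distinguished triangle, which I rewrite as $Z^{\vee}\to Y^{\vee}\to X^{\vee}\to Z^{\vee}[1]$, defining $Z^{\vee}$. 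I then check dualizability of $Z$ by verifying that the canonical morphism $Z^{\vee}\otimes W\to[Z,W]$ is an isomorphism: tensoring the triangle defining $Z^{\vee}$ with $W$ (using that $-\otimes W$ is triangulated) and mapping it to the triangle obtained by applying $[-,W]$ to the original, the maps on the $X$ and $Y$ terms are isomorphisms by dualizability, and the five lemma for triangulated categories finishes the job. Finally, $Z^{\vee}\in\cell(\ca{P})$ because it sits in a triangle with $X^{\vee},Y^{\vee}\in\cell(\ca{P})$.

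The only delicate point is the cofiber step for duals, where one must be careful about the sign with which $[-,\U]$ takes distinguished triangles to distinguished triangles in the first variable, and about the direction in which one sets up the five-lemma comparison. Once the compatibility axioms of \cite{MAY} are invoked, however, these subtleties reduce to bookkeeping, and no further input beyond the triangulated five lemma and the triangulatedness of $W\otimes-$ is required.
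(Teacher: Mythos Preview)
Your proof is correct and follows essentially the same strategy as the paper: a two-variable thick-subcategory argument for tensor products, and the subcategory $\ca{D}$ of dualizable objects with dual in $\cell(\ca{P})$ for the duals claim. The only cosmetic differences are that the paper handles closure of $\ca{D}$ under shifts via $X[1]\cong X\otimes S^1$ (using the already-established closure under tensor products and the standing assumption \eqref{eqn:s1_in_image}) rather than your direct $(X[n])^{\vee}\cong X^{\vee}[-n]$, and that the paper states the dualizability of the cofiber more tersely (``the characterization of duals in terms of the internal hom'') where you spell out the five-lemma comparison $Z^{\vee}\otimes W\to[Z,W]$ explicitly.
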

 
 \begin{proof}
  Any thick subcategory is closed under finite direct sums. To see that $\cell(\ca{P})$ is closed under finite tensor products, note that $\U \in \cell(\ca{P})$ since the functor $K \colon \ca{P} \rightarrow \ca{T}$ is strong monoidal. For each $p \in \ca{P}$, the category
  \[
  \ca{B} \defl \{ X \in \ca{T} \;\vert\; X \otimes Kp \in \cell(\ca{P}) \}
  \]
  is thick and contains $Kq$ for all $q \in \ca{P}$. Thus $A \otimes Kp \in \cell(\ca{P})$ for all $A \in \cell(\ca{P})$. The same argument applied to $\ca{B}_A \defl \{X \in \ca{T} \;\vert \; A \otimes X \in \cell(\ca{P}) \}$ shows that $\cell(\ca{P})$ is closed under binary tensor products.
  
  To see that $\cell(\ca{P})$ is closed under duals, consider the full subcategory $\ca{C} \defl \{ X \in \cell(\ca{P}) \;\vert\; X \text{ has a dual and }  X^{\vee} \in \cell(\ca{P}) \}$ of $\cell(\ca{P})$. Since $\ca{P}$ is rigid, we have $Kp \in \ca{C}$ for all $p \in \ca{P}$. Clearly $\ca{C}$ is closed under retracts. It is closed under shifts since $X[1] \cong X \otimes S^{1}$ and $\cell(\ca{P})$ is closed under binary tensor products (see above). Now let $A, B \in \ca{C}$ and let
  \[
  \xymatrix{A \ar[r]^-{f} & B \ar[r] & C_f \ar[r] & A[1] }
  \]
  be a distinguished triangle. Since the monoidal closed structure is compatible with the triangulation, the characterization of duals in terms of the internal hom shows that $C_f$ has a dual. Moreover, there exists a distinguished triangle
  \[
  \xymatrix{B^{\vee} [-1] \ar[r] & A^{\vee}[-1] \ar[r] & C_f^{\vee} \ar[r] & B^{\vee} } \smash{\rlap{,}}
  \]
  so $C_f^{\vee} \in \cell(\ca{P})$ and therefore $C_f \in \ca{C}$. Thus $\ca{C}=\cell(\ca{P})$, which shows that $\cell(\ca{P})$ is closed under duals.
 \end{proof}
 
 \begin{dfn}\label{dfn:H-dual}
 Let $F \colon \ca{T} \rightarrow \Ab$ be a symmetric lax monoidal additive functor which is homological and let $\bar{F}$ be as in Definition~\ref{dfn:K-functor_from_Ab-functor}. Let $B=\bar{F} \U \in \ca{K}$ be the coefficient ring of $\bar{F} \colon \ca{T} \rightarrow \ca{K}$ and let
 \[
 H \defl \bar{F}^{\mathrm{norm}} \colon \ca{T} \rightarrow \ca{K}_B
 \]
 be the normal lift of $\bar{F}$ (see Corollary~\ref{cor:lax_implies_some_isos}). An object $A \in \cell(\ca{P})$ is called an \emph{$H$-dual} if $HA \in \ca{K}_B$ has a dual. A morphism $f \colon X \rightarrow Y$ in $\ca{T}$ is called an \emph{$H$-epimorphism} if $Hf$ is an epimorphism. We write $\ca{A}_H$ for the full subcategory of $\cell(\ca{P})$ consisting of $H$-duals.
\end{dfn}

\begin{lemma}
 The functor $H$ in Definition~\ref{dfn:H-dual} is homological.
\end{lemma}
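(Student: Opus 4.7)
The plan is to reduce the claim to the homological property of $F$ in two steps, each using a faithful exact functor that detects exact sequences.

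First, recall that $H = \bar{F}^{\mathrm{norm}}$ has underlying $\Ab$-enriched functor $\bar{F} \colon \ca{T} \to \ca{K}$ after passing from $B$-modules back to $\ca{K}$. The forgetful functor $\ca{K}_B \to \ca{K}$ creates and reflects exact sequences (its left adjoint $B \ten{} -$ is right exact and the forgetful functor is faithful exact), so it suffices to show $\bar{F} \colon \ca{T} \to \ca{K}$ is homological, i.e.\ sends each distinguished triangle $X \to Y \to Z \to X[1]$ to a sequence exact at $\bar{F}Y$.

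Second, exactness in $\ca{K} = [\ca{P}^{\op},\Ab]$ is computed pointwise, so it is enough to check that for every $p \in \ca{P}$ the functor
\[
X \longmapsto \bar{F}(X)(p) = F\bigl(Kp^{\vee} \otimes X\bigr)
\]
sends distinguished triangles to exact sequences of abelian groups. Since $\ca{P}$ is rigid, $Kp^{\vee}$ exists in $\ca{T}$, and because $\ca{T}$ is a $\otimes$-triangulated category, the functor $Kp^{\vee} \otimes - \colon \ca{T} \to \ca{T}$ preserves distinguished triangles. Composing with the homological functor $F$ yields a homological functor from $\ca{T}$ to $\Ab$. This proves that $\bar{F}$ is homological, hence so is $H$.

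There is no real obstacle here; the result is a routine consequence of the fact that both reductions (from $\ca{K}_B$ to $\ca{K}$ and from $\ca{K}$ to pointwise $\Ab$-valued functors) preserve and reflect exactness, combined with the compatibility of the tensor structure with the triangulation.
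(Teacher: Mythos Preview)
Your proof is correct and follows essentially the same approach as the paper: the paper's one-line argument (``This follows from the fact that the functor $Kp \otimes -$ is triangulated for all $p \in \ca{P}$'') is exactly the key step you spell out in your second reduction, and your first reduction via the forgetful functor $\ca{K}_B \to \ca{K}$ and pointwise exactness in $[\ca{P}^{\op},\Ab]$ merely makes explicit what the paper leaves implicit.
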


\begin{proof}
 This follows from the fact that the functor $Kp \otimes - \colon \ca{T} \rightarrow \ca{T}$ is triangulated for all $p \in \ca{P}$.
\end{proof}

\begin{prop}\label{prop:basic_properties_of_H-duals}
 The category $\ca{A}_H$ of $H$-duals (see Definition~\ref{dfn:H-dual}) has the following properties:
\begin{enumerate}
\item[(i)] For each $p \in \ca{P}$, the object $Kp$ lies in $\ca{A}_H$.
\item[(ii)] For all $A \in \ca{A}_H$ and all $X \in \cell(\ca{P})$, the morphisms
\[
\varphi_{A,X} \colon HA \ten{B} HX \rightarrow H(A\otimes X)
\]
and
\[
\varphi_{X,A} \colon HX \ten{B} HA \rightarrow H(X\otimes A)
\]
are isomorphisms.
\item[(iii)] If $A \in \ca{A}_H$, then its dual $A^{\vee}$ lies in $\ca{A}_H$.
\item[(iv)] The category $\ca{A}_H$ is closed under finite tensor products. In particular, $\ca{A}_H$ is a rigid symmetric monoidal category.
\item[(v)] If $f \colon A \rightarrow B$ is a morphism in $\ca{A}_H$ such that $\mathrm{coker}(Hf) \in \ca{K}_B$ has a dual, then both the cofiber $C_f$ and the fiber $F_f$ of $f$ lie in $\ca{A}_H$. In this case, the morphism $\mathrm{coker}(Hf) \rightarrow HC_f$ is a split monomorphism.
\end{enumerate}
\end{prop}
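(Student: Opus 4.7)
The plan is to address the parts in order, with (ii) providing the key technical lemma and (v) posing the main obstacle. Part (i) I would obtain directly from Corollary~\ref{cor:lax_implies_some_isos}: taking $X = Kp^\vee$ yields $HKp \otimes_B HKp^\vee \cong H(Kp \otimes Kp^\vee) \cong H\U = B$ (using strong monoidality of $K$), and an analogous isomorphism with the tensor factors swapped. The duality data for $HKp$ in $\ca{K}_B$ would then be obtained by applying $H$ to the evaluation and coevaluation of $Kp$ in $\ca{T}$ and transporting across these isomorphisms; the triangle identities transfer from $\ca{T}$ by the normal lax monoidality of $H$.

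For part (ii), the approach is to observe that both $HA \otimes_B H(-)$ and $H(A \otimes -)$ are homological functors $\ca{T} \to \ca{K}_B$ and then run a thick-subcategory argument. The second is homological because $A \otimes - \colon \ca{T} \to \ca{T}$ is triangulated and $H$ is homological. The first is the composite of $H$ with $HA \otimes_B -$, which is exact because $HA$ is dualizable: tensoring with a dualizable object in a closed symmetric monoidal abelian category is both a left adjoint (to $\Hom_B(HA,-) \cong HA^\vee \otimes_B -$) and a right adjoint (to $HA^\vee \otimes_B -$, by applying the same adjunction to $HA^\vee$), hence exact. Corollary~\ref{cor:lax_implies_some_isos} already gives that $\bar{\varphi}_{A, Kp}$ is an iso for every $p \in \ca{P}$. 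I would then consider the full subcategory of $X \in \ca{T}$ on which $\bar{\varphi}_{A, X[n]}$ is an iso for every $n \in \mathbb{Z}$: this is thick (closure under retracts and shifts is built in, and closure under cones follows from the five lemma applied to the two long exact sequences), and contains every $Kp[n] \cong K(p \otimes \ell^n)$, hence all of $\cell(\ca{P})$. A symmetric argument handles $\bar{\varphi}_{-, A}$.

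Parts (iii) and (iv) should then follow routinely from (ii). For (iii), I would apply $H$ to the evaluation and coevaluation of $A$ in $\ca{T}$ and use (ii) to identify $H(A^\vee \otimes A) \cong HA^\vee \otimes_B HA$ and $H(A \otimes A^\vee) \cong HA \otimes_B HA^\vee$, so that the resulting morphisms in $\ca{K}_B$ exhibit $HA^\vee$ as dual to $HA$; in particular $HA^\vee$ has a dual and $A^\vee \in \ca{A}_H$. For (iv), $\U \in \ca{A}_H$ is immediate, and for $A, A' \in \ca{A}_H$ one has $A \otimes A' \in \cell(\ca{P})$ by Lemma~\ref{lemma:cell_P_rigid} while $H(A \otimes A') \cong HA \otimes_B HA'$ is a tensor product of dualizables, hence dualizable.

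Part (v) is where the main work lies. The crucial input is that every dualizable $M \in \ca{K}_B$ is projective, since $\Hom_B(M,-) \cong M^\vee \otimes_B -$ is exact by the argument in (ii). Starting from the hypothesis that $\mathrm{coker}(Hf)$ is dualizable, the short exact sequence $0 \to \mathrm{im}(Hf) \to HB \to \mathrm{coker}(Hf) \to 0$ splits (as its right-hand term is projective), so $\mathrm{im}(Hf)$ is a direct summand of $HB$ and hence dualizable. The sequence $0 \to \ker(Hf) \to HA \to \mathrm{im}(Hf) \to 0$ then splits analogously, so $\ker(Hf)$ is dualizable as well. Applying the homological functor $H$ to the distinguished triangle $A \xrightarrow{f} B \to C_f \to A[1]$, using~\eqref{eqn:s1_in_image} together with (ii) to identify $Hf[1]$ with $Hf \otimes_B HK\ell$, and extracting a short exact sequence from the resulting long exact sequence, I obtain
\[
0 \to \mathrm{coker}(Hf) \to HC_f \to \ker(Hf) \otimes_B HK\ell \to 0.
\]
Both ends of this sequence are tensor products of dualizables, hence dualizable and therefore projective, so it splits. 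This exhibits $HC_f$ as a direct sum of dualizables, yielding both $C_f \in \ca{A}_H$ and the claim that $\mathrm{coker}(Hf) \to HC_f$ is a split monomorphism. Finally, $F_f \cong C_f \otimes K(\ell^\vee)$ lies in $\ca{A}_H$ by (i) and (iv).
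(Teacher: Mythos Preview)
Your proof is correct and follows essentially the same route as the paper: Corollary~\ref{cor:lax_implies_some_isos} for (i), a thick-subcategory/five-lemma argument for (ii), transport of duality data via (ii) for (iii) and (iv), and the projectivity of dualizables in $\ca{K}_B$ for (v). Your handling of (v) is in fact slightly more streamlined than the paper's: you show directly that $\mathrm{im}(Hf)$ and then $\ker(Hf)$ are dualizable summands of $HB$ and $HA$, whereas the paper arrives at the same conclusion by chasing one step further through the periodic long exact sequence (working with the images $P_0,\ldots,P_3$ up to $H(B[1])$), but the content is identical.
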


\begin{proof}
Part~(i) follows directly from Corollary~\ref{cor:lax_implies_some_isos}.

 To see~(ii), let $\ca{B} \defl \{ Y \in \cell(\ca{P}) \;\vert\; \varphi_{A,Y} \text{ is an isomorphism for all } A \in \ca{A}_H \}$. By Corollary~\ref{cor:lax_implies_some_isos}, we have $Kp \in \ca{B}$ for all $p \in \ca{P}$, and $\ca{B}$ is clearly closed under retracts. Since $HA$ has a dual, the functor $HA \ten{B} H(-) \colon \ca{T} \rightarrow \ca{K}_B$ is homological. Combined with the five lemma, this implies that $\ca{B}$ is closed under cofibers. That $\ca{B}$ is closed under shifts follows from the compatibility
 \[
 \xymatrix{
 (HA \ten{B} HY) \ten{B} HS^{1} \ar[r]^-{\cong} \ar[d]_{\cong}^{Y \in \ca{B}} & HA \ten{B} (HY \ten{B} HS^{1}) \ar[d]^{\cong}_{\text{Cor.~\ref{cor:lax_implies_some_isos}}} \\
 H(A \otimes Y) \ten{B} HS^{1} \ar[d]_{\cong}^{\text{Cor.~\ref{cor:lax_implies_some_isos}}} & HA \ten{B} H(Y \otimes S^{1}) \ar[d]^{\varphi_{A,Y \otimes S^{1}}} \\
 H \bigl( (A\otimes Y) \otimes S^{1} \bigr) \ar[r]_-{\cong} & H\bigl(A \otimes (Y \otimes S^{1}) \bigr) }
 \]
 of $\varphi$ with associators and Assumption~\eqref{eqn:s1_in_image} that $S^1=K \ell$ for some $\ell \in \ca{P}$. Thus $\ca{B}=\cell(\ca{P})$, so $\varphi_{A,X}$ is an isomorphism for all $A \in \ca{A}_H$ and all $X \in \cell(\ca{P})$. The claim about $\varphi_{X,A}$ follows by symmetry.
 
 To see~(iii), note that $A^{\vee} \in \cell(\ca{P})$ by Lemma~\ref{lemma:cell_P_rigid}. Thus both $\varphi_{A,A^{\vee}}$ and $\varphi_{A^{\vee},A}$ are isomorphisms by~(ii). The usual proof that strong monoidal functors preserve duals now shows that
 \[
 \xymatrix{B \ar[r]^-{\varphi_0} & H\U \ar[r]^-{H(\mathrm{coev})} & H(A \otimes A^{\vee}) \ar[r]^-{\varphi_{A,A^{\vee}}^{-1}} & HA \ten{B} HA^{\vee} }
 \]
 and
 \[
 \xymatrix{ HA^{\vee} \ten{B} HA \ar[r]^-{\varphi_{A^{\vee},A}} & H(A^{\vee} \otimes A) \ar[r]^-{H(\mathrm{ev})} & H\U \ar[r]^-{\varphi_0^{-1}} & B }
 \]
 exhibit $HA^{\vee}$ as dual of $HA$ (it is not required that any further instances of $\varphi$ are invertible, the coherence axioms for lax monoidal functors suffice to show that the above morphisms satisfy the triangle identities). Thus $A^{\vee}$ is indeed an $H$-dual.
 
 To see~(iv), note that $\varphi_{A,A^{\prime}}$ is an isomorphism for all $A,A^{\prime} \in \ca{A}_H$ by~(ii), so $H(A\otimes A^{\prime}) \cong HA \ten{B} HA^{\prime}$ has a dual. We have $\U \in \ca{A}_H$ by~(i), and~(iii) shows that $\ca{A}_H$ is rigid. 
 
 Finally, to see~(v), let $A, B \in \ca{A}_H$ and let $f \colon A \rightarrow B$ be a morphism such that $\mathrm{coker}(Hf)$ has a dual. Recall that the full subcategory $\ca{K}_B^{\mathrm{dual}}$ of duals in $\ca{K}_B$ is precisely the category of finitely presentable projective objects in $\ca{K}_B$ since the unit object $B$ is finitely presentable projective.
 
 Consider the exact sequence
 \[
 \xymatrix@C=10pt{ HA \ar[rr]^-{Hf} && HB \ar[rr]^-{Hg}  \ar@{->>}[rd] && HC_f \ar[rr]^-{Hh} \ar@{->>}[rd] && H(A[1]) \ar[rr]^-{-H(f[1])} \ar@{->>}[rd] && H(B[1]) \ar@{->>}[rd] \\ &&& P_3 \ar@{ >->}[ru] && P_2  \ar@{ >->}[ru] && P_1  \ar@{ >->}[ru] && P_0 }
 \]
 with image factorization depicted in the second row. Since the restriction of $H$ to $\ca{A}_H$ is strong monoidal by~(ii) and $S^{1} \in \ca{A}_H$ by~(i) and~\eqref{eqn:s1_in_image}, it follows that $P_0 \cong P_3 \ten{B} HS^{1}$. Moreover, $P_3 \cong \mathrm{coker}(Hf)$ is finitely presentable projective, so $P_0$ is pojective. Thus $P_1$ is a direct summand of $H(B[1])$, so it is finitely presentable projective. The same reasoning shows that $P_2$ is finitely presentable projective. Thus $HC_f \cong P_3 \oplus P_2$ is finitely presentable projective, so it has a dual. This also shows that all the epimorphisms and all the monomorphisms in the above diagram are split. The fiber $F_f$ is isomorphic to $C_f [-1]$, so $H F_f$ has a dual as well. This shows that both $C_f$ and $F_f$ lie in $\ca{A}_H$.
 \end{proof}
 
 \begin{prop}\label{prop:Pstragowski_topology}
 In the situation of Definition~\ref{dfn:H-dual}, let $p \colon  A \rightarrow B$ be an $H$-epimorphism in $\ca{A}_H$ and let
 \[
 \xymatrix{F_p \ar[r] & A \ar[r]^-{p} & B \ar[r] & F_p[1] }
 \]
 be a distinguished triangle in $\ca{T}$. Then $F_p \in \ca{A}_H$ and the sequence
 \begin{equation}\label{eqn:exact_sequence_in_K_B}
 \xymatrix{0 \ar[r] & HF_p \ar[r] & HA \ar[r]^-{Hp} & HB \ar[r] & 0}
 \end{equation}
 in $\ca{K}_B$ is exact. In particular, if $f \colon B^{\prime} \rightarrow B$ is a morphism in $\ca{A}_H$, then there exists a morphism $f^{\prime} \colon A^{\prime} \rightarrow A$ in $\ca{A}_H$ and an $H$-epimorphism $p^{\prime} \colon A^{\prime} \rightarrow B^{\prime}$ such that the square
 \[
 \xymatrix{ A^{\prime} \ar[r]^-{p^{\prime}} \ar[d]_{f^{\prime}} & B^{\prime} \ar[d]^{f} \\ A \ar[r]_-{p} & B }
 \]
 is commutative. Thus the $H$-epimorphisms form a singleton Grothendieck coverage on $\ca{A}_H$.
 \end{prop}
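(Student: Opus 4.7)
My plan is to prove the three assertions in the stated order, with each step building on the previous.

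\emph{Step 1: The short exact sequence and $F_p \in \ca{A}_H$.} Since $p$ is an $H$-epimorphism, $\mathrm{coker}(Hp)=0$ trivially has a dual, so Proposition~\ref{prop:basic_properties_of_H-duals}(v) immediately yields $F_p \in \ca{A}_H$. For exactness, I apply the homological functor $H$ to the distinguished triangle to obtain a long exact sequence in $\ca{K}_B$. By assumption~\eqref{eqn:s1_in_image}, $S^{1} \cong K\ell$ for some $\ell \in \ca{P}$, so parts~(i) and~(iii) of Proposition~\ref{prop:basic_properties_of_H-duals} give $K\ell^{\vee} \in \ca{A}_H$, and part~(ii) identifies $H(X[-1]) \cong HX \ten{B} HK\ell^{\vee}$. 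Since $HK\ell^{\vee}$ is dualizable and hence flat, tensoring with it preserves epimorphisms, so $H(p[-1])$ is also an epimorphism. This forces the connecting morphism $H(B[-1]) \to HF_p$ to vanish; combined with $Hp$ being epi on the other side, the long exact sequence truncates to the desired $0 \to HF_p \to HA \to HB \to 0$.

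\emph{Step 2: The cover lifting property.} Given $f \colon B' \to B$ in $\ca{A}_H$, I form $A'$ as the fiber of $(p,-f) \colon A \oplus B' \to B$ in $\ca{T}$. Duals in $\ca{K}_B$ are the finitely generated projectives (as noted in the proof of Proposition~\ref{prop:basic_properties_of_H-duals}(v)), hence closed under finite direct sums, so $A \oplus B' \in \ca{A}_H$; and since $Hp$ is already epi, so is $(Hp,-Hf)$, making $(p,-f)$ an $H$-epimorphism. Applying Step~1 to $(p,-f)$ gives $A' \in \ca{A}_H$ and a short exact sequence
\[
0 \to HA' \to HA \oplus HB' \xrightarrow{(Hp,-Hf)} HB \to 0 .
\]
Let $f' \colon A' \to A$ and $p' \colon A' \to B'$ be the composites of $A' \to A \oplus B'$ with the two projections. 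Because $A' \to A \oplus B' \to B$ is zero, we have $pf' = fp'$. Surjectivity of $Hp'$ follows by a diagram chase: given $y \in HB'$, pick $x \in HA$ with $Hp(x) = Hf(y)$ (possible since $Hp$ is epi); then $(x,y)$ lies in $\ker(Hp,-Hf)$, hence lifts to some $z \in HA'$ with $Hp'(z) = y$.

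\emph{Step 3: Coverage axioms.} A singleton Grothendieck coverage requires that isomorphisms be covers and that covers admit lifts along arbitrary morphisms. The first is immediate: if $p$ is an isomorphism, then $Hp$ is an isomorphism, and in particular an epimorphism. The second is exactly the content of Step~2.

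\emph{Main obstacle.} The subtle point is establishing left-exactness in Step~1, i.e., that $HF_p \to HA$ is a monomorphism. This hinges on upgrading the epimorphism $Hp$ to $H(p[-1])$ being epi, which crucially uses the standing assumption~\eqref{eqn:s1_in_image} that $S^{1}$ lies in the image of $K$ together with the invertibility of $\bar{\varphi}$ on arguments in the image of $K$ provided by Corollary~\ref{cor:lax_implies_some_isos}. Once Step~1 is in hand, Step~2 is a fairly formal application of it to the homotopy pullback, and Step~3 follows immediately.
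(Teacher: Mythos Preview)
Your proof is correct and follows essentially the same approach as the paper's: invoke Proposition~\ref{prop:basic_properties_of_H-duals}(v) for $F_p \in \ca{A}_H$, use $H(p[-1]) \cong Hp \ten{B} HS^{-1}$ to get left-exactness, and then take $A'$ to be the fiber of $(p,-f) \colon A \oplus B' \rightarrow B$. The only cosmetic difference is that where you do an explicit element chase to show $Hp'$ is epi, the paper simply observes that the short exact sequence exhibits $HA'$ as the pullback of $Hp$ along $Hf$ in the abelian category $\ca{K}_B$, and pullbacks of epimorphisms are epimorphisms there.
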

 
 \begin{proof}
  The object $F_p$ lies in $\ca{A}_H$ by Part~(v) of Proposition~\ref{prop:basic_properties_of_H-duals}. Since $H$ is homological and both $Hp$ and $H(p[-1]) \cong Hp \ten{B} HS^{-1}$ are epimorphisms, the sequence~\eqref{eqn:exact_sequence_in_K_B} is exact. 
  
  Let $A^{\prime}$ be the fiber of the $H$-epimorphism $\left( \begin{smallmatrix} p && -f \end{smallmatrix} \right) \colon A \oplus B^{\prime} \rightarrow B$. In this case, the exact sequence~\eqref{eqn:exact_sequence_in_K_B} implies that $HA^{\prime}$ is the pullback of $Hp$ along $Hf$. Thus $Hp^{\prime}$ is an epimorphism since $\ca{K}_B$ is abelian, which shows the second claim.
 \end{proof}
 
 \begin{dfn}\label{dfn:Pstragowski-topology}
 The additive Grothendieck topology $\tau_H$ on $\ca{A}_H$ induced by the singleton coverage of $H$-epimorphisms is called the Pstr\k{a}gowski-topology. Its category of sheaves is denoted by $\sh(\ca{A}_H) \subseteq [\ca{A}_H^{\op},\Ab]$.
 \end{dfn}
 
 For each $H$-epimorphism $p \colon A \rightarrow B$ in $\ca{A}_H$, we fix a distinguished triangle
 \[
\xymatrix{F_p \ar[r] & A \ar[r]^-{p} & B \ar[r] & F_p[1]} 
 \]
 and we denote the resulting set of diagrams $\{\xymatrix@1{F_p \ar[r] & A \ar[r]^-{p} & B } \}$ in $\ca{A}_H$ by $\Sigma_H$.
 
 \begin{prop}\label{prop:sheaves_characterization}
 The category $\sh(\ca{A}_H)$ is the full subcategory $\Lex_{\Sigma_H}(\ca{A}_H)$ of $[\ca{A}_H^{\op},\Ab]$ consisting of additive functors $G$ such that
 \[
 \xymatrix{0 \ar[r] & GB \ar[r]^-{Gp} & GA \ar[r] & GF_p }
 \]
 is left exact in $\Ab$ for each diagram in $\Sigma_H$. In particular, $\sh(\ca{A}_H)$ is closed under filtered colimits in $[\ca{A}_H^{\op},\Ab]$, hence locally finitely presentable, and the exact reflector $L \colon [\ca{A}_H^{op},\Ab] \rightarrow \sh(\ca{A}_H)$ preserves finitely presentable objects. 
 \end{prop}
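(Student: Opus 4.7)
The plan is to identify the sheaf condition for the Pstr\k{a}gowski topology with the left-exactness condition defining $\Lex_{\Sigma_H}$ via a matching-family argument, and then deduce the rest from standard principles on locally presentable categories. The main obstacle will be this first reformulation; the identification hinges on translating the algebraic condition $pg = 0$ in $\ca{A}_H$ into the statement that $g$ factors through the canonical map $i \colon F_p \to A$, via the long exact sequence associated to the distinguished triangle in $\ca{T}$.

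For the first assertion, I would fix a cover $p \colon A \to B$ in $\ca{A}_H$. Since the coverage is stable under pullbacks in the Grothendieck sense (Proposition~\ref{prop:Pstragowski_topology}), $G$ is a $\tau_H$-sheaf iff it satisfies the coverage-level sheaf condition: every $\alpha \in GA$ with $G(g_1)(\alpha) = G(g_2)(\alpha)$ for all $g_1, g_2 \colon X \to A$ in $\ca{A}_H$ satisfying $pg_1 = pg_2$ admits a unique $\beta \in GB$ with $G(p)(\beta) = \alpha$. By additivity of $G$, the matching condition is equivalent to $G(g)(\alpha) = 0$ for every $g \colon X \to A$ with $pg = 0$. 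Applying the long exact sequence of $\ca{T}(X, -)$ to the distinguished triangle $F_p \to A \xrightarrow{p} B \to F_p[1]$, such a $g$ is exactly one of the form $g = ih$ for some $h \colon X \to F_p$. Taking $g = i$ forces $G(i)(\alpha) = 0$, and the converse is immediate from $G(ih)(\alpha) = G(h)(G(i)(\alpha))$. Thus matching families are precisely the elements of $\ker(G(i))$, and unique amalgamation becomes the assertion that $G(p) \colon GB \to GA$ is a monomorphism with image $\ker(G(i))$, i.e., exactness of $0 \to GB \to GA \to GF_p$.

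For the second assertion, filtered colimits in $[\ca{A}_H^{op}, \Ab]$ are computed pointwise and commute with finite limits in $\Ab$, hence preserve left-exact sequences. Therefore $\Lex_{\Sigma_H}(\ca{A}_H) = \sh(\ca{A}_H)$ is closed under filtered colimits in the presheaf category. Since $\sh(\ca{A}_H)$ is a reflective subcategory of the locally finitely presentable category $[\ca{A}_H^{op}, \Ab]$ closed under filtered colimits, standard principles on locally presentable categories give both local finite presentability of $\sh(\ca{A}_H)$ and the fact that $L$ preserves finitely presentable objects. The exactness of $L$ is a standard property of sheafification for additive Grothendieck topologies.
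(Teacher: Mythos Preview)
Your proof is correct and takes essentially the same approach as the paper. The paper's proof simply cites \cite[Proposition~2.1.5]{SCHAEPPI_MOTIVES} and the fact that $F_p$ is a weak kernel of $p$ in $\ca{T}$ (hence in $\ca{A}_H$), which is exactly the content of your step showing that $pg=0$ iff $g$ factors through $i \colon F_p \to A$; you have unpacked the argument that the paper leaves to the reference, and your deduction of the locally finitely presentable consequences matches the paper's.
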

 
 \begin{proof}
 The first claim follows as in the proof of \cite[Proposition~2.1.5]{SCHAEPPI_MOTIVES} from the fact that $F_p$ is a weak kernel of $p$ in $\ca{T}$ (and thus also in $\ca{A}_H$). Since the right adjoint $\sh(\ca{A}_H) \rightarrow [\ca{A}_H^{\op},\Ab]$ preserves filtered colimits, its left adjoint $L$ preserves finitely presentable objects. Thus $\sh(\ca{A}_H)$ is locally finitely presentable, as claimed.
 \end{proof}
 
 \begin{thm}\label{thm:Grothendieck_tensor_from_homology_theory}
 Let $F \colon \ca{T} \rightarrow \Ab$ be a symmetric lax monoidal homological functor, $\bar{F} \colon \ca{T} \rightarrow \ca{K}$ the functor sending $X$ to $F\bigl(K(-)^{\vee} \otimes X \bigr)$, and $B \defl \bar{F} \U$ the coefficient ring of $\bar{F}$ (cf.\ Corollary~\ref{cor:lax_implies_some_isos}). Let
 \[
 H\defl \bar{F}^{\mathrm{norm}} \colon \ca{T} \rightarrow \ca{K}_B
 \]
 be the normal lift of $\bar{F}$ and let $\ca{A}_H \subseteq \cell(\ca{P})$ be the full subcategory of $H$-duals. We write $J \colon \ca{A}_H \rightarrow \ca{T}$ for the inclusion.
 
 \begin{enumerate}
 \item[(i)] The category $\sh(\ca{A}_H)$ of sheaves for the Pstr\k{a}gowski-topology (see Defininition~\ref{dfn:Pstragowski-topology}) is a Grothendieck tensor category and the reflector 
 \[
 L \colon [\ca{A}_H^{\op},\Ab] \rightarrow \sh(\ca{A}_H)
 \]
 is symmetric strong monoidal for the Day convolution.
 \item[(ii)] The composite
 \[
 \xymatrix@C=25pt{\ca{T} \ar[rr]^-{\mathrm{Hom}_{\ca{A}_H}(J,-)} && [\ca{A}_H^{\op},\Ab] \ar[r]^-{L} & \sh(\ca{A}_H) }
 \]
 is symmetric normal lax monoidal and homological.
 \item[(iii)] The functor $- \ten{\ca{A}_H} H \colon [\ca{A}_H^{\op},\Ab] \rightarrow \ca{K}_B$ restricts to a symmetric strong monoidal left adjoint
 \[
 - \ten{\ca{A}_H} H \colon \sh(\ca{A}_H) \rightarrow \ca{K}_B
 \]
 with the following property: if $G \in \sh(\ca{A}_H)$ is finitely generated (that is, there exists an epimorphism $G^{\prime} \rightarrow G$ with $G^{\prime}$ finitely presentable) and $G \ten{\ca{A}_H} H \cong 0$, then $G \cong 0$.
 \end{enumerate}
 \end{thm}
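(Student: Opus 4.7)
For parts~(i) and~(ii), the plan is to apply Day's reflection theorem. The hypothesis to verify is that the Pstr\k{a}gowski topology is monoidal: if $p\colon A \to B$ is an $H$-epimorphism and $C \in \ca{A}_H$, then $p \otimes C$ is again an $H$-epimorphism. This follows from Proposition~\ref{prop:basic_properties_of_H-duals}(ii) because tensoring by the dualizable object $HC$ preserves epimorphisms in $\ca{K}_B$. Day reflection then makes $L$ symmetric strong monoidal and $\sh(\ca{A}_H)$ a symmetric monoidal closed category. The unit $L(h^{\U})$ is finitely presentable by Proposition~\ref{prop:sheaves_characterization}, and the duals $L(h^{A^{\vee}})$ of the generating representables $L(h^A)$ form a strong generator of dualizable finitely presentable objects, proving~(i). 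For~(ii), the restricted Yoneda $\mathrm{Hom}_{\ca{A}_H}(J,-)$ is symmetric lax monoidal (since $J$ is strong monoidal between rigid categories) and homological (since each $\ca{T}(A,-)$ sends distinguished triangles to long exact sequences); composing with the symmetric strong monoidal exact functor $L$ preserves both properties. Normality follows because $\U \in \ca{A}_H$ gives $\mathrm{Hom}_{\ca{A}_H}(J,\U) = h^{\U}$, whose $L$-image is the unit of $\sh(\ca{A}_H)$.

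For part~(iii), the functor $-\ten{\ca{A}_H} H$ on presheaves is the left Kan extension of $H|_{\ca{A}_H}$ along Yoneda. Its symmetric strong monoidal structure comes from that of $H|_{\ca{A}_H}\colon \ca{A}_H \to \ca{K}_B$, combining parts~(ii) and~(iv) of Proposition~\ref{prop:basic_properties_of_H-duals}. It descends to $\sh(\ca{A}_H)$ because, by Proposition~\ref{prop:Pstragowski_topology}, for each covering $p\colon A \to B$ with fiber $F_p$ the sequence $HF_p \to HA \to HB \to 0$ is already right exact in $\ca{K}_B$, so the tensor functor inverts the sheafification morphisms and descends to a cocontinuous symmetric strong monoidal functor $\sh(\ca{A}_H) \to \ca{K}_B$.

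The main obstacle is the detection property. Given $G$ finitely generated with $G \ten{\ca{A}_H} H = 0$, the plan is to choose $A \in \ca{A}_H$ and an epimorphism $\pi \colon L(h^A) \twoheadrightarrow G$ in $\sh(\ca{A}_H)$, using closure of $\ca{A}_H$ under finite direct sums to merge generators. Let $K = \ker(\pi)$; right exactness of $-\ten{\ca{A}_H} H$ yields an epimorphism $K \ten{\ca{A}_H} H \twoheadrightarrow HA$. Since $HA$ is dualizable in $\ca{K}_B$, and the unit $B$ of $\ca{K}_B$ is finitely presentable projective (as recalled in the proof of Proposition~\ref{prop:basic_properties_of_H-duals}(v)), $HA$ is itself finitely presentable projective, so this epimorphism splits. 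Writing $K = \colim_i K_i$ as a filtered colimit of finitely generated subsheaves, the splitting factors through some $K_{i_0} \ten{\ca{A}_H} H$ by finite presentability of $HA$. Choose an epimorphism $L(h^B) \twoheadrightarrow K_{i_0}$ and let $\phi \colon L(h^B) \to L(h^A)$ be the composite with the inclusion, so that $H\phi$ is an epimorphism. The crucial point is that the sheafification for the singleton $H$-epimorphism topology admits an explicit description: the section $\phi \in L(h^A)(B)$ is represented, after refinement along some $H$-epimorphism $p \colon B' \twoheadrightarrow B$, by a morphism $f \colon B' \to A$ in $\ca{A}_H$ satisfying $f \circ (F_p \to B') = 0$, and this representative obeys $\phi \circ L(h^p) = L(h^f)$. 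Then $Hf = H\phi \circ Hp$ is a composition of epimorphisms in $\ca{K}_B$, forcing $f$ to be itself an $H$-epimorphism. Hence $L(h^f)$ is an epimorphism in $\sh(\ca{A}_H)$ since covers sheafify to epimorphisms, and combined with $L(h^p)$ being an epimorphism this forces $\phi$ to be an epimorphism. Consequently $K_{i_0} = L(h^A)$, so $K = L(h^A)$ and $G = 0$.
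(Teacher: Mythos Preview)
Your argument for parts~(i) and~(ii) is correct and essentially identical to the paper's. For part~(iii), your proof is also correct, but the route diverges from the paper's in an interesting way.

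The paper works entirely at the \emph{presheaf} level for the detection property. It takes the kernel $K$ of the composite $\ca{A}(-,A) \xrightarrow{\eta} L\bigl(\ca{A}(-,A)\bigr) \to G$ in $[\ca{A}_H^{\op},\Ab]$, covers $K$ by a coproduct $\bigoplus_{i\in I}\ca{A}(-,A_i)$, and uses the key fact that $\eta \ten{\ca{A}_H} H$ is an isomorphism (because the right adjoint $\Hom_{\ca{A}_H}(H,-)$ lands in sheaves) to conclude that the composite $\bigoplus_i \ca{A}(-,A_i)\to\ca{A}(-,A)$ becomes epi after $-\ten{\ca{A}_H} H$. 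Finite presentability of $HA$ then extracts a finite subfamily $I_0$; since the resulting map $\bigoplus_{I_0}A_i \to A$ is an honest morphism in $\ca{A}_H$ which is an $H$-epimorphism, it is a cover, so $L$ of it is epi. No analysis of sections of sheafified representables is needed.

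Your approach instead takes the kernel in the \emph{sheaf} category, exploits projectivity of $HA$ to split the resulting epimorphism, and then has to contend with the fact that a morphism $L(h^B)\to L(h^A)$ need not come from $\ca{A}_H$. You resolve this correctly by invoking the local surjectivity of $\eta\colon h^A\to L(h^A)$: any section of $L(h^A)$ over $B$ is, after refinement along a cover $p\colon B'\to B$, represented by some $f\colon B'\to A$, yielding $\phi\circ L(h^p)=L(h^f)$. This works, and your deduction that $\phi$ is epi from $L(h^f)$ and $L(h^p)$ both being epi is clean. The trade-off is that you rely on the explicit mechanics of the plus construction for this singleton coverage, whereas the paper sidesteps this entirely by never leaving presheaves until the very last step. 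The paper's argument is shorter and uses one less idea (no splitting via projectivity, just finite presentability); yours is slightly more conceptual in its use of the splitting but pays for it with the sheafification analysis.
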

 
 \begin{proof}
  To see~(i), we need to check that the conditions of Day's reflection theorem are satisfied. As observed in \cite[\S 2.2]{SCHAEPPI_MOTIVES}, these follow from the fact that $\Sigma_H$ is (up to isomorphism) closed under $A \otimes -$ for all $A \in \ca{A}_H$ and the characterization of $\sh(\ca{A}_H)$ in terms of $\Sigma_H$ in Proposition~\ref{prop:sheaves_characterization}. That the unit object is finitely presentable follows from the second part of Proposition~\ref{prop:sheaves_characterization}. The category is generated by duals since $\ca{A}_H$ is rigid (see Proposition~\ref{prop:basic_properties_of_H-duals}).
  
  To see~(ii), note that $\mathrm{Hom}_{\ca{A}_H}(J,-)$ is homological and $L$ is exact, so the composite is homological. Since $L$ is strong monoidal, it only remains to check that $\widetilde{J} \defl \mathrm{Hom}_{\ca{A}_H}(J,-)$ is symmetric lax monoidal. It preserves the unit strictly and the morphisms
  \[
  \ca{T}(JA,X) \otimes \ca{T}(JB,Y) \otimes \ca{A}_H(C,A\otimes B) \rightarrow \ca{T}(JC,X \otimes Y)
  \]
  which send $f \otimes g \otimes \varphi$ to $(f\otimes g) \circ J\varphi$ induce the desired natural transformation $\varphi_{X,Y} \colon \widetilde{J}X \dt \widetilde{J} Y \rightarrow \widetilde{J} ( X\otimes Y)$.
  
  It remains to show~(iii). From the universal property of $\Lex_{\Sigma_H} (\ca{A}_H)$ (see \cite[Theorem~2.2.1]{SCHAEPPI_MOTIVES} ) and Proposition~\ref{prop:Pstragowski_topology} it follows that $-\ten{\ca{A}_H} H$ restricts to a symmetric strong monoidal left adjoint on $\sh(\ca{A}_H)$.
  
  In the interest of legibility, we simply write $\ca{A}$ for $\ca{A}_H$ in the remainder of the proof. We write $\eta$ for the unit of the left adjoint $L$. Let $G \colon \ca{A}^{\op} \rightarrow \Ab$ be a finitely generated additive sheaf such that $G \ten{\ca{A}} H \cong 0$. The objects $L \bigl(\ca{A}(-,A) \bigr)$, $A \in \ca{A}$, are closed under finite direct sums and they generate $\sh(\ca{A})$, so there exists an object $A \in \ca{A}$ and an epimorphism $p \colon L \bigl(\ca{A}(-,A) \bigr) \rightarrow G$ in $\sh(\ca{A})$. The composite
  \[
  \xymatrix{\ca{A}(-,A) \ar[r]^-{\eta_{\ca{A}(-,A)}} & L \bigl(\ca{A}(-,A) \bigr) \ar[r]^-{p} & G }
  \]
  in $[\ca{A}^{\op},\Ab]$ is sent to an epimorphism $L \bigl(\ca{A}(-,A) \bigr) \rightarrow LG \cong G$ by $L$. Consider the kernel $k \colon K \rightarrow \ca{A}(-,A)$ of $p \eta_{\ca{A}(-,A)}$ in $[\ca{A}^{\op},\Ab]$. We choose an epimorphism
  \[
q\colon  \oplus_{i \in I} \ca{A}(-,A_i) \rightarrow K
  \]
  in $[\ca{A}^{\op},\Ab]$. By construction, the exact functor $L$ sends the cokernel of $kq$ to $LG \cong G$, so it suffices to show that $L(kq)$ is an epimorphism in $\sh(\ca{A})$.
  
  The right adjoint $\mathrm{Hom}_{\ca{A}}(H,-)$ of $-\ten{\ca{A}} H$ factors trough $\sh(\ca{A})$ by Propositions~\ref{prop:Pstragowski_topology} and \ref{prop:sheaves_characterization}, so $\eta \ten{\ca{A}} H \colon - \ten{\ca{A}} H \Rightarrow L(-)\ten{\ca{A}} H$ is an isomorphism. From this it follows that $-\ten{\ca{A}} H$ sends the cokernel of $kq$ to $G \ten{\ca{A}} H \cong 0$, that is, $kq \ten{\ca{A}} H$ is an epimorphism in $\ca{K}_B$. The object $\ca{A}(-,A) \ten{\ca{A}} H \cong HA$ is finitely presentable (and projective), so there exists a finite subset $I_0 \subseteq I$ such that
  \[
  H\bigl( \xymatrix{\oplus_{i \in I_0} \ca{A}(-,A_i) \ar[r]^-{\mathrm{incl}} & \oplus_{i \in I} \ca{A}(-,A_i) \ar[r]^-q  & K \ar[r]^-k & \ca{A}(-,A) } \bigr)
  \]
  is an epimorphism. From the definition of the Pstr\k{a}gowski-topology it follows that $L\bigl(\oplus_{i \in I_0 } \ca{A}(-,A_i) \rightarrow \ca{A}(-,A) \bigr)$ is an epimorphism, hence that $L(kq)$ is an epimorphism. Thus $G \cong LG \cong \mathrm{coker}\bigl(L(kq)\bigr) \cong 0$, as claimed.
 \end{proof}
 
 In general, the functor $- \ten{\ca{A}_{H}} H \colon \sh(\ca{A}_H) \rightarrow \ca{B}$ will \emph{not} be left exact. Consequenctly, Part~(iii) of the above theorem does not imply that the functor is faithful. If $\ca{A} \subseteq \ca{T}$ is any subcategory, $H \colon \ca{T} \rightarrow \ca{K}_B$ a (homological) functor, we can consider the left Kan extension of $H \vert_{ \ca{A}}$ along the inclusion $J \colon \ca{A} \rightarrow \ca{T}$. By \cite[Formula~(4.17)]{KELLY_BASIC}, this is given by the composite of $\mathrm{Hom}_{\ca{A}}(J,-) \colon \ca{T} \rightarrow [\ca{A}^{\op},\Ab]$ with $-\ten{\ca{A}} H \colon [\ca{A}^{\op},\Ab] \rightarrow \ca{K}_B$. By definition, we have a natural transformation $\alpha \colon \mathrm{Hom}_{\ca{A}}(J,-) \ten{\ca{A}} H \Rightarrow H$ and its restriction to $\ca{A}$ is an isomorphism by \cite[Proposition~4.23]{KELLY_BASIC}. In particular, in the situation of Theorem~\ref{thm:Grothendieck_tensor_from_homology_theory}, we have a natural transformation
 \begin{equation}\label{eqn:natural_transformation_Kan_to_H}
\alpha \colon L\bigl( \mathrm{Hom}_{\ca{A}}(J,-) \bigr) \ten{\ca{A}_H} H \Rightarrow H \colon \ca{T} \rightarrow \ca{K}_B 
 \end{equation}
 whose restriction to $\ca{A}_H$ is an isomorphism (this relies on the fact that $-\ten{\ca{A}_H} H$ sends the unit of $L$ to an isomorphism). At this level of generality, we do not expect that this natural transformation is an isomorphism on all of $\ca{T}$.  More can be said if the homology theory $F$ is $\ca{T}$-flat.
 
 \begin{dfn}\label{dfn:homology_theory_of_Adams_type}
 Let $F \colon \ca{T} \rightarrow \Ab$ be a symmetric lax monoidal homological functor and let $H \defl \bar{F}^{\mathrm{norm}} \colon \ca{T} \rightarrow \ca{K}_B$. If the restriction of $H$ to the category $\ca{A}_H$ of $H$-duals in $\cell(\ca{P})$ induces a \emph{left exact} functor
 \[
 -\ten{\ca{A}_H} H \colon [\ca{A}_{H}^{\op},\Ab] \rightarrow \ca{K}_B \smash{\rlap{,}}
 \]
 then $F$ is said to be of $\ca{T}$-flat relative to $K \colon \ca{P} \rightarrow \ca{T}$.
  \end{dfn}
  
\begin{prop}\label{prop:Adams_characterization}
 Let $F \colon \ca{T} \rightarrow \Ab$ be a symmetric lax monoidal homological functor and let $H \defl \bar{F}^{\mathrm{norm}}$. Let $\ca{A}_H$ be the category of $H$-duals in $\cell(\ca{P})$. Then the following are equivalent:
 \begin{enumerate}
 \item[(i)] The homological functor $F$ is $\ca{T}$-flat relative to $K \colon \ca{P} \rightarrow \ca{T}$.
 \item[(ii)] The restriction of $F$ to $\ca{A}_H$ is \emph{flat}: the functor $- \ten{\ca{A}_H} F \colon [\ca{A}_H^{\op},\Ab] \rightarrow \Ab$ is exact.
 \item[(iii)] There exists a filtered diagram $\ca{I} \rightarrow \ca{A}_H$, $i \mapsto A_i$ and isomorphisms
 \[
 \colim_{i \in \ca{I}} \ca{A}_H(A_i^{\vee},A) \cong FA
 \]
 which are natural in $A \in \ca{A}_H$.
 \end{enumerate}
\end{prop}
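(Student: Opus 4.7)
My plan is to split the equivalences into (i) $\Leftrightarrow$ (ii), which reduces to the observation that $\ca{K}_B$ has pointwise limits in $\ca{P}$ together with the fact that each $Kp^{\vee} \otimes -$ acts as an autoequivalence of $\ca{A}_H$, and (ii) $\Leftrightarrow$ (iii), which is a direct appeal to the additive version of Lazard's theorem on the small additive category $\ca{A}_H$.

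For the first equivalence, the key observation is that unpacking Definitions~\ref{dfn:K-functor_from_Ab-functor} and~\ref{dfn:lax_monoidal_structure} gives $(HA)(p) = F(Kp^{\vee}\otimes A)$, so for any $G \in [\ca{A}_H^{\op},\Ab]$ the $p$-component of $G \ten{\ca{A}_H} H \in \ca{K}_B$ is the abelian group
\[
(G \ten{\ca{A}_H} H)(p) \cong G \ten{\ca{A}_H} F_p,
\]
where $F_p \defl F\bigl(Kp^{\vee} \otimes -\bigr)$ restricts to $\ca{A}_H \to \Ab$ because $\ca{A}_H$ is closed under $Kp^{\vee}\otimes -$ by Proposition~\ref{prop:basic_properties_of_H-duals}(iv). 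The forgetful functor $\ca{K}_B \to \ca{K}$ and evaluation at $p \in \ca{P}$ are both exact, so $-\ten{\ca{A}_H} H$ is left exact if and only if $-\ten{\ca{A}_H} F_p$ is left exact for every $p \in \ca{P}$. Since each of these coend functors is a left adjoint and therefore already right exact, left exactness amounts to exactness. Specializing to $p=\U$ and using $K\U \cong \U$ yields (i) $\Rightarrow$ (ii). Conversely, $Kp^{\vee}\otimes -$ is an autoequivalence of $\ca{A}_H$ with inverse $Kp \otimes -$, so flatness of $F|_{\ca{A}_H}$ transfers to flatness of $F_p|_{\ca{A}_H}$ for every $p$, giving (ii) $\Rightarrow$ (i).

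For the second equivalence, I would invoke the additive analogue of Lazard's theorem: for a small additive category $\ca{A}$, an additive functor $\ca{A} \to \Ab$ is flat (in the sense that $-\ten{\ca{A}} F$ is exact) precisely when it is a filtered colimit of representables $\ca{A}(B_i,-)$. Rigidity of $\ca{A}_H$ (Proposition~\ref{prop:basic_properties_of_H-duals}(iv)) makes $(-)^{\vee}\colon \ca{A}_H^{\op} \to \ca{A}_H$ an equivalence, so any such presentation can be reindexed by $A_i \defl B_i^{\vee}$ to the form $\colim_{i\in\ca{I}} \ca{A}_H(A_i^{\vee},A) \cong FA$ required by (iii). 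The converse direction of Lazard is the easy one: a filtered colimit of representables is automatically flat since filtered colimits commute with finite limits in $\Ab$ and representables are flat. I do not anticipate substantive obstacles; the only delicate point is verifying the identification $(G \ten{\ca{A}_H} H)(p) \cong G \ten{\ca{A}_H} F_p$, and this follows directly from the Fubini-type manipulation of the defining coend together with the explicit pointwise formula for $H$.
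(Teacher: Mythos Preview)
Your proposal is correct and follows essentially the same approach as the paper: reduce exactness of $-\ten{\ca{A}_H} H$ to exactness of each $-\ten{\ca{A}_H} F_p$ via pointwise evaluation, invoke the additive Lazard theorem for (ii)$\Leftrightarrow$(iii), and use rigidity of $\ca{A}_H$ to reindex representables. The only cosmetic difference is that the paper closes the cycle via (iii)$\Rightarrow$(i) by explicitly exhibiting $F_p$ as $\colim_i \ca{A}_H(A_i^{\vee}\otimes Kp,-)$, whereas you argue (ii)$\Rightarrow$(i) directly from the autoequivalence $Kp^{\vee}\otimes-$; these are the same idea in slightly different packaging.
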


\begin{proof}
 Clearly (i) implies (ii) since both the forgetful functor $\ca{K}_B \rightarrow \ca{K}$ and the functor $U=\mathrm{ev}_{\U} \colon \ca{K} \rightarrow \Ab$ preserve colimits, hence functor tensor products, and they are both exact. It is well-known that~(ii) holds if and only if the restriction of $F$ to $\ca{A}_H$ is a filtered colimit of representable functors. Together with the rigidity of $\ca{A}_H$ (see Proposition~\ref{prop:basic_properties_of_H-duals}~(iv)), this implies (iii). Finally, from~(iii) it follows that the functor which sends $A \in \ca{A}_H$ to $F(Kp^{\vee} \otimes A)$ can be written as filtered colimit of the functors $\ca{A}_H(A_i^{\vee}\otimes Kp,-)$, so $\mathrm{ev}_{p} \circ H$ is flat for each $p \in \ca{P}$. This implies the exactness of $- \ten{\ca{A}_H} H \colon [\ca{A}_H^{\op},\Ab] \rightarrow \ca{K}_B$, hence that $F$ is of $\ca{T}$-flat (relative to $K$).
\end{proof}

 In the case of $\ca{T}$-flat homological functors, we can say more about the canonical natural transformation~\eqref{eqn:natural_transformation_Kan_to_H}.

\begin{prop}\label{prop:Left_Kan_if_Adams}
 Suppose that the image of $K \colon \ca{P} \rightarrow \ca{T}$ consists of compact objects and that the symmetric lax monoidal functor $F \colon \ca{T} \rightarrow  \Ab$ preserves coproducts. In this case, if $F$ is $\ca{T}$-flat, then the natural transformation~\eqref{eqn:natural_transformation_Kan_to_H} is an isomorphism on the smallest localizing subcategory of $\ca{T}$ which contains the image of $K \colon \ca{P} \rightarrow \ca{T}$.
\end{prop}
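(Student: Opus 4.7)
The plan is to show that
\[
\ca{C} \defl \{X \in \ca{T} \mid \alpha_{X[n]} \text{ is an isomorphism for every } n \in \mathbb{Z} \}
\]
is a localizing subcategory of $\ca{T}$ containing the image of $K$. Closure under shifts is built into the definition of $\ca{C}$, so the task reduces to verifying $\ca{A}_H \subseteq \ca{C}$, closure under cofibers, and closure under arbitrary coproducts.

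For the first point, the restriction $\alpha\vert_{\ca{A}_H}$ is an isomorphism by the paragraph containing~\eqref{eqn:natural_transformation_Kan_to_H}. Combined with Assumption~\eqref{eqn:s1_in_image} and Proposition~\ref{prop:basic_properties_of_H-duals}(i), (iii), (iv), this shows that $K\ell$ and $K\ell^{\vee}$ lie in $\ca{A}_H$, so $\ca{A}_H$ is closed under shifts. Therefore $A[n] \in \ca{A}_H$ and $\alpha_{A[n]}$ is an isomorphism for every $A \in \ca{A}_H$ and every $n \in \mathbb{Z}$. In particular, $K(\ca{P}) \subseteq \ca{A}_H \subseteq \ca{C}$.

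The heart of the argument is to establish that both $F \defl L\bigl(\mathrm{Hom}_{\ca{A}_H}(J,-)\bigr) \ten{\ca{A}_H} H$ and $H$ are homological functors $\ca{T} \to \ca{K}_B$ that preserve arbitrary coproducts. Homologicality of $H$ is the lemma following Definition~\ref{dfn:H-dual}, and $F$ is homological as a composite of the homological functor $\mathrm{Hom}_{\ca{A}_H}(J,-)$, the exact reflector $L$, and the functor $-\ten{\ca{A}_H} H$, which is exact precisely because $F$ is $\ca{T}$-flat (Definition~\ref{dfn:homology_theory_of_Adams_type}). For coproduct-preservation of $H$: by Corollary~\ref{cor:lax_implies_some_isos}, $HX$ is the module-lift of $F(K(-)^{\vee} \otimes X)$; the functor $F$ preserves coproducts by hypothesis and $K(-)^{\vee} \otimes -$ does so as a left adjoint in the closed $\otimes$-triangulated category $\ca{T}$. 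For the source of $\alpha$: the compact objects of $\ca{T}$ form a thick subcategory containing $K(\ca{P})$, so $\cell(\ca{P}) \supseteq \ca{A}_H$ consists of compact objects; hence $\ca{T}(JA,-)$ preserves coproducts for every $A \in \ca{A}_H$, which makes $\mathrm{Hom}_{\ca{A}_H}(J,-)$ coproduct-preserving pointwise, and post-composing with the left adjoints $L$ and $-\ten{\ca{A}_H} H$ completes the verification.

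Given these two ingredients, the 5-lemma applied to the $\mathbb{Z}$-indexed long exact sequences induced by $F$ and $H$ gives closure of $\ca{C}$ under cofibers (all four components of $\alpha$ needed at each position are isomorphisms thanks to the shift-closure built into $\ca{C}$), and coproduct-preservation yields closure of $\ca{C}$ under arbitrary coproducts. Hence $\ca{C}$ is a localizing subcategory of $\ca{T}$ containing $K(\ca{P})$, so it contains the smallest such subcategory, as claimed. I expect the main obstacle to be nothing more than this bookkeeping, in particular the verification that the source of $\alpha$ preserves coproducts, which ultimately rests on the compactness of $K(\ca{P})$ propagating to all of $\cell(\ca{P})$.
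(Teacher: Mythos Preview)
Your proof is correct and follows essentially the same route as the paper: verify that both the source and target of $\alpha$ are homological and coproduct-preserving (the latter via compactness of $\cell(\ca{P})$), then conclude that the locus where $\alpha$ is an isomorphism is localizing and contains $\ca{A}_H$. Your device of building shift-closure into the definition of $\ca{C}$ is a clean way to handle a point the paper leaves implicit; otherwise the arguments coincide.
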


\begin{proof}
 Note that both the domain and the codomain of the natural transformation~\eqref{eqn:natural_transformation_Kan_to_H} are homological if $F$ is of $\ca{T}$-flat. The assumption implies that $\cell(\ca{P})$ consists of compact objects, so both functors preserve coproducts. Thus the subcategory of $X \in \ca{T}$ such that $\alpha_X$ is an isomorphism is localizing and it contains $\ca{A}_H$, hence in particular the image of $K$.
\end{proof}

\subsection{Flat replacements}\label{section:Adams_replacement}

 In this section, we fix a closed $\otimes$-triangulated category $\ca{T}$, a small rigid symmetric monoidal $\Ab$-category $\ca{P}$ and a symmetric strong monoidal additive functor $K \colon \ca{P} \rightarrow \ca{K}$ and we let $\ca{K}\defl [\ca{P}^{\op},\Ab]$. In addition, we fix a symmetric lax monoidal additive functor $F \colon \ca{T} \rightarrow \Ab$, we let $B \in \ca{K}$ be the coefficient ring of $\bar{F}$, and we let $H \defl \bar{F}^{\mathrm{norm}} \colon \ca{T} \rightarrow \ca{K}_B$ be the normal lift of $\bar{F}$. We write $J \colon \ca{A}_H \rightarrow \ca{T}$ for the inclusion of the $H$-duals in $\cell(\ca{P})$ and we use the abbreviation $\widetilde{J} \defl \mathrm{Hom}_{\ca{A}_H}(J,-) \colon \ca{T} \rightarrow [\ca{A}_H^{\op},\Ab]$.
 
 If $F$ is $\ca{T}$-flat, then the category of sheaves on $\ca{A}_H$ has a description in terms of comodules.
 
 \begin{prop}\label{prop:Adams_type_implies_sheaves_are_comodules}
  In the situation of Theorem~\ref{thm:Grothendieck_tensor_from_homology_theory}, if $F$ is $\ca{T}$-flat relative to $K$, then the functor
  \[
  -\ten{\ca{A}_H} H \colon \sh(\ca{A}_H) \rightarrow \ca{K}_B
  \]
  is faithful and exact, hence comonadic. The comonad on $\ca{K}_B$ is given by $\Gamma \ten{B}-$ for a flat coalgebroid $(B,\Gamma)$ in $\ca{K}$.
 \end{prop}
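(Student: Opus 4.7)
The plan is to first establish that $L \defl -\ten{\ca{A}_H} H$ is exact and faithful on $\sh(\ca{A}_H)$, then invoke Beck's comonadicity theorem, and finally use the symmetric strong monoidal structure of $L$ to identify the resulting comonad with tensoring by a flat coalgebroid. Exactness on $\sh(\ca{A}_H)$ follows from the $\ca{T}$-flatness hypothesis (Definition~\ref{dfn:homology_theory_of_Adams_type}), which provides exactness of $-\ten{\ca{A}_H} H$ already on $[\ca{A}_H^{\op},\Ab]$. Since $\sh(\ca{A}_H)$ is closed under kernels in the presheaf category, left exactness descends immediately; for right exactness, cokernels in $\sh(\ca{A}_H)$ are computed by applying the reflector $L$ to presheaf cokernels, and the observation from the proof of Theorem~\ref{thm:Grothendieck_tensor_from_homology_theory}~(iii) that $-\ten{\ca{A}_H} H$ inverts the unit of $L$ transfers right exactness to the level of sheaves.

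For faithfulness, let $f \colon G \rightarrow G'$ in $\sh(\ca{A}_H)$ satisfy $L(f) = 0$. Exactness yields $L\bigl(\mathrm{im}(f)\bigr) = 0$; writing $\mathrm{im}(f) = \colim_i M_i$ as a filtered colimit of its finitely generated subobjects (available in any Grothendieck abelian category), exactness gives $L(M_i) \hookrightarrow L\bigl(\mathrm{im}(f)\bigr) = 0$, so by Theorem~\ref{thm:Grothendieck_tensor_from_homology_theory}~(iii) each $M_i = 0$, whence $\mathrm{im}(f) = 0$ and $f = 0$. The right adjoint $R \defl \mathrm{Hom}_{\ca{A}_H}(H,-)$ lands in $\sh(\ca{A}_H)$ by Propositions~\ref{prop:Pstragowski_topology} and~\ref{prop:sheaves_characterization}, so we have a genuine adjunction $L \dashv R$ between abelian categories. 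Since $L$ is a faithful exact left adjoint it reflects isomorphisms (a standard consequence of faithfulness combined with exactness) and preserves all equalizers (by exactness); Beck's comonadicity theorem then gives the equivalence $\sh(\ca{A}_H) \simeq \Comod(\ca{K}_B, LR)$.

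For the coalgebroid identification, set $\Gamma \defl LR(B) \in \ca{K}_B$. Since $L$ is symmetric strong monoidal, the Beck equivalence transports the symmetric monoidal structure of $\sh(\ca{A}_H)$ to the comodule category in such a way that the forgetful functor to $\ca{K}_B$ becomes symmetric strong monoidal; the coalgebroid reconstruction argument (cf.\ Remark~\ref{rmk:Hopf_algebroid}) then identifies the comonad as $LR \cong \Gamma \ten{B} -$ for a coalgebra $\Gamma$ in the category of $B$-bimodules in $\ca{K}$. Flatness of $\Gamma$ as a right $B$-module amounts to exactness of $\Gamma \ten{B} - = LR$: for each $A \in \ca{A}_H$ the object $HA$ is dualizable in $\ca{K}_B$, hence finitely presentable and projective (its dual $HA^\vee \ten{B} -$ is both a left and right adjoint to $HA \ten{B} -$, so exact), so $\ca{K}_B(HA,-)$ is exact; this makes $R$ exact, and $LR$ is then exact as a composite of exact functors. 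The main technical obstacle lies in the coalgebroid reconstruction step, which requires carefully matching the two symmetric monoidal structures in order to identify the abstract comonad $LR$ with tensoring by an honest coalgebra object in bimodules.
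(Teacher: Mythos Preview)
Your argument for exactness, faithfulness, and comonadicity is correct and follows the same outline as the paper, with the faithfulness step spelled out in more detail than the paper (which simply cites Theorem~\ref{thm:Grothendieck_tensor_from_homology_theory}(iii)).

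The gap is in the coalgebroid identification. You want to conclude that the comonad $LR$ on $\ca{K}_B$ is of the form $\Gamma \ten{B} -$ for a $B$-bimodule $\Gamma$ in $\ca{K}$, but the ingredients you have assembled do not yield this. An $\Ab$-cocontinuous endofunctor of $\ca{K}_B$ is not in general given by tensoring with a single $B$-bimodule in $\ca{K}$: the object $B$ is not a small projective generator of $\ca{K}_B$ in the $\Ab$-enriched sense (the generators are all the shifts $L_i \otimes B$), so the Eilenberg--Watts argument only produces a ``matrix'' of data, not a single $\Gamma \in \ca{K}$. The symmetric monoidal structure on $L$ alone does not fix this; what you would need is a projection formula $R(LX \ten{B} M) \cong X \otimes RM$, or equivalently that $LR$ is a $\ca{K}_B$-module endofunctor, and you have not established this. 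Your appeal to Remark~\ref{rmk:Hopf_algebroid} is not apt: that remark concerns a different adjunction (base change along a faithfully flat algebra constructed in \S\ref{section:existence_theorem}) where the projection formula is obtained by a separate argument.

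The paper resolves this differently: it shows that the entire adjunction lifts to the $\ca{K}$-enriched level. Since $H$ is the underlying additive functor of the $\ca{K}$-functor $\mathbb{F}^{\mathrm{norm}}$ (Proposition~\ref{prop:symmetric_lax_monoidal_K-functor}), the adjunction $-\ten{\ca{A}} H \dashv \mathrm{Hom}_{\ca{A}}(H,-)$ is the underlying additive adjunction of a $\ca{K}$-enriched adjunction $-\ten{\mathbb{A}} \mathbb{F}^{\mathrm{norm}} \dashv \mathrm{Hom}_{\mathbb{A}}(\mathbb{F}^{\mathrm{norm}},-)$. One then checks that the $\ca{K}$-right adjoint is $\ca{K}$-cocontinuous (because each $HA$ is dualizable, so $\mathbb{K}_B(HA,-)$ preserves all $\ca{K}$-colimits), and now \cite[Theorem~4.51]{KELLY_BASIC} applies: any $\ca{K}$-cocontinuous endofunctor of $\mathbb{K}_B$ is given by tensoring with a $B$-bimodule in $\ca{K}$. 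This is precisely the missing $\ca{K}$-linearity that your monoidal argument does not supply.
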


\begin{proof}
 Since $-\ten{\ca{A}_H} H \colon [\ca{A}_H^{\op},\Ab] \rightarrow \ca{K}_B$ is exact, so is its restriction to the category of sheaves. From Theorem~\ref{thm:Grothendieck_tensor_from_homology_theory}~(iii) it follows that $-\ten{\ca{A}_{H}} H$ is also faithful on the category of sheaves, hence comonadic.
 
 It remains to show that the comonad is induced by a flat coalgebroid in $\ca{K}$. To simplify the notation, we let $\ca{A} \defl \ca{A}_H$. The comonad in question coincides with the comonad induced by the adjunction $-\ten{\ca{A}} H \dashv \mathrm{Hom}_{\ca{A}}(H,-) \colon [\ca{A}^{\op},\Ab] \rightarrow \ca{K}_B$. We claim that this is the underlying comonad of a $\ca{K}$-enriched comonad $\mathbb{K}_B \rightarrow \mathbb{K}_B$ (see \S \ref{section:coefficient_categories} for the notation). The functor $H \colon \ca{A} \rightarrow \ca{K}_B$ is the underlying functor of the $\ca{K}$-enriched functor $\mathbb{F}^{\mathrm{norm}} \colon \mathbb{A} \rightarrow \mathbb{K}_B$. Let $U\defl \ev_{\U} \colon \ca{K} \rightarrow \Ab$ and let $\Psi \colon U_{\ast} [\mathbb{A}^{\op},\mathbb{K}] \rightarrow [\ca{A}^{\op},\Ab]$ be the functor which sends $G$ to $U \circ U_\ast G$. 
 
 The proof of Theorem~\ref{thm:K-functor_Ab-functor_equivalence} shows that $\Psi$ is an equivalence of categories (by simply ignoring the lax monoidal structure on either side). Moreover, there is a natural isomorphism $\Psi \circ \mathrm{Hom}_{\mathbb{A}}(\mathbb{F}^{\mathrm{norm}},-) \cong \mathrm{Hom}_{\ca{A}}(H,-)$. From this it follows that the comonad induced by the additive adjunction $-\ten{\ca{A}} H \dashv \mathrm{Hom}_{\ca{A}}(H,-)$ is isomorphic to the underlying additive comonad induced by the $\ca{K}$-adjunction $-\ten{\mathbb{A}} \mathbb{F}^{\mathrm{norm}} \dashv \mathrm{Hom}_{\mathbb{A}}(\mathbb{F}^{\mathrm{norm}},-)$.
 
 Since any $\ca{K}$-cocontinuous functor $\mathbb{K}_B \rightarrow \mathbb{K}_B$ is given by tensoring with some bimodule by \cite[Theorem~4.51]{KELLY_BASIC}, it suffices to show that $\mathrm{Hom}_{\mathbb{A}}(\mathbb{F}^{\mathrm{norm}},-)$ is $\ca{K}$-cocontinuous. This amounts to showing that $\mathbb{K}_B(\mathbb{F}^{\mathrm{norm}}A,-)$ is $\ca{K}$-cocontinuous for each $A \in \ca{A}$. Since the object $\mathbb{F}^{\mathrm{norm}}A=HA$ has a dual by definition of $\ca{A}$, this follows from the fact that $\mathbb{K}_B(B,-)$ is $\ca{K}$-cocontinuous. Thus the comonad is given by $\Gamma \ten{B} -$ for a coalgebroid $(B,\Gamma)$ in $\ca{K}$. Clearly $\Gamma$ is flat since we assumed that $F$ is $\ca{T}$-flat. 
\end{proof}

 It would be interesting to know if the coalgebroid of Proposition~\ref{prop:Adams_type_implies_sheaves_are_comodules} is in fact a Hopf algebroid, and to compare it with the classical Hopf algebroid associated to a flat homology theory in the case of spectra. This would follow if the equivalence $\Psi$ is symmetric strong monoidal.

 Even if $F$ is \emph{not} $\ca{T}$-flat, we can still ask if $\sh(\ca{A}_H)$ is equivalent to the category of comodules of some commutative Hopf algebroid $(A,\Gamma)$ in $\ca{K}$ (where the commutative algebra $A$ is not necessarily isomorphic to $B$). In this case, we have a faithful and exact symmetric strong monoidal left adjoint $W \colon \sh(\ca{A}_H) \rightarrow \ca{K}_A$, so we get a new homological functor
 \begin{equation}\label{eqn:Adams_replacement_from_W}
 F(W) \defl \xymatrix@C=20pt{\ca{T} \ar[r]^-{\widetilde{J}} & [\ca{A}_H^{\op},\Ab] \ar[r]^-{L} & \sh(\ca{A}_H) \ar[r]^-{W} & \ca{K}_A \ar[r]^-{\mathrm{forget}} & \ca{K} \ar[r]^-{\;U=\mathrm{ev}_{\U}} & \Ab }
 \end{equation}
 which is symmetric lax monoidal. The first goal of this section is to show that $F(W)$ is $\ca{T}$-flat relative to $K \colon \ca{P} \rightarrow \ca{T}$ if $W$ is compatible with the $\ca{K}$-enrichment (see Theorem~\ref{thm:Adams_replacement} below). To express the compatibility with enrichment, it is convenient to use the following proposition. We write $\ssm_{\ca{K}}$ for the 2-category of symmetric monoidal $\ca{K}$-categories, symmetric strong monoidal functors, and symmetric monoidal $\ca{K}$-natural transformations, respectively $\ssm_{\Ab}$ for the $\Ab$-enriched version of this 2-category.
 
 \begin{prop}\label{prop:strong_monoidal_enriched_equivalent_to_slice}
  The full sub-2-category of $\ssm_{\ca{K}}$ consisting of symmetric monoidal $\ca{K}$-categories $\mathbb{B}$ with copowers $\ca{P}(-,p) \odot X$ for all $p \in \ca{P}$, $X \in \mathbb{B}$ is biequivalent to the slice 2-category $\ca{P} \slash \ssm_{\Ab}$. The biequivalence sends $\mathbb{B}$ to the $\Ab$-category $U_{\ast} \mathbb{B}$ with functor $\ca{P} \rightarrow U_{\ast} \mathbb{B}$ given by $p \mapsto \ca{P}(-,p) \odot \U$.
 \end{prop}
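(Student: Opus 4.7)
The plan is to construct a pseudo-inverse $\Psi$ to the given 2-functor $\Phi$ using the constructions developed earlier in this section. On objects, given $(K \colon \ca{P} \rightarrow \ca{B}) \in \ca{P} \slash \ssm_{\Ab}$, Propositions~\ref{prop:enrichment_from_K} and \ref{prop:K-category_symmetric_monoidal} produce a symmetric monoidal $\ca{K}$-category $\mathbb{B}$ with underlying $\Ab$-category $\ca{B}$, and Lemma~\ref{lemma:copowers} ensures the required copowers exist, with $\ca{P}(-,p) \odot \U \cong Kp \otimes \U \cong Kp$. This defines $\Psi$ on objects, and the isomorphism $\Phi \Psi(K) \simeq K$ in $\ca{P} \slash \ssm_{\Ab}$ is an immediate consequence.

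For the opposite composite $\Psi \Phi(\mathbb{B}) \simeq \mathbb{B}$, I would use the Yoneda-style identification
\[
 \mathbb{B}(X,Y)(p) \cong U_{\ast}\mathbb{B}\bigl(\ca{P}(-,p) \odot X,\, Y\bigr) \cong U_{\ast}\mathbb{B}\bigl((\ca{P}(-,p) \odot \U) \otimes X,\, Y\bigr),
\]
where the second isomorphism uses the symmetric monoidal structure of $\mathbb{B}$ together with the fact that copowers by $\ca{P}(-,p)$ are computed by tensoring with $\ca{P}(-,p) \odot \U$. This recovers the formula $\mathbb{B}(X,Y) = \ca{B}\bigl(K(-) \otimes X, Y\bigr)$ used in Proposition~\ref{prop:enrichment_from_K}, and a routine verification shows that composition, identities, and the lifted symmetric monoidal $\ca{K}$-functor structure coincide with those produced by $\Psi$.

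For 1-cells and 2-cells, the essential input is the biequivalence between $\ca{P}$-copowered $\ca{K}$-categories and $\Ab$-categories equipped with a $\ca{P}$-action from \cite[Theorem~3.4]{GORDON_POWER}, which was already invoked in the proof of Theorem~\ref{thm:K-functor_Ab-functor_equivalence}. Under our hypotheses the induced $\ca{P}$-action on $U_{\ast} \mathbb{B}$ is $p \cdot X = Kp \otimes X$, so specifying a $\ca{P}$-action-preserving symmetric strong monoidal functor $U_{\ast}\mathbb{B} \rightarrow U_{\ast}\mathbb{B}^{\prime}$ is equivalent to specifying a symmetric strong monoidal functor commuting up to coherent isomorphism with the structure maps from $\ca{P}$. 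The correspondence on 2-cells is essentially automatic: any symmetric monoidal natural transformation between the underlying $\Ab$-valued data is $\ca{K}$-natural, by the same copower argument used in the faithfulness half of the proof of Theorem~\ref{thm:K-functor_Ab-functor_equivalence}.

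The main obstacle I anticipate is the careful bookkeeping needed to transfer the full symmetric monoidal coherence between the two sides, and in particular to check compatibility of the reconstructed symmetric monoidal $\ca{K}$-category structure on $\Psi \Phi(\mathbb{B})$ with the original one on $\mathbb{B}$. In effect, the proof amounts to a symmetric monoidal enhancement of the Gordon--Power theorem, specialized to the case where the shape of the copowers is controlled by the rigid category $\ca{P}$; the coherence axioms reduce, via the copowers, to the corresponding axioms on underlying $\Ab$-categories, but a diligent verification is required to ensure that every piece of structure transports correctly.
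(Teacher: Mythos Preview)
Your proposal is correct and takes essentially the same approach as the paper: both construct the inverse on objects via Propositions~\ref{prop:enrichment_from_K} and~\ref{prop:K-category_symmetric_monoidal} and Lemma~\ref{lemma:copowers}, and both reduce the 1-cell and 2-cell comparison to the Gordon--Power biequivalence \cite[Theorem~3.4]{GORDON_POWER} between $\ca{P}$-copowered $\ca{K}$-categories and $\ca{P}$-actions, with the symmetric monoidal coherence checked by hand. The only organizational difference is that the paper gives an explicit formula for the lifted $\ca{K}$-functor $\mathbb{G}_{X,Y}$ on 1-cells, verifies that one composite is pseudonaturally the identity, and then invokes 2-by-3 after Gordon--Power establishes that the constructed inverse is itself a biequivalence---whereas you sketch both round-trips directly.
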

 
 \begin{proof}
 The inverse biequivalence sends $\ca{B}$ to the symmetric monoidal $\ca{K}$-category $\mathbb{B}$ constructed in Propositions~\ref{prop:enrichment_from_K} and \ref{prop:K-category_symmetric_monoidal}. Given a 1-cell $(\ca{B},F) \rightarrow (\ca{B}^{\prime},F^{\prime})$ in the slice 2-category, that is, a symmetric strong monoidal additive functor $G \colon \ca{B} \rightarrow \ca{B}^{\prime}$ and a symmetric monoidal isomorphism $\alpha \colon F^{\prime} \Rightarrow GF$, we construct a $\ca{K}$-functor $\mathbb{G}$ using the composites
 \[
 \xymatrix@C=50pt{\ca{B}\bigl(F(-)\otimes X,Y\bigr) \ar@{-->}[d]_{\mathbb{G}_{X,Y}} \ar[r]^-{F} & \ca{B}^{\prime}\Bigl(G\bigl(F(-) \otimes X\bigr),GY\Bigr) \ar[d]^{(\varphi_{F(-),X})^{\ast}} \\ \ca{B}^{\prime}\bigl(F^{\prime}(-) \otimes GX,GY\bigr) & \ca{B}^{\prime}\bigl(GF(-)\otimes GX,GY\bigr) \ar[l]^{\alpha_{(-)} \otimes GX)^{\ast}} }
 \]
 for $X,Y \in \ca{B}$. With this definition, a morphism $\xymatrix@1{F\U \otimes X  \ar[r]^-{\cong} & X \ar[r]^-{f} &Y}$ in $U_{\ast} \mathbb{B}$ induces the morphisms
 \[
 \bigl(F(-)\otimes f\bigr)^{\ast} \colon \ca{B}\bigl(F(-)\otimes Y,Z\bigr) \rightarrow \ca{B}\bigl(F(-)\otimes X,Z\bigr)
 \]
 and
 \[
 f_{\ast} \colon \ca{B}\bigl(F(-)\otimes Z,X\bigr) \rightarrow \ca{B}\bigl(F(-)\otimes Z,Y\bigr)
 \]
 by pre- and postcomposition. From this and the coherence axioms for symmetric monoidal functors it follows that the symmetric strong monoidal structure of $G$ lifts to one of $\mathbb{G}$ (the explicit description of pre- and postcomposition above is used to check $\ca{K}$-naturality).
 
 The axiom
 \[
\vcenter{ \xymatrix{ & \ca{P} \ar[ld]_{F^{\prime}} \ar[rd]^F \ar@{}[d]|(0.4){\cong} \\ \ca{B} \rrtwocell^{G}_{G^{\prime}}{\gamma} && \ca{B}^{\prime} }} \quad=\quad
\vcenter{
 \xymatrix{& \ca{P} \ar[ld]_{F^{\prime}} \ar[rd]^F \ar@{}[d]|(0.6){\cong} \\ \ca{B} \ar[rr]_{G^{\prime}} && \ca{B}^{\prime}}
 }
 \]
 expressing the fact that $\gamma$ is a 2-cell in the slice $\ca{P} \slash \ssm_{\Ab}$, together with the explicit description of pre- and postcomposition with components of $\gamma$ shows that $\gamma$ lifts to a $\ca{K}$-natural symmetric monoidal transformation $\mathbb{G} \Rightarrow \mathbb{G}^{\prime}$. This defines a 2-functor
 \[
 \ca{P} \slash \ssm_{\Ab} \rightarrow \ssm_{\ca{K}} 
 \]
 and by construction, its composite with $U_{\ast}$ is pseudonaturally isomorphic to the identity. From the description of $\ca{K}$-enrichments in terms of $\ca{P}$-actions (see \cite[Theorem~3.4]{GORDON_POWER}) it follows that the above 2-functor is a biequivalence. The claim thus follows from the 2-by-3 property of biequivalences.
 \end{proof}
 
 The strong monoidal functors $YK  \colon \ca{P} \rightarrow [\ca{A}_{H}^{\op},\Ab]$ and $LYK \colon \ca{P} \rightarrow \sh(\ca{A}_H)$ thus define $\ca{K}$-enrichments of their targets. By the above proposition, $L$ is the underlying additive functor of a symmetric strong monoidal $\ca{K}$-functor for these $\ca{K}$-enrichments. Since the functor $\widetilde{J} \colon \ca{T} \rightarrow [\ca{A}_H^{\op},\Ab]$ is merely lax monoidal, the above proposition is not applicable.
 
 \begin{lemma}\label{lemma:J_tilde_enrichment}
 The functor $\widetilde{J} \colon \ca{T} \rightarrow [\ca{A}_H^{\op},\Ab]$ is the underlying additive functor of a symmetric lax monoidal $\ca{K}$-functor, where the $\ca{K}$-enrichments are given by $K$ and $YK$ respectively.
 \end{lemma}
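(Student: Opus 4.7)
The plan is to construct the required $\ca{K}$-functor structure and symmetric lax monoidal constraints on $\widetilde{J}$ by direct analogy with the constructions of Propositions~\ref{prop:strong_monoidal_enriched_equivalent_to_slice} and \ref{prop:symmetric_lax_monoidal_K-functor}. A preliminary observation is that on the image of $K$ the functor $\widetilde{J}$ is strong monoidal: by Proposition~\ref{prop:basic_properties_of_H-duals}~(i), every $Kp$ lies in $\ca{A}_H$, so $\widetilde{J}(Kp)(A) = \ca{T}(A, Kp) = \ca{A}_H(A, Kp) = (YKp)(A)$ for $A \in \ca{A}_H$, and a direct inspection shows that the lax constraint $\bar{\varphi}_{Kp, Kq}$ coincides under this identification with the strong monoidal structure of $Y$ for Day convolution. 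Via Lemma~\ref{lemma:copowers}, this matches the $\ca{K}$-enrichment of $[\ca{A}_H^{\op}, \Ab]$ induced by $YK$ with the one whose copowers are $\ca{P}(-, p) \odot F = \widetilde{J}(Kp) \dt F$.

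First I would define the $\ca{K}$-functor action $\widetilde{\mathbb{J}}_{X,Y} \colon \mathbb{T}(X, Y) \rightarrow [\widetilde{J}X, \widetilde{J}Y]_{\mathrm{Day}}$ via the adjunction $\ca{P}(-, p) \odot - \dashv [\ca{P}(-, p), -]$: send $f \colon Kp \otimes X \rightarrow Y$ to the natural transformation whose $A$-component is the composite
\[
\bigl(YKp \dt \widetilde{J}X\bigr)(A) \xrightarrow{(\bar{\varphi}_{Kp, X})_A} \widetilde{J}(Kp \otimes X)(A) = \ca{T}(A, Kp \otimes X) \xrightarrow{f_{\ast}} \ca{T}(A, Y) = \widetilde{J}Y(A).
\]
Then I would verify $\ca{K}$-functoriality of $\widetilde{\mathbb{J}}$: compatibility with composition reduces, after unwinding the definition of composition in $\mathbb{T}$ from Proposition~\ref{prop:enrichment_from_K}, to the associativity axiom for $\bar{\varphi}$ together with the interchange law in $\ca{T}$, while compatibility with identities uses the unit axiom for the lax monoidal structure.

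The symmetric lax monoidal constraints of $\widetilde{\mathbb{J}}$ will be the morphisms $\bar{\varphi}_{X, Y} \colon \widetilde{J}X \dt \widetilde{J}Y \rightarrow \widetilde{J}(X \otimes Y)$ already defined in the proof of Theorem~\ref{thm:Grothendieck_tensor_from_homology_theory}~(ii) together with the unit $\bar{\varphi}_0 \colon Y\U \rightarrow \widetilde{J}\U$ sending $\id_\U$ to $\id_\U$. It remains to check $\ca{K}$-naturality of $\bar{\varphi}_{X,Y}$ in each variable and the coherence axioms (associativity hexagon, unit triangles, symmetry) for a symmetric lax monoidal $\ca{K}$-functor. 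Both proceed by reducing to the ordinary coherence diagrams for $\widetilde{J}$ via the universal property of the Day convolution coend, in the spirit of the computations in the proof of Proposition~\ref{prop:symmetric_lax_monoidal_K-functor}. I expect the main obstacle to be the $\ca{K}$-naturality of $\bar{\varphi}$, which requires careful bookkeeping through the coend defining composition in $\mathbb{T}$ and the monoidal constraints; however, each axiom ultimately reduces to the interchange law between composition and tensor in $\ca{T}$ plus the coherence of $\bar{\varphi}$, so no essentially new argument beyond those already used in Propositions~\ref{prop:symmetric_lax_monoidal_K-functor} and \ref{prop:strong_monoidal_enriched_equivalent_to_slice} is needed.
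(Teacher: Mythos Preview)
Your proposal is correct but takes a different route from the paper. You construct the $\ca{K}$-functor hom-morphisms $\widetilde{\mathbb{J}}_{X,Y}$ explicitly and then verify the $\ca{K}$-functoriality and symmetric lax monoidal axioms by hand, in the style of Proposition~\ref{prop:symmetric_lax_monoidal_K-functor}. The paper instead uses the Gordon--Power equivalence (\cite[Theorem~3.4]{GORDON_POWER}) between $\ca{P}$-copowered $\ca{K}$-categories and categories with a $\ca{P}$-action: it simply writes down the natural isomorphism
\[
p \cdot \widetilde{J}X = \ca{T}\bigl(J(-\otimes Kp^{\vee}),X\bigr) \cong \ca{T}(J-,Kp \otimes X) = \widetilde{J}(p\cdot X)
\]
(using the duality $Kp^{\vee}$) to exhibit $\widetilde{J}$ as a morphism of $\ca{P}$-actions, and then appeals to Gordon--Power to conclude. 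Compatibility with the lax monoidal structure is then dispatched in one sentence by invoking coherence in $\ca{T}$. Your approach is more self-contained and makes every piece of structure explicit, but the paper's is dramatically shorter (three sentences) and sidesteps all the coend bookkeeping you anticipate as the ``main obstacle.'' Since the paper has already established the $\ca{P}$-action formalism in the proof of Theorem~\ref{thm:K-functor_Ab-functor_equivalence} and again in Proposition~\ref{prop:strong_monoidal_enriched_equivalent_to_slice}, it makes sense to reuse it here rather than repeat the direct verification.
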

 
 \begin{proof}
 The $\ca{P}$-action on the target is given by $p \cdot G =G(-\otimes Kp^{\vee})$ by construction of the Day convolution tensor product. The isomorphisms
 \[
 p \cdot \mathrm{Hom}_{\ca{A}_H}(J,X)=\ca{T}\bigl(J(-\otimes Kp^{\vee}),X\bigr) \cong \ca{T}(J-,Kp \otimes X)
 \]
 give the desired $\ca{K}$-enrichment by \cite[Theorem~3.4]{GORDON_POWER}. Compatibility with the lax monoidal structure follows from the coherence axioms of $K$ and the symmetric monoidal category $\ca{T}$.
 \end{proof}
 
 \begin{lemma}\label{lemma:HW_description}
 Let $A$ be a commutative algebra in $\ca{K}$ and let $W \colon \sh(\ca{A}_H) \rightarrow \ca{K}_A$ be a symmetric strong monoidal functor which is compatible with the enrichment. Let $H(W)$ be the normal lift of the symmetric monoidal functor $\overline{F(W)} \colon \ca{T} \rightarrow \ca{K}$ induced by the composite~\eqref{eqn:Adams_replacement_from_W}. Then $H(W)$ is naturally isomorphic to the composite $WL\widetilde{J} \colon \ca{T} \rightarrow \ca{K}_A$.
 \end{lemma}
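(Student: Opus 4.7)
The plan is to apply the equivalence of Theorem~\ref{thm:K-functor_Ab-functor_equivalence} to the composite $WL\widetilde{J}$, viewed as a symmetric lax monoidal $\ca{K}$-functor. By Lemma~\ref{lemma:J_tilde_enrichment}, $\widetilde{J}$ underlies a symmetric lax monoidal $\ca{K}$-functor for the given enrichments. The discussion immediately preceding Lemma~\ref{lemma:J_tilde_enrichment} shows that $L$ underlies a symmetric strong monoidal $\ca{K}$-functor, and the hypothesis together with Proposition~\ref{prop:strong_monoidal_enriched_equivalent_to_slice} gives the same for $W$. Post-composing with the forgetful $\ca{K}$-functor $\mathbb{K}_A \to \mathbb{K}$, we thus obtain a symmetric lax monoidal $\ca{K}$-functor $\mathbb{G} \colon \mathbb{T} \to \mathbb{K}$ whose underlying additive functor is $\mathrm{forget} \circ WL\widetilde{J}$.

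Under the functor $\Psi \colon \slm_{\ca{K}}(\mathbb{T},\mathbb{K}) \to \slm_{\Ab}(\ca{T},\Ab)$ of Theorem~\ref{thm:K-functor_Ab-functor_equivalence}, $\mathbb{G}$ is sent to $U \circ \mathrm{forget} \circ W L \widetilde{J} = F(W)$, using \eqref{eqn:Adams_replacement_from_W}. Since $\Phi$ and $\Psi$ are mutually inverse equivalences, and since by construction $\Phi(F(W))$ sends $X$ to $F(W)\bigl(K(-)^{\vee} \otimes X\bigr) = \overline{F(W)}(X)$, the isomorphism $\Phi \Psi \cong \mathrm{id}$ produces a symmetric monoidal $\ca{K}$-natural isomorphism $\overline{F(W)} \cong \mathbb{G}$ of functors $\ca{T} \to \mathbb{K}$.

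To upgrade this to an isomorphism in $\ca{K}_A$, I would evaluate at $\U$: strong monoidality of $W$ and $L$ gives $WL\widetilde{J}(\U) \cong A$, and the isomorphism above identifies $A$ with the coefficient ring $\overline{F(W)}(\U)$ of $F(W)$ as commutative algebras in $\ca{K}$. By construction, both $WL\widetilde{J} \colon \ca{T} \to \ca{K}_A$ and $H(W) = \overline{F(W)}^{\mathrm{norm}} \colon \ca{T} \to \ca{K}_A$ factor the underlying $\ca{K}$-functor through modules over their coefficient ring, and in both cases the module action is the canonical one induced by the lax monoidal structure and the unitor $\U \otimes X \cong X$. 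Since the monoidal natural isomorphism $\mathbb{G} \cong \overline{F(W)}$ respects the lax monoidal structures, these actions correspond, and the isomorphism lifts to a natural isomorphism $WL\widetilde{J} \cong H(W)$ in $\ca{K}_A$.

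The main obstacle is this last step—matching the two $A$-module structures. The underlying idea is that the normal lift is uniquely determined by the lax monoidal data, so two normal lifts of naturally monoidal-isomorphic lax monoidal functors must agree under the induced isomorphism of coefficient algebras; formalizing this requires carefully unraveling how the lax monoidal structure produces the action and verifying compatibility with the natural isomorphism obtained from Theorem~\ref{thm:K-functor_Ab-functor_equivalence} on each relevant component.
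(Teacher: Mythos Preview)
Your proposal is correct and follows essentially the same route as the paper's proof: both build the $\ca{K}$-enrichment of the composite $\mathrm{forget}\circ WL\widetilde{J}$ from Lemma~\ref{lemma:J_tilde_enrichment} and the enrichments of $L$ and $W$, then invoke Theorem~\ref{thm:K-functor_Ab-functor_equivalence} to identify this composite with $\overline{F(W)}$ as symmetric lax monoidal functors, and finally pass to normal lifts. The paper handles the last step in a single sentence, whereas you spell out why the monoidal isomorphism lifts along the normal-lift construction; your caution is reasonable, but the argument is indeed formal: a monoidal natural isomorphism $G\cong G'$ induces an algebra isomorphism $G\U\cong G'\U$ and, since the module structures on $GX$ and $G'X$ are both given by the lax structure maps $\varphi_{\U,X}$, the isomorphism intertwines them, so $G^{\mathrm{norm}}$ and $G'^{\mathrm{norm}}$ correspond under the induced equivalence of module categories.
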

 
 \begin{proof}
 Compatibility of $W$ with the enrichment and Lemma~\ref{lemma:J_tilde_enrichment} imply that the composite
 \[
 \xymatrix{\ca{T} \ar[r]^-{\widetilde{J}} & [\ca{A}_H^{\op},\Ab] \ar[r]^-{L} & \sh(\ca{A}_H) \ar[r]^-{W} & \ca{K}_A \ar[r]^-{\mathrm{forget}} & \ca{K} }
 \]
 is the underlying functor of a lax monoidal $\ca{K}$-functor. From Theorem~\ref{thm:K-functor_Ab-functor_equivalence} it follows that the $\ca{K}$-functor in question is naturally isomorphic to $\Phi\bigl(F(W)\bigr)$, so the above composite is naturally isomorphic to $U_{\ast} \Phi\bigl(F(W) \bigr)=\overline{F(W)}$. Thus the normal lift $H(W)$ is naturally isomorphic to $WL\widetilde{J}$, as claimed.
 \end{proof}
 
 \begin{thm}\label{thm:Adams_replacement}
 Let $A \in \ca{K}$ be a commutative algebra, $W \colon \sh(\ca{A}_H) \rightarrow \ca{K}_A$ an exact, faithful, and symmetric strong monoidal left adjoint which is compatible with the $\ca{K}$-enrichment. Then the symmetric lax monoidal functor $F(W)$ given by the composite~\eqref{eqn:Adams_replacement_from_W} is a $\ca{T}$-flat homological functor (relative to $K \colon \ca{P} \rightarrow \ca{T}$). Moreover, the following hold (where $H(W)$ is defined in Lemma~\ref{lemma:HW_description}):
 \begin{enumerate}
 \item[(i)] The category $\ca{A}_H$ is a full subcategory of $\ca{A}_{H(W)}$ and a morphism in $\ca{A}_H$ is an $H$-epimorphism if and only if it is an $H(W)$-epimorphism.
 \item[(ii)] For every $H(W)$-dual $Z$ in $\cell(\ca{P})$, there exists an $H$-dual $X$ in $\cell(\ca{P})$ and an $H(W)$-epimorphism $X \rightarrow Z$, so the inclusion $\ca{A}_H \rightarrow \ca{A}_{H(W)}$ induces an equivalence of categories of additive sheaves.
 \item[(iii)] The site $\ca{A}_{H(W)}$ is independent of the choice of $A$ and $W$.
 \end{enumerate}
 \end{thm}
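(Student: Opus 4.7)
The proof starts from Lemma~\ref{lemma:HW_description}, which identifies $H(W) \cong W L \widetilde{J}$, so that $F(W) = U \circ \mathrm{forget} \circ H(W)$ is homological because $L\widetilde{J}$ is homological by Theorem~\ref{thm:Grothendieck_tensor_from_homology_theory}(ii) and $W$, $\mathrm{forget}$, and $U = \ev_{\U}$ are exact. The plan is to prove parts (i)--(iii) in order, deducing $\ca{T}$-flatness of $F(W)$ once the sheaf-equivalence in (ii) is available.

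For (i), the inclusion $\ca{A}_H \subseteq \ca{A}_{H(W)}$ is immediate because $\ca{A}_H$ is rigid (Proposition~\ref{prop:basic_properties_of_H-duals}(iv)) and $L Y \colon \ca{A}_H \to \sh(\ca{A}_H)$ is symmetric strong monoidal (Yoneda into Day convolution is strong monoidal, and $L$ is strong monoidal by Theorem~\ref{thm:Grothendieck_tensor_from_homology_theory}(i)), so $H(W) A = W L Y A$ inherits a dual from $A$. For the equivalence of epi-classes on $\ca{A}_H$: an $H$-epimorphism $f$ is a covering in the Pstr\k{a}gowski topology, so $L Y f$ is an epi in $\sh(\ca{A}_H)$ and $H(W) f = W L Y f$ is an epi since $W$ is exact; conversely, if $H(W) f$ is epi then $L Y f$ is epi because $W$ is faithful exact, and then $H f \cong L Y f \ten{\ca{A}_H} H$ is epi because $- \ten{\ca{A}_H} H$ is a left adjoint on $\sh(\ca{A}_H)$ by Theorem~\ref{thm:Grothendieck_tensor_from_homology_theory}(iii).

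Part~(ii) is the crux. Given $Z \in \ca{A}_{H(W)}$, the object $H(W) Z \in \ca{K}_A$ is dualizable and therefore finitely presentable and projective; this uses that $A$ is projective in $\ca{K}_A$ (since $\Hom_{\ca{K}_A}(A, -) = U \circ \mathrm{forget}$ is exact), so that tensoring with the dual of $H(W) Z$ realises $\Hom_{\ca{K}_A}(H(W) Z, -)$ as a composite of exact functors. Pick an epimorphism $\bigoplus_{i \in I} L Y A_i \twoheadrightarrow L\widetilde{J} Z$ in $\sh(\ca{A}_H)$ from a coproduct of generators with $A_i \in \ca{A}_H$, apply $W$, and use finite presentability plus projectivity of $H(W) Z$ to lift the identity and truncate to a finite subsum $I_0 \subseteq I$, obtaining an epi $L Y X_0 \twoheadrightarrow L\widetilde{J} Z$ with $X_0 = \bigoplus_{i \in I_0} A_i \in \ca{A}_H$. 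By Yoneda this corresponds to a section of $L\widetilde{J} Z$ at $X_0$; the (two-step) $+$-construction applied to the singleton Pstr\k{a}gowski coverage, combined with closure of $H$-epimorphisms under composition, produces an $H$-epi $q \colon X \to X_0$ in $\ca{A}_H$ such that $q^{\ast}$ of this section is the image of some $f \colon X \to Z$ in $\ca{T}$ under the unit of sheafification. Then $L\widetilde{J}(f)$ is a composite of epimorphisms, so $f$ is an $H(W)$-epi from an object of $\ca{A}_H$; the equivalence of sheaf categories then follows from the comparison lemma for sites using (i) and this covering statement.

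To deduce $\ca{T}$-flatness of $F(W)$, note that $H(W)$ satisfies the hypotheses of Propositions~\ref{prop:basic_properties_of_H-duals} and \ref{prop:Pstragowski_topology}, so Proposition~\ref{prop:sheaves_characterization} gives an exact reflector $L' \colon [\ca{A}_{H(W)}^{\op}, \Ab] \to \sh(\ca{A}_{H(W)})$, and the right adjoint of $- \ten{\ca{A}_{H(W)}} H(W)$ factors through $\sh(\ca{A}_{H(W)})$; the induced left adjoint $\sh(\ca{A}_{H(W)}) \to \ca{K}_A$ corresponds under the symmetric monoidal sheaf-equivalence from (ii) to the exact $W$, so $- \ten{\ca{A}_{H(W)}} H(W)$ is exact. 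For (iii), the same equivalence sends the representable sheaves $L Y^{\ca{A}_{H(W)}} Z$ (which are dualizable because $\ca{A}_{H(W)}$ is rigid by Proposition~\ref{prop:basic_properties_of_H-duals}(iv) applied to $H(W)$) to $L\widetilde{J} Z$, forcing $L\widetilde{J} Z$ to be dualizable in $\sh(\ca{A}_H)$ for every $Z \in \ca{A}_{H(W)}$; conversely, dualizability of $L\widetilde{J} Z$ gives that of $H(W) Z = W L\widetilde{J} Z$ by strong monoidality of $W$. This yields the intrinsic characterisation $\ca{A}_{H(W)} = \{Z \in \cell(\ca{P}) : L\widetilde{J} Z \text{ is dualizable in } \sh(\ca{A}_H)\}$, and the same reasoning with epimorphisms in place of dualizables identifies the $H(W)$-epimorphisms as those $f$ for which $L\widetilde{J}(f)$ is epi in $\sh(\ca{A}_H)$, so the site $\ca{A}_{H(W)}$ is independent of $(A, W)$. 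The main obstacle is the sheafification step in (ii): spelling out why the section of $L\widetilde{J} Z$ at $X_0$ is locally realised by an honest morphism in $\ca{T}$ requires careful use of the two-step $+$-construction for the singleton Pstr\k{a}gowski coverage, since $\widetilde{J} Z$ is in general not separated as a presheaf.
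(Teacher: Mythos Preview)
Your proof is correct, but it takes a more circuitous route than the paper's in two places, and the detour you flag as ``the main obstacle'' is entirely avoidable.

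For part~(ii), the paper works in the \emph{presheaf} category $[\ca{A}_H^{\op},\Ab]$ rather than in $\sh(\ca{A}_H)$. One chooses an epimorphism
\[
q \colon \bigoplus_{i \in I} \ca{A}_H(-,X_i) \twoheadrightarrow \widetilde{J}Z
\]
in $[\ca{A}_H^{\op},\Ab]$; by Yoneda each component is already a morphism $X_i \to Z$ in $\ca{T}$. Applying $L$ and using that $L\widetilde{J}Z$ is dualizable, hence finitely presentable in the Grothendieck tensor category $\sh(\ca{A}_H)$, one truncates to a finite $I_0$ so that $L$ of the truncated map is still epi. This truncated map \emph{is} $\widetilde{J}p$ for the morphism $p \colon \bigoplus_{i\in I_0} X_i \to Z$ in $\ca{T}$, so no sheafification analysis is needed. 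Your approach---choosing the epimorphism in $\sh(\ca{A}_H)$ and then invoking local surjectivity of the unit $\widetilde{J}Z \to L\widetilde{J}Z$ via the two-step $+$-construction---works, but the extra cover $q\colon X \to X_0$ and the care about non-separated presheaves are unnecessary once you realise the epimorphism can be chosen upstairs.

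For $\ca{T}$-flatness, the paper also avoids your dependence on~(ii). Writing $G$ for the restriction of $U\circ\mathrm{forget}\circ W\circ L$ along Yoneda, cocontinuity gives $F(W) \cong \Lan_J G$. Since $W$ is exact, $G$ is flat, hence a filtered colimit of representables $\ca{A}_H(A_i^{\vee},-)$; left Kan extension preserves these, so $F(W)\cong \colim_i \ca{T}(A_i^{\vee},-)$ on all of $\ca{T}$. With the inclusion $\ca{A}_H \subseteq \ca{A}_{H(W)}$ from~(i), Proposition~\ref{prop:Adams_characterization}(iii) applies immediately. Your argument---factoring $-\ten{\ca{A}_{H(W)}} H(W)$ through the sheaf reflector and identifying the sheaf-level functor with $W$ via the comparison-lemma equivalence---is valid but longer. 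One advantage of your route is that for~(iii) you obtain the intrinsic description $\ca{A}_{H(W)}=\{Z : L\widetilde{J}Z \text{ dualizable in }\sh(\ca{A}_H)\}$ from the sheaf equivalence, whereas the paper asserts directly that the faithful exact $W$ detects duals; your derivation of this detection via rigidity of $\ca{A}_{H(W)}$ and the equivalence is arguably more transparent.
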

 
 \begin{proof}
 Let $G \colon \ca{A}_H \rightarrow \Ab$ be the restriction of the composite
 \[
 \xymatrix{ [\ca{A}_H^{\op}, \Ab] \ar[r]^-{L} & \sh(\ca{A}_H) \ar[r]^-{W} & \ca{K}_A \ar[r]^-{\mathrm{forget}} & \ca{K} \ar[r]^-{U} & \Ab }
 \]
 to $\ca{A}_H$ (along the Yoneda embedding). Since the above functor is cocontinuous, it is isomorphic to $\Lan_Y G$. Thus $F(W)\cong \Lan_Y G \widetilde{J} \cong \Lan_J G$ (for the second isomorphism, see \cite[Formulas~(4.17) and (4.13)]{KELLY_BASIC}). Moreover, the above functor is exact by assumption, so $G$ is flat, hence a filtered colimit of representables. Since left Kan extension along a fixed functor preserves colimits and representables, it follows that $F(W)$ is naturally isomorphic to a filtered colimit of functors $\ca{T}(A_i^{\vee},-)$ with $A_i \in \ca{A}_H$. To show that $F(W)$ is $\ca{T}$-flat, it thus suffices to show Claim~(i) (in fact, we only need the inclusion of $\ca{A}_H$ in $\ca{A}_{H(W)}$ for this, see Proposition~\ref{prop:Adams_characterization}).
 
 To see~(i), note that the faithful and exact functor $W$ detects (and preserves) duals. From the natural isomorphism $H(W) \cong WL \widetilde{J}$ (see Lemma~\ref{lemma:HW_description}) it follows that $A \in \cell{\ca{P}}$ is an $H(W)$-dual if and only if $L \widetilde{J}$ is a dual in $\sh(\ca{A}_H)$. Since $L$ preserves duals, this shows that $H$-duals are $H(W)$-duals. From the same isomorphism it follows that a morphism $f \colon A \rightarrow B$ in $\ca{A}_H$ is an $H(W)$-epimorphism if and only if $LY(f)$ is an epimorphism in $\sh(\ca{A}_H)$. That this is equivalent to $Hf$ being an epimorphism follows from Part~(iii) of Theorem~\ref{thm:Grothendieck_tensor_from_homology_theory}.
 
 To see~(ii), let $Z \in \cell(\ca{P})$ be an $H(W)$-dual and let
 \[
 q \colon \oplus_{i \in I} \ca{A}_H(-,X_i) \rightarrow \widetilde{J}Z
 \]
 be an epimorphism in $[\ca{A}_H^{\op},\Ab]$. As we have seen above, $L \widetilde{J} Z$ has a dual, so it is finitely presentable since $\sh(\ca{A}_H)$ is a Grothendieck tensor category (see Part~(i) of Theorem~\ref{thm:Grothendieck_tensor_from_homology_theory}). It follows that there exists a finite subset $I_0 \subseteq I$ such that $L$ applied to the composite
 \[
 \xymatrix{ \oplus_{i \in I_0} \ca{A}_H(-,X_i) \ar[r]^-{\mathrm{incl}} & \oplus_{i \in I} \ca{A}_H(-,X_i) \ar[r]^-{q}  & \widetilde{J}Z }
 \]
 is an epimorphism. By Yoneda, this composite is of the form $\widetilde{J}p$ for some morphism $p \colon \oplus_{i \in I_0} X_i \rightarrow Z$ in $\ca{T}$. Since $L \widetilde{J} p$ is an epimorphism and $H(W) \cong WL \widetilde{J}p$, this morphism $p$ is the desired $H(W)$-epimorphism.
 
 Finally, that the site $\ca{A}_{H(W)}$ is independent of the algebra $A$ and the functor $W$ follows from the natural isomorphism $H(W) \cong WL \widetilde{J}$ and the fact that $W$ preserves and detects both duals and epimorphisms.
 \end{proof}
 
 \begin{dfn}\label{dfn:Adams_replacement}
 In the situation of Theorem~\ref{thm:Adams_replacement}, we say that the homological functor $F \colon \ca{T} \rightarrow \Ab$ admits a \emph{flat replacement}. For any $A$ and $W$ as in Theorem~\ref{thm:Adams_replacement}, we call the functor $F(W)$ given by the composite~\eqref{eqn:Adams_replacement_from_W} a flat replacement of $F$.
 \end{dfn}

 An important example of an flat replacement due to Pstr\k{a}gowski is the following.
 
 \begin{thm}\label{thm:complex_cobordism}
 Complex cobordism is a flat replacement of singular homology with integral coefficients. In fact, the site associated to complex cobordism is equal to the site associated to singular homology with integral coefficients.
 \end{thm}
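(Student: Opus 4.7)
The plan is to combine the Conner--Smith theorem with the general machinery of Theorem~\ref{thm:Adams_replacement}. Take $\ca{T}=\mathrm{SH}$ and $\ca{P}$ the Picard category generated by a single odd invertible object mapping to $S^{1}$, so that $\ca{K}$ is the category of graded abelian groups and $\cell(\ca{P})$ is the category of finite spectra. Conner--Smith \cite[\S 3]{CONNER_SMITH} prove that, for a finite spectrum $X$, the graded abelian group $\mathrm{H}\mathbb{Z}_{\ast}X$ is finitely generated projective if and only if the graded $\mathrm{MU}_{\ast}$-module $\mathrm{MU}_{\ast}X$ is finitely generated projective. Applied with $F=\mathrm{H}\mathbb{Z}_{\ast}$ and $F=\mathrm{MU}_{\ast}$ respectively, Definition~\ref{dfn:H-dual} yields $\ca{A}_{\mathrm{H}\mathbb{Z}}=\ca{A}_{\mathrm{MU}}$ as full subcategories of finite spectra. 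To promote this equality of underlying categories to an equality of sites I would check that the two classes of epimorphisms coincide on this common subcategory: by Proposition~\ref{prop:basic_properties_of_H-duals}(v) the cofiber of such a morphism also lies in the common category, and the natural isomorphism $\mathrm{H}\mathbb{Z}_{\ast}X\cong\mathrm{MU}_{\ast}X\otimes_{\mathrm{MU}_{\ast}}\mathbb{Z}$ (which is part of the Conner--Smith analysis for objects of $\ca{A}_{\mathrm{MU}}$) transports $\mathrm{MU}$-epimorphisms to $\mathrm{H}\mathbb{Z}$-epimorphisms and vice versa. This proves the second assertion of the theorem.

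For the first assertion I would invoke Theorem~\ref{thm:Adams_replacement}. Since $\mathrm{MU}$ is of Adams type it is $\mathrm{SH}$-flat, so by Proposition~\ref{prop:Adams_type_implies_sheaves_are_comodules} together with \cite[Remark~3.26]{PSTRAGOWSKI} the category $\sh(\ca{A}_{\mathrm{MU}})=\sh(\ca{A}_{\mathrm{H}\mathbb{Z}})$ is equivalent to the category $\Comod(\mathrm{MU}_{\ast},\mathrm{MU}_{\ast}\mathrm{MU})$ of comodules over the classical Hopf algebroid. The underlying $\mathrm{MU}_{\ast}$-module functor $W\colon\sh(\ca{A}_{\mathrm{H}\mathbb{Z}})\rightarrow\ca{K}_{\mathrm{MU}_{\ast}}$ is then exact, faithful, symmetric strong monoidal, and has a right adjoint (the cofree comodule functor). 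Its compatibility with the $\ca{K}$-enrichments coming from the canonical strong monoidal functors out of $\ca{P}$ is automatic from Proposition~\ref{prop:strong_monoidal_enriched_equivalent_to_slice}, since both sides carry the tautological grading inherited from $\ca{P}$.

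It remains to identify $F(W)$ with $\mathrm{MU}_{\ast}$. By Lemma~\ref{lemma:HW_description} the normal lift $H(W)$ is naturally isomorphic to $WL\widetilde{J}$, and by construction this restricts on the site $\ca{A}_{\mathrm{H}\mathbb{Z}}=\ca{A}_{\mathrm{MU}}$ to the tautological $\mathrm{MU}_{\ast}$-valued functor. Theorem~\ref{thm:Adams_replacement} gives that $F(W)$ is $\mathrm{SH}$-flat, and since the image of $K$ consists of compact objects and $\mathrm{MU}_{\ast}$ preserves coproducts, Proposition~\ref{prop:Left_Kan_if_Adams} extends this natural isomorphism to the smallest localizing subcategory of $\mathrm{SH}$ containing $\cell(\ca{P})$; this subcategory is all of $\mathrm{SH}$. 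Evaluating at the unit of $\ca{P}$ then gives $F(W)\cong\mathrm{MU}_{\ast}$, so $\mathrm{MU}$ is indeed a flat replacement of $\mathrm{H}\mathbb{Z}$. The only genuinely nontrivial input is the Conner--Smith theorem; the remainder is a direct application of the machinery established in \S\ref{section:coefficient_categories}--\S\ref{section:Adams_replacement}.
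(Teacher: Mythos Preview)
Your overall strategy matches the paper's: establish equality of the two sites via Conner--Smith, and deduce that $\mathrm{MU}$ is a flat replacement from the fact that $\mathrm{MU}$ is $\mathrm{SH}$-flat. The paper's proof in fact only spells out the equality of sites and leaves the first sentence implicit, so your explicit invocation of Proposition~\ref{prop:Adams_type_implies_sheaves_are_comodules}, Lemma~\ref{lemma:HW_description}, and Proposition~\ref{prop:Left_Kan_if_Adams} to identify $F(W)$ with $\mathrm{MU}_\ast$ is a welcome elaboration rather than a deviation.

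There is, however, one genuine gap in your argument for the equality of coverages. The implication ``$\mathrm{MU}_\ast$-epimorphism $\Rightarrow$ $\mathrm{H}\mathbb{Z}_\ast$-epimorphism'' follows from right exactness of $\mathbb{Z}\otimes_{\mathrm{MU}_\ast}-$ together with the Conner--Smith isomorphism, as you indicate. But your ``vice versa'' is not justified: knowing that $\mathbb{Z}\otimes_{\mathrm{MU}_\ast}\mathrm{MU}_\ast f$ is surjective does not formally give that $\mathrm{MU}_\ast f$ is surjective, and the appeal to Proposition~\ref{prop:basic_properties_of_H-duals}(v) does not help. Even after you know the cofiber lies in the common site, you are still left showing that a finitely generated $\mathrm{MU}_\ast$-module (the cokernel of $\mathrm{MU}_\ast f$) which vanishes after $\mathbb{Z}\otimes_{\mathrm{MU}_\ast}-$ is itself zero. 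This is a graded Nakayama statement relying on the connectedness of $\mathrm{MU}_\ast$; the paper cites \cite[Corollary~1.5]{MILNOR_MOORE} for exactly this step. Once you insert that reference (or any equivalent Nakayama argument), your proof is complete.
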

 
 \begin{proof}
 In this case, the category $\cell(\ca{P})$ is the category of finite spectra. By applying a sufficiently high suspension, we can work with finite $\mathrm{CW}$-complexes $X$. That $\mathrm{MU}_{\ast} X$ is projective if and only if $\mathrm{H}(X; \mathbb{Z})$ is projective is proved in \cite[Corollary 3.10]{CONNER_SMITH}.
 
 From \cite[Corollary~1.5]{MILNOR_MOORE} it follows that $\mathbb{Z} \ten{\mathrm{MU}_{\ast}} \mathrm{MU}_{\ast} f$ is an epimorphism if and only if $\mathrm{MU}_{\ast} f$ is an epimorphism. From \cite[Lemma~3.1]{CONNER_SMITH} we know that there is a natural isomorphism $\mathbb{Z} \ten{\mathrm{MU}_{\ast}} \mathrm{MU}_{\ast} X \cong \mathrm{H}_{\ast}(X;\mathbb{Z})$. Thus the notion of $\mathrm{MU}_{\ast}$-epimorphism coincides with the notion of $\mathrm{H}_{\ast}(-;\mathbb{Z})$-epimorphism. 
 \end{proof}
 
 The notion of flat replacements is particularly useful in contexts where Brown representability holds for coproduct preserving homological functors. Let $\ca{T}_{\ca{P}}$ be the smallest localizing subcategory of $\ca{T}$ which contains $Kp$ for all $p \in \ca{P}$. Moreover, we assume from now on that the objects $Kp$ are \emph{compact}, that is, that the functors $\ca{T}_{\ca{P}}(Kp,-)$ preserve coproducts for all $p \in \ca{P}$. It follows that all $A \in \cell(\ca{P})$ are compact as well.
 
 \begin{lemma}\label{lemma:homological_left_kan}
 Suppose $\ca{A}$ is a triangulated subcategory of $\ca{T}_{\ca{P}}$ which consists of compact objects and which contains $Kp$ for all $p \in \ca{P}$ and let $J \colon \ca{A} \rightarrow \ca{T}_{\ca{P}}$ be the inclusion. If $G \colon \ca{T}_{\ca{P}} \rightarrow \Ab$ is a coproduct preserving homological functor, then the identity of $GJ$ exhibits $G$ as left Kan extension $\Lan_J GJ$ of $GJ$ along $J$. 
 \end{lemma}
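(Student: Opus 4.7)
The plan is to show directly that the canonical comparison natural transformation $\alpha \colon \Lan_J GJ \Rightarrow G$, arising from the universal property of the left Kan extension, is a pointwise isomorphism. By \cite[Proposition~4.23]{KELLY_BASIC}, the full-faithfulness of $J$ ensures that $\alpha_X$ is already an isomorphism for every $X \in \ca{A}$. To handle a general $X \in \ca{T}_{\ca{P}}$, I would invoke the standard fact --- applicable because the generators $Kp$ are compact by hypothesis --- that $\ca{T}_{\ca{P}}$ is compactly generated by $\{Kp \mid p \in \ca{P}\}$, so that $X$ admits a cellular presentation as a homotopy colimit $X \cong \mathrm{hocolim}_n X_n$ of a sequence $X_0 \to X_1 \to \cdots$ built iteratively by attaching cells $Kp[k]$, with each $X_n$ lying in $\cell(\ca{P}) \subseteq \ca{A}$.

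I would then identify both sides of $\alpha_X$ with $\colim_n GX_n$. For $\Lan_J GJ$, compactness of each $JA$ (with $A \in \ca{A}$) in $\ca{T}_{\ca{P}}$ yields a natural isomorphism $\ca{T}_{\ca{P}}(JA, X) \cong \colim_n \ca{T}_{\ca{P}}(JA, X_n)$; commuting this filtered colimit past the coend $\int^{A \in \ca{A}} \ca{T}_{\ca{P}}(JA, -) \otimes GJA$ and applying the already-established isomorphisms $\alpha_{X_n}$ gives $\Lan_J GJ(X) \cong \colim_n \Lan_J GJ(X_n) \cong \colim_n GX_n$. For $G$, applying it to the telescope triangle $\bigoplus_n X_n \xrightarrow{1-\sigma} \bigoplus_n X_n \to X \to (\bigoplus_n X_n)[1]$ and using that $G$ is homological and preserves coproducts produces the exact sequence
\[
\bigoplus_n GX_n \xrightarrow{1-\sigma} \bigoplus_n GX_n \to GX \to \bigoplus_n G(X_n[1]) \xrightarrow{1-\sigma} \bigoplus_n G(X_n[1]).
\]
The cokernel of the first $1-\sigma$ is exactly $\colim_n GX_n$; the key observation is that the kernel of $1-\sigma$ on the direct sum is zero, since any $y = (y_n)$ of finite support satisfying $y = \sigma y$ has $y_0 = 0$ and then inductively $y_n = G(f_{n-1})(y_{n-1}) = 0$. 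Thus $GX \cong \colim_n GX_n$, and by naturality of both identifications the map $\alpha_X$ is forced to be an isomorphism.

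The principal technical input I expect to need is the existence of such a cellular presentation for $X$, a standard tool for compactly generated triangulated categories. The point that distinguishes the present covariant homological setup from the contravariant cohomological one --- where a $\lim^{1}$-correction would obstruct the clean comparison $G(\mathrm{hocolim}_n X_n) \cong \colim_n GX_n$ --- is that the telescope triangle here involves direct sums rather than products, on which $1 - \sigma$ has vanishing kernel by the elementary induction above.
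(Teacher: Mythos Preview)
Your argument has a genuine gap in the cellular presentation step. The standard cellular tower in a compactly generated triangulated category attaches a (possibly infinite) \emph{coproduct} of cells at each stage, so the intermediate objects $X_n$ need not lie in $\cell(\ca{P})$. Your stronger claim, that one may arrange each $X_n$ to be a finite cell object, is false in general: for instance, an uncountable coproduct $\bigoplus_{i \in I} Kp$ cannot be a sequential homotopy colimit of objects in $\cell(\ca{P})$, since applying the homological functor $\ca{T}_{\ca{P}}(Kp,-)$ to the telescope triangle would exhibit an uncountable abelian group as a sequential colimit of finitely generated ones. Consequently the step ``applying the already-established isomorphisms $\alpha_{X_n}$'' is unjustified, because $X_n \notin \ca{A}$.

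The paper's argument sidesteps this entirely. Rather than presenting $X$ explicitly, it shows that $\Lan_J GJ$ is itself homological and coproduct-preserving. The key point is that $GJ \colon \ca{A} \rightarrow \Ab$ is a homological functor on a triangulated category, hence flat: its category of elements is cofiltered because $\ca{A}$ has finite direct sums and fibers provide the required weak equalizers. Writing $\Lan_J GJ \cong \mathrm{Hom}_{\ca{A}}(J,-) \ten{\ca{A}} GJ$, flatness of $GJ$ makes the functor tensor product exact, so $\Lan_J GJ$ inherits homologicality from $\mathrm{Hom}_{\ca{A}}(J,-)$; coproduct-preservation follows from compactness of each $JA$. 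Once both sides of $\alpha$ are homological and coproduct-preserving, the locus where $\alpha$ is an isomorphism is a localizing subcategory containing $\ca{A}$, hence all of $\ca{T}_{\ca{P}}$.

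Your approach can be repaired, but only by absorbing this same insight. If you use the honest cellular tower (with infinite coproducts attached at each stage), the inductive step requires the five lemma, and for that you need $\Lan_J GJ$ to send triangles to exact sequences --- which is precisely the flatness of $GJ$. At that point the explicit tower and telescope manipulations become redundant: the localizing-subcategory argument is both shorter and more robust.
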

 
 \begin{proof}
 By definition of Kan extensions, the identity induces a natural transformation $\alpha \colon \Lan_J GJ \Rightarrow G$. The functor $GJ \colon \ca{A} \rightarrow \Ab$ is homological, hence flat (the category of elements of $GJ$ is cofiltered). Thus $- \ten{\ca{A}} G$ is exact and $\Lan_J GJ \cong \mathrm{Hom}_{\ca{A}}(J,-) \ten{\ca{A}} G$ is homological and coproduct preserving. Thus the subcategory of objects $X \in \ca{T}_{\ca{P}}$ for which $\alpha_X$ is an isomorphism is localizing. Since $J$ is full and faithful, this subcategory contains $\ca{A}$, so it must be equal to $\ca{T}_{\ca{P}}$
 \end{proof}
 
 Recall that an additive functor with values in $\Ab$ is called \emph{small} if it is isomorphic to the left Kan extension of a functor along the inclusion of a small subcategory. Equivalently, if it is a small colimit of representable functors. Note that $\cell(\ca{P})$ is a small triangulated subcategory of $\ca{T}_{\ca{P}}$ containing the image of $K$. It consists of compact objects since each $Kp$, $p \in \ca{P}$ is compact. The above lemma thus shows that all coproduct preserving homological functors $\ca{T}_{\ca{P}} \rightarrow \Ab$ are small. We thus have a functor
 \[
 F_B \colon \ca{T}_{\ca{P}} \rightarrow [\ca{T}_{\ca{P}}, \Ab]_{\mathrm{small}}, \quad X \mapsto \ca{T}_{\ca{P}}(\U, X \otimes -)
 \]
 which lands in the full subcategory of coproduct preserving homological functors. A morphism $p$ in $\ca{T}_{\ca{P}}$ is called a \emph{phantom} if $F_B p$ is zero. These form an additive ideal, so we get a new additive category $\ca{T}_{\ca{P}}\slash{\sim}$ by taking the hom-wise quotient. The category $\ca{T}_{\ca{P}}$ is a \emph{Brown category} if the functor
 \[
 F_{B}^{\prime} \colon \ca{T}_{\ca{P}} \slash{ \sim} \rightarrow [\ca{T}_{\ca{P}},\Ab]_{\mathrm{small}}
 \]
 induced by $B$ is full and faithful and its essential image consists of \emph{all} coproduct preserving homological functors. Note that this imposes a strong smallness condition on our category: this homological form of Brown representability works if there is a full subcategory $\ca{G}$ generating $\ca{T}_{\ca{P}}$ which is countable (meaning that there is only a countable set of morphisms in $\ca{G}$). Nevertheless, this holds in many examples we care about, namely the stable homotopy category itself \cite{ADAMS_BROWN_REPRESENTABILITY}, categories of genuine $G$-spectra for compact Lie groups (\cite[Corollary~9.4.4]{HOVEY_PALMIERI_STRICKLAND}) and the stable $\mathbb{A}^1$-homotopy category of suitable schemes (\cite{NAUMANN_SPITZWECK}). For general results, see \cite{NEEMAN} \cite[Theorem~4.1.5]{HOVEY_PALMIERI_STRICKLAND}.
 
 We will need to use some compatibility with the monoidal structure. First note that the Day convolution induces a tensor product on $[\ca{T}_{\ca{P}},\Ab]_{\mathrm{small}}$. Indeed, by the discussion at the beginning of \cite[\S 7]{DAY_LACK}, this is the case if the functors defining the promonoidal structure on the domain are small; in our case, we are dealing with a monoidal category, so the functors in question are given by $\ca{T}_{\ca{P}}(-,X \otimes Y)$ which are representable, hence small. The Day convolution tensor product can equivalently be described as the left Kan extension
 \[
 \xymatrix{\ca{T}_{\ca{P}} \otimes \ca{T}_{\ca{P}} \dtwocell\omit{^<-6.5>} \ar[d]_{F \otimes G} \ar[r]^-{\otimes}  & \ca{T}_{\ca{P}} \ar@{-->}[d]^{F \otimes_{\mathrm{Day}} G}  \\ \Ab \otimes \Ab \ar[r]_-{\ten{\mathbb{Z}}} & \Ab} 
 \]
 of additive functors. From this description it follows that the morphisms
 \[
 \ca{T}_{\ca{P}}( \U, X \otimes A ) \ten{\mathbb{Z}} \ca{T}_{\ca{P}}(\U, Y \otimes B) \rightarrow \ca{T}_{\ca{P}}(\U,X\otimes Y \otimes A \otimes B)
 \]
 given by the tensor product and symmetry in $\ca{T}$ endow the functor $F_B$ above with a normal symmetric lax monoidal structure. 
 
 From the definition of phantom morphisms, it follows that $X \otimes p$ is a phantom whenever $p$ is, so the symmetric monoidal structure of $\ca{T}_{\ca{P}}$ descends to the quotient category $\ca{T}_{\ca{P}} \slash{\sim}$. Using the notion of minimal weak colimits, one can show that the functor $F_{B}^{\prime}$ is in fact an equivalence of symmetric monoidal categories. Since this is not stated explicitly in \cite{HOVEY_PALMIERI_STRICKLAND}, we give a proof below. Recall that a cocone on a filtered diagram is called a \emph{minimal weak colimit} if it is a weak colimit and it is sent to a colimit cocone by the functor $F_B$ above.

 \begin{prop}\label{prop:Brown_functor_monoidal}
 If $\ca{T}_{\ca{P}}$ is a Brown category, then the functor
 \[
 F_{B}^{\prime} \colon \ca{T}_{\ca{P}} \slash{ \sim} \rightarrow [\ca{T}_{\ca{P}},\Ab]_{\mathrm{small} }
 \]
 is symmetric strong monoidal.
 \end{prop}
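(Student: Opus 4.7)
The plan is to show that the symmetric normal lax monoidal structure on $F_B$ described above descends to $F_B^{\prime}$ (clear, since the lax constraints are natural in both variables and phantom morphisms form a tensor ideal) and that the resulting lax constraints
\[
\varphi_{X,Y} \colon F_B X \otimes_{\mathrm{Day}} F_B Y \longrightarrow F_B(X \otimes Y)
\]
are isomorphisms for all $X,Y \in \ca{T}_{\ca{P}}$. I would prove this in two stages: first on the rigid subcategory $\cell(\ca{P})$, then extend by a minimal weak colimit argument.

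For the first stage, note that every $A \in \cell(\ca{P})$ has a dual $A^{\vee}$ by Lemma~\ref{lemma:cell_P_rigid}. The adjunction defining duals yields a natural isomorphism
\[
F_B A = \ca{T}_{\ca{P}}(\U, A \otimes -) \;\cong\; \ca{T}_{\ca{P}}(A^{\vee}, -) \smash{\rlap{,}}
\]
so $F_B A$ is representable. Since Day convolution of representables is computed by the tensor product of representing objects (this is the defining universal property), we get
\[
F_B A \otimes_{\mathrm{Day}} F_B B \;\cong\; \ca{T}_{\ca{P}}\bigl(A^{\vee} \otimes B^{\vee}, -\bigr) \;\cong\; \ca{T}_{\ca{P}}\bigl((A \otimes B)^{\vee}, -\bigr) \;\cong\; F_B(A \otimes B) \smash{\rlap{,}}
\]
and a direct check using the explicit description of $\varphi$ via $\otimes$ in $\ca{T}_{\ca{P}}$ shows that this composite isomorphism is indeed $\varphi_{A,B}$.

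For the second stage, I would use the standard fact that in a Brown category every object $X \in \ca{T}_{\ca{P}}$ is the minimal weak colimit of a filtered diagram $i \mapsto A_i$ in $\cell(\ca{P})$ (these $A_i$ are compact by assumption and the existence follows from Brown representability applied to $F_B X$). By definition $F_B$ sends such minimal weak colimits to genuine colimits in $[\ca{T}_{\ca{P}},\Ab]_{\mathrm{small}}$, and Day convolution preserves colimits in each variable (being a left Kan extension). Writing $X$ and $Y$ as minimal weak colimits of $A_i$ and $B_j$ from $\cell(\ca{P})$, it suffices to show that $X \otimes Y$ is the minimal weak colimit of $A_i \otimes B_j$ — equivalently, that $F_B(X \otimes Y) \cong \colim_{i,j} F_B(A_i \otimes B_j)$. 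Combined with the first stage and naturality of $\varphi$, this reduces the claim to an instance already established.

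The main obstacle is the final bookkeeping: verifying that the tensor product of two minimal weak colimits is again a minimal weak colimit, i.e.\ that $F_B(X \otimes Y)$ is the filtered colimit of $F_B(A_i \otimes B_j)$. This is where the properties of Brown categories are really used. One way to argue is to observe that since each $A_i$ is compact and dualizable, the functor $A_i \otimes -$ admits a right adjoint $A_i^{\vee} \otimes -$ on $\ca{T}_{\ca{P}}$, so $A_i \otimes -$ preserves minimal weak colimits (this follows from the fact that $F_B$ converts such colimits to colimits and commutes with $A_i \otimes -$ via the rigid case handled above). Iterating in both variables, $X \otimes Y$ is exhibited as the minimal weak colimit of $A_i \otimes B_j$, completing the proof.
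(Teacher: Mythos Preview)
Your approach is essentially the same as the paper's: reduce to the rigid subcategory $\cell(\ca{P})$ via duality and Day convolution of representables, then extend using minimal weak colimits. The paper packages the extension step by passing through the equivalence $\ca{T}_{\ca{P}}\slash{\sim} \simeq \Ind\bigl(\cell(\ca{P})^{\op}\bigr)$ and then invoking \cite[Proposition~2.2.4(e)]{HOVEY_PALMIERI_STRICKLAND} for the fact that $-\otimes Y$ preserves minimal weak colimits for \emph{every} $Y$, whereas you argue more directly.

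There is, however, a small gap in your ``iterating in both variables'' step. You establish that $A_i \otimes -$ preserves minimal weak colimits because $A_i$ is dualizable, and symmetrically $- \otimes B_j$ does. But to conclude that $X \otimes Y$ is the minimal weak colimit of $A_i \otimes B_j$ you still need one application of $-\otimes Y$ (or $X \otimes -$) with the \emph{non}-dualizable factor, and your dualizability argument does not cover that case. Fortunately the missing fact is immediate from the definition and does not require dualizability at all: for any $Y$ and any test object $W$ one has
\[
F_B(X \otimes Y)(W) \;=\; \ca{T}_{\ca{P}}(\U, X \otimes Y \otimes W) \;=\; F_B(X)(Y \otimes W) \smash{\rlap{,}}
\]
so if $X$ is a minimal weak colimit of $A_i$, then $F_B(X \otimes Y) \cong \colim_i F_B(A_i \otimes Y)$ and hence $X \otimes Y$ is a minimal weak colimit of $A_i \otimes Y$. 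With this observation in place your argument goes through; you may wish to replace the appeal to dualizability of $A_i$ by this direct computation, which is both simpler and exactly what the paper's cited reference provides.
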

 
 \begin{proof}
 Note that there are no non-zero phantom morphisms $X \rightarrow Y$ if the domain $X$ lies in $\cell(\ca{P})$, so we have a natural inclusion $\cell(\ca{P}) \rightarrow \ca{T}_{\ca{P}} \slash{\sim}$.
 
 Combining the definition of Brown categories with Lemma~\ref{lemma:homological_left_kan}, it follows that the composite of $F_{B}^{\prime}$ with the functor $[\ca{T}_{\ca{P}},\Ab]_{\mathrm{small}} \rightarrow [\cell(\ca{P}),\Ab]$ given by restriction along $\cell(\ca{P}) \rightarrow \ca{T}_{\ca{P}}$ gives an equivalence between $\ca{T}_{\ca{P}}\slash{\sim}$ and the category of \emph{all} homological functors $\cell(\ca{P}) \rightarrow \Ab$. The latter is well-known to be equivalent to the category of flat functors, so there exists an equivalence
 \[
 \ca{T}_{\ca{P}} \slash{\sim} \simeq \Ind\bigl(\cell(\ca{P})^{\op} \bigr)
 \]
 which restricts to the Yoneda embedding along the inclusion $\cell(\ca{P}) \rightarrow \ca{T}_{\ca{P}} \slash{\sim}$. Moreover, by \cite[Proposition~2.2.4.(e)]{HOVEY_PALMIERI_STRICKLAND}, minimal weak colimits are preserved by $-\otimes Y$ for all $Y \in \ca{T}_{\ca{P}}$. Since every filtered diagram in $\cell(\ca{P})$ has a minimal weak colimit in a Brown category (\cite[Theorem~4.2.3]{HOVEY_PALMIERI_STRICKLAND}), the above equivalence reduces the problem to checking that the restriction of $F_{B}^{\prime}$ to $\cell(\ca{P})$ is strong monoidal.
 
 This restriction is naturally isomorphic to $A \mapsto \ca{T}_{\ca{P}}(A^{\vee},-)$. Thus the Kan extension defining the Day convolution tensor product of $B^{\prime} A_0$ and $B^{\prime} A_1$ is the left Kan extension of
 \[
 \ca{T}_{\ca{P}} \otimes \ca{T}_{\ca{P}}\bigl((A_0^{\vee},A_1^{\vee}),-\bigr) \colon \ca{T}_{\ca{P}} \otimes \ca{T}_{\ca{P}} \rightarrow \Ab
 \]
 along $\otimes \colon \ca{T}_{\ca{P}} \otimes \ca{T}_{\ca{P}} \rightarrow \ca{T}_{\ca{P}}$. Since left Kan extensions preserve representables, the Day convolution is naturally isomorphic to $\ca{T}_{\ca{P}}\bigl((A_0 \otimes A_1)^{\vee},-\bigr)$. Using this, one can check that $\varphi_{A_0,A_1}$ is indeed an isomorphism for $A_0, A_1 \in \cell(\ca{P})$.
 \end{proof}
 
 A \emph{commutative quasi-ring} in $\ca{T}_{\ca{P}}$ is a commutative algebra in $\ca{T}_{\ca{P}}\slash{\sim}$, that is, an object $E \in \ca{T}_{\ca{P}}$, together with a multiplication $\mu \colon E \otimes E \rightarrow E$ and a unit $\eta \colon \U \rightarrow E$ which is associative and unital up to phantom morphism.
 
 \begin{cor}\label{cor:monoidal_Brown_representability}
 Suppose that $\ca{T}_{\ca{P}}$ is a Brown category. Then the functor
 \[
 \CAlg(\ca{T}_{\ca{P}} \slash{\sim} ) \rightarrow \slm_{\Ab}(\ca{T}_{\ca{P}},\Ab)
 \]
 which sends a commutative quasi-ring $(E,\mu,\eta)$ to the functor $\ca{T}_{\ca{P}}(\U,E\otimes -)$ with lax monoidal structure $\varphi_{X,Y}$ given by the morphism
 \[
 \ca{T}_{\ca{P}}(\U,E\otimes X) \otimes  \ca{T}_{\ca{P}}(\U,E\otimes Y) \rightarrow  \ca{T}_{\ca{P}}(\U,E\otimes X \otimes Y)
 \]
 which sends $\alpha \otimes \beta$ to the composite
 \[
 \xymatrix@C=30pt{ \U \ar[r]^-{\alpha \otimes \beta} & E \otimes X \otimes E \otimes Y \ar[r]^-{\cong} &  E\otimes E \otimes X \otimes Y \ar[r]^-{\mu \otimes X \otimes Y} & E \otimes X \otimes Y}
 \]
and $\varphi_0$ given by $\eta$ is full and faithful. Its image consists of the symmetric lax monoidal additive functors which are homological and which preserve coproducts.
 \end{cor}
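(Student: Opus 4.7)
The plan is to deduce this corollary by applying the equivalence from Proposition~\ref{prop:Brown_functor_monoidal} at the level of commutative algebras. By the Brown category assumption, the functor
\[
F_B^{\prime} \colon \ca{T}_{\ca{P}} \slash{\sim} \rightarrow [\ca{T}_{\ca{P}},\Ab]_{\mathrm{small}}
\]
is fully faithful with essential image consisting of all coproduct preserving homological functors. Proposition~\ref{prop:Brown_functor_monoidal} upgrades this to a symmetric strong monoidal functor, so it induces a fully faithful functor
\[
\CAlg(F_B^{\prime}) \colon \CAlg(\ca{T}_{\ca{P}} \slash{\sim}) \rightarrow \CAlg\bigl([\ca{T}_{\ca{P}},\Ab]_{\mathrm{small}}\bigr)
\]
whose essential image consists of those commutative algebras whose underlying functor is coproduct preserving and homological.

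Next I would identify commutative algebras in $[\ca{T}_{\ca{P}},\Ab]_{\mathrm{small}}$ for the Day convolution with symmetric lax monoidal additive functors $\ca{T}_{\ca{P}} \rightarrow \Ab$ (whose underlying functor is small). This is the standard Day convolution universal property: the Day tensor product is characterised by the fact that the Yoneda embedding is symmetric strong monoidal and by the left Kan extension description used in the proof of Proposition~\ref{prop:Brown_functor_monoidal}, so maps $F \dt G \rightarrow H$ in $[\ca{T}_{\ca{P}},\Ab]_{\mathrm{small}}$ correspond bijectively to bilinear natural transformations $F(X) \otimes G(Y) \rightarrow H(X \otimes Y)$. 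Combined with Lemma~\ref{lemma:homological_left_kan}, which forces any coproduct preserving homological functor to be small, this shows that the essential image of $\CAlg(F_B^{\prime})$ is exactly the set of symmetric lax monoidal additive functors $\ca{T}_{\ca{P}} \rightarrow \Ab$ which are homological and preserve coproducts.

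The remaining task is to check that the lax monoidal structure transported through $F_B^{\prime}$ along a commutative quasi-ring $(E,\mu,\eta)$ coincides with the explicit one given in the statement. By the symmetric strong monoidal structure of $F_B^{\prime}$ (constructed in the proof of Proposition~\ref{prop:Brown_functor_monoidal}), the multiplication $\mu$ corresponds to the natural transformation whose component at $(X,Y)$ is the composite indicated: pair $\alpha \colon \U \rightarrow E \otimes X$ and $\beta \colon \U \rightarrow E \otimes Y$ by tensoring, apply the symmetry to rearrange to $E \otimes E \otimes X \otimes Y$, and then compose with $\mu \otimes X \otimes Y$; analogously $\eta$ corresponds to $\varphi_0$. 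This unpacking is formal given how the isomorphism $F_B^{\prime}(E) \dt F_B^{\prime}(E) \cong F_B^{\prime}(E\otimes E)$ was constructed in Proposition~\ref{prop:Brown_functor_monoidal}.

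The main obstacle is likely this last bookkeeping step: one has to trace the Day convolution isomorphism $F_B^{\prime}(X) \dt F_B^{\prime}(Y) \cong F_B^{\prime}(X \otimes Y)$ back through the left Kan extension description, check that it agrees on representables with the tensor product of morphisms in $\ca{T}_{\ca{P}}$, and then evaluate the composite $F_B^{\prime}(\mu) \circ \text{(Day iso)}$ to recognise the claimed formula for $\varphi_{X,Y}$. None of this is deep, but it requires care with the symmetry isomorphism to land on the stated expression.
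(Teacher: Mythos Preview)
Your proposal is correct and follows essentially the same route as the paper: apply Proposition~\ref{prop:Brown_functor_monoidal} to lift the fully faithful strong monoidal functor $F_B^{\prime}$ to commutative algebras, identify commutative algebras for Day convolution with symmetric lax monoidal functors, and read off the explicit lax structure from the definition of $F_B$. The paper's proof is terser (it omits the explicit invocation of Lemma~\ref{lemma:homological_left_kan} for smallness, having noted this just before the proposition), but the content is the same.
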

 
 \begin{proof}
 Since the full and faithful functor $F_{B}^{\prime}$ is strong monoidal by Proposition~\ref{prop:Brown_functor_monoidal}, it lifts to a full and faithful functor of commutative algebras. From the definition of the Day convolution tensor product as left Kan extension, it follows that the commutative algebras for the Day convolution are precisely the symmetric lax monoidal additive functors. The explicit description of the lax monoidal structure of $\ca{T}_{\ca{P}}(\U,E\otimes -)$ is immediate from the lax monoidal structure of the functor $F_B \colon \ca{T}_{\ca{P}} \rightarrow [\ca{T}_{\ca{P}},\Ab]_{\mathrm{small}}$. 
 \end{proof}
 
 \begin{dfn}
 We say that a commutative quasi-ring $(E,\mu,\eta)$ in $\ca{T}_{\ca{P}}$ is $\ca{T}$-flat relative to $K \colon \ca{P} \rightarrow \ca{T}$ if the symmetric lax monoidal homological functor
 \[
 \ca{T}_{\ca{P}}(\U,E\otimes -) \colon \ca{T}_{\ca{P}} \rightarrow \Ab
 \]
 is $\ca{T}$-flat relative to $K$ in the sense of Definition~\ref{dfn:homology_theory_of_Adams_type}.
 \end{dfn}
 
 The following proposition shows that a homotopy commutative ring spectrum (a commutative ring object in the stable homotopy category $\mathrm{SH}$) is $\mathrm{SH}$-flat relative to the functor $K$ induced from the Picard group of the stable homotopy category if and only if it is topologically flat in the sense of \cite[Definition~1.4.5]{HOVEY}.
 
 \begin{prop}\label{prop:Adams_quasi_ring_characterization}
 If $\ca{T}_{\ca{P}}$ is a Brown category, then a quasi-ring $E$ in $\ca{T}_{\ca{P}}$ is $\ca{T}$-flat relative to $K$ if and only if $E$ is a minimal weak colimit of a filtered diagram of objects $E_i \in \cell(\ca{P})$ with the property that $ \ca{T}_{\ca{P}}(K-,E\otimes E_i) \in \ca{K}_A $
 is finitely generated projective, where $A=\ca{T}(K-,E)$ denotes the coefficient ring of the symmetric lax monoidal functor $\ca{T}_{\ca{P}} \rightarrow \ca{K}$ which sends $X$ to $\ca{T}_{\ca{P}}(K-,E\otimes X)$.
 \end{prop}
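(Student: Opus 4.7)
The plan is to combine Proposition~\ref{prop:Adams_characterization} (which characterizes $\ca{T}$-flatness via filtered colimits of representables on $\ca{A}_H$) with the Brown-representability dictionary between commutative quasi-rings and coproduct-preserving homological functors (Corollary~\ref{cor:monoidal_Brown_representability}) and the minimal-weak-colimit machinery of \cite{HOVEY_PALMIERI_STRICKLAND}. Unpacking: $\bar{F}(X) = \ca{T}_{\ca{P}}(K-, E\otimes X)$, so the coefficient ring is $A = \ca{T}_{\ca{P}}(K-,E)$, and $H(X) = \bar{F}^{\mathrm{norm}}(X)$ is exactly $\ca{T}_{\ca{P}}(K-, E\otimes X)$ viewed as an $A$-module. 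Because $\U \in \ca{K}$ is finitely presentable projective, the same is true of $A \in \ca{K}_A$, so duals in $\ca{K}_A$ coincide with finitely generated projectives (as already used in the proof of Proposition~\ref{prop:basic_properties_of_H-duals}(v)). Hence the condition ``$\ca{T}_{\ca{P}}(K-, E\otimes E_i) \in \ca{K}_A$ is finitely generated projective'' is equivalent to $E_i \in \ca{A}_H$.

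For the implication $(\Leftarrow)$, assume $E$ is a minimal weak colimit of a filtered diagram $\{E_i\}$ in $\ca{A}_H \subseteq \cell(\ca{P})$. Each $E_i$ is dualizable in $\ca{T}$ by Lemma~\ref{lemma:cell_P_rigid}, and $E_i^\vee \in \ca{A}_H$ by Proposition~\ref{prop:basic_properties_of_H-duals}(iii). The defining property of a minimal weak colimit gives natural isomorphisms $F(X) = \ca{T}_{\ca{P}}(\U, E\otimes X) \cong \colim_i \ca{T}_{\ca{P}}(\U, E_i\otimes X) \cong \colim_i \ca{T}_{\ca{P}}(E_i^\vee, X)$ for all $X \in \ca{T}_{\ca{P}}$, and in particular for $X \in \ca{A}_H$. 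This presents $F\vert_{\ca{A}_H}$ as a filtered colimit of representables, so Proposition~\ref{prop:Adams_characterization} (criterion (iii), with $A_i \defl E_i$) yields $\ca{T}$-flatness.

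For $(\Rightarrow)$, Proposition~\ref{prop:Adams_characterization}(iii) provides a filtered diagram $\{A_i\}$ in $\ca{A}_H$ and isomorphisms $\colim_i \ca{A}_H(A_i^\vee, X) \cong F(X)$ for $X \in \ca{A}_H$. I set $E_i \defl A_i$ and must upgrade this to a natural isomorphism on the whole of $\ca{T}_{\ca{P}}$, i.e.\ to an equality $F_B(E) \cong \colim_i F_B(A_i)$ of coproduct-preserving homological functors. Both sides are indeed coproduct-preserving and homological: the right side is $F_B(E)$, and the left side is a filtered colimit of representables $\ca{T}_{\ca{P}}(A_i^\vee, -)$ with $A_i^\vee \in \cell(\ca{P})$ compact. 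The locus of $X \in \ca{T}_{\ca{P}}$ where the comparison is an isomorphism is a thick subcategory (closed under shifts by compatibility, under retracts trivially, and under cofibers by the five lemma applied to the two long exact sequences of a distinguished triangle); since it contains $\ca{A}_H \supseteq K(\ca{P})$ it contains the thick closure $\cell(\ca{P})$, and Lemma~\ref{lemma:homological_left_kan} then forces the iso to extend to all of $\ca{T}_{\ca{P}}$.

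Finally, one has to argue that this equality of $F_B$-values exhibits $E$ itself (and not merely some object in its phantom-equivalence class) as a minimal weak colimit. A minimal weak colimit $E^{\prime}$ of $\{E_i\}$ exists in the Brown category $\ca{T}_{\ca{P}}$ by \cite[Theorem~4.2.3]{HOVEY_PALMIERI_STRICKLAND}, and $F_B(E) \cong \colim F_B(E_i) \cong F_B(E^{\prime})$; fullness and faithfulness of $F_B^{\prime}$ gives an isomorphism $E \cong E^{\prime}$ in $\ca{T}_{\ca{P}}/{\sim}$, and lifting this to a morphism of $\ca{T}_{\ca{P}}$ and composing with the cocone into $E^{\prime}$ produces a cocone $\{E_i \to E\}$ whose image under $F_B$ is still the colimit cocone, so $E$ itself is a minimal weak colimit. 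The delicate step is the extension of the iso from $\ca{A}_H$ (which is not a priori triangulated) to $\ca{T}_{\ca{P}}$: this relies on the fact that $\ca{A}_H$ generates $\cell(\ca{P})$ thickly together with Lemma~\ref{lemma:homological_left_kan}, after which the identification of $E$ itself as a minimal weak colimit is a formal consequence of the monoidal Brown representability of Corollary~\ref{cor:monoidal_Brown_representability}.
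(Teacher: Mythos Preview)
Your approach is essentially the same as the paper's, and the argument is correct in outline. Two places deserve tightening. First, when you speak of ``the locus of $X$ where the comparison is an isomorphism,'' you need a globally defined natural transformation to compare; the paper obtains it as the canonical map $\Lan_{J}(F\vert_{\ca{A}_H}) \Rightarrow F$ (and identifies the domain with $\colim_i \ca{T}_{\ca{P}}(A_i^{\vee},-)$ since left Kan extension preserves colimits and representables), after which the localizing-subcategory argument runs directly without the detour through $\cell(\ca{P})$ and Lemma~\ref{lemma:homological_left_kan}. Second, in your last step you only check that $F_B$ sends the cocone $\{E_i \to E\}$ to a colimit cocone; to conclude that $E$ is a \emph{minimal weak} colimit you also need the cocone to be a weak colimit, and for this the paper invokes conservativity of $F_B$ \cite[Theorem~4.1.8]{HOVEY_PALMIERI_STRICKLAND}: the lifted morphism $E^{\prime} \to E$ is sent to an isomorphism by $F_B$, hence is an honest isomorphism in $\ca{T}_{\ca{P}}$, so the weak-colimit property transfers from $E^{\prime}$ to $E$.
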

 
 \begin{proof}
 First assume that $E$ is $\ca{T}$-flat relative to $K$. With $F = \ca{T}_{\ca{P}}(\U,E\otimes -)$, $H = \bar{F}^{\mathrm{norm}}$, the category $\ca{A}_H$ is the full subcategory of $\cell(\ca{P})$ of objects $Y$ with the property that $\ca{T}_{\ca{P}}(K-,E\otimes Y) \in \ca{K}_A$ is finitely generated and projective. Moreover, the restriction of $F$ to $\ca{A}_H$ is isomorphic to $\colim \ca{A}_H(E_i^{\vee},-)$ for some filtered diagram $i \mapsto E_i \in \ca{A}_H$ by Proposition~\ref{prop:Adams_characterization}~(iii).
  
As in the proof of Lemma~\ref{lemma:homological_left_kan}, the flatness of the restriction of $F$ to $\ca{A}_H$ implies that $F$ is naturally isomorphic to the left Kan extension of this restriction along the  inclusion $J \colon \ca{A}_H \rightarrow \ca{T}_{\ca{P}} $. Since left Kan extensions preserve representables and colimits, $\ca{T}_{\ca{P}}(\U,E\otimes -)$ is naturally isomorphic to $\colim_{i} \ca{T}_{\ca{P}}(\U,E_i \otimes -)$. The functor $F_B \colon \ca{T}_{P} \rightarrow [\ca{T}_{\ca{P}},\Ab]_{\mathrm{small}}$ is full and there are no non-zero phantom morphisms with domain $E_i$, so the colimit cocone is the image of a cocone $E_i \rightarrow E$ in $\ca{T}_{\ca{P}}$. The comparison morphism from any minimal weak colimit of the $E_i$ is thus sent to an isomorphism by $F_B$, so it is an isomorphism since $F_B$ is conservative by \cite[Theorem~4.1.8]{HOVEY_PALMIERI_STRICKLAND} (where $F_B$ is denoted by $V_{\bullet}$). Thus $E$ is indeed a minimal weak colimit of the objects $E_i$.
 
 The converse follows directly from the definition of minimal weak colimits and Proposition~\ref{prop:Adams_characterization}~(iii).
\end{proof}

 \begin{cor}\label{cor:Adams_replacement_for_Brown_categories}
 Assume that $\ca{T}_{\ca{P}}$ is a Brown category. If $F \colon \ca{T} \rightarrow \Ab$ has a flat replacement $F(W) \colon \ca{T} \rightarrow \Ab$, then there exists a $\ca{T}$-flat commutative quasi-ring $E \in \ca{T}_{\ca{P}}$ (relative to $K$) such that the restriction of $F(W)$ to $\ca{T}_{\ca{P}}$ is naturally isomorphic to $\ca{T}_{\ca{P}}(\U,E \otimes-)$.
 \end{cor}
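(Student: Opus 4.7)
The plan is to realise $E$ as the object representing the restriction of $F(W)$ to $\ca{T}_{\ca{P}}$, by means of the monoidal Brown representability statement of Corollary~\ref{cor:monoidal_Brown_representability}. To apply that corollary, the key preliminary task is to verify that $F(W)|_{\ca{T}_{\ca{P}}}$ is a symmetric lax monoidal additive functor that is both coproduct preserving and homological.

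Recall that $F(W)$ is, by definition, the composite
\[
\xymatrix@C=18pt{\ca{T} \ar[r]^-{\widetilde{J}} & [\ca{A}_H^{\op},\Ab] \ar[r]^-{L} & \sh(\ca{A}_H) \ar[r]^-{W} & \ca{K}_A \ar[r]^-{\mathrm{forget}} & \ca{K} \ar[r]^-{\ev_{\U}} & \Ab.}
\]
Theorem~\ref{thm:Adams_replacement} together with the construction already ensures that $F(W)$ is symmetric lax monoidal and additive, so these properties restrict. For homologicality, $\widetilde{J}=\ca{T}(J(-),-)$ is homological since each $\ca{T}(A,-)$ takes distinguished triangles to exact sequences, and each of $L$, $W$, $\mathrm{forget}$ and $\ev_{\U}$ is exact; so the composite is homological. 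For coproduct preservation, the key input is that the inclusion $\ca{A}_H\subseteq \cell(\ca{P})$ consists of compact objects (since all objects of $\cell(\ca{P})$ are compact under our standing hypothesis that the $Kp$ are compact), hence $\widetilde{J}$ sends coproducts in $\ca{T}_{\ca{P}}$ to coproducts of representables; the remaining factors $L$ and $W$ are left adjoints, while $\mathrm{forget}\colon\ca{K}_A\to\ca{K}$ and $\ev_{\U}\colon\ca{K}\to\Ab$ preserve all colimits.

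With this in hand, Corollary~\ref{cor:monoidal_Brown_representability} produces a commutative quasi-ring $(E,\mu,\eta)\in\ca{T}_{\ca{P}}$ and a natural isomorphism of symmetric lax monoidal functors
\[
\ca{T}_{\ca{P}}(\U,E\otimes-)\;\cong\;F(W)\bigr|_{\ca{T}_{\ca{P}}}.
\]
It remains to verify that $E$ is $\ca{T}$-flat relative to $K$. By Theorem~\ref{thm:Adams_replacement}, the functor $F(W)\colon\ca{T}\to\Ab$ is $\ca{T}$-flat relative to $K$. The site $\ca{A}_{H(W)}$ and the associated functor tensor product $-\ten{\ca{A}_{H(W)}}H(W)$ appearing in Definition~\ref{dfn:homology_theory_of_Adams_type} depend only on the values of $F(W)$ on $\cell(\ca{P})\subseteq \ca{T}_{\ca{P}}$, so $\ca{T}$-flatness of $F(W)$ transfers across the above natural isomorphism to $\ca{T}_{\ca{P}}(\U,E\otimes-)$, which by definition is exactly the assertion that $E$ is a $\ca{T}$-flat commutative quasi-ring relative to $K$.

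I do not anticipate a serious obstacle: the argument is essentially bookkeeping. The only delicate point is to make sure that the coproduct-preservation and homologicality of the composite defining $F(W)$ are correctly tracked (so that Corollary~\ref{cor:monoidal_Brown_representability} applies) and that the two meanings of ``$\ca{T}$-flat''---one for symmetric lax monoidal functors, one for quasi-rings---are correctly identified under the representing isomorphism; both reduce to the observation that $\ca{A}_{H(W)}\subseteq\cell(\ca{P})\subseteq\ca{T}_{\ca{P}}$.
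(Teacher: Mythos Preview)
Your proposal is correct and follows essentially the same approach as the paper: verify that the restriction of $F(W)$ to $\ca{T}_{\ca{P}}$ is homological and coproduct preserving, invoke Corollary~\ref{cor:monoidal_Brown_representability} to obtain the quasi-ring $E$, and transfer $\ca{T}$-flatness from Theorem~\ref{thm:Adams_replacement}. The paper orders the steps slightly differently (establishing $\ca{T}$-flatness first, via Proposition~\ref{prop:Adams_characterization}) and leaves homologicality implicit, but the content is the same.
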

 
 \begin{proof}
 From Theorem~\ref{thm:Adams_replacement} we know that $F(W)$ is $\ca{T}$-flat relative to $K$, so the same is true for its restriction to $\ca{T}_{\ca{P}}$ (by Proposition~\ref{prop:Adams_characterization}). 
 
 The assumption that each $Kp$ is compact implies that $\cell(\ca{P})$ consists of compact objects, from which it follows that the restriction of $\widetilde{J}$ to $\ca{T}_{\ca{P}}$ preserves coproducts. Since all the other functors appearing in the composite~\eqref{eqn:Adams_replacement_from_W} preserve coproducts, it follows that the restriction of $F(W)$ to $\ca{T}_{\ca{P}}$ preserves coproducts. By Corollary~\ref{cor:monoidal_Brown_representability}, this restriction is of the form $\ca{T}_{\ca{P}}(\U,E\otimes -)$ for a commutative quasi-ring $E$.
 \end{proof}

\section{Locally free objects}

\subsection{Preliminaries}\label{section:grading}
 In Section~\ref{section:Adams_replacement}, the following question arose naturally: does there exist a faithful, exact, symmetric strong monoidal left adjoint $W \colon \sh(\ca{A}_H) \rightarrow \ca{K}_A$ which is compatible with the $\ca{K}$-enrichment? In this section, we develop some techniques to address this question for particular categories $\ca{P}$ and $\ca{K}=[\ca{P}^{\op},\Ab]$.
 
 Throughout this section, we fix a pair $(G,\varepsilon)$ of an abelian group $G$ and a homomorphism $\varepsilon \colon G \rightarrow \{1,-1 \}$. We call the elements $i \in G$ with $\varepsilon(i)=1$ \emph{even} and the other elements \emph{odd}. The category $\ca{P}$ is the free $\Ab$-enriched category on the discrete category $G$. It is strict monoidal via addition in $G$. The symmetry isomorphism is given by the Koszul sign rules, that is, the symmetry isomorphism $i+j \rightarrow j+i$ is given by $-1$ if both $i$ and $j$ are odd, and by the identity otherwise.

 The category $\ca{K}=[\ca{P}^{\op},\Ab]$ is the category of $G$-graded abelian groups. We denote it by $(G,\varepsilon)\mbox{-}\Ab$. A Grothendieck tensor category $\ca{C}$ equipped with a symmetric strong monoidal functor $K \colon \ca{P} \rightarrow \ca{C}$ is called a \emph{$(G,\varepsilon)$-graded Grothendieck tensor category}. We use the notation $L_i \defl Ki$ since these objects are reminiscent of line bundles. Given a second $(G,\varepsilon)$-graded tensor category $\ca{D}$, a \emph{graded tensor functor} $\ca{C} \rightarrow \ca{D}$ is a symmetric strong monoidal left adjoint $F \colon \ca{C} \rightarrow \ca{D}$ with a symmetric monoidal isomorphism $F(L_i) \cong L_i$. Under the equivalence of Proposition~\ref{prop:strong_monoidal_enriched_equivalent_to_slice}, these categories and functors correspond to categories and functors enriched in $(G,\varepsilon)\mbox{-}\Ab$.
 
 In order to show that flat replacements exist, we need to show that there exists a commutative algebra $A \in (G,\varepsilon)\mbox{-}\Ab$ and a symmetric strong monoidal left adjoint
 \[
 W \colon \ca{C} \rightarrow (G,\varepsilon)\mbox{-}\Ab_{A}
 \]
 which is faithful, exact, and compatible with the enrichment. This is precisely the existence problem for fiber functors in (enriched) Tannaka duality.

\subsection{Recollections about Adams algebras}

 The key idea in Deligne's existence theorem for fiber functors in \cite{DELIGNE} and its generalizations in \cite{DELIGNE_TENSORIELLES, SCHAEPPI_GEOMETRIC, SCHAEPPI_COLIMITS} is the construction of a faithfully flat algebra $B$ in $\ca{C}$ such that the unit of $\ca{C}_B$ is a projective generator (which immediatley implies the desired equivalence $\ca{C}_B$ with the category of modules over the ring of endomorphisms of the unit object $B$ of $\ca{C}_B$). Thus it is crucial to have good criteria for an algebra to be faithfully flat. The class of \emph{Adams algebras} introduced in \cite[\S 4]{SCHAEPPI_COLIMITS} has several convenient properties which facilitate this. Let $\ca{C}^{\mathrm{dual}}$ be the full category of $\ca{C}$ consisting of objects with a dual. 
 
 \begin{dfn}\label{dfn:Adams_algebra}
  A commutative algebra $(A,\mu,\eta)$ in $\ca{C}$ with unit $\eta \colon \U \rightarrow A$ is called an \emph{Adams algebra} if the following conditions are satisfied.
 
\begin{enumerate}
\item[(i)]
 There exist a filtered diagram $\ca{I} \rightarrow \ca{C}^{\mathrm{dual}}$, $j \mapsto A_j$ whose colimit in $\ca{C}$ is the underlying object $A$ of the algebra $(A,\mu,\eta)$;
 \item[(ii)]
 In addition, there exists a cone $\eta_j \colon \U \rightarrow A_j $ whose colimit is $\eta \colon \U \rightarrow A$;
 \item[(iii)]
 For each object $j \in \ca{I}$, the dual morphism $\eta_j^{\vee} \colon A_j^{\vee} \rightarrow \U^{\vee} \cong \U$ of the cone in~(ii) is an epimorphism.
\end{enumerate} 

\end{dfn} 
  
  The key result about Adams algebras that we need is the following.
 
 \begin{prop}\label{prop:Adams_iff_faithfully_flat}
 Let $\ca{C}$ be a symmetric monoidal closed locally finitely presentable abelian category with finitely presentable unit object. Let $(A,\mu,\eta)$ be a commutative algebra which satisfies condition (i) above. Then $A$ is an Adams algebra if and only if the base change functor
 \[
 (-)_A \colon \ca{C} \rightarrow \ca{C}_A, \quad C \mapsto C_A \defl A \otimes C
 \]
 is faithful and left exact. 
 \end{prop}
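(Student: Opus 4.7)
The proof splits into the two implications, with the forward direction a direct verification and the backward direction requiring a construction of a new filtered diagram presenting $A$.

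For the forward direction, the functor $(-)_A = A \otimes (-)$ is right exact as a left adjoint in the closed monoidal category $\ca{C}$. Left exactness follows from condition~(i): every dualizable $A_j$ is flat since $- \otimes A_j$ has both adjoints and is therefore exact, and a filtered colimit of exact functors is exact in the Grothendieck abelian category $\ca{C}$. For faithfulness, since $(-)_A$ is exact it suffices to show it reflects zero objects. Given $C$ with $A \otimes C = 0$, pick any finitely presentable $X$ and any morphism $f \colon X \to C$. Finite presentability of $X$ yields
\[
0 = \ca{C}(X, A \otimes C) = \colim_j \ca{C}(X, A_j \otimes C),
\]
so $(\eta_k \otimes C) \circ f = 0$ for some $k$. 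Translating via the duality adjunction for the dualizable $A_k$, this becomes $f \circ (X \otimes \eta_k^{\vee}) = 0$, and $X \otimes \eta_k^{\vee}$ is an epimorphism since $\eta_k^{\vee}$ is epi by~(iii) and $X \otimes (-)$ preserves epimorphisms as a left adjoint; hence $f = 0$. Since $\ca{C}$ is locally finitely presentable, the finitely presentable objects form a strong generator, and it follows that $C = 0$.

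For the backward direction, assume $(-)_A$ is faithful and left exact. Using condition~(i), finite presentability of $\U$ allows us to factor $\eta$ through some $A_{j_0}$, and passing to the cofinal subdiagram of indices above $j_0$ yields a cone $\eta_j \colon \U \to A_j$ with colimit $\eta$; this cone need not satisfy~(iii). My plan is to replace the indexing by the category $\ca{J}$ whose objects are triples $(P, p, q)$ with $P$ dualizable, $p \colon \U \to P$, $q \colon P \to A$ satisfying $qp = \eta$, and $p^{\vee} \colon P^{\vee} \to \U$ an epimorphism, with morphisms the obvious commuting triangles. This category has an initial object $(\U, \id, \eta)$ and binary coproducts
\[
(P, p, q) \sqcup (P', p', q') \defl \bigl( P \otimes P', \; p \otimes p', \; \mu \circ (q \otimes q') \bigr),
\]
which lie in $\ca{J}$ because $p^{\vee} \otimes p'^{\vee}$ is a tensor of epimorphisms and therefore epi (tensoring is a left adjoint in each variable), and $(\mu \circ (q \otimes q')) \circ (p \otimes p') = \mu \circ (\eta \otimes \eta) = \eta$ by the unit axioms for the algebra $A$. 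Granting that $\ca{J}$ is filtered and that $\colim_{\ca{J}} P \cong A$ via the cocone $q$, the image of $\ca{J}$ in $\ca{C}^{\mathrm{dual}}$ furnishes the desired Adams algebra structure. The colimit identification should follow by a cofinality argument, exhibiting each object of the original $\ca{I}$-diagram as dominated inside $\ca{J}$ after absorbing the data of $p$ through the algebra structure.

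The main obstacle is verifying filteredness of $\ca{J}$, which reduces to coequalizing parallel morphisms $f, g \colon (P, p, q) \to (P', p', q')$. Compatibility with $p$ and $q$ forces $(f - g) \circ p = 0$ and $q' \circ (f - g) = 0$, so $f - g$ factors from $P / p(\U)$ into $\ker q'$; however, the naive cokernel is generally not dualizable. I anticipate using the commutative algebra structure of $A$---specifically, that $\eta \otimes A \colon A \to A \otimes A$ is split by $\mu$---together with the hypothesis that $A$ is faithfully flat. Base-changing to $\ca{C}_A$ should trivialize the difference $f - g$ via the splitting, after which faithful flatness permits descent back to $\ca{C}$ of a dualizable object $(Q, p_Q, q_Q) \in \ca{J}$ and a morphism $P' \to Q$ in $\ca{J}$ identifying $f$ and $g$. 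This descent step is the technical heart of the argument and I expect it to follow the original construction of Adams algebras in~\cite{SCHAEPPI_COLIMITS}.
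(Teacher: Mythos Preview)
Your forward direction is a clean, correct, self-contained argument; the paper gives none of this and simply cites \cite[Proposition~4.1.6]{SCHAEPPI_COLIMITS}. One small remark: the computation $\ca{C}(X,A\otimes C)=\colim_j \ca{C}(X,A_j\otimes C)$ uses both that $X$ is finitely presentable and that $-\otimes C$ preserves filtered colimits (closedness), and you should say explicitly that the element $(\eta_{j_0}\otimes C)\circ f$ represents zero in the colimit, hence dies after some transition map---this is what produces your index $k$.

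For the backward direction the paper again does nothing beyond citing \cite[Proposition~4.1.2]{SCHAEPPI_COLIMITS}, so your sketch is already more than the paper offers. That said, two points are worth flagging. First, your tensor construction $(P\otimes P',p\otimes p',\mu\circ(q\otimes q'))$ is \emph{not} a coproduct in $\ca{J}$: there is no universal property, only the canonical inclusions $P\cong P\otimes\U\xrightarrow{P\otimes p'}P\otimes P'$ and its symmetric partner. What you have shown is that every pair of objects admits an upper bound, which is exactly what is needed for filteredness, so the content is right but the word is wrong. Second, your proposed coequalization via ``descent along faithful flatness'' is genuinely delicate: effective descent for dualizable objects along a faithfully flat algebra in an arbitrary Grothendieck tensor category is not automatic, and your sketch does not indicate how the split over $A$ would be transported back. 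You are right that this is the technical heart, and you are right that \cite{SCHAEPPI_COLIMITS} is where it is done; but the mechanism there is rather different from a descent argument, so I would not claim that your outline ``should follow'' that reference without checking. Likewise the identification $\colim_{\ca{J}}P\cong A$ needs an argument: the original $A_j$ need not lie in $\ca{J}$ (their $\eta_j^{\vee}$ need not be epimorphisms), so a direct cofinality comparison with $\ca{I}$ is not available, and you would need to produce objects of $\ca{J}$ dominating each $A_j$.
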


\begin{proof}
 From \cite[Proposition~4.1.2]{SCHAEPPI_COLIMITS} it follows that a faithfully flat algebra is Adams if and only if its underlying object is a filtered colimit of objects with a dual. Conversely, if $\ca{C}$ is a symmetric monoidal closed abelian category with exact filtered colimits, then every Adams algebra is faithfully flat by \cite[Proposition~4.1.6]{SCHAEPPI_COLIMITS}.  
\end{proof} 
 
 In particular, any symmetric strong monoidal functor between such categories which detects epimorphisms between objects with duals also detects whether or not an algebra satisfying (i) is faithfully flat. Thus it is useful to know some stability properties for algebras satisfying (i). Note that this condition does not involve the algebra structure in any way, so we only need to study filtered colimits in locally finitely presentable categories. Note that duals are finitely presentable if the unit is finitely presentable, so the following proposition is applicable.
 
 Recall that the \emph{closure} of a full subcategory $\ca{A}$ of $\ca{C}$ under filtered colimits is the smallest subcategory which contains $\ca{A}$ and is closed under filtered colimits. In general, the construction of such a category is very complicated since the closure has to be iterated, possibly even transfinitely. One starts by setting $\ca{C}_0= \ca{A}$ and then one takes for $\ca{C}_{i+1}$ all the colimits (in $\ca{C}$) of filtered diagrams in $\ca{C}_{i}$ (successor step), and one takes the union of the previously constructed subcategories for limit ordinals. However, in many categories this procedure terminates after just one step. 
 
 \begin{prop}\label{prop:ind_objects}
 Let $\ca{C}$ be a locally finitely presentable category and let $\ca{A}$ be a full subcategory of the category $\ca{C}_{\fp}$ of finitely presentable objects. Fix an object $C \in \ca{C}$. Then the following are equivalent:
 \begin{enumerate}
 \item[(a)] The object $C$ lies in the closure of $\ca{A}$ under filtered colimits;
 \item[(b)] There exists a filtered category $\ca{I}$ and a diagram $D \colon \ca{I} \rightarrow \ca{A}$ whose colimit in $\ca{C}$ is isomorphic to $C$.
 \end{enumerate}
 In other words, the closure of $\ca{A}$ under filtered colimits in $\ca{C}$ terminates after one step. Moreover, in this case, the canonical functor $\Ind(\ca{A}) \rightarrow \ca{C}$ induced by the inclusion of $\ca{A}$ is full and faithful and its essential image is the closure of $\ca{A}$ under filtered colimits.
 \end{prop}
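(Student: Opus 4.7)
The plan is to work with the canonical functor $\Phi \colon \Ind(\ca{A}) \to \ca{C}$ extending the inclusion $\iota \colon \ca{A} \hookrightarrow \ca{C}$; such a $\Phi$ exists and is unique up to isomorphism as a filtered-colimit-preserving functor, because $\Ind(\ca{A})$ is the free cocompletion of $\ca{A}$ under filtered colimits and $\ca{C}$ admits all colimits. The implication (b)$\Rightarrow$(a) is immediate from the definition of closure. All remaining claims reduce to two facts: $\Phi$ is fully faithful and its essential image $\mathrm{Im}(\Phi) \subseteq \ca{C}$ coincides with the class of objects satisfying (b), which is closed under filtered colimits in $\ca{C}$. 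Granting these, $\mathrm{Im}(\Phi)$ contains $\ca{A}$ and is closed under filtered colimits, so it contains the closure; conversely every object of $\mathrm{Im}(\Phi)$ satisfies (b), hence lies in the closure. This simultaneously proves (a)$\Leftrightarrow$(b), that the closure is obtained after one step, and the fully-faithful claim.

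For full faithfulness, a standard reduction lets me test on a representable source. Given $A \in \ca{A}$ and $(\ca{I}, D) \in \Ind(\ca{A})$ with $D \colon \ca{I} \to \ca{A}$ filtered, the computation
\[
\ca{C}\bigl(A, \Phi(\ca{I}, D)\bigr) = \ca{C}\bigl(A, \colim_{\ca{I}} \iota D\bigr) \cong \colim_{\ca{I}} \ca{C}(A, \iota D) = \colim_{\ca{I}} \ca{A}(A, D) \cong \Ind(\ca{A})\bigl(A, (\ca{I}, D)\bigr)
\]
provides the desired bijection: the middle isomorphism uses that $A \in \ca{A} \subseteq \ca{C}_{\fp}$, and the last is the defining formula for morphisms out of a representable in $\Ind(\ca{A})$. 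The general case follows by writing an arbitrary source in $\Ind(\ca{A})$ as a filtered colimit of representables and using that both $\Ind(\ca{A})(-,Y)$ and $\ca{C}(-,\Phi Y)$ carry such colimits into the same limits in $\Ab$, together with the fact that $\Phi$ preserves filtered colimits.

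For the essential image, closure under filtered colimits of $\mathrm{Im}(\Phi)$ is immediate from the preservation of filtered colimits by $\Phi$ together with their existence in $\Ind(\ca{A})$. An object of $\ca{C}$ satisfies (b) precisely when it is isomorphic to $\colim_{\ca{I}} \iota D = \Phi(\ca{I}, D)$ for some filtered $D \colon \ca{I} \to \ca{A}$, and every object of $\Ind(\ca{A})$ is canonically of this form, so $\mathrm{Im}(\Phi)$ is exactly the class described in (b). Combining this with full faithfulness identifies the closure of $\ca{A}$ under filtered colimits inside $\ca{C}$ with $\mathrm{Im}(\Phi) \simeq \Ind(\ca{A})$.

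The main point requiring care is the extension from representables to arbitrary sources in the full-faithfulness argument, which amounts to commuting a filtered colimit past a (generally infinite) limit in $\Ab$; the formal manipulation goes through because the relevant limit arises as the universal property of a colimit presentation and $\Phi$ is already known to preserve filtered colimits. No transfinite iteration of the closure procedure is ever needed: everything collapses at the first stage via $\Phi$.
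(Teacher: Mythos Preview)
Your proof is correct and follows essentially the same approach as the paper: both arguments establish full faithfulness of the canonical functor $\Ind(\ca{A}) \to \ca{C}$ by first treating representable sources (using that objects of $\ca{A}$ are finitely presentable in $\ca{C}$) and then extending via filtered colimits, and both identify the closure with the essential image. One minor point worth making explicit: your claim that $\mathrm{Im}(\Phi)$ is closed under filtered colimits in $\ca{C}$ tacitly uses the full faithfulness you just proved, since one must lift a filtered diagram in $\mathrm{Im}(\Phi)$ back to $\Ind(\ca{A})$ before applying preservation; the paper makes the same move in the same order.
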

 
 \begin{proof}
 We first prove the second claim. Recall that the free cocompletion $\Ind(\ca{A})$ can be constructed as the closure of the representable presheaves in $[\ca{A}^{\op},\Set]$ under filtered colimits. Let $K \colon \ca{A} \rightarrow \ca{C}$ denote the inclusion. From the universal property of free cocompletions we get the canonical functor 
 \[
 L_K \colon \Ind(\ca{A}) \rightarrow \ca{C}
 \] 
 which preserves filtered colimits and whose restriction along the Yoneda embedding is the inclusion $K$. From the assumption we thus know that the functor $L_K$ sends representable objects to finitely presentable objects in $\ca{C}$. Thus the subcategory of $\Ind(\ca{A})$ of objects $F$ for which the function
 \[
 (L_K)_{\ca{A}(-,A),F} \colon \Ind(\ca{A})\bigl(\ca{A}(-,A) ,F\bigr) \rightarrow \ca{C}\bigl(L_K \ca{A}(-,A),L_K F\bigr )
 \]
 on hom-sets is a bijection is closed under filtered colimits and contains the representable presheaves, so it must be all of $\Ind(\ca{A})$. Since contravariant representable functors send colimits to limits, it follows that $L_K \colon \Ind(\ca{A}) \rightarrow \ca{C}$ is full and faithful. Thus the essential image is in particular closed under filtered colimits. It contains $\ca{A}$ by construction, so this image is precisely the closure of $\ca{A}$ under filtered colimits.

 The non-trivial implication $(a) \Rightarrow (b)$ is thus reduced to the analogous claim that each object in $\Ind(\ca{A})$ is a filtered colimit of representable presheaves. This is a well known result: a presheaf $F$ lies in $\Ind(\ca{A})$ if and only if the category of elements $\el(F)$ is cofiltered, and any presheaf is a colimit of the canonical diagram
 \[
 \el(F)^{\op} \rightarrow [\ca{A}^{\op},\Set]
 \]
 which sends $(A,x \in FA)$ to $\ca{A}(-,A)$ and a morphism $f \colon (A,x) \rightarrow (A^{\prime},x^{\prime})$ (that is, a morphism $f \colon A^{\prime} \rightarrow A$ in $\ca{A}$ with $Ff(x)=x^{\prime}$) to $\ca{A}(-,f)$. 
 \end{proof}

 \begin{cor}
 Let $\ca{C}$ be a symmetric monoidal locally finitely presentable abelian category with finitely presentable unit object. A commutative algebra $(A,\mu,\eta)$ in $\ca{C}$ satisfies Condition~(i) of Definition~\ref{dfn:Adams_algebra} if and only if it lies in the full subcategory $\Ind(\ca{C}^{\mathrm{dual}})$.
 \end{cor}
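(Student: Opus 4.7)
The plan is to reduce the statement to a direct application of Proposition~\ref{prop:ind_objects} with $\ca{A} = \ca{C}^{\mathrm{dual}}$. For that proposition to apply, the one non-trivial input needed is that $\ca{C}^{\mathrm{dual}}$ is a full subcategory of $\ca{C}_{\fp}$. This is where the hypothesis that $\U$ is finitely presentable enters.

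First I would verify that every dualizable object is finitely presentable. Given $X \in \ca{C}$ with dual $X^{\vee}$, the standard adjunction $X \otimes - \dashv X^{\vee} \otimes -$ gives a natural isomorphism
\[
\ca{C}(X,-) \;\cong\; \ca{C}(X \otimes \U,-) \;\cong\; \ca{C}(\U, X^{\vee} \otimes -)
\]
obtained by setting the left argument equal to the unit. The functor $X^{\vee} \otimes -$ is itself a left adjoint (its right adjoint is $X \otimes -$, since $X$ is dual to $X^{\vee}$), hence preserves all colimits, in particular filtered colimits. Combined with the assumption that $\ca{C}(\U,-)$ preserves filtered colimits, this shows $\ca{C}(X,-)$ preserves filtered colimits, so $X \in \ca{C}_{\fp}$.

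With the inclusion $\ca{C}^{\mathrm{dual}} \subseteq \ca{C}_{\fp}$ established, Proposition~\ref{prop:ind_objects} applies verbatim to $\ca{A} = \ca{C}^{\mathrm{dual}}$. Condition~(i) of Definition~\ref{dfn:Adams_algebra} is the assertion that the underlying object of $A$ is isomorphic to a colimit in $\ca{C}$ of a filtered diagram in $\ca{C}^{\mathrm{dual}}$, which is precisely condition~(b) of Proposition~\ref{prop:ind_objects}. By the final assertion of that proposition, the set of such objects coincides with the essential image of the canonical fully faithful functor $\Ind(\ca{C}^{\mathrm{dual}}) \rightarrow \ca{C}$, which is exactly the subcategory $\Ind(\ca{C}^{\mathrm{dual}})$ appearing in the statement. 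No serious obstacle arises; the content is entirely packaged by the preceding proposition, and the only point requiring a brief verification is the finite presentability of duals.
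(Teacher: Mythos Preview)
Your proof is correct and follows essentially the same approach as the paper: both reduce immediately to Proposition~\ref{prop:ind_objects} applied with $\ca{A}=\ca{C}^{\mathrm{dual}}$, the only required input being that duals are finitely presentable when the unit is. The paper states this fact without proof just before Proposition~\ref{prop:ind_objects}, while you spell out the standard argument via the adjunction $\ca{C}(X,-)\cong\ca{C}(\U,X^{\vee}\otimes -)$; otherwise the arguments are identical.
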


\begin{proof}
 This follows from Proposition~\ref{prop:ind_objects} applied to the case $\ca{A}=\ca{C}^{\mathrm{dual}}$.
\end{proof} 
 
 The lfp abelian tensor we care about often have a generating set consisting of objects with duals. One can deduce a lot of information about a graded tensor functor between two such categories from its behaviour on objects with duals. For example, a left adjoint functor $F \colon \ca{C} \rightarrow \ca{D}$ which detects epimorphisms in $\ca{C}$ whose domain and codomain have a dual automatically detects epimorphisms between finitely presentable objects (since duals are finitely presentable, this follows from \cite[Lemma~3.2.6]{SCHAEPPI_DESCENT}. Such a functor need not be faithful nor exact, and is therefore easier to construct. An example of a functor of this kind worth keeping in mind is the base change $R \slash \mathfrak{m} \ten{R} -$ to the residue field for a commutative local ring $(R,\mathfrak{m})$ (by the Nakayama lemma). Note that in this case, there are many non-zero objects $C \in \ca{C}$ with $FC \cong 0$. Since these functors play an important role in the next section we introduce a short name for them.
 
 \begin{dfn}
 Let $\ca{C}$ be a (graded) Grothendieck tensor category. A (graded) symmetric strong monoidal functor $F \colon \ca{C} \rightarrow \ca{D}$ is called a \emph{Nakayama functor} if it detects epimorphisms in $\ca{C}$ whose domain and codomain have a dual (equivalently, whose domain and codomain are finitely presentable).
\end{dfn} 
 
 We can thus summarize the main results of this section as follows.
 
 \begin{thm}\label{thm:Nakayama_detection}
  If $F \colon \ca{C} \rightarrow \ca{D}$ is a (graded) Nakayama functor between (graded) Grothendieck tensor categories, then a commutative algebra $(A,\mu,\eta)$ in $\ca{C}$ whose underlying object lies in the closure of $\ca{C}^{\mathrm{dual}}$ under filtered colimits in $\ca{C}$ is faithfully flat if and only if the commutative algebra $FA$ in $\ca{D}$ is faithfully flat. This is the case if and only if both $A$ and $FA$ are Adams algebras.
 \end{thm}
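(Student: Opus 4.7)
The plan is to reduce the theorem to the single claim that $A$ is an Adams algebra if and only if $FA$ is. Since $F$ is a left adjoint it preserves filtered colimits, and being (graded) symmetric strong monoidal it preserves duals, so $FA$ also lies in the closure of $\ca{D}^{\mathrm{dual}}$ under filtered colimits in $\ca{D}$. Applying Proposition~\ref{prop:Adams_iff_faithfully_flat} in both $\ca{C}$ and $\ca{D}$ then yields the equivalences ``Adams iff faithfully flat'' for $A$ and for $FA$, collapsing both biconditionals of the statement into a comparison of Adams-ness across $F$.

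The forward implication is direct: given a witnessing diagram $(A_j,\eta_j)_{j \in \ca{I}}$ for $A$, applying $F$ produces a filtered diagram of duals in $\ca{D}$ with colimit $FA$ and unit cone $F\eta$, and the natural isomorphism $F(A_j^{\vee}) \cong (FA_j)^{\vee}$ identifies $F(\eta_j^{\vee})$ with $(F\eta_j)^{\vee}$, which is epi because $F$ preserves epimorphisms. For the converse, I would start from a presentation $A \cong \colim_{j \in \ca{I}} A_j$ with each $A_j$ a dual, provided by condition~(i) of Definition~\ref{dfn:Adams_algebra}; finite presentability of $\U$ lets me factor $\eta$ through some $\eta_{j_0}\colon \U \rightarrow A_{j_0}$, and passing to the cofinal slice $j_0 \downarrow \ca{I}$ produces a compatible cone $\eta_j \colon \U \rightarrow A_j$ for the ambient diagram. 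Let $\ca{J} \subseteq \ca{I}$ denote the full subcategory of indices $j$ with $\eta_j^{\vee}$ epi, and $\ca{J}' \subseteq \ca{I}$ the analogous subcategory defined using $(F\eta_j)^{\vee}$. Since the Nakayama hypothesis exactly asserts that $F$ detects epimorphisms between objects with duals (and both $A_j^{\vee}$ and $\U$ are such), one has $\ca{J} = \ca{J}'$.

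The hard part will be to show that $\ca{J}'$ is cofinal in $\ca{I}$, which would immediately exhibit $(A_j,\eta_j)_{j \in \ca{J}}$ as a witness that $A$ is Adams and thereby finish the converse. This cofinality is not automatic from the bare existence of \emph{some} Adams witness for $FA$ supplied by Definition~\ref{dfn:Adams_algebra}; the key additional input required is the refinement that, for a faithfully flat algebra lying in the closure of the duals under filtered colimits, \emph{every} filtered presentation by duals equipped with a unit cone has a cofinal good subdiagram. I expect this refinement to be extractable from the argument of \cite[Proposition~4.1.2]{SCHAEPPI_COLIMITS}; granting it, the Nakayama identification $\ca{J} = \ca{J}'$ completes the converse direction and the theorem follows.
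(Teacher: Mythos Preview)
Your approach matches the paper's: reduce via Proposition~\ref{prop:Adams_iff_faithfully_flat} on both sides, then use the Nakayama hypothesis to transfer condition~(iii) of Definition~\ref{dfn:Adams_algebra} across $F$. Your cofinality worry is unnecessary, however: once $FA$ is faithfully flat one actually has $\ca{J}'=\ca{I}$, since for any factorization $F\eta = g \circ F\eta_j$ of the unit through a dualizable $FA_j$ the map $FA \otimes F\eta_j$ is split mono (retracted by $\mu \circ (FA \otimes g)$), so its $FA$-linear dual is split epi and faithful flatness of $FA$ forces $(F\eta_j)^{\vee}$ to be epi --- this is exactly what \cite[Proposition~4.1.2]{SCHAEPPI_COLIMITS} supplies and what the paper's one-line proof invokes.
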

 
 \begin{proof}
 This follows directly from Propostion~\ref{prop:Adams_iff_faithfully_flat} and Proposition~\ref{prop:ind_objects} and the assumption that $(\eta_i)^{\vee} \colon A_i^{\vee} \rightarrow \U$ is an epimorphism whenever $F(\eta_i^{\vee})$ is an epimorphism.
 \end{proof}

\subsection{Recognizing locally free objects}

 A \emph{type} is an element $(i_1 , \ldots , i_n) \in G^{n}$. We call a type \emph{even} if $\varepsilon(i_k)=1$ for all $k=1,\ldots,n$ and we call it \emph{odd} if $\varepsilon(i_k)=-1$ for all $k=1,\ldots,n$. We call even and odd types \emph{unmixed} and all other types \emph{mixed}. Throughout this section, we fix a $(G,\varepsilon)$-graded Grothendieck tensor category $\ca{C}$.
 
 \begin{dfn}
 Let $(i_1 ,\ldots i_n) \in G^n$. An object $M \in \ca{C}$ is called \emph{locally free of type $(i_1, \ldots i_k)$} if there exists an Adams algebra $A \in \ca{C}$ and an isomorphism $M_{A} \cong \oplus_{k=1}^n L_{i_k}$.
 \end{dfn}
 
 A key idea in Deligne's proof of the existence of fiber functors is to study the universal algebra which splits off a copy of the unit object as a direct summand. This can be adapted to the case of splitting off invertible objects. To do this, we will need the free commutative algebra $\Sym(M)$ on an object. Note that, in the graded case, this is not quite a polynomial algebra, for example, in the case of the category of graded modules, it will often be tensor product of exterior and polynomial algebras. It exhibits particularly strange behaviour if $2$ is not invertible in the endomorphism ring of the unit, for then the higher exterior powers of the unit itself do not vanish. Thus ``graded'' geometry is a bit peculiar in this case, since a line bundle need not be flat. Our conditions will have to exclude this kind of behaviour.
 
 Since $\Sym(M)$ is the free commutative algebra on an object, it is given by the graded algebra $\oplus_{i \in \mathbb{N}} \Sym^i(M)$, where $\Sym^i(M)$ denotes the quotient of the natural action of the symmetric group on the $n$-fold tensor product $M^{\otimes n}$. Note that, being left adjoint, it sends direct sums to coproducts of commutative algebras, that is, to their tensor product. Thus the crucial formula
 \[
 \Sym^n(M\oplus N) \cong \oplus_{i+j=n} \Sym^i(M) \otimes \Sym^j(N)
 \]
 follows directly from the universal property as in the ordinary (ungraded) case.
 
 Delignes construction can be generalized as follows. Fix an element $i \in G$. For an object $M$ with a dual, we can form the pushout
 \[
 \xymatrix{\Sym(\U) \ar[d] \ar[r] & \Sym\bigl(L_i^{-1} \otimes M \oplus (L_{i}^{-1} \otimes M)^{\vee}\bigr)  \ar[d]\\ \U \ar[r] & A(M,i) }
 \]
 in the category of commutative algebras in $\ca{C}$, where the top morphism is induced (via universal property) by the composite
 \[
 \xymatrix@C=40pt{\U \ar[r]^-{\coev} & L_i^{-1} \otimes M^{\vee}\otimes (L_i^{-1} \otimes M)^{\vee} \ar[r]^-{\mathrm{inclusion}} & \Sym\bigl(L_i^{-1} \otimes M \oplus (L_i^{-1} \otimes M)^{\vee}\bigr) }
 \]
 of the coevaluation and the natural inclusion $\Sym^{1} (L_i^{-1} \otimes M)^{\vee} \otimes \Sym^{1}  (L_i^{-1} \otimes M)^{\vee}$ in the symmetric algebra. The vertical arrow is the universal morphism which restricts to the identity on $\U$. By construction, the algebra $A(M,i)$ represents the functor which splits off $L_i$ as a direct summand: indeed, to give a homomorphism of commutative algebras $A(M,i) \rightarrow B$ in $\ca{C}$ amounts to a choice of morphisms of $B$-modules $p \colon M_B \rightarrow L_i$ and $s \colon L_i \rightarrow M_B$ such that $ps=\id$. Since it is constructed as a pushout of free algebras, the algebra $A(M,i)$ is preserved by any graded symmetric strong monoidal functor $F \colon \ca{C} \rightarrow \ca{D}$ of graded lfp tensor categories: there is a natural isomorphism $F\bigl(A(M,i)\bigr)\cong A(FM,i)$ of commutative algebras in $\ca{D}$.
 
 We can iterate this construction as follows.
 
 \begin{dfn}
 Let $(i_1 ,\ldots i_n) \in G^n$ and let $M$ be an object of $\ca{C}^{\mathrm{dual}}$. Then we inductively define a sequence of commutative algebras 
 \[
 \U \rightarrow A(M;i_1) \rightarrow A(M;i_1 ,i_2) \rightarrow \ldots \rightarrow A(M;i_1 ,\ldots,i_n)
 \]
 together with an epimorphism $p_n \colon M_{A(M;i_1,\ldots,i_n)} \rightarrow \oplus_{k=1}^n L_{i_k}$ in the category of $A(M;i_1,\ldots, i_n)$-modules and a section $s_n$ of $p_n$ as follows. We set $A(M;i_1)$ to be the algebra $A(M,i_1)$ defined by the pushout diagram above, and we let $p_1=p$, $s_1=s$. If $n>1$, we let $N_{n-1} \in \ca{C}_{A(M;i_1,\ldots i_{n-1})}$ be the kernel of $p_n$ in the category of $A(M;i_1, \ldots, i_{n-1})$-modules. We let $A(M; i_1, \ldots, i_n)$ be the universal algebra (in the cateogry of $A(M;i_1,\ldots, i_{n-1})$-modules) which splits off a copy of $L_{i_n}$ of the module $N_{n-1}$, that is, we set $A(M; i_1 ,\ldots, i_n) \defl A(N_{n-1},i_n)$. We use the base change of the split epimorphism $p_{n-1} \colon M_{A(M;i_1,\ldots, i_n)} \rightarrow \oplus_{k=1}^n L_{i_k}$ and the split epimorphism $p \colon (N_{n-1})_{A(N_{n-1},i_n)} \rightarrow L_n$ and the respective sections $s_{n-1}$ and $s$ to construct $p_n$ and $s_n$.
 \end{dfn}
 
 \begin{rmk}\label{rmk:split_unit}
 Suppose $B$ is a commutative algebra in $\ca{C}$ and $M$ is an object with a dual. If there exists a split epimorphism $q \colon M_B \rightarrow \oplus_{k=1}^n L_{i_k}$ in the category of $B$-modules, then we can inductively construct an algebra homomorphism $A(M;i_1 ,\ldots, i_n) \rightarrow B$. In particular, if $M$ itself has a direct summand isomorphic to $\oplus_{k=1}^n L_{i_k}$, then the unit of the algebra $A(M; i_1,\ldots, i_n)$ admits a retraction $A(M;i_1,\ldots, i_n) \rightarrow \U$.   
 \end{rmk}
 
 We now turn to the key result of this section.
 
 \begin{thm}\label{thm:iterated_splitting}
 Let $F \colon \ca{C} \rightarrow \ca{D}$ be a graded Nakayama functor between $(G,\varepsilon)$-graded Grothendieck tensor categories. Fix a type $(i_1, \ldots, i_n) \in G^n$. Let $M$ be an object with a dual and consider the commutative algebra $A \defl A(M;i_1, \ldots, i_n)$ in $\ca{C}$. Suppose that the following condition holds: for each $k \in \{1,\ldots,n\}$ and each $j \in \mathbb{N}$, the objects $\Sym^j(L_{i_k}^{-1} \otimes M )$ and $\Sym^j\bigl((L_{i_k}^{-1} \otimes M )^{\vee}\bigr)$ have a dual.
 
 Then $A$ is faithfully flat if and only if $FA$ is faithfully flat. In this case, it is an Adams algebra. In partiuclar, if the image $F(M)$ splits as a direct sum $FM \cong X \oplus L_{i_1} \oplus \ldots \oplus L_{i_n}$ in $\ca{D}$, then $A$ is an Adams algebra.
\end{thm}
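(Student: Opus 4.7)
The plan is to apply Theorem~\ref{thm:Nakayama_detection}. Its hypothesis requires that the underlying object of $A \defl A(M;i_1,\ldots,i_n)$ lie in the closure $\Ind(\ca{C}^{\mathrm{dual}})$ of duals under filtered colimits in $\ca{C}$; granted this, Theorem~\ref{thm:Nakayama_detection} yields the equivalence between faithful flatness of $A$ and of $FA$, with both being Adams algebras in that case.

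For the ``in particular'' clause, one uses that $F$ preserves the pushouts defining $A$, so $F(A) \cong A(FM;i_1,\ldots,i_n)$. Remark~\ref{rmk:split_unit}, applied to the assumed splitting of $FM$ in $\ca{D}$, furnishes a retraction $r \colon F(A) \to \U$ of the unit $\eta \colon \U \to F(A)$. The same argument that shows $A \in \Ind(\ca{C}^{\mathrm{dual}})$ applies equally inside $\ca{D}$ and gives $F(A) \in \Ind(\ca{D}^{\mathrm{dual}})$, so we may write $F(A) \cong \colim_j F(A)_j$ with each $F(A)_j$ having a dual in $\ca{D}$. Finite presentability of $\U$ lets us factor $\eta$ for $j$ large enough as $\eta_j \colon \U \to F(A)_j$ followed by the colimit injection, and $r$ restricts to a retraction of $\eta_j$; thus $\eta_j^\vee$ is a split epimorphism, $F(A)$ is an Adams algebra in $\ca{D}$, and the main equivalence forces $A$ to be an Adams algebra in $\ca{C}$.

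The main obstacle is the Ind-description of $A$, which I would prove by induction on $n$. For the base case $n=1$, set $V \defl L_{i_1}^{-1} \otimes M$ and $S \defl \Sym(V \oplus V^\vee)$, so that $A(M,i_1) = S \otimes_{\Sym(\U)} \U = S/(\coev - 1)S$. The decomposition $S \cong \bigoplus_{a,b \geq 0} \Sym^a(V) \otimes \Sym^b(V^\vee)$ combined with the hypothesis on symmetric powers exhibits $S$ as a filtered colimit (indexed by total bidegree) of finite direct sums of duals. A direct analysis of the cokernel of multiplication by $\coev - 1$ on the bigraded object $S$ shows that the relation identifies $x \in S_{a,b}$ only with $\coev \cdot x \in S_{a+1,b+1}$, without mixing distinct values of $a - b$; this yields a decomposition $A(M,i_1) \cong \bigoplus_{k \in \mathbb{Z}} \colim_n \Sym^{\max(k,0)+n}(V) \otimes \Sym^{\max(-k,0)+n}(V^\vee)$ of objects in $\ca{C}$. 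This presents $A(M,i_1)$ as a countable direct sum of sequential colimits of duals, hence as an object of $\Ind(\ca{C}^{\mathrm{dual}})$.

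For the inductive step, $A_n \defl A(M;i_1,\ldots,i_n)$ is constructed as $A(N_{n-1}, i_n)$ in the relative Grothendieck tensor category $\ca{C}_{A_{n-1}}$, with $N_{n-1}$ a direct summand of $M_{A_{n-1}} \cong M \otimes A_{n-1}$. Running the base-case argument inside $\ca{C}_{A_{n-1}}$ presents $A_n$ as a filtered colimit (and countable direct sum) of tensor products of relative symmetric powers $\Sym_{A_{n-1}}^a(L_{i_n}^{-1} \otimes N_{n-1})$. Each such is a direct summand of $\Sym_{A_{n-1}}^a(L_{i_n}^{-1} \otimes M_{A_{n-1}}) \cong \Sym^a(L_{i_n}^{-1} \otimes M) \otimes A_{n-1}$, and the right side lies in $\Ind(\ca{C}^{\mathrm{dual}})$ because $\Sym^a(L_{i_n}^{-1} \otimes M)$ has a dual by hypothesis and $A_{n-1} \in \Ind(\ca{C}^{\mathrm{dual}})$ by the inductive hypothesis. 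Since $\Ind(\ca{C}^{\mathrm{dual}})$ is closed under tensor products, direct summands (by idempotent-completeness of $\ca{C}^{\mathrm{dual}}$), and filtered colimits, it follows that $A_n \in \Ind(\ca{C}^{\mathrm{dual}})$, completing the induction.
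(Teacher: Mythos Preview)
Your proof is correct and follows essentially the same architecture as the paper's: reduce everything to showing $A \in \Ind(\ca{C}^{\mathrm{dual}})$, then invoke Theorem~\ref{thm:Nakayama_detection}; establish this by induction on $n$, using the closure of $\Ind(\ca{C}^{\mathrm{dual}})$ under finite tensor products, retracts, direct sums, and filtered colimits, together with the fact that relative symmetric powers of $N_{n-1}$ are retracts of $\Sym^j(L_{i_n}^{-1}\otimes M)\otimes A_{n-1}$.

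The differences are minor but worth noting. First, where the paper cites \cite[Lemmas~5.13 and 5.14]{SCHAEPPI_GEOMETRIC} to describe the pushout as the direct sum of the two sequential colimits $\colim_k B_{2k}$ and $\colim_k B_{2k+1}$ (indexed by the parity of total degree), you instead compute $S/(\coev-1)S$ directly via the telescope identification and decompose along $a-b$; the two decompositions are equivalent, and yours has the virtue of being self-contained. Second, for the ``in particular'' clause the paper argues that $F(A)$, being a filtered colimit of duals, is flat, and the split unit makes it faithful, hence faithfully flat, hence Adams by Proposition~\ref{prop:Adams_iff_faithfully_flat}; you instead verify the Adams conditions directly by factoring $\eta$ through a cofinal stage and using the global retraction---this works, but you should make explicit that one restricts to the cofinal subdiagram above $j_0$ and that the retraction $r\circ\iota_j$ splits each $\eta_j$ there. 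Two small points: in the inductive step you should also mention the dual case $\Sym^b\bigl((L_{i_n}^{-1}\otimes N_{n-1})^\vee\bigr)$, handled identically; and the reason $\Ind(\ca{C}^{\mathrm{dual}})$ is closed under retracts is that idempotent-splitting is itself a filtered colimit, not merely that $\ca{C}^{\mathrm{dual}}$ is idempotent-complete.
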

 
 \begin{proof}
 We first show that under the stated assumption, the underlying object of the commutative algebra $A$ lies in the closure $\Ind(\ca{C}^{\mathrm{dual}})$ of $\ca{C}^{\mathrm{dual}}$ under filtered colimits in $\ca{C}$. Note that $\ca{C}^{\mathrm{dual}}$ is closed under finite tensor products and finite direct sums, so from Proposition~\ref{prop:ind_objects} and the fact that products of filtered categories are filtered we immediately see that $\Ind(\ca{C})$ is closed under finite tensor products and finite direct sums. Moreover, the category with one object with a single non-trivial idempotent on it is filtered, so  $\Ind(\ca{C})$ is closed under splitting of idempotents. In other words, it is closed under retracts. Since it is by definition closed under filtered colimits, it is also closed under the formation of arbitrary direct sums. Using these closure properties, we  can show by induction that the underlying object of the algebra $A$ lies in $\Ind(\ca{C}^{\mathrm{dual}})$.
 
 For ease of notation, we set $A_0 \defl \U$ and $A_n \defl A(M;i_1, \ldots, i_n)$. The base case $n=0$ is therefore vacuously true, so we assume $n>0$ and that the underlying object of $A_{n-1}$ lies in $\Ind(\ca{C}^{\mathrm{dual}})$. Let $N$ be the kernel of the split epimorphism $p_{n-1} \colon M_{A_{n-1}} \rightarrow \oplus_{k=1}^n L_{i_k}$. By definition, the algebra $A_n$ is given by the pushout diagram
 \[
 \xymatrix{ \Sym(A_n) \ar[d] \ar[r] & \Sym \bigl(L_{i_n}^{-1} \otimes N \oplus (L_{i_n}^{-1} \otimes N)^{\vee} \bigr) \ar[d] \\ A_{n-1} \ar[r] & A_n}
 \]
 in $\ca{C}_{A_{n-1}}$, where the vertical arrow corresponds to the identity on $N$ and the horizontal arrow corresponds to the coevaluation. The symmetric algebra $B \defl  \Sym \bigl(L_{i_n}^{-1} \otimes N \oplus (L_{i_n}^{-1} \otimes N)^{\vee} \bigr) $ is canonically $\mathbb{N}$-graded, and the coevaluation factors through the inclusion  $B_2 \rightarrow \oplus B_n$. Deligne has given a detailed description of this pushout in the case where the category $\ca{C}$ is the category of ind-objects of a \emph{rigid} abelian category. As already observed in \cite[\S 5]{SCHAEPPI_GEOMETRIC}, this construction works exactly the same in the considerably more general setting of Grothendieck tensor categories. Indeed, from \cite[Lemmas~5.13 and 5.14]{SCHAEPPI_GEOMETRIC} it follows that the underlying $A_{n-1}$-module of $A_n$ is given by the direct sum of the colimits of the two chains
 \[
 \xymatrix{\U \ar[r] & B_2 \ar[r] & B_4 \ar[r] & \ldots \ar[r] & B_{2k} \ar[r] & \ldots  }
 \]
 and
 \[
 \xymatrix{B_1 \ar[r] & B_3 \ar[r] & B_5 \ar[r] & \ldots \ar[r] & B_{2k+1} \ar[r] & \ldots  }
 \]
 respectively, where each connecting morphism $B_{j} \rightarrow B_{j+2}$ is given by the multiplication with the coevaluation (see \cite[Lemma~5.14]{SCHAEPPI_GEOMETRIC} for details). Since colimits of $\ca{C}_{A_{n-1}}$ are computed as in $\ca{C}$, this reduces the problem to checking that the object
 \[
 B_n= \Sym^n \bigl(L_{i_n}^{-1} \otimes N \oplus (L_{i_n}^{-1} \otimes N)^{\vee} \bigr) \cong \oplus_{i+j=n} \Sym^i(L_{i_n}^{-1} \otimes N) \otimes \Sym^j\bigl( (L_{i_n}^{-1} \otimes N)^{\vee}\bigr)
 \]
 lies in $\Ind(\ca{C}^{\mathrm{dual}})$. From the above mentioned closure properties, we immediately reduce the claim to checking that the two objects
 \[
 \Sym^j(L_{i_n}^{-1} \otimes N) \quad \text{and} \quad \Sym^j\bigl( (L_{i_n}^{-1} \otimes N)^{\vee}\bigr)
 \]
 lie in $\Ind(\ca{C}^{\mathrm{dual}})$ for all $j \in \mathbb{N}$. Note that $L_{i_n}^{-1} \otimes N$ is by definition a retract of $L_{i_n}^{-1} \otimes M_{A_{n-1}}$, so $(L_{i_n}^{-1} \otimes N)^{\vee}$ is a retract of $(L_{i_n}^{-1} \otimes M_{A_{n-1}})^{\vee}$. Since all functors preserve retracts, we have further reduced the problem to the claim that the two objects
 \[
  \Sym^j(L_{i_n}^{-1} \otimes M_{A_{n-1}}) \quad \text{and} \quad \Sym^j\bigl( (L_{i_n}^{-1} \otimes M_{A_{n-1}})^{\vee}\bigr)
 \]
 lie in $\Ind(\ca{C}^{\mathrm{dual}})$. The base change functor $(-)_{A_{n-1}} \colon \ca{C} \rightarrow \ca{C}_{A_{n-1}}$ preserves duals, gradings, and it commutes with the formation of symmetric powers since it preserves tensor products and colimits. Our assumption therefore implies that both objects are images under $(-)_{A_{n-1}}$ of objects with a dual in $\ca{C}$. We have reduced the claim to checking that, for any object $K \in \ca{C}^{\mathrm{dual}}$, the object $K \otimes A_{n-1}$ lies in $\ca{C}$. This finally follows from the inductive assumption that $A_{n-1}$ lies in $\Ind(\ca{C}^{\mathrm{dual}})$.
 
 From Theorem~\ref{thm:Nakayama_detection} it follows that the algebra $A =A(M;i_1, \ldots, i_n)$ is faithfully flat if and only if $F(A)$ is, and that it is an Adams algebra in this case. To see the final claim, note that $F(A)$ lies in $\Ind(\ca{D}^{\mathrm{dual}})$ since $F$ preserves duals and filtered colimits. It is thus Adams if and only if it is faithfully flat by Proposition~\ref{prop:Adams_iff_faithfully_flat}. It is flat since filtered colimits in $\ca{D}$ are exact, and by Remark~\ref{rmk:split_unit}, the unit of $F(A)$ is split, so $F(A)$ is clearly a faithfully flat algebra.
  \end{proof}
  
  This theorem gets us closer to a recognition theorem for locally free objects. Indeed, under the assumptions of the theorem, if $F(M) \cong \oplus_{k=1}^n L_{i_k}$, then there exists an Adams algebra $A \in \ca{C}$, an $A$-module $X$, and an isomorphism $M_A \cong X \oplus L_{i_1} \oplus \ldots \oplus L_{i_n}$ of $A$-modules. The assumptions, however, do not directly imply that $X$ has to be the zero object: while $X$ is finitely presentable as an $A$-module, its underlying object in $\ca{C}$ will in general be far from finitely presentable. Thus the fact that $FX \cong 0$ does not imply that $X\cong 0$.
  
 Here we use another idea of Deligne: it is sometimes possible to perform a finite construction using just the object $M$  in the base category $\ca{C}$, for example an exterior power, or a certain Schur functor, such that the base change of the construction detects if the object $X$ above is non-zero. The functor $F$ is then of course also required to detect whether or not the construction vanishes. The following proposition shows that the symmetrizer and the antisymmetrizer are candidates for such a construction. 
 
 \begin{prop}\label{prop:unmixed_locally_free_detection}
  Let $X$ be an object of $\ca{C}^{\mathrm{dual}}$ and $(i_1, \ldots, i_n) \in G^n$.  Let $M \defl L_{i_1} \oplus \ldots \oplus L_{i_n} \oplus X$. Then the identity of $L_{i_1} \otimes \ldots \otimes L_{i_n} \otimes X$ factors through both the symmetrizer $S \defl  \sum_{\sigma \in \Sigma_{n+1}} \sigma$ and the antisymmetrizer $A \defl \sum_{\sigma \in \Sigma_{n+1}} \sgn(\sigma) \sigma $ on $M^{\otimes (n+1)}$.
 \end{prop}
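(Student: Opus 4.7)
The plan is to reduce the statement to a short biproduct calculation by exploiting the direct sum decomposition of $M$. To set up notation, write $M_j \defl L_{i_j}$ for $j=1,\ldots,n$ and $M_{n+1} \defl X$, so that $M = \bigoplus_{j=1}^{n+1} M_j$. By distributivity of $\otimes$ over $\oplus$, the $(n+1)$-fold tensor power $M^{\otimes(n+1)}$ decomposes as the direct sum $\bigoplus_{\varphi} M_{\varphi(1)} \otimes \cdots \otimes M_{\varphi(n+1)}$ indexed by \emph{all} functions $\varphi \colon \{1,\ldots,n+1\} \to \{1,\ldots,n+1\}$, and the factor $L_{i_1} \otimes \cdots \otimes L_{i_n} \otimes X$ appears as the summand indexed by the identity function $\mathrm{id}$. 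I would let $\iota$ and $\pi$ denote the canonical inclusion and projection of this particular summand and aim to show that $\pi \circ S \circ \iota = \mathrm{id}$ and $\pi \circ A \circ \iota = \mathrm{id}$.

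The key observation, and the only step requiring care, is that each permutation $\sigma \in \Sigma_{n+1}$, acting on $M^{\otimes(n+1)}$ via the symmetry of $\ca{C}$, sends the summand indexed by $\varphi$ to the summand indexed by $\varphi \circ \sigma^{-1}$, up to a Koszul sign. I would verify this by decomposing $\sigma$ into adjacent transpositions, so that only the defining property of the symmetry distributing over a single swap of direct summands is needed. Granted this, the image of $\sigma \circ \iota$ lies in the summand indexed by $\sigma^{-1}$, which equals the summand indexed by $\mathrm{id}$ precisely when $\sigma = \mathrm{id}$. By orthogonality of biproduct inclusions and projections it follows that $\pi \circ \sigma \circ \iota = 0$ for every $\sigma \neq \mathrm{id}$, while the Koszul sign is trivially $+1$ for $\sigma = \mathrm{id}$ and hence $\pi \circ \iota = \mathrm{id}_{L_{i_1} \otimes \cdots \otimes L_{i_n} \otimes X}$.

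Summing over $\sigma \in \Sigma_{n+1}$ then gives the desired factorizations $\pi \circ S \circ \iota = \mathrm{id}$ and $\pi \circ A \circ \iota = \sgn(\mathrm{id}) \cdot \mathrm{id} = \mathrm{id}$. The main point to emphasize in the writeup is that the summands of $M^{\otimes(n+1)}$ are indexed by \emph{positions} rather than by isomorphism classes of the $M_j$, so the argument goes through with no distinctness hypothesis even if some of the $L_{i_j}$ happen to be isomorphic to each other or to $X$. I do not anticipate any real obstacle here: the statement is essentially a formal consequence of how the symmetric group permutes the labels in the direct sum decomposition of $M^{\otimes(n+1)}$, and signs play no role beyond the trivial case $\sigma = \mathrm{id}$.
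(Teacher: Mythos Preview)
Your proposal is correct and follows essentially the same approach as the paper: both arguments use the inclusion $\iota = \alpha_1 \otimes \cdots \otimes \alpha_{n+1}$ and projection $\pi = \beta_1 \otimes \cdots \otimes \beta_{n+1}$ of the summand $L_{i_1} \otimes \cdots \otimes L_{i_n} \otimes X$ and observe via coherence that $\sigma$ carries this summand to a distinct (positionally indexed) summand for $\sigma \neq \id$, forcing $\pi \circ \sigma \circ \iota = 0$. Your remark that the summands are indexed by positions rather than isomorphism classes is a helpful clarification the paper leaves implicit.
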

 
 \begin{proof}
 For $1 \leq k \leq n$, let $X_k \defl L_{i_k}$ and let $X_{n+1} \defl X$. We write $\alpha_k \colon X_k \rightarrow M$ for the inclusions of the direct sum and $\beta_k$ for the corresponding projections onto $X_k$. The symmetric group $\Sigma_{n+1}$ acts on the $(n+1)$-fold tensor product $M^{\otimes (n+1)}$. From the coherence axioms of symmetric monoidal categories, we know that the diagram
  \[
  \xymatrix{ M^{\otimes (n+1)} \ar[r]^-{\sigma} & M^{\otimes (n+1)}  \\ X_1 \otimes \ldots \otimes X_{n+1}  \ar[r]_-{\sigma} \ar[u]^{\alpha_1 \otimes \ldots \otimes \alpha_{n+1}}  & X_{\sigma(1)} \otimes \ldots \otimes X_{\sigma(n)} \ar[u]_{\alpha_{\sigma(1)} \otimes \ldots \otimes \alpha_{\sigma(n+1)}} }
  \]
 is commutative for each $\sigma \in \Sigma_{n+1}$. Thus the composite 
 \[
\xymatrix{ L_{i_1} \otimes \ldots \otimes L_{i_n} \otimes X \ar[d]^{\alpha_1 \otimes  \ldots \otimes  \alpha_{n+1}} \ar@{-->}[r] & L_{i_1} \otimes \ldots \otimes L_{i_n} \otimes X \\
M^{\otimes(n+1)}  \ar[r]_-{\sigma} & M^{\otimes (n+1)} \ar[u]_{\beta_1 \otimes \ldots \otimes \beta_{n+1}} & }  
 \]
 is equal to the identity if $\sigma=\id$ and zero if $\sigma \neq \id$. It follows that the identity morphism of $L_{i_1} \otimes \ldots \otimes L_{i_n} \otimes X$ factors through both the antisymmetrizer $A \defl \sum_{\sigma \in \Sigma_{n+1}} \sgn(\sigma) \sigma $ and the symmetrizer $S \defl  \sum_{\sigma \in \Sigma_{n+1}} \sigma$ on $M^{\otimes (n+1)}$.
 \end{proof}
 
 Thus if either the antisymmetrizer or the symmetrizer vanish, we have $X =0$. This will be the case for locally free objects which are purely even respectively purely odd. Dealing with locally free objects of mixed type is more complicated. In characteristic zero, Deligne has used Schur functors to solve the corresponding problem in the case of rigid categories.
 
 In \cite{COULEMBIER}, Coulembier claimed that this can be extended to positive characteristic. Unfortunately, there is a mistake in the proof of \cite[Theorem~5.2.2]{COULEMBIER}: the Schur functor in question need not vanish on the relevant super vector space in positive characteristic. Coulembier has provided a fix by working with Young symmetrizers instead of Schur functors (see Proposition~\ref{prop:non-zero_detection} below).
 
 We consider the rectangular Young tableau with $n$ rows and $m$ columns, labelled increasingly first from left to right an then from top to bottom. For example, the Young tableau
 \[
     \begin{ytableau}
    1 & 2 & 3 \\ 4 & 5 & 6
    \end{ytableau}
   \]
 is the special case $n=2$ and $m=3$. Let $X$ be an object of the symmetric monoidal category $\ca{C}$. The Young symmetrizer $S_X^{m,n}$ is the endomorphism of $X^{\otimes mn}$ which is given by the composite which symmetrizes with respect to permutations which fix rows and antisymmetrizes with respect to permutations which preserve columns. More explicitly, we write $\Sigma_{mn} \cap C$ and $\Sigma_{mn} \cap R$ for the permutations of $\{1,\ldots, mn\}$ which fix the columns respectively the rows of the tableau. For example, $\Sigma_{6} \cap R$ coincides with the standard inclusion of $\Sigma_3 \times \Sigma_3$ in $\Sigma_6$ in the case of the above diagram. With this notation, the morphism $S_X^{m,n}$ is given by the composite
 \begin{equation}\label{eqn:young_symmetrizer}
 S_X^{m,n} \defl \xymatrix@C=75pt{X^{ \otimes mn} \ar[r]^-{\sum_{\sigma \in \Sigma_{mn} \cap R} \sigma  } & X^{\otimes mn} \ar[r]^-{\sum_{\tau \in \Sigma_{mn} \cap C} \sgn(\tau) \tau}  & X^{\otimes mn}}
 \end{equation}
 in $\ca{C}$.
 
 The following result is due to Kevin Coulembier.
 
 \begin{prop}\label{prop:non-zero_detection}
 Let $\ca{C}$ be a $(G,\varepsilon)$-graded symmetric monoidal category with finite direct sums. Let $p,q \in \mathbb{N}$. Let $Y \in \ca{C}^{\mathrm{dual}}$ and let $(i_1, \ldots i_p) \in G_{\mathrm{even}}^{p}$ and $(i_{p+1}, \ldots, i_{p+q}) \in G_{\mathrm{odd}}^q$. If the Young symmetrizer
 \[
S_{ L_{i_1} \oplus \ldots \oplus L_{i_{p+q}} \oplus Y }^{(q+1,p+1)} \in \End_{\ca{C}} \bigl( ( L_{i_1} \oplus \ldots \oplus L_{i_{p+q}} \oplus Y )^{\otimes(p+1)(q+1)} \bigr)
 \]
 is zero, then $Y \cong 0$.
 \end{prop}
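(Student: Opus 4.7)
The plan is to exhibit a direct summand $T_f$ of $M^{\otimes (p+1)(q+1)}$, with $M := L_{i_1} \oplus \cdots \oplus L_{i_{p+q}} \oplus Y$ and with canonical inclusion $\iota_f$ and projection $\pi_f$, such that (a) $T_f \cong L \otimes Y$ for some invertible object $L \in \ca{C}$, and (b) $\pi_f \circ S \circ \iota_f = \id_{T_f}$, where $S$ denotes the Young symmetrizer $S_M^{(q+1, p+1)}$. The hypothesis $S = 0$ will then force $\id_{T_f} = 0$, so $T_f \cong 0$, and tensoring with $L^{-1}$ gives $Y \cong 0$.

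To construct $T_f$, I label the $p+q+1$ direct summands of $M$ by $\{1, \ldots, p+q+1\}$, assigning $p+q+1$ to $Y$, and fill cell $(r, c)$ of the $(p+1) \times (q+1)$ rectangle with the ``diagonal'' label $\phi(r, c) := r + c - 1$. Under this rule the label $p+q+1$ appears only at the unique cell $(p+1, q+1)$, while every other cell is labelled by an invertible $L_{i_k}$. I then define $T_f$ to be the tensor product, in the row-by-row order used in \eqref{eqn:young_symmetrizer}, of the summands prescribed by $\phi$; this is a direct summand of $M^{\otimes(p+1)(q+1)}$ isomorphic to $L \otimes Y$ for a tensor product $L$ of invertibles, hence invertible.

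Expanding $S = \bigl(\sum_{\tau \in \Sigma_{mn} \cap C} \sgn(\tau)\, \tau\bigr) \circ \bigl(\sum_{\sigma \in \Sigma_{mn} \cap R} \sigma\bigr)$, a term $\pi_f\, \tau \sigma\, \iota_f$ is non-zero only when $\tau \sigma$ permutes cells of the rectangle while preserving the labeling $\phi$. The crux of the argument is the combinatorial claim that, for the diagonal filling, the only pair $(\sigma, \tau) \in (\Sigma_{mn} \cap R) \times (\Sigma_{mn} \cap C)$ whose product preserves $\phi$ is $(\id, \id)$. To establish this, I would write $\sigma(r, c) = (r, \sigma_r(c))$ and $\tau(r, c') = (\tau_{c'}(r), c')$; label preservation translates into $\tau_{c'}(r) = r + \sigma_r^{-1}(c') - c'$ for all $(r, c')$. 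Summing over $c'$ and using that $\sigma_r^{-1}$ is a permutation of $\{1, \ldots, q+1\}$ gives $\sum_{c'} \tau_{c'}(r) = (q+1)\, r$. Evaluating at $r = 1$ with the bound $\tau_{c'}(1) \geq 1$ forces $\tau_{c'}(1) = 1$ for every $c'$; at $r = p+1$ with $\tau_{c'}(p+1) \leq p+1$ it forces $\tau_{c'}(p+1) = p+1$; iterating this extremality inward yields $\tau_{c'}(r) = r$ for all $r, c'$, so $\tau = \id$ and consequently $\sigma = \id$.

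With this combinatorial lemma in hand, only the pair $(\id, \id)$ contributes to $\pi_f \circ S \circ \iota_f$, and this contribution equals $\id_{T_f}$ (no Koszul sign appears, since nothing is actually permuted). Hence $\pi_f \circ S \circ \iota_f = \id_{T_f}$, and the proposition follows as indicated. The main obstacle is isolating the correct filling; once the diagonal $\phi(r, c) = r + c - 1$ is identified, the remaining extremality computation in the symmetric group is short.
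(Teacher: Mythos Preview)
Your proof is correct and follows essentially the same strategy as the paper: both use the diagonal filling $\phi(r,c)=r+c-1$ of the $(p+1)\times(q+1)$ rectangle, define the corresponding summand $T_f$ (the paper calls it $Y'$), and show that $\pi_f\circ S\circ\iota_f=\id_{T_f}$ by proving that the only pair $(\sigma,\tau)\in(\Sigma_{mn}\cap R)\times(\Sigma_{mn}\cap C)$ with $\tau\sigma$ label-preserving is $(\id,\id)$.

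The one genuine difference is in the combinatorial verification. The paper argues by a double induction, first on rows and then on positions within each row, using that if $\sigma=\tau\rho$ is label-preserving and $\rho$ fixes rows $1,\ldots,n$, then it must fix row $n+1$ as well. Your argument instead exploits the additive structure of the diagonal labeling: from $\tau_{c'}(r)=r+\sigma_r^{-1}(c')-c'$ you sum over $c'$ to get $\sum_{c'}\tau_{c'}(r)=(q+1)r$, and then an extremality argument (each $\tau_{c'}(r)\geq r$ once rows $1,\ldots,r-1$ are known to be fixed) forces $\tau_{c'}(r)=r$ for all $c'$. This is a cleaner and more uniform argument than the paper's cell-by-cell induction, and it makes transparent why the \emph{diagonal} filling is the right choice: it is the unique filling for which label-preservation translates into a linear constraint amenable to such a counting trick.
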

 
 \begin{proof}
 Let $X=L_{i_1} \oplus \ldots \oplus L_{i_{p+q}} \oplus Y$. We define the object $Y^{\prime}$ as a certain tensor product of $L_{i_k}$'s and $Y$. To specify this, we fill the first row of the rectangular Young diagram with the labels $L_{i_1}, \ldots, L_{i_{q+1}}$, the second row with $L_{i_2}, \ldots, L_{i_{q+2}}$, and so on up to row $p$. We fill the last row with the labels $L_{p+1}, \ldots, L_{p+q},Y$. Let $Y^{\prime}$ denote the tensor product of $(p+1)(q+1)$ factors, with factor at position $i$ given by the label of this Young diagram at position $i$.
 
 For example, for $p=2$, $q=3$, the labelled Young diagram is
 \[
   \begin{ytableau}
    L_{i_1} & L_{i_2} & L_{i_3} & L_{i_4} \\ L_{i_2} & L_{i_3} & L_{i_4} & L_{i_5} \\ L_{i_3} & L_{i_4} & L_{i_5} & Y
   \end{ytableau}
 \]
 and $Y^{\prime}=L_{i_1} \otimes L_{i_2} \otimes L_{i_3} \otimes L_{i_4} \otimes L_{i_2} \otimes L_{i_3} \otimes L_{i_4} \otimes L_{i_5} \otimes L_{i_3} \otimes L_{i_4} \otimes L_{i_5} \otimes Y$.
 
  We have an inclusion $Y^{\prime} \rightarrow X^{\otimes (p+1)(q+1)}$ and a projection $ X^{\otimes (p+1)(q+1)} \rightarrow Y^{\prime}$ given by the tensor product of the inclusions of $L_{i_k}$ and $Y$ in $X$ (respectively by the tensor product of the corresponding projections). We claim that the composite
 \[
 \xymatrix@C=40pt{ Y^{\prime} \ar[r]^-{\mathrm{incl}} & X^{\otimes (p+1)(q+1)} \ar[r]^-{S_X^{(q+1,p+1)}} & X^{\otimes (p+1)(q+1)} \ar[r]^-{\mathrm{proj}} & Y^{\prime} }
 \]
 is equal to the identity.
 
 For $\sigma \in \Sigma_{(p+1)(q+1)}$, the composite
 \[
  \xymatrix@C=40pt{ Y^{\prime} \ar[r]^-{\mathrm{incl}} & X^{\otimes (p+1)(q+1)} \ar[r]^-{\sigma} & X^{\otimes (p+1)(q+1)} \ar[r]^-{\mathrm{proj}} & Y^{\prime} }
 \]
 is non-zero only if $\sigma$ permutes all the boxes with label $L_{i_k}$ among themselves. In the example $p=2$, $q=3$, this means that $\sigma$ keeps $1$ and $12$ fixed and permutes the elements of the sets $\{2,5\}$, $\{3,6,9\}$, $\{4,7,10\}$, and $\{8,11\}$ among themselves. If $\sigma$ is one of the summands of $S^{(q+1,p+1)}_X$, then there exists a column permutation $\tau$ so that $\tau \circ \sigma$ is equal to a row permutation. We claim that this can only happen if $\sigma=\id$.
 
 We show by induction that such a permutation $\sigma$ keeps all the rows fixed, with base case the zeroth row being vacuously fixed. Thus we assume that the first $n$ rows are fixed and we label the elements of the $(n+1)$-st row  by $x_1,\ldots, x_{q+1}$. Since $\sigma$ can only send $x_1$ to one of the rows $1, \ldots, n$, the induction assumption implies that $x_1$ is fixed. Now assume that $\sigma$ fixes $x_1,\ldots, x_k$ for some $k \geq 1$.
 
 Again using the fact that all the elements in rows $1, \ldots, n$ are fixed, we find that $\sigma$ either fixes $x_{k+1}$ or it sends it to an element in one of the columns $1, \ldots, k$. Assume that $x_{k+1}$ is \emph{not} fixed.
 
 The composite $\tau \circ \sigma$ is a row permutation, so $\tau \circ \sigma$ must send $x_{k+1}$ to one of $x_1, \ldots, x_k$. On the other hand, since $\tau$ is a column permutation and $\sigma$ fixes $x_1, \ldots, x_k$, we must have $\tau(x_i)=x_i$ for $i=1,\ldots, k$ (else the composite $\tau \circ \sigma$ would not be a row permutation). Thus $\tau \circ \sigma$ sends two distinct elements to one of the $x_1, \ldots, x_k$, a contradiction. So $x_{k+1}$ is fixed by $\sigma$ and induction on $k$ shows that the $(n+1)$-st row is fixed by $\sigma$. Induction on $n$ shows that $\sigma=\id$, as claimed.
 
 The above argument shows that the identity on $Y^{\prime}$ factors through $S_X^{(q+1,p+1)}$. Thus if $S_X^{(q+1,p+1)}=0$, then we must have $Y^{\prime} \cong 0$. But $Y^{\prime}$ is $Y$ tensored with an invertible object, so $Y^{\prime} \cong 0$ implies $Y \cong 0$.
 \end{proof}

 \begin{rmk}
 Combining Proposition~\ref{prop:non-zero_detection} with Lemma~\ref{lemma:Young_symmetrizer_zero} shows that \cite[Theorem~5.2.2]{COULEMBIER} can be fixed by replacing the condition ``the Schur functor $\Gamma_{\lambda}(X)$ is zero'' with the condition ``the Young symmetrizer $S_X^{([X]_{\bar{\U}}+1, [X]_{\U}+1 )}$ is zero.''
 \end{rmk}
\section{Examples of homology theories with flat replacements}

\subsection{Detecting duals}\label{section:detecting_duals}

 In this section, we fix a $\otimes$-triangulated category $\ca{T}$ which is monoidal closed and whose internal hom preserves distinguished triangles. We let $H \colon \ca{T} \rightarrow \ca{K}_B$ be as in Definition~\ref{dfn:H-dual} and we write $\ca{A} \defl \ca{A}_H$ for the additive site of $H$-duals. By Part~(ii) of Proposition~\ref{prop:basic_properties_of_H-duals}, the restriction of $H$ to $\ca{A}$ is symmetric strong monoidal. Thus the induced functor $- \ten{\ca{A}} H \colon [\ca{A}^{\op},\Ab] \rightarrow \ca{K}_B$ is also symmetric strong monoidal. We write $L \colon [\ca{A}^{\op},\Ab] \rightarrow \sh(\ca{A})$ for the associated sheaf functor. The goal of this section is to prove the following theorem.
 
 \begin{thm}\label{thm:detecting_duals}
 If $Z \in [\ca{A}^{\op},\Ab]$ is finitely presentable and $Z \ten{\ca{A}} H$ has a dual, then $LZ \in \sh(\ca{A})$ has a dual.
 \end{thm}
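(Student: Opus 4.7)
My plan is to realize $LZ$ as a direct summand of a dualizable sheaf of the form $L\ca{A}_H(-,C_\phi)$ for $C_\phi$ the cofiber in $\ca{T}$ of an appropriate morphism in $\ca{A}_H$; since direct summands of dualizable objects are dualizable, and representable sheaves $L\ca{A}_H(-,C)$ with $C \in \ca{A}_H$ are dualizable (as $\ca{A}_H$ is rigid by Proposition~\ref{prop:basic_properties_of_H-duals}(iv) and $L$ is symmetric strong monoidal), this will finish the proof. The first step is to exploit the finite presentability of $Z$ together with the closure of $\ca{A}_H$ under finite direct sums (which follows from Lemma~\ref{lemma:cell_P_rigid} and the additivity of $H$) to write $Z \cong \mathrm{coker}\bigl(\ca{A}_H(-,\phi)\bigr)$ for a single morphism $\phi \colon A' \to A$ in $\ca{A}_H$. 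Right exactness of $-\ten{\ca{A}_H} H$ then identifies $\mathrm{coker}(H\phi)$ with $HLZ$, which has a dual by hypothesis, so Proposition~\ref{prop:basic_properties_of_H-duals}(v) applies to $\phi$ and yields that $C_\phi \in \ca{A}_H$ and that $HLZ \hookrightarrow HC_\phi$ is a split monomorphism in $\ca{K}_B$.

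Next, using the distinguished triangle $A' \to A \to C_\phi \xrightarrow{\pi} A'[1]$ and the homologicality of $\ca{T}(X,-)$ on each $X \in \ca{A}_H$, one identifies $Z$ with $\ker\bigl(\pi_* \colon \ca{A}_H(-,C_\phi) \to \ca{A}_H(-,A'[1])\bigr)$ in $[\ca{A}_H^{\op},\Ab]$. Applying the exact symmetric strong monoidal reflector $L$ (Proposition~\ref{prop:sheaves_characterization} and Theorem~\ref{thm:Grothendieck_tensor_from_homology_theory}(i)) then realizes $LZ$ as the kernel of a morphism between the dualizable sheaves $L\ca{A}_H(-,C_\phi)$ and $L\ca{A}_H(-,A'[1])$, so $LZ$ fits into a short exact sequence
\[
0 \to LZ \to L\ca{A}_H(-,C_\phi) \to Q \to 0
\]
in $\sh(\ca{A}_H)$ whose image under $-\ten{\ca{A}_H} H$ is the split short exact sequence with splitting idempotent $e_H \colon HC_\phi \twoheadrightarrow HLZ \hookrightarrow HC_\phi$ in $\ca{K}_B$.

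To conclude, it then suffices to lift $e_H$ to a sheaf endomorphism $e$ of $L\ca{A}_H(-,C_\phi)$. Once such a lift is in hand, the detection property (iii) of Theorem~\ref{thm:Grothendieck_tensor_from_homology_theory} supplies the rest essentially for free: the morphism $e^2 - e$ factors through a finitely generated subsheaf of $L\ca{A}_H(-,C_\phi)$ and satisfies $H(e^2 - e) = e_H^2 - e_H = 0$, and hence vanishes by Theorem~\ref{thm:Grothendieck_tensor_from_homology_theory}(iii); an analogous argument forces the image of $e$ to coincide with $LZ$ as a subsheaf of $L\ca{A}_H(-,C_\phi)$, exhibiting $LZ$ as a direct summand and hence as dualizable. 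The main obstacle, which I expect to be the hard part of the proof, is the construction of some such lift $e$. By Yoneda, an endomorphism of $L\ca{A}_H(-,C_\phi)$ in $\sh(\ca{A}_H)$ corresponds to a section of this sheaf at $C_\phi$; my plan is to produce the required section by picking an $H$-epimorphism $\widetilde{C} \twoheadrightarrow C_\phi$ from an appropriately chosen $H$-dual $\widetilde{C}$ on which the canonical decomposition $HC_\phi \cong HLZ \oplus \mathrm{image}(H\pi)$ from the proof of Proposition~\ref{prop:basic_properties_of_H-duals}(v) is realized by honest morphisms in $\ca{A}_H$, and descending the resulting local endomorphism to a global one by means of the Pstr\k{a}gowski-topology sheaf condition (Definition~\ref{dfn:Pstragowski-topology}).
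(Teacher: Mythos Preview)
Your reduction steps are sound: presenting $Z$ as $\mathrm{coker}(Y\phi)$, invoking Proposition~\ref{prop:basic_properties_of_H-duals}(v) to place $C_\phi$ in $\ca{A}_H$, and identifying $LZ$ with $\ker(L\pi_*)$ all work as stated. The difficulty is exactly where you locate it, but your plan for constructing the lift cannot succeed in general. A lift $e$ of $e_H$ with $e^2=e$ and image $LZ$ would split the short exact sequence
\[
0\longrightarrow LZ\longrightarrow L\ca{A}_H(-,C_\phi)\longrightarrow Q\longrightarrow 0
\]
in $\sh(\ca{A}_H)$, and that is strictly stronger than $LZ$ being dualizable. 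For a concrete obstruction, take $\ca{T}=\mathrm{SH}$, $F=\pi_0(H\mathbb{F}_2\wedge-)$, and $\phi=2\colon S^0\to S^0$. Then $C_\phi=S^0/2\in\ca{A}_H$ and $Z\ten{\ca{A}}H\cong\mathbb{F}_2$ is dualizable, but under the equivalence of $\sh(\ca{A}_H)$ with comodules over the dual Steenrod algebra the displayed sequence becomes the comodule extension $0\to\mathbb{F}_2\to (H\mathbb{F}_2)_*(S^0/2)\to\mathbb{F}_2[1]\to 0$, which the nontrivial $\mathrm{Sq}^1$ prevents from splitting. Since $-\ten{\ca{A}}H$ is here the forgetful functor to $\mathbb{F}_2$-modules, an $e$ with $He=e_H$ would be a comodule endomorphism with underlying module map $e_H$; no such map exists, so no section over any $H$-cover $\widetilde C\to C_\phi$ can descend to the lift you want. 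Your proposal therefore attempts to prove a statement that is false.

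The paper's argument avoids any splitting. It proves dualizability via the canonical morphism $[LZ,\U]\otimes LZ\to[LZ,LZ]$: a chain of lemmas (Lemmas~\ref{lemma:dual_finitely_presentable_and_preserved} through~\ref{lemma:presentation_of_canonical_morphism}) shows that $[Z,\U]$ and $[Z,Z]$ are finitely presentable presheaves preserved by $-\ten{\ca{A}}H$, and that the kernel of $[Z,\U]\otimes Z\to[Z,Z]$ is likewise preserved. Since $-\ten{\ca{A}}H$ sends this canonical morphism to an isomorphism, the Nakayama property of Theorem~\ref{thm:Grothendieck_tensor_from_homology_theory}(iii) forces both its sheafified kernel and cokernel to vanish, so $[LZ,\U]\otimes LZ\to[LZ,LZ]$ is an isomorphism and $LZ$ is dualizable.
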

 
 We prove this with a sequence of lemmas. Let $Z$ be a finitely presentable object of $[\ca{A}^{\op},\Ab]$ such that $Z \ten{\ca{A}} H$ has a dual. Pick a presentation
 \[
 \xymatrix{YA \ar[r]^-{Yf} & YB \ar@{->>}[r] & Z}
 \]
 with $A,B \in \ca{A}$. Then $Z \ten{\ca{A}} H$ is isomorphic to $\mathrm{coker}(Hf)$, so  $\mathrm{coker}(Hf)$ is finitely presentable and projective. By Part~(v) of Proposition~\ref{prop:basic_properties_of_H-duals}, the cofiber $C$ of $f$ lies in $\ca{A}$. It follows in particular that $Z$ is isomorphic to the image of $YB \rightarrow YC$.
 
 \begin{lemma}\label{lemma:dual_finitely_presentable_and_preserved}
 The internal hom $[Z,\U]$ is finitely presentable and it is preserved by $-\ten{\ca{A}} H \colon [\ca{A}^{\op},\Ab] \rightarrow \ca{K}_B$.
 \end{lemma}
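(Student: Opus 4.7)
The plan is to give $[Z,\U]$ an explicit presentation as a cokernel between representable presheaves, and then exploit right exactness of $-\ten{\ca{A}} H$ together with the strong monoidality of $H|_{\ca{A}}$ to read off both claims. First I would complete the chosen presentation $\ca{A}(-,A) \xrightarrow{\ca{A}(-,f)} \ca{A}(-,B) \twoheadrightarrow Z$ to a distinguished triangle $A \xrightarrow{f} B \xrightarrow{g} C \xrightarrow{h} A[1]$ in $\ca{T}$. Since $Z \cong \mathrm{coker}\bigl(\ca{A}(-,f)\bigr)$ and hence $Z \ten{\ca{A}} H \cong \mathrm{coker}(Hf)$ has a dual by assumption, Proposition~\ref{prop:basic_properties_of_H-duals}~(v) forces $C \in \ca{A}$. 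Closure of $\ca{A}$ under duals and shifts (Proposition~\ref{prop:basic_properties_of_H-duals}~(iii) and Assumption~\eqref{eqn:s1_in_image}) then places the whole dualized triangle
\[ A^{\vee}[-1] \xrightarrow{\alpha} C^{\vee} \xrightarrow{g^{\vee}} B^{\vee} \xrightarrow{f^{\vee}} A^{\vee} \]
inside $\ca{A}$.

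Next I would compute $[Z,\U]$ directly. The Day convolution formula gives $[\ca{A}(-,A),\U] \cong \ca{A}(-,A^{\vee})$, and left exactness of $[-,\U]$ applied to the presentation of $Z$ yields a left exact sequence
\[ 0 \to [Z,\U] \to \ca{A}(-,B^{\vee}) \xrightarrow{\ca{A}(-,f^{\vee})} \ca{A}(-,A^{\vee}). \]
Evaluating at each $X \in \ca{A}$ and applying the long exact sequence of $\ca{T}(X,-)$ to the dualized triangle identifies $\ker\bigl(\ca{A}(-,f^{\vee})\bigr)$ with $\mathrm{image}\bigl(\ca{A}(-,g^{\vee})\bigr)$, and in turn with $\mathrm{coker}\bigl(\ca{A}(-,\alpha)\bigr)$; this is the same argument that identified $Z$ with the image of $\ca{A}(-,g)$ in the discussion preceding this lemma. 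Consequently $[Z,\U] \cong \mathrm{coker}\bigl(\ca{A}(-,\alpha)\bigr)$, which is manifestly finitely presentable.

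Finally, applying the right exact functor $-\ten{\ca{A}} H$ to this cokernel description produces $[Z,\U] \ten{\ca{A}} H \cong \mathrm{coker}(H\alpha)$. Because $H|_{\ca{A}}$ is symmetric strong monoidal by Proposition~\ref{prop:basic_properties_of_H-duals}~(ii) and $H$ is homological, the dualized triangle yields an exact sequence in $\ca{K}_B$ which we may identify with $(HA)^{\vee}[-1] \to (HC)^{\vee} \to (HB)^{\vee} \to (HA)^{\vee}$. Inside this sequence $\mathrm{coker}(H\alpha) = \ker\bigl((Hf)^{\vee}\bigr) = \bigl(\mathrm{coker}(Hf)\bigr)^{\vee} \cong (Z \ten{\ca{A}} H)^{\vee}$. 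The main point requiring care, and thus the principal obstacle, will be verifying that this abstract isomorphism genuinely matches the canonical comparison map $[Z,\U] \ten{\ca{A}} H \to [Z \ten{\ca{A}} H, H\U]$ coming from the strong monoidality of $-\ten{\ca{A}} H$; but every isomorphism involved is either a structure iso of the strong monoidal functor $H|_{\ca{A}}$ or a universal property of the exact sequences above, so this reduces to a routine diagram chase.
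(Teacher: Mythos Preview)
Your proposal is correct and follows essentially the same approach as the paper: dualize the distinguished triangle to obtain $A^{\vee}[-1]\to C^{\vee}\to B^{\vee}\xrightarrow{f^{\vee}} A^{\vee}$, identify $[Z,\U]$ simultaneously as $\ker\bigl(\ca{A}(-,f^{\vee})\bigr)$ and as $\mathrm{coker}\bigl(\ca{A}(-,\alpha)\bigr)$, and then use that $H$ is homological to see that $-\ten{\ca{A}} H$ carries this object to $\ker(Hf^{\vee})$. The paper is terser and does not explicitly discuss the comparison map you flag at the end, but your extra care there is harmless and the diagram chase is indeed routine.
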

 
 \begin{proof}
 Since the internal hom of $\ca{T}$ is compatible with the triangulated structure, the triangle
 \[
 \xymatrix{ YC^{\vee} \ar[r] & YB^{\vee} \ar[r]^-{f^{\vee}} & YA^{\vee} \ar[r] & YC^{\vee}[1]}
 \]
 is distinguished. The image of $YC^{\vee} \rightarrow YB^{\vee}$ is isomorphic to $[Z,\U]$ since the functor $[-,\U] \colon [\ca{A}^{\op},\Ab] \rightarrow [\ca{A}^{\op},\Ab]$ sends cokernels to kernels. This image is also isomorphic to the cokernel of $YA^{\vee}[-1] \rightarrow YC^{\vee}$ since the triangle
 \[
\xymatrix{YA^{\vee}[-1] \ar[r] & YC^{\vee} \ar[r] & YB^{\vee} \ar[r]^-{f^{\vee}} & YA^{\vee} } 
 \]
 is distinguished, so $[Z,\U]$ is finitely presentable. Since the homological functor $H$ sends these triangles to exact sequences, it follows that $[Z,\U]$ is sent to the kernel of $Hf^{\vee} \colon HB^{\vee} \rightarrow HA^{\vee}$, so it is preserved by $- \ten{\ca{A}} H$.
 \end{proof}
 
 In order to understand the behaviour of the internal hom $[Z,Z] \in [\ca{A}^{\op},\Ab]$, we need to use presentations
 \[
 \xymatrix{ YA_1 \ar[r]^-{Yf_1} \ar[d] & YB_1 \ar@{->>}[r] \ar[d] & Z_1 \ar[d] \\ YA_2 \ar[r]^-{Yf_2} & YB_2 \ar@{->>}[r] & Z_2}
 \]
 of morphisms $Z_1 \rightarrow Z_2$ such that $Z_i \ten{\ca{A}} H$ has a dual. By Part~(v) of Proposition~\ref{prop:basic_properties_of_H-duals}, the cofiber $C_i$ of $f_i$ lies in $\ca{A}$. Since the Yoneda embedding sends distinguished triangles in $\ca{A}$ to exact sequences, the morphism $Z_1 \rightarrow Z_2$ is isomorphic to the horizontal image factorization of the square
 \[
 \xymatrix{YB_1 \ar[r] \ar[d] & YC_1 \ar[d] \\ YB_2 \ar[r] & YC_2}
 \]
 in $[\ca{A}^{\op},\Ab]$. The following lemma gives an explicit description of the kernel of $Z_1 \rightarrow Z_2$.
 
 \begin{lemma}\label{lemma:kernel_description}
 Suppose that the fiber $F$ of the diagonal morphism $B_1 \rightarrow C_2$ lies in $\ca{A}$. Then the horizontal image factorizations
 \[
\vcenter{
 \xymatrix{YF \ar@{->>}[r] \ar[d] & Z_0 \ar@{ >->}[r] \ar@{ >->}[d] & YC_1 \ar@{=}[d] \\ YB_1 \ar@{->>}[r] \ar[d] & Z_1 \ar@{ >->}[r] \ar[d] & YC_1 \ar[d] \\ YB_2 \ar@{->>}[r] & Z_2 \ar@{ >->}[r] & YC_2}
 }
\quad \text{and} \quad
\vcenter{
 \xymatrix{HF \ar@{->>}[r] \ar[d] & J_0 \ar@{ >->}[r] \ar@{ >->}[d] & HC_1 \ar@{=}[d] \\ HB_1 \ar@{->>}[r] \ar[d] & J_1 \ar@{ >->}[r] \ar[d] & HC_1 \ar[d] \\ HB_2 \ar@{->>}[r] & J_2 \ar@{ >->}[r] & HC_2}
 }
 \]
 yield exact sequences
 \[
 \xymatrix{0 \ar[r] & Z_0 \ar@{ >->}[r] & Z_1 \ar[r] & Z_2 }
 \quad \text{and} \quad
  \xymatrix{0 \ar[r] & J_0 \ar@{ >->}[r] & J_1 \ar[r] & J_2 }
 \]
 respectively.
 \end{lemma}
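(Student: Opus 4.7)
The plan is to prove both exactness claims by the same abelian-category argument, carried out once in $[\ca{A}^{\op},\Ab]$ and once in $\ca{K}_B$. The geometric input is the observation that the given square fits into a morphism of distinguished triangles $A_i \to B_i \to C_i \to A_i[1]$ obtained from the morphism of presentations: the two composites $B_1 \to B_2 \to C_2$ and $B_1 \to C_1 \to C_2$ coincide (by commutativity of the middle square of that morphism of triangles) and define a single morphism $d \colon B_1 \to C_2$ whose fiber is $F$ by hypothesis. Hence we have a distinguished triangle $F \to B_1 \xrightarrow{d} C_2 \to F[1]$ in $\ca{T}$ whose objects all lie in $\ca{A}$.

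Applying the Yoneda embedding $\ca{A} \to [\ca{A}^{\op},\Ab]$ and the restriction of $H \colon \ca{T} \to \ca{K}_B$ to $\ca{A}$ (both homological) to this triangle gives exact sequences in the respective targets. In particular, the image of $YF \to YB_1$ equals $\ker(YB_1 \to YC_2)$, and the image of $HF \to HB_1$ equals $\ker(HB_1 \to HC_2)$.

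The next step is the elementary abelian-category fact that for an epimorphism $p \colon M \twoheadrightarrow N$ and a morphism $\varphi \colon N \to Q$, one has $\ker \varphi = p\bigl(\ker(\varphi p)\bigr)$. Applied to $p \colon YB_1 \twoheadrightarrow Z_1$ and $\varphi \colon Z_1 \to Z_2$, together with the fact that $Z_2 \hookrightarrow YC_2$ is monic (so $\ker(Z_1 \to Z_2) = \ker(Z_1 \to YC_2)$), this identifies $\ker(Z_1 \to Z_2)$ with the image of $YF \to YB_1 \twoheadrightarrow Z_1$. Since $Z_1 \hookrightarrow YC_1$ is monic, that subobject of $Z_1$ coincides with the subobject $Z_0 = \mathrm{im}(YF \to YC_1)$ of $YC_1$, yielding exactness of $0 \to Z_0 \to Z_1 \to Z_2$. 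The argument for $0 \to J_0 \to J_1 \to J_2$ is verbatim the same with $H$ replacing $Y$ and $\ca{K}_B$ replacing $[\ca{A}^{\op},\Ab]$.

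No step presents a real obstacle: the only delicate point is ensuring that the two composites $B_1 \to C_2$ agree, which is immediate from the commutativity of the middle square of a morphism of distinguished triangles, while the transfer of kernels across the epi-mono factorization is a routine diagram chase.
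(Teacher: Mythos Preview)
Your proof is correct and essentially the same as the paper's: both exploit exactness of $YF \to YB_1 \to YC_2$ (respectively $HF \to HB_1 \to HC_2$) at the middle term, coming from the distinguished triangle on the fiber $F$, and then perform the same diagram chase through the epi-mono factorizations. The only cosmetic difference is that the paper carries out the chase element-wise (directly in presheaves for the $Z_i$, via Freyd--Mitchell for the $J_i$) and cites Verdier for the first case, whereas you package it uniformly as the abelian-category identity $\ker\varphi = p\bigl(\ker(\varphi p)\bigr)$ for $p$ epi.
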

 
 \begin{proof}
 The claim about the $Z_i$ is proved in \cite[Proposition~3.2.5]{VERDIER}. The only non-trivial assertion is exactness at $Z_1$ and at $J_1$. To see exactness at $Z_1$, let $D \in \ca{A}$ and consider an element $x \in Z_1(D)$. This is a morphism $x \colon D \rightarrow C_1$ which factors through $B_1 \rightarrow C_1$. If $x$ is sent to zero, the composite
 \[
 \xymatrix{D \ar[r] & B_1 \ar[r] & C_1 \ar[r] & C_2}
 \]
 is zero, so the morphism $D \rightarrow B_1$ factors through $F \rightarrow B_1$. This shows that $x$ lies in the image of the inclusion $Z_0 \rightarrow Z_1$.
 
 To see exactness at $J_1$, it is convenient to apply the Freyd--Mitchell embedding theorem to $\ca{K}_B$ so that we can use elements to do the diagram chase. If $x \in J_1$ is sent to zero in $J_2$, then any preimage $y \in HB_1$ of $x$ is sent to zero by the morphism $HB_1 \rightarrow HC_2$. Thus there exists $z \in HF$ such that $z \mapsto y$. The image of $z$ in $J_0$ is sent to $x$ by the inclusion $J_0 \rightarrow J_1$.
 \end{proof}
 
 \begin{lemma}\label{lemma:kernel_preserved}
 In the situation of Lemma~\ref{lemma:kernel_description}, suppose that in addition the fiber $F^{\prime}$ of $F \rightarrow C_1$ is an $H$-dual. Then $-\ten{\ca{A}} H \colon [\ca{A}^{\op},\Ab] \rightarrow \ca{K}_B$ preserves the kernel $Z_0$ of $Z_1 \rightarrow Z_2$: there are natural isomorphisms $Z_i \ten{\ca{A}} H \cong J_i$.
 \end{lemma}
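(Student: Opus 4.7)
The plan is to exhibit each $Z_i$ as the cokernel of a morphism between representables in $[\ca{A}^{\op},\Ab]$, apply the left adjoint $-\ten{\ca{A}} H$ (which preserves cokernels), and use the homologicity of $H$ to identify the resulting cokernel in $\ca{K}_B$ with the image $J_i$ from Lemma~\ref{lemma:kernel_description}. Once the natural isomorphisms $Z_i \ten{\ca{A}} H \cong J_i$ are in hand, the kernel-preservation statement will follow by transporting the exact sequence $0 \to J_0 \to J_1 \to J_2$ back through these isomorphisms.

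First I would pin down the cokernel presentations. For any distinguished triangle $A \xrightarrow{f} B \xrightarrow{g} C \to A[1]$ in $\ca{A}$, the long exact sequence of $\ca{T}(D,-)$ for $D \in \ca{A}$ makes $YA \to YB \to YC$ exact at $YB$ in $[\ca{A}^{\op},\Ab]$, so $\mathrm{im}(Yg) \cong \mathrm{coker}(Yf)$. Applied to the triangles $A_i \to B_i \to C_i$ (for $i=1,2$) this yields $Z_i \cong \mathrm{coker}(Yf_i)$. The assumption that $F^{\prime}$ is an $H$-dual places $F^{\prime}$ in $\ca{A}$, so the same argument applied to the distinguished triangle $F^{\prime} \to F \to C_1$ gives $Z_0 \cong \mathrm{coker}(YF^{\prime} \to YF)$.

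Next I would apply $-\ten{\ca{A}} H$, which sends $YA$ to $HA$ and preserves cokernels, to obtain $Z_i \ten{\ca{A}} H \cong \mathrm{coker}(Hf_i)$ and $Z_0 \ten{\ca{A}} H \cong \mathrm{coker}(HF^{\prime} \to HF)$. Since $H$ is homological, the sequences $HA_i \to HB_i \to HC_i$ and $HF^{\prime} \to HF \to HC_1$ are exact at the middle term, so these cokernels coincide with the images $J_i = \mathrm{im}(HB_i \to HC_i)$ and $J_0 = \mathrm{im}(HF \to HC_1)$. Naturality of the resulting isomorphisms $Z_i \ten{\ca{A}} H \cong J_i$ (compatibility with the maps $Z_0 \to Z_1 \to Z_2$) is immediate from the functoriality of the image and cokernel constructions applied to the morphisms of triangles relating $F^{\prime} \to F \to C_1$ and the $A_i \to B_i \to C_i$ (as in the diagram of Lemma~\ref{lemma:kernel_description}).

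Finally, applying $-\ten{\ca{A}} H$ to the exact sequence $0 \to Z_0 \to Z_1 \to Z_2$ of Lemma~\ref{lemma:kernel_description} and transporting along the isomorphisms just established produces the sequence $0 \to J_0 \to J_1 \to J_2$, which the same lemma asserts is exact. Thus $Z_0 \ten{\ca{A}} H \cong J_0$ is the kernel of $Z_1 \ten{\ca{A}} H \to Z_2 \ten{\ca{A}} H$, as required. The only delicate point is checking that the connecting morphisms match under $Z_i \ten{\ca{A}} H \cong J_i$; this should be a routine diagram chase, since both sides are presented uniformly as cokernels of morphisms of representables arising from the same diagram of distinguished triangles.
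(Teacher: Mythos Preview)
Your proposal is correct and follows essentially the same approach as the paper: present each $Z_i$ as the cokernel of a map of representables coming from a distinguished triangle in $\ca{A}$, apply the cokernel-preserving functor $-\ten{\ca{A}} H$, and use homologicity of $H$ to identify the result with the image $J_i$. The paper's proof is more terse and omits your final paragraph, since the kernel-preservation statement is already encoded in the natural isomorphisms $Z_i \ten{\ca{A}} H \cong J_i$ together with the exactness of $0 \to J_0 \to J_1 \to J_2$ established in Lemma~\ref{lemma:kernel_description}.
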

 
 \begin{proof}
 The rows in the diagram
 \[
 \xymatrix{YF^{\prime} \ar[r] \ar[d] & YF \ar@{->>}[r] \ar[d] & Z_0 \ar[r] \ar[d] & 0 \\
 YA_1 \ar[r] \ar[d] & YB_1 \ar@{->>}[r] \ar[d] & Z_1 \ar[r] \ar[d] & 0  \\
 YA_2 \ar[r]  & YB_2 \ar@{->>}[r]  & Z_2 \ar[r]  & 0 }
 \]
 are exact, so the $Z_i$ are sent to the respective cokernels in $\ca{K}_B$. Since $H$ is homological, the rows of the diagram
 \[
  \xymatrix{HF^{\prime} \ar[r] \ar[d] & HF \ar@{->>}[r] \ar[d] & J_0 \ar[r] \ar[d] & 0 \\
 HA_1 \ar[r] \ar[d] & HB_1 \ar@{->>}[r] \ar[d] & J_1 \ar[r] \ar[d] & 0  \\
 HA_2 \ar[r]  & HB_2 \ar@{->>}[r]  & J_2 \ar[r]  & 0 }
 \]
 are exact. The claim follows from the natural isomorphism $HA \cong YA \ten{\ca{A}} H$.  
 \end{proof}

\begin{lemma}\label{lemma:internal_hom_preserved}
 The internal hom $[Z,Z]$ is finitely presentable and preserved by $- \ten{\ca{A}} H \colon [\ca{A}^{\op},\Ab] \rightarrow \ca{K}_B$.
\end{lemma}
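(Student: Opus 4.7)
The plan is to realise $[Z,Z]$ as a kernel of the form treated in Lemmas~\ref{lemma:kernel_description} and \ref{lemma:kernel_preserved}. Applying the left exact contravariant functor $[-,Z]$ to the presentation $YA \xrightarrow{Yf} YB \twoheadrightarrow Z \to 0$ produces the exact sequence
\[
0 \to [Z,Z] \to [YB,Z] \to [YA,Z].
\]
Since $YA$ and $YB$ are dualizable in $[\ca{A}^{\op},\Ab]$ (Yoneda is strong monoidal for the Day convolution, so their duals are $YA^{\vee}$ and $YB^{\vee}$), the right-hand terms identify with $YB^{\vee} \dt Z$ and $YA^{\vee} \dt Z$ respectively, and the connecting morphism becomes $f^{\vee} \dt Z$. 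Thus $[Z,Z] \cong \ker(YB^{\vee} \dt Z \to YA^{\vee} \dt Z)$.

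Next I would match this kernel with the setup of Lemma~\ref{lemma:kernel_description}. Tensoring the exact sequence $YA \to YB \to Z \to 0$ with the dualizable object $YB^{\vee}$ exhibits $YB^{\vee} \dt Z$ as the image of $Y(B^{\vee} \otimes B) \to Y(B^{\vee} \otimes C)$, where $g \colon B \to C$ is the cofiber map of $f$; similarly $YA^{\vee} \dt Z$ is the image of $Y(A^{\vee} \otimes B) \to Y(A^{\vee} \otimes C)$. Setting $B_1 = B^{\vee} \otimes B$, $C_1 = B^{\vee} \otimes C$, $B_2 = A^{\vee} \otimes B$, $C_2 = A^{\vee} \otimes C$, all of which lie in $\ca{A}$ by the closure properties from Proposition~\ref{prop:basic_properties_of_H-duals}, and using that the underlying distinguished triangles $A_i \to B_i \to C_i$ come from tensoring $A \xrightarrow{f} B \xrightarrow{g} C \to A[1]$ with $B^{\vee}$ and $A^{\vee}$ respectively, we arrive in the framework of Lemma~\ref{lemma:kernel_description}; the diagonal $B_1 \to C_2$ is the morphism $f^{\vee} \otimes g$.

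The main obstacle will be verifying the two fiber hypotheses: that the fiber $F$ of $f^{\vee} \otimes g \colon B_1 \to C_2$ and the fiber $F'$ of the composite $F \to B_1 \to C_1$ both lie in $\ca{A}$. By Proposition~\ref{prop:basic_properties_of_H-duals}(v), each of these reduces to dualizability in $\ca{K}_B$ of the corresponding cokernel under $H$, and this is where the hypothesis that $J \defl Z \ten{\ca{A}} H$ is dualizable becomes decisive. Since $B$ is projective in $\ca{K}_B$, dualizability is equivalent to finite generation and projectivity, so the short exact sequence $0 \to \mathrm{image}(Hf) \to HB \to J \to 0$ is split. Iterating along the long exact sequence associated to $A \to B \to C \to A[1]$ yields natural direct-sum decompositions $HA \cong \ker(Hf) \oplus \mathrm{image}(Hf)$, $HB \cong \mathrm{image}(Hf) \oplus J$, and $HC \cong J \oplus \ker(Hf)[1]$, in which every summand is finitely generated projective. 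A direct computation using these decompositions shows that $H(f^{\vee} \otimes g) = (Hf)^{\vee} \ten{B} Hg$ has cokernel a direct sum of tensor products of the above summands (hence dualizable); the octahedral triangle $A_1 \to F' \to C_2[-1] \to A_1[1]$ for the composite $F \to B_1 \to C_1$ reduces the analogous statement for $F'$ to another such direct-summand computation.

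With $F, F' \in \ca{A}$ established, Lemma~\ref{lemma:kernel_description} identifies $[Z,Z]$ with $Z_0$, which is the cokernel of $YF' \to YF$ (the map covering the kernel of $YF \to YC_1$) and is therefore finitely presentable. Lemma~\ref{lemma:kernel_preserved} then provides a natural isomorphism $[Z,Z] \ten{\ca{A}} H \cong J_0$, and using the strong monoidality of $-\ten{\ca{A}} H$ together with the flatness of the dualizable object $J$ one recognises $J_0 \cong J^{\vee} \ten{B} J = [J,J]$, yielding the claimed preservation.
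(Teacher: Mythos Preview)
Your proposal is correct and follows essentially the same route as the paper: both realise $[Z,Z]$ as the kernel $Z_0$ in the framework of Lemmas~\ref{lemma:kernel_description} and~\ref{lemma:kernel_preserved}, with $B_1=B^{\vee}\otimes B$, $C_1=B^{\vee}\otimes C$, $B_2=A^{\vee}\otimes B$, $C_2=A^{\vee}\otimes C$, and both reduce the problem to checking $F,F'\in\ca{A}$. The only difference is in how this last check is carried out: the paper argues directly that $H(B_1\to C_2)$ and $H(F\to C_1)$ factor as a split epimorphism followed by a split monomorphism (so Proposition~\ref{prop:basic_properties_of_H-duals}(v) applies), whereas you compute with explicit direct-sum decompositions of $HA$, $HB$, $HC$ and invoke the octahedral axiom; your octahedral triangle should read $C_2[-1]\to F'\to A_1\to C_2$ rather than $A_1\to F'\to C_2[-1]$, but this does not affect the argument.
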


\begin{proof}
 From the presentation
 \[
 \xymatrix{YA \ar[r]^-{Yf} & YB \ar@{->>}[r] & Z}
 \]
 we get a presentation
 \[
 \xymatrix{ Y(B^{\vee} \otimes A) \ar[r] \ar[d] & Y(B^{\vee} \otimes B) \ar@{->>}[r]  \ar[d] & YB^{\vee} \otimes Z \ar@{ >->}[r] \ar[d] & Y(B^{\vee} \otimes C) \ar[d] \\ 
 Y(A^{\vee} \otimes A) \ar[r] & Y(A^{\vee} \otimes B) \ar@{->>}[r] & YA^{\vee} \otimes Z \ar@{ >->}[r] & Y(A^{\vee} \otimes C) }
 \]
 of $YB^{\vee} \otimes Z \rightarrow YA^{\vee} \otimes Z$, which is isomorphic to $[YB,Z] \rightarrow [YA,Z]$.
 
 We claim that the diagonal morphism $H(B^{\vee} \otimes B) \rightarrow H(A^{\vee} \otimes C)$ factors as a split epimorphism followed by a split monomorphism. Indeed, $- \ten{\ca{A}} H$ sends the above image factorization to $[HB,Z \ten{A} H] \rightarrow [HA,Z\ten{A} H]$. This factors as a split epimorphism followed by a split monomorphism since  $Hf \colon HA \rightarrow HB$ factors accordingly (as a consequence of the fact that $\mathrm{coker} Hf \cong Z \ten{\ca{A}} H$ is projective). The claim now follows from the fact that $Z \rightarrow YC$, hence $YA^{\vee} \otimes Z \rightarrow Y(A^{\vee} \otimes C)$, is sent to a split monomorphism by $- \ten{\ca{A}} H$ by Part~(v) of Proposition~\ref{prop:basic_properties_of_H-duals}. Thus the fiber $F$ of $B^{\vee} \otimes B \rightarrow A^{\vee} \otimes C$ is an $H$-dual.
 
 We next claim that the morphism $HF \rightarrow H(B^{\vee} \otimes C)$ also factors as a split epimorphism followed by a split monomorphism. By Lemma~\ref{lemma:kernel_description}, the image factorization is the composite $J_0 \rightarrow [HB, Z \ten{\ca{A}} H]$ followed by the split monomorphism $[HB,Z \ten{\ca{A}} H] \rightarrow [HB, HC]$. The kernel $J_0$ of $[Hf,\id]$ is isomorphic to $[Z\ten{\ca{A}} H,Z\ten{\ca{A}} H]$, so the kernel is a split monomorphism since $HB \rightarrow Z\ten{\ca{A}}H$ is a split epimorphism. Thus the fiber $F^{\prime}$ of $F \rightarrow B^{\vee} \otimes C$ is also an $H$-dual.
 
 The resulting sequence
 \[
 \xymatrix{YF^{\prime} \ar[r] & YF \ar@{->>}[r] & [Z,Z]}
 \]
 gives the desired finite presentation of $[Z,Z]$. Lemma~\ref{lemma:kernel_preserved} shows that the exact sequence
 \[
 \xymatrix{0 \ar[r] & [Z,Z] \ar@{ >->}[r] & [YB,Z] \ar[r] & [YA,Z] }
 \]
 is preserved by $-\ten{\ca{A}} H$. It follows that $- \ten{\ca{A}} H$ preserves the internal hom $[Z,Z]$ since any symmetric strong monoidal functor preserves the internal hom $[X,Z]$ if $X$ has a dual.
\end{proof}
 
 \begin{lemma}\label{lemma:presentation_of_canonical_morphism}
 There exists a presentation
 \[
 \xymatrix{
YA_1 \ar[r] \ar[d] & YB_1 \ar@{->>}[r] \ar[d]^{Yg} & [Z,\U] \otimes Z \ar[d] \\
YA_2 \ar[r] & YB_2 \ar@{->>}[r] & [Z,Z]  
 }
 \]
of the canonical morphism $[Z,\U] \otimes Z \rightarrow [Z,Z]$ such that $LYg$ is an epimorphism. Moreover, if $C_2$ denotes the cofiber of $A_2 \rightarrow B_2$, then the morphism $HB_2 \rightarrow HC_2$ factors as a split epimorphism followed by a split monomorphism.
 \end{lemma}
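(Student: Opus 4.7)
The plan is to combine the canonical presentation of $[Z,Z]$ coming out of the proof of Lemma~\ref{lemma:internal_hom_preserved}, namely $YF' \xrightarrow{Ya} YF \twoheadrightarrow [Z,Z]$ with $F$ the fiber of $B^\vee \otimes B \to A^\vee \otimes C$ and $F'$ the fiber of $F \to B^\vee \otimes C$, with an arbitrary chosen presentation $YA_1^0 \xrightarrow{Yh} YB_1^0 \twoheadrightarrow [Z,\U] \otimes Z$ (which exists since the target is finitely presentable, as $[Z,\U]$ is by Lemma~\ref{lemma:dual_finitely_presentable_and_preserved} and Day convolution preserves finite presentability). A naive lift $\beta \colon B_1^0 \to F$ of the composite $YB_1^0 \to [Z,\U] \otimes Z \to [Z,Z]$ through $YF \twoheadrightarrow [Z,Z]$, obtained using projectivity of $YB_1^0$, does not make $H\beta$ surjective onto $HF$; it only surjects onto the quotient $[Z,Z] \ten{\ca{A}} H \cong [Z\ten{\ca{A}} H,\, Z \ten{\ca{A}} H]$, because strong monoidality of $-\ten{\ca{A}} H$ on $\ca{A}$ (Proposition~\ref{prop:basic_properties_of_H-duals}~(ii)) identifies $([Z,\U] \otimes Z) \ten{\ca{A}} H$ with this internal hom. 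The fix is to enlarge the chosen presentation of $[Z,\U] \otimes Z$ by a trivial $F'$-summand whose image in $HF$ is precisely the missing kernel.

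Concretely, I will set $B_1 \defl B_1^0 \oplus F'$, $A_1 \defl A_1^0 \oplus F'$, $B_2 \defl F$, $A_2 \defl F'$, with the top surjection $YB_1 \twoheadrightarrow [Z,\U] \otimes Z$ equal to the original map on $YB_1^0$ and zero on $YF'$, the bottom surjection $YF \twoheadrightarrow [Z,Z]$ as in Lemma~\ref{lemma:internal_hom_preserved}, the top horizontal $(Yh, \id)$, the middle vertical $Yg = (Y\beta, Ya)$, and the left vertical $(Y\alpha, \id)$, where $\alpha \colon A_1^0 \to F'$ is a lift of $\beta \circ h$ through $a \colon F' \to F$. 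Such an $\alpha$ exists by projectivity of $YA_1^0$: because $Y\beta \circ Yh$ becomes zero in $[Z,Z]$, it factors through the image of $Ya$. The top row is exact since both $A_1^0$ and $B_1^0$ are augmented by the same $F'$ summand with identity map between them, and both squares commute by the choices of $\beta$ and $\alpha$.

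The key verification is that $Hg \colon HB_1 \to HF$ is surjective. Its image contains $\mathrm{image}(H\beta)$, which surjects onto the quotient $[Z \ten{\ca{A}} H,\, Z \ten{\ca{A}} H]$ of $HF$, and it also contains $\mathrm{image}(Ha)$, which equals the kernel of this quotient by right-exactness of $-\ten{\ca{A}} H$ applied to $YF' \to YF \to [Z,Z] \to 0$; a short diagram chase then gives $\mathrm{image}(Hg) = HF$. To pass from $Hg$ epi to $LYg$ epi, I will observe that $\mathrm{coker}(LYg)$ is a finitely generated sheaf with $\mathrm{coker}(LYg) \ten{\ca{A}} H \cong \mathrm{coker}(Hg) = 0$, so Theorem~\ref{thm:Grothendieck_tensor_from_homology_theory}~(iii) forces its vanishing. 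The ``moreover'' clause is immediate: the cofiber of $A_2 \to B_2$ in $\ca{T}$ is the triangle completion of $a \colon F' \to F$, namely $B^\vee \otimes C$, and the factorization of $HF \to H(B^\vee \otimes C)$ as a split epimorphism followed by a split monomorphism was already established in the proof of Lemma~\ref{lemma:internal_hom_preserved}. The main obstacle to guard against is the temptation to take $B_2 = B_1^0$ or $B_1 = F$ directly: neither yields both commutativity of the right square and surjectivity of $Hg$, and the $F'$-augmentation is the precise mechanism that reconciles exactness of the top row with surjectivity of $Hg$ while keeping the whole diagram compatible with the canonical morphism $[Z,\U] \otimes Z \to [Z,Z]$.
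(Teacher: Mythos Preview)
Your proof is correct, and it takes the same bottom row $A_2 = F'$, $B_2 = F$ as the paper, so the ``moreover'' clause follows identically by citing the factorisation of $HF \to H(B^{\vee} \otimes C)$ from the proof of Lemma~\ref{lemma:internal_hom_preserved}. Where you diverge from the paper is in the construction of $B_1$ and the verification that $LYg$ is an epimorphism.

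The paper forms the pullback $E$ of $YB_2 \twoheadrightarrow [Z,Z]$ along the canonical morphism $[Z,\U] \otimes Z \to [Z,Z]$ in $[\ca{A}^{\op},\Ab]$. Since $-\ten{\ca{A}} H$ sends the canonical morphism to an isomorphism, the Nakayama property (Theorem~\ref{thm:Grothendieck_tensor_from_homology_theory}~(iii)) shows $L$ sends it to an epimorphism; exactness of $L$ then gives that $L(E \to YB_2)$ is an epimorphism, and one extracts a representable $YB_1 \to E$ by a finite--presentability argument so that both $LYg$ and $YB_1 \to [Z,\U]\otimes Z$ are epimorphisms. The paper then builds $A_1$ by a second pullback. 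Your approach is instead entirely explicit: you write down $B_1 = B_1^0 \oplus F'$ and $A_1 = A_1^0 \oplus F'$ by hand, check $Hg$ is surjective because its image hits both the kernel (via $Ha$) and the quotient (via $H\beta$) of $HF \twoheadrightarrow [Z,Z]\ten{\ca{A}} H$, and then invoke Theorem~\ref{thm:Grothendieck_tensor_from_homology_theory}~(iii) on $\mathrm{coker}(LYg)$. Your route avoids the pullback machinery and gives a concrete formula for $B_1$, at the cost of needing the trick of the trivial $F'$-augmentation; the paper's route is more uniform but leaves the shape of $B_1$ implicit.
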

 
 \begin{proof}
 Let
 \[
 \xymatrix{YA_2 \ar[r] & YB_2 \ar@{->>}[r] & [Z,Z]}
 \]
 be the presentation of Lemma~\ref{lemma:internal_hom_preserved}. Since $[Z,Z]\ten{\ca{A}} H \cong [Z\ten{\ca{A}} H, Z \ten{\ca{A}} H]$ by the same lemma, the cokernel of $HA_2 \rightarrow HB_2$ is projective. This implies that the cokernel of $HA_2 [1] \rightarrow HB_2[1]$ is also projective. It follows from the long exact sequence for the homological functor $H$ that $[Z,Z] \ten{\ca{A}} H \rightarrow HC_2$ is a split monomorphism.
 
 Consider the pullback
 \[
 \xymatrix{E \ar@{->>}[r] \ar[d] & [Z,\U]\otimes Z \ar[d] \\ YB_2 \ar@{->>}[r] & [Z,Z]}
 \]
 in $[\ca{A}^{\op},\Ab]$. By Lemma~\ref{lemma:dual_finitely_presentable_and_preserved}, the object $[Z,\U]\otimes Z$ is finitely presentable. The morphism $[Z,\U] \otimes Z \rightarrow [Z,Z]$ is sent to an isomorphism by $- \ten{\ca{A}} H$, so it is sent to an epimorphism by $L$ since the restriction of $-\ten{\ca{A}} H$ to $\sh(\ca{A})$ is a Nakayama functor by Part~(iii) of Theorem~\ref{thm:Grothendieck_tensor_from_homology_theory}. Since $L$ preserves pullbacks, $L(E \rightarrow YB_2)$ is an epimorphism. Thus we can find an object $B_1 \in \ca{A}$ and a morphism $YB_1 \rightarrow E$ such that the resulting square
 \[
 \xymatrix{YB_1 \ar@{->>}[r] \ar[d]^{Yg} & [Z,\U]\otimes Z \ar[d] \\ YB_2 \ar@{->>}[r] & [Z,Z]}
 \]
 satisfies the requirements that $LYg$ is an epimorphism and the top horizontal morphism is an epimorphism in $[\ca{A}^{\op},\Ab]$.
 
 Let $K_1$ be the kernel of $YB_1 \rightarrow [Z,\U]\otimes Z$, let $K_2$ be the kernel of $YB_2 \rightarrow [Z,Z]$, and let
 \[
 \xymatrix{E^{\prime} \ar@{->>}[r] \ar[d] & K_1 \ar[d] \\ YA_2 \ar@{->>}[r] & K_2}
 \]
 be a pullback square. Since $K_1$ is finitely generated, we can find an object $A_1 \in \ca{A}$ and a morphism $YA_1 \rightarrow A^{\prime}$ such that that $YA_1 \rightarrow E^{\prime} \rightarrow K_1$ is an epimorphism.
 \end{proof}
 
 \begin{proof}[Proof of Theorem~\ref{thm:detecting_duals}]
 It follows from Lemmas~\ref{lemma:dual_finitely_presentable_and_preserved} and \ref{lemma:internal_hom_preserved} that the canonical morphism $[Z,\U] \otimes Z \rightarrow [Z,Z]$ is sent to an isomorphism by $-\ten{\ca{A}} H$. By Part~(iii) of Theorem~\ref{thm:Grothendieck_tensor_from_homology_theory}, the functor $-\ten{A} H \colon \sh(\ca{A}) \rightarrow \ca{K}_B$ is a Nakayama functor. Thus $L([Z,\U] \otimes Z \rightarrow [Z,Z])$ is an epimorphism. In order to show that it is an isomorphism, it suffices to check that its kernel is preserved by $-\ten{A} H$. To do this, we will show that the presentation of Lemma~\ref{lemma:presentation_of_canonical_morphism} satisfies the conditions of Lemma~\ref{lemma:kernel_preserved}.
 
 Let $F$ be the fiber of $B_1 \rightarrow C_2$. Since $Hg$ is a split epimorphism, the morphism $HB_1 \rightarrow HC_2$ factors as a split epimorphism followed by a split monomorphism. It follows that $HF$ has a dual.
 
 Since $-\ten{\ca{A}} H$ sends $[Z,\U] \otimes Z \rightarrow [Z,Z]$ to an isomorphism, the composite
 \[
 \xymatrix{HF \ar[r] & HB_1 \ar@{->>}[r] & [Z,\U] \otimes Z \ten{\ca{A}} H }
 \]
 is zero. It follows that the composite $HF \rightarrow HB_1 \rightarrow HC_1$ is zero, so the fiber $F^{\prime}$ of $F \rightarrow C_1$ is an $H$-dual.
 
 Lemma~\ref{lemma:kernel_preserved} shows that the kernel $K$ of $[Z,\U] \otimes Z \rightarrow [Z,Z]$ is preserved by $-\ten{\ca{A}}H$. Thus $LK$ is sent to zero by $-\ten{\ca{A}} H \colon \sh(\ca{A}) \rightarrow \ca{K}_B$. Since this is a Nakayama functor, it follows that $LK \cong 0$. The functor $L$ is exact and symmetric strong monoidal, so it preserves the internal hom between finitely presentable objects. If follows that the canonical morphism
 \[
 [LZ,\U] \otimes LZ \rightarrow [LZ,LZ]
 \]
 is an isomorphism, so $LZ$ has a dual.
 \end{proof}

 \subsection{Existence of flat replacements}\label{section:existence_theorem}

 In this section, we assume that the requirements of \S \ref{section:grading} and \S \ref{section:detecting_duals} are both satisfied. Thus $\ca{P}$ is the free $\Ab$-category on some abelian group $G$, with strict monoidal structure given by addition in $G$ and symmetry given by the Koszul sign rules according to some homomorphism $\varepsilon \colon G \rightarrow \{1,-1\}$. Furthermore, we assume that the tensor triangulated category $\ca{T}$ is monoidal closed and that the internal hom objects are compatible with the triangulated structure.
 
 We fix a symmetric lax monoidal homological functor $F \colon \ca{T} \rightarrow \Ab$ and we let $H \defl \bar{F}^{\mathrm{norm}} \colon \ca{T} \rightarrow \ca{K}_B$, where $B=\bar{F} \U$ is the coefficient ring of $F$ (recall that $\bar{F}$ sends $X$ to $F\bigl(K(-)^{\vee} \otimes X\bigr)$ by Definition~\ref{dfn:K-functor_from_Ab-functor}). We write $\ca{A} \defl \ca{A}_H$ for the additive site of $H$-duals (see Definition~\ref{dfn:H-dual}) and $\ca{C} \defl \sh(\ca{A})$ for the category of sheaves on $\ca{A}$. We denote the associated sheaf functor by $L \colon [\ca{A}^{\op},\Ab] \rightarrow \ca{C}$.
 
 The composite
 \[
 \xymatrix{ \ca{P} \ar[r]^-{K} & \ca{A} \ar[r]^-{Y} & [\ca{A}^{\op},\Ab] \ar[r]^-{L} & \ca{C} } 
 \]
 endows $\ca{C}$ with an enrichment over $\ca{K}=(G,\varepsilon)\mbox{-}\Ab$ (see Proposition~\ref{prop:strong_monoidal_enriched_equivalent_to_slice}).
 
 \begin{lemma}\label{lemma:H_graded_nakayama}
 The functor $-\ten{\ca{A}} H \colon \ca{C} \rightarrow \ca{K}_B$ is a graded Nakayama functor.
 \end{lemma}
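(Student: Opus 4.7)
The plan is to unpack the definition of a graded Nakayama functor into two parts: that $-\ten{\ca{A}} H$ is a graded tensor functor (in the sense of \S\ref{section:grading}), and that it detects epimorphisms between finitely presentable (equivalently, dualizable) objects of $\ca{C}$. Each part will be handled using results already established in the excerpt, so no genuinely new construction is required.

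For the first part, Part~(iii) of Theorem~\ref{thm:Grothendieck_tensor_from_homology_theory} supplies that $-\ten{\ca{A}} H$ is a symmetric strong monoidal left adjoint, so the remaining task is to exhibit symmetric monoidal isomorphisms $L(YKp) \ten{\ca{A}} H \cong Y(p) \otimes B$ expressing compatibility with the graded structures via Proposition~\ref{prop:strong_monoidal_enriched_equivalent_to_slice}. The natural transformation $\eta \ten{\ca{A}} H \colon -\ten{\ca{A}} H \Rightarrow L(-) \ten{\ca{A}} H$ is an isomorphism (noted in the proof of Theorem~\ref{thm:Grothendieck_tensor_from_homology_theory}(iii)), so $L(YKp) \ten{\ca{A}} H \cong YKp \ten{\ca{A}} H \cong H(Kp)$, the last isomorphism being the left Kan extension formula applied to a representable. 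It then suffices to compute that $H(Kp)$ is the line object in $\ca{K}_B$ of grading $p$: since $K$ is strong monoidal and $\ca{P}$ is rigid, one has $\bar{F}(Kp)(q) = F(Kq^{\vee} \otimes Kp) \cong F(K(p-q)) = B(q-p)$, which is exactly $Y(p) \otimes B$. Coherence of these isomorphisms with respect to tensor products follows from the coherence of $K$.

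For the second part, let $f \colon X \to Y$ be a morphism in $\ca{C}$ whose domain and codomain have duals, and suppose $f \ten{\ca{A}} H$ is an epimorphism in $\ca{K}_B$. Duals in a Grothendieck tensor category are finitely presentable, so the cokernel $G = \mathrm{coker}(f)$ in $\ca{C}$ is finitely presentable, in particular finitely generated. Being a left adjoint, $-\ten{\ca{A}} H$ is right exact, so $G \ten{\ca{A}} H \cong \mathrm{coker}(f \ten{\ca{A}} H) \cong 0$. Part~(iii) of Theorem~\ref{thm:Grothendieck_tensor_from_homology_theory} then forces $G \cong 0$, whence $f$ is an epimorphism.

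I do not anticipate any real obstacles: the grading check is a bookkeeping exercise that reduces to strong monoidality of $K$ together with the identity $Kq^{\vee} \cong K(-q)$, and the epimorphism detection step is essentially an immediate corollary of the Nakayama-type statement already built into Theorem~\ref{thm:Grothendieck_tensor_from_homology_theory}(iii). The only place where care is warranted is in matching the enrichment data through Proposition~\ref{prop:strong_monoidal_enriched_equivalent_to_slice}, but this is routine once the line-object isomorphism has been identified.
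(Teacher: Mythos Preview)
Your proposal is correct and follows essentially the same two-step structure as the paper's proof: verify that $-\ten{\ca{A}} H$ is a graded tensor functor, then invoke Theorem~\ref{thm:Grothendieck_tensor_from_homology_theory}(iii) for the Nakayama property. The only cosmetic difference is in the first step: the paper argues abstractly that $H$ is the underlying additive functor of the $\ca{K}$-functor $\mathbb{F}^{\mathrm{norm}}$ (Proposition~\ref{prop:symmetric_lax_monoidal_K-functor}), so compatibility with the enrichment is automatic, whereas you verify the line-object identification $H(Kp)\cong L_p$ by direct computation; these amount to the same thing, and your explicit calculation is a valid substitute for the paper's appeal to the enrichment machinery.
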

 
 \begin{proof}
 The functor $H$ is the underlying additive functor of the $\ca{K}$-functor $\mathbb{F}^{\mathrm{norm}}$ (see Proposition~\ref{prop:symmetric_lax_monoidal_K-functor}), so $H$ is compatible with the $\ca{K}$-enrichment. The isomorphism $- \ten{\ca{A}}  H\circ LY \cong H$ is composed of $\eta \ten{\ca{A}} H \colon - \ten{\ca{A}} H \rightarrow (L-)\ten{\ca{A}} H$ and $Y \ten{\ca{A}} H \cong H$, which are monoidal since $\eta$ is monoidal respectively by \cite[Theorem~5.1]{IM_KELLY}. Thus $-\ten{\ca{A}} H \colon \ca{C} \rightarrow \ca{K}_B$ is a graded tensor functor.
 
 The functor $-\ten{\ca{A}} H$ is a Nakayama functor by Part~(iii) of Theorem~\ref{thm:Grothendieck_tensor_from_homology_theory}
 \end{proof}

\begin{lemma}\label{lemma:detecting_zero_morphism}
 If $f \colon A \rightarrow A^{\prime}$ is a morphism in $\ca{A}$ with $Hf=0$, then $LYf=0$ in $\ca{C}$.
\end{lemma}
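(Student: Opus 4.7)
The plan is to produce, from the hypothesis $Hf=0$, a cover of $A$ in the Pstr\k{a}gowski-topology through which $f$ already vanishes, and then cancel the resulting epimorphism after sheafification.

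Concretely, I would first choose a distinguished triangle
\[
\xymatrix{F_f \ar[r]^-{j} & A \ar[r]^-{f} & A' \ar[r] & F_f[1]}
\]
in $\ca{T}$. Since $H$ is homological and $Hf=0$, applying $H$ yields an exact sequence in $\ca{K}_B$ in which $Hj$ is an epimorphism; equivalently, $j$ is an $H$-epimorphism. Moreover, $\mathrm{coker}(Hf)=HA'$ has a dual because $A' \in \ca{A}=\ca{A}_H$, so Part~(v) of Proposition~\ref{prop:basic_properties_of_H-duals} places $F_f$ inside $\ca{A}$. Thus $j \colon F_f \to A$ is a morphism in $\ca{A}$ which is a cover for the Pstr\k{a}gowski-topology of Definition~\ref{dfn:Pstragowski-topology}.

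The sheaves for this singleton coverage are exactly those additive presheaves sending such triangles to left exact sequences (Proposition~\ref{prop:sheaves_characterization}); a standard argument then forces $LYj$ to be an epimorphism in $\ca{C}$. Since consecutive arrows in a distinguished triangle compose to zero in $\ca{T}$, one has $f \circ j = 0$, and applying the additive functor $LY$ gives $LYf \circ LYj = 0$. Cancelling the epimorphism $LYj$ yields $LYf = 0$, as desired.

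There is essentially no obstacle here: the only real content is the observation that $F_f$ automatically lies in $\ca{A}$ (because $\mathrm{coker}(Hf)=HA'$ is a dual), which is exactly the situation where Part~(v) of Proposition~\ref{prop:basic_properties_of_H-duals} applies.
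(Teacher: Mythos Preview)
Your proof is correct and follows essentially the same argument as the paper: both use Part~(v) of Proposition~\ref{prop:basic_properties_of_H-duals} to see that the fiber $F_f$ lies in $\ca{A}$, observe that the fiber map is an $H$-epimorphism because $Hf=0$, and then cancel the resulting epimorphism $LY(F_f \to A)$ against the vanishing composite $f\circ j=0$.
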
 

\begin{proof}
 If $Hf=0$, then $\mathrm{coker} Hf \cong HA^{\prime}$ has a dual. Thus the fiber $k \colon F_f \rightarrow A$ of $f$ lies in $\ca{A}$ by Part~(v) of Proposition~\ref{prop:basic_properties_of_H-duals}. Exactness of the sequence
 \[
 \xymatrix{HF_f \ar[r]^-{Hk} & HA \ar[r]^-{0} & HA^{\prime}}
 \]
 implies that $k$ is an $H$-epimorphism. Thus $LYk$ is an epimorphism and the claim follows from the fact that $fk=0$.
\end{proof}

\begin{thm}\label{thm:detecting_even_type}
 The functor $H \colon \ca{A} \rightarrow \ca{K}_B$ detects locally free objects of even type in $\ca{C}$: if $X \in \ca{A}$ is an object such that $HX$ is locally free of even type $(i_1, \ldots, i_n)$, then there exists an Adams algebra $A$ in $\ca{C}$ such that $(LYX)_A \cong \oplus_{k=1}^{n} L_{i_k}$.
\end{thm}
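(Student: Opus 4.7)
Set $M \defl LYX \in \ca{C}$. Since $X \in \ca{A}$ is by hypothesis an $H$-dual, $YX \ten{\ca{A}} H \cong HX$ has a dual in $\ca{K}_B$, and $YX$ is finitely presentable in $[\ca{A}^{\op},\Ab]$; hence Theorem~\ref{thm:detecting_duals} guarantees that $M$ has a dual in $\ca{C}$. The assumption that $HX$ is locally free of type $(i_1,\ldots,i_n)$ provides an Adams algebra $A' \in \ca{K}_B$ together with an isomorphism $(HX)_{A'} \cong \oplus_{k=1}^n L_{i_k}$, and the strategy is to build an Adams algebra in $\ca{C}$ that produces the analogous splitting for $M$. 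To control monoidality and faithfulness I first package $A'$ into the composite functor
\[
F \defl (- \otimes_B A') \circ (- \ten{\ca{A}} H) \colon \ca{C} \longrightarrow (\ca{K}_B)_{A'}.
\]
By Lemma~\ref{lemma:H_graded_nakayama} the second factor is a graded Nakayama functor, and since $A'$ is faithfully flat (Proposition~\ref{prop:Adams_iff_faithfully_flat}) the first factor is exact and conservative, so the composite $F$ is itself a graded Nakayama functor, and $F(M) \cong \oplus_{k=1}^n L_{i_k}$ splits in the target.

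The plan is then to apply Theorem~\ref{thm:iterated_splitting} to $F$, $M$, and the type $(i_1,\ldots,i_n)$ to obtain the algebra $A \defl A(M; i_1,\ldots,i_n) \in \ca{C}$. The hypothesis that needs checking is that $\Sym^j(L_{i_k}^{-1} \otimes M)$ and $\Sym^j\bigl((L_{i_k}^{-1} \otimes M)^{\vee}\bigr)$ have duals in $\ca{C}$ for all $k$ and $j$. Since each $L_{i_k}$ is even and invertible, the symmetry on $L_{i_k}^{\pm 1} \otimes L_{i_k}^{\pm 1}$ is the identity, which yields a natural isomorphism $\Sym^j(L_{i_k}^{-1} \otimes M) \cong L_{i_k}^{-j} \otimes \Sym^j(M)$ (and similarly for the dual); the question reduces to dualizability of $\Sym^j(M)$ and $\Sym^j(M^{\vee})$ in $\ca{C}$, which I would verify by exhibiting $\Sym^j$ as a retract of the tensor power via the symmetrizer. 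Granting this, since $F(M) \cong \oplus_{k=1}^n L_{i_k}$ splits with no extra summand, Theorem~\ref{thm:iterated_splitting} yields that $A$ is Adams and produces a splitting $M_A \cong Y \oplus \oplus_{k=1}^n L_{i_k}$ in $\ca{C}_A$ with $Y$ dualizable.

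It remains to show $Y \cong 0$. By Proposition~\ref{prop:unmixed_locally_free_detection} the identity of $W \defl L_{i_1} \otimes \cdots \otimes L_{i_n} \otimes Y$ factors as $\pi \circ \mathrm{Alt} \circ \iota$, where $\mathrm{Alt}$ denotes the antisymmetrizer on $M_A^{\otimes(n+1)}$. Passing to the base change $F_A$ of $F$ to $A$-modules, $F_A(M_A) \cong (\oplus_{k=1}^n L_{i_k})_{F(A)}$ is the direct sum of $n$ even invertible objects, and a pigeonhole pairing argument shows that the antisymmetrizer on $F_A(M_A)^{\otimes(n+1)}$ vanishes in any characteristic: for each summand $L_{k_1} \otimes \cdots \otimes L_{k_{n+1}}$ the pigeonhole principle produces two positions $a<b$ with $k_a = k_b$, and the transposition $(ab)$ acts as the identity on that summand because the swap on an even invertible tensored with itself is the identity; pairing each $\sigma \in S_{n+1}$ with $\sigma(ab)$ therefore makes the terms of $\mathrm{Alt}$ cancel with opposite signs. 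Thus $F_A(\mathrm{Alt}) = 0$, which forces $F_A(\id_W) = 0$, and cancelling the invertible tensor factors yields $F_A(Y) \cong 0$. The hard part will be lifting this conclusion to $Y \cong 0$ in $\ca{C}_A$: I would argue that the underlying object $U(Y)$ is finitely generated in $\ca{C}$ by tracing through the finite iterative pushout construction of $A(M; i_1,\ldots,i_n)$, so that Theorem~\ref{thm:Grothendieck_tensor_from_homology_theory}(iii) applied to the vanishing $U(Y) \ten{\ca{A}} H \cong 0$ (inherited from $F_A(Y) \cong 0$ via the faithful flatness of $A'$ over $B$) forces $U(Y) \cong 0$, whence $Y \cong 0$ by faithfulness of the forgetful functor from $\ca{C}_A$ to $\ca{C}$.
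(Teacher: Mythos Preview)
Your argument has two genuine gaps.

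\textbf{Dualizability of symmetric powers.} Exhibiting $\Sym^j(M)$ as a retract of $M^{\otimes j}$ via the symmetrizer requires $j!$ to be invertible: the composite $M^{\otimes j} \xrightarrow{\sum_\sigma \sigma} M^{\otimes j} \twoheadrightarrow \Sym^j(M)$ equals $j!$ times the projection, so without inverting $j!$ you do not get a splitting. No such hypothesis is available here. The paper instead applies Theorem~\ref{thm:detecting_duals} directly to the finitely presentable object $\Sym^j(L_{i_k}^{-1}\otimes YX)$ in $[\ca{A}^{\op},\Ab]$: its image under $-\ten{\ca{A}} H$ is $\Sym^j(L_{i_k}^{-1}\otimes HX)$, which is locally free of even type (symmetric powers of even locally free objects remain even locally free), hence dualizable in $\ca{K}_B$. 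Theorem~\ref{thm:detecting_duals} then gives dualizability of $L\Sym^j(\ldots)\cong\Sym^j(L_{i_k}^{-1}\otimes M)$ in $\ca{C}$.

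\textbf{The vanishing of $Y$.} Your proposed lift from $F_A(Y)\cong 0$ to $Y\cong 0$ fails: the underlying object $U(Y)$ in $\ca{C}$ is \emph{not} finitely generated. The algebra $A=A(M;i_1,\ldots,i_n)$ is built from infinite symmetric algebras (the proof of Theorem~\ref{thm:iterated_splitting} shows it is a colimit of unbounded chains $\U\to B_2\to B_4\to\cdots$), so $M_A$ and its retract $Y$ are far from finitely generated in $\ca{C}$; the paper explicitly flags this obstacle just before Proposition~\ref{prop:unmixed_locally_free_detection}. The paper's fix is to establish the vanishing of the antisymmetrizer already \emph{in $\ca{C}$}, before base change: the antisymmetrizer on $(LYX)^{\otimes(n+1)}$ is $LY$ applied to the antisymmetrizer on $X^{\otimes(n+1)}$, which is a morphism in $\ca{A}$ (by Proposition~\ref{prop:basic_properties_of_H-duals}(iv)); since $H$ sends it to the antisymmetrizer on $(HX)^{\otimes(n+1)}$, which vanishes by your pigeonhole argument, Lemma~\ref{lemma:detecting_zero_morphism} gives $LY(\text{antisymmetrizer})=0$ in $\ca{C}$. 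Base change to $A$ preserves this, so the antisymmetrizer on $M_A^{\otimes(n+1)}$ vanishes and Proposition~\ref{prop:unmixed_locally_free_detection} forces $Y\cong 0$ directly.
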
  
 
 \begin{proof}
 The objects $(L_{i_k}^{-1} \otimes LYX) \ten{\ca{A}} H$ and $(L_{i_k}^{-1} \otimes LYX)^{\vee} \ten{\ca{A}} H$ are locally free of even type, so their symmetric powers are also locally free of even type. It follows in particular that they have duals. Since $\mathrm{Sym}^{j}(L_{i_k}^{-1} \otimes YX)$ and $\mathrm{Sym}^{j}\bigl((L_{i_k}^{-1} \otimes YX)^{\vee} \bigr)$ are finitely presentable objects of $[\ca{A}^{\op},\Ab]$, it follows from Theorem~\ref{thm:detecting_duals} that $\mathrm{Sym}^{j}(L_{i_k}^{-1} \otimes LYX)$ and $\mathrm{Sym}^{j}\bigl((L_{i_k}^{-1} \otimes LYX)^{\vee}\bigr)$ have duals.
 
 This shows that the condition of Theorem~\ref{thm:iterated_splitting} are satisfied, so there exists an Adams algebra $A$ in $\ca{C}$ such that $(LYX)_A \cong X^{\prime} \oplus L_{i_1} \oplus \ldots \oplus L_{i_n}$ for some $A$-module $X^{\prime}$. By Proposition~\ref{prop:unmixed_locally_free_detection}, the identity of $X^{\prime} \otimes L_{i_1} \otimes \ldots \otimes L_{i_n}$ factors through the antisymmetrizer on $(LYX)_A^{\otimes (n+1)}$. To show that $X^{\prime} \cong 0$ it thus suffices to check that the antisymmetrizer on $LYX^{\otimes (n+1)}$ is zero (since the antisymmetrizer is preserved by base change). By Lemma~\ref{lemma:detecting_zero_morphism}, it suffices to check that the antisymmetrizer on $HX^{\otimes (n+1)} \in \ca{K}_B$ is zero.
 
 By assumption, there exists a faithfully flat $B$-algebra $B^{\prime}$ such that $HX_{B^{\prime}} \cong \oplus_{k=1}^{n} L_{i_k}$. Since each $i_k$ is even, we can find a faithfully flat algebra $B^{\prime \prime} \in \ca{K}_B$ such that $HX_{B^{\prime \prime}} \cong (B^{\prime \prime})^{\oplus n}$. This object lies in the image of the unique symmetric strong monoidal functor $\Ab \rightarrow \ca{K}_{B^{\prime\prime}}$, so the antisymmetrizer on $HX_{B^{\prime\prime}}$ vanishes since the exterior power $\Lambda^{n+1} (\mathbb{Z}^{\oplus n})$ is zero.
 \end{proof}
 
 In the proof of the above theorem, we have used the fact that symmetric powers of locally free objects of even type are again locally free of even type. In general, the same need not be true for locally free objects of mixed type. It is however true if $2$ is a unit in $B$.
 
 \begin{lemma}\label{lemma:locally_split_closed_under_symmetric_powers}
 If $2$ is a unit in $B$, then locally free objects of arbitrary type in $\ca{K}_B$ are closed under symmetric powers.
 \end{lemma}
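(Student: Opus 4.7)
The plan is to reduce the claim to a direct computation of $\Sym^a(L_i)$ for a single line bundle. Suppose $M \in \ca{K}_B$ is locally free of some type $(i_1, \ldots, i_n)$, so there exists an Adams algebra $A \in \ca{K}_B$ together with an isomorphism $M_A \cong \bigoplus_{k=1}^n L_{i_k}$. Since the base change functor $(-)_A \colon \ca{K}_B \to \ca{K}_A$ is a symmetric strong monoidal left adjoint, it preserves tensor products and coequalizers, hence commutes with the formation of symmetric powers. Thus $\Sym^j(M)_A \cong \Sym^j(M_A) \cong \Sym^j\bigl(\bigoplus_{k=1}^n L_{i_k}\bigr)$, and it suffices to show the right-hand side decomposes as a finite direct sum of line bundles. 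Since $A$ remains an Adams algebra and $2$ remains invertible in $A$, this will exhibit $\Sym^j(M)$ as locally free.

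Next, I invoke the standard decomposition $\Sym(X \oplus Y) \cong \Sym(X) \otimes \Sym(Y)$ coming from the universal property of the free commutative algebra (as recalled in the preceding subsection), whose degree-$j$ part gives
\[
\Sym^j\Bigl(\bigoplus_{k=1}^n L_{i_k}\Bigr) \cong \bigoplus_{a_1 + \cdots + a_n = j} \Sym^{a_1}(L_{i_1}) \otimes \cdots \otimes \Sym^{a_n}(L_{i_n}).
\]
The problem reduces to computing $\Sym^a(L_i)$ for a single line bundle.

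For $i$ even, the Koszul symmetry on $L_i \otimes L_i$ is the identity, so the $\Sigma_a$-action on $L_i^{\otimes a}$ is trivial and $\Sym^a(L_i) \cong L_i^{\otimes a} \cong L_{ai}$. For $i$ odd and $a \geq 2$, every transposition $\sigma \in \Sigma_a$ acts on $L_i^{\otimes a}$ by multiplication by $-1$, so the coequalizer relation $x = \sigma x$ defining the symmetric power forces $2x = 0$. Since $2$ is invertible in $B$ (and hence in $A$), we conclude that $\Sym^a(L_i) = 0$ for $i$ odd and $a \geq 2$; of course $\Sym^0(L_i) = \U = L_0$ and $\Sym^1(L_i) = L_i$ in all cases.

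Combining these computations, the only surviving summands in the decomposition above are those indexed by tuples $(a_1, \ldots, a_n)$ with $\sum_k a_k = j$ and $a_k \in \{0, 1\}$ whenever $i_k$ is odd; each such surviving summand is a tensor product of line bundles, hence itself a single line bundle $L_{\sum_k a_k i_k}$. Therefore $\Sym^j(M)_A$ is a finite direct sum of line bundles, so $\Sym^j(M)$ is locally free. There is no serious obstacle in this argument; the only delicate point is the vanishing $\Sym^a(L_i) = 0$ for $i$ odd and $a \geq 2$, which is precisely where the hypothesis that $2$ is a unit enters.
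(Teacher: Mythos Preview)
Your proof is correct and follows essentially the same approach as the paper: reduce via the direct sum formula for symmetric powers to the case of a single $L_i$, then observe that for odd $i$ and $a \geq 2$ the transposition acts by $-1$, forcing $\Sym^a(L_i)=0$ when $2$ is invertible. You are somewhat more explicit about the base change step and the final reassembly, but the argument is the same.
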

 
 \begin{proof}
 The direct sum formula for symmetric powers reduces the problem to the case of a single object $L_i$. If $i$ is even, then $\mathrm{Sym}^{j} L_i \cong L_i^{\otimes j}$. If $i$ is odd, then $\mathrm{Sym}^{0} L_i =\U$, $\mathrm{Sym}^{1} L_i =L_i$, and $\mathrm{Sym}^{j} L_i =0$ for $j>1$. Indeed, writing $p \colon L_i^{\otimes j} \rightarrow \mathrm{Sym}^{j} L_i$ for the projection, we have $p=-p$ if $j>1$, so $2p=0$. Since $2$ is a unit, the claim follows.
\end{proof}  
 
 \begin{lemma}\label{lemma:Young_symmetrizer_zero}
 If $X \in \ca{K}_B$ is locally free of type $(i_1, \ldots, i_p, i_{p+1}, \ldots, i_{p+q})$ with $(i_1,\ldots, i_p)$ even and $(i_{p+1},\ldots , i_{p+q})$ odd, then the Young symmetrizer $S_X^{(q+1,p+1)}$ (see Equation~\ref{eqn:young_symmetrizer}) vanishes.
 \end{lemma}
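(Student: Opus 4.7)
The plan is to first reduce to the case of a split $X$ by faithfully flat base change, and then to analyze the action of the Young symmetrizer summand by summand on the tensor power. Since $X$ is locally free of type $(i_1, \ldots, i_{p+q})$, there is by definition an Adams algebra $A$ in $\ca{K}_B$ with $X_A \cong \bigoplus_{k=1}^{p+q} L_{i_k}$ in $\ca{K}_A$. The Young symmetrizer $S_X^{(q+1,p+1)}$ is constructed entirely from the symmetric monoidal structure (tensor products, symmetries, and finite sums), so it commutes with the symmetric strong monoidal base change functor $(-)_A$. Since $A$ is faithfully flat by Proposition~\ref{prop:Adams_iff_faithfully_flat}, the vanishing of $S_X^{(q+1,p+1)}$ descends through $(-)_A$, so I would replace $X$ by $X_A$ and assume outright that $X = \bigoplus_{k=1}^{p+q} L_{i_k}$, with the first $p$ summands even and the last $q$ odd.

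Next, setting $N = (p+1)(q+1)$, I decompose $X^{\otimes N} = \bigoplus_{f} V_f$ over maps $f \colon \{1, \ldots, N\} \to \{1, \ldots, p+q\}$, where $V_f \defl L_{i_{f(1)}} \otimes \cdots \otimes L_{i_{f(N)}}$. Viewing $f$ as a filling of the rectangular Young tableau with $p+1$ rows and $q+1$ columns, it suffices to show $S_X^{(q+1,p+1)} V_f = 0$ for every labeling $f$. Two kinds of fillings are killed by direct Koszul-sign bookkeeping. If some row of the tableau carries two positions $s,t$ with $f(s) = f(t)$ odd, then the Koszul swap of those two positions acts as $-\id$ on $V_f$, so the involution $\rho \leftrightarrow \rho (s\, t)$ of $\Sigma_N \cap R$ forces $R \cdot V_f = 0$. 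If instead some column carries two positions $s,t$ with $f(s) = f(t)$ even, then the column swap acts as $+\id$ on $V_f$, while the group-algebra identity $(s\, t) \cdot C = \sgn(s\, t) \cdot C = -C$ forces $2 \cdot C V_f = 0$; the standing invertibility of $2$ in $B$ (the running hypothesis in Section~\ref{section:existence_theorem}) then gives $C V_f = 0$.

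The main obstacle is the residual case, where the filling has pairwise distinct odd labels in every row and pairwise distinct even labels in every column. Here the strategy is to exploit the fact that no super-semistandard Young tableau of shape $(q+1)^{p+1}$ exists over the super-alphabet with $p$ even and $q$ odd symbols: by the strict-column rule on evens the bottom entry of column one is forced to be odd, whence the weakly-increasing bottom row of length $q+1$ consists entirely of odd symbols, which by the strict-row rule on odds would require $q+1$ distinct odd symbols out of only $q$ available. To convert this combinatorial obstruction into the desired algebraic vanishing, one sets up a Garnir / column-straightening relation that rewrites $CR V_f$ as a $\mathbb{Z}[\tfrac12]$-linear combination of terms $CR V_{f'}$ for $f'$ falling into one of the two earlier cases and therefore contributing zero. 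Equivalently, one may identify the image of the Young symmetrizer on $X^{\otimes N}$ with the super Weyl (Schur) module of shape $(q+1)^{p+1}$ applied to a superspace of superdimension $(p, q)$, and invoke the classical super representation-theoretic fact that this module vanishes whenever the shape lies outside the $(p, q)$-hook; the vanishing then transfers back through the reductions above.
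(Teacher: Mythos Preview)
Your endpoint is the same as the paper's---the vanishing ultimately rests on the super Schur functor for the rectangular shape $(q+1)^{p+1}$ being zero on a superspace of superdimension $(p,q)$---but the route you take to get there is both longer and has a real gap along the way.

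The gap is in your case~2. You argue that if $f$ repeats an even label in some column at positions $s,t$, then $(s\,t)$ acts as $+\id$ on $V_f$ and $(s\,t)C=-C$, whence $2\,C V_f=0$. But the Young symmetrizer is $CR$, not $C$, and you need $C(RV_f)=0$. The row symmetrizer $R$ sends $V_f$ to a sum of $V_{f\rho^{-1}}$ over row permutations $\rho$, and a column repetition in $f$ need not survive in $f\rho^{-1}$. So this case does not close as written. (Case~1 is fine because $R$ is applied first and kills $V_f$ outright.)

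More importantly, the combinatorial detour buys you nothing: in your residual case you still have to invoke the super Schur module vanishing outside the $(p,q)$-hook, which is exactly the black box the paper uses. The paper's proof is much shorter and avoids the need for $2$ to be invertible in $B$: since $\bigoplus_k L_{i_k}$ is already the image under $B\otimes-$ of the corresponding object in $(G,\varepsilon)\mbox{-}\Ab$, one reduces to that category; then via Lemma~\ref{lemma:locally_Z_2_graded} to $(\mathbb{Z}/2,\alpha)\mbox{-}\Ab$; and since the underlying abelian group is free, the vanishing of the integral Young symmetrizer is detected after base change to $\mathbb{C}$, where one cites \cite[Corollaire~1.9]{DELIGNE_TENSORIELLES} directly. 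Your Garnir/straightening suggestion could in principle be made to work over $\mathbb{Z}[\tfrac12]$, but you have not carried it out, and the cleaner move is to pass to $\mathbb{C}$ once and for all.
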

 
\begin{proof}
This reduces to the case $X=\oplus_{k=1}^{q+p} L_{i_k}$ in $(G,\varepsilon)\mbox{-}\Ab$. We can further reduce this to the case $(G,\varepsilon)=(\mathbb{Z}\slash 2,\alpha)$ where $\alpha(1 +2\mathbb{Z})=-1$ (cf.\ Lemma~\ref{lemma:locally_Z_2_graded} below). Since the underlying abelian group is free, it suffices to check the claim after base change to $\mathbb{C}$. In this case, $S_X^{(q+1,p+1)}$ is, up to scalar multiple, the projection onto the Schur functor defined by the rectangular Young diagram with $p+1$ rows and $q+1$ columns. This Schur functor vanishes by \cite[Corollaire~1.9]{DELIGNE_TENSORIELLES}, so the projection is the zero morphism, as claimed.
\end{proof}

 \begin{thm}\label{thm:detecting_locally_split_objects}
 Suppose that $2$ is a unit in $B$. Then $H \colon \ca{A} \rightarrow \ca{K}_B$ detects locally free objects of arbitrary type in $\ca{C}$: if $X \in \ca{A}$ is an object such that $HX$ is locally free of type $(i_1, \ldots, i_p, i_{p+1}, \ldots, i_{p+q})$ with $(i_1, \ldots, i_p)$ even and $(i_{p+1}, \ldots, i_{p+q})$ odd, then there exists an Adams algebra $A$ in $\ca{C}$ such that $(LYX)_A \cong \oplus_{k=1}^{p+q} L_{i_k}$.
 \end{thm}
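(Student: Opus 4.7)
The plan is to mirror the proof of Theorem~\ref{thm:detecting_even_type}, substituting the Young symmetrizer detection of Proposition~\ref{prop:non-zero_detection} for the antisymmetrizer input of Proposition~\ref{prop:unmixed_locally_free_detection}. Set $M \defl LYX \in \ca{C}$. Observe that $M$ has a dual, since $YX \in [\ca{A}^{\op},\Ab]$ is finitely presentable with $YX \ten{\ca{A}} H \cong HX$ dualisable, so Theorem~\ref{thm:detecting_duals} applies.

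First I would verify the hypotheses of Theorem~\ref{thm:iterated_splitting} for $M$ and the type $(i_1,\ldots,i_{p+q})$: for each $k$ and each $j \in \mathbb{N}$, the objects $\Sym^{j}(L_{i_k}^{-1} \otimes M)$ and $\Sym^{j}\bigl((L_{i_k}^{-1} \otimes M)^{\vee}\bigr)$ are of the form $LZ$ for a finitely presentable $Z \in [\ca{A}^{\op},\Ab]$, because $L$ is exact, symmetric strong monoidal and preserves finitely presentable objects (Proposition~\ref{prop:sheaves_characterization}). Their images under $-\ten{\ca{A}} H$ are the corresponding symmetric powers of $L_{i_k}^{-1} \otimes HX$ and of its dual, which are locally free in $\ca{K}_B$ by Lemma~\ref{lemma:locally_split_closed_under_symmetric_powers}, and hence dualisable. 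Theorem~\ref{thm:detecting_duals} then yields the required duals in $\ca{C}$. This step is where the assumption that $2$ is a unit in $B$ enters, and it is the main obstacle to extending the argument of Theorem~\ref{thm:detecting_even_type}: without invertibility of $2$, the exterior algebra on an odd line bundle is non-trivial, and one cannot guarantee that the symmetric powers of a locally free object of mixed type remain locally free.

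Next, Theorem~\ref{thm:iterated_splitting} combined with the fact that $-\ten{\ca{A}} H$ is a graded Nakayama functor (Lemma~\ref{lemma:H_graded_nakayama}) produces an Adams algebra $A$ in $\ca{C}$ together with a splitting
\[
(LYX)_A \cong X^{\prime} \oplus L_{i_1} \oplus \ldots \oplus L_{i_{p+q}}
\]
in $\ca{C}_A$, in which $X^{\prime}$ is dualisable as a direct summand of the dualisable object $(LYX)_A$. It remains to prove $X^{\prime} \cong 0$. By Proposition~\ref{prop:non-zero_detection}, it suffices to verify that the Young symmetrizer $S^{(q+1,p+1)}$ on $(LYX)_A^{\otimes (p+1)(q+1)}$ vanishes, and since Young symmetrizers are preserved by base change along $\U \to A$, I would instead check the vanishing of the Young symmetrizer on $(LYX)^{\otimes (p+1)(q+1)}$ in $\ca{C}$.

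Finally, $\ca{A}$ is closed under tensor products (Proposition~\ref{prop:basic_properties_of_H-duals}(iv)), so $X^{\otimes (p+1)(q+1)} \in \ca{A}$ and the Young symmetrizer is a morphism there, which $LY$ sends to the one in $\ca{C}$. Lemma~\ref{lemma:detecting_zero_morphism} reduces the vanishing of this morphism under $LY$ to its vanishing under $H$, and since the restriction of $H$ to $\ca{A}$ is symmetric strong monoidal (Proposition~\ref{prop:basic_properties_of_H-duals}(ii)), this image is precisely the Young symmetrizer $S^{(q+1,p+1)}$ on $HX^{\otimes (p+1)(q+1)}$, which vanishes by Lemma~\ref{lemma:Young_symmetrizer_zero} since $HX$ is locally free of the prescribed type.
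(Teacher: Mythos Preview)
Your proposal is correct and follows essentially the same route as the paper's proof: verify the dualisability hypotheses of Theorem~\ref{thm:iterated_splitting} via Lemma~\ref{lemma:locally_split_closed_under_symmetric_powers} and Theorem~\ref{thm:detecting_duals}, obtain the splitting $(LYX)_A \cong X' \oplus \bigoplus_k L_{i_k}$, and then kill $X'$ by showing the Young symmetrizer $S_X^{(q+1,p+1)}$ is sent to zero under $H$ (Lemma~\ref{lemma:Young_symmetrizer_zero}) and hence under $LY$ (Lemma~\ref{lemma:detecting_zero_morphism}), invoking Proposition~\ref{prop:non-zero_detection}. Your write-up is slightly more explicit in a few places (e.g.\ noting $X'$ is dualisable as a retract, citing Proposition~\ref{prop:sheaves_characterization} for finite presentability), but the argument is the same.
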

 
 \begin{proof}
 Since $2$ is a unit, Lemma~\ref{lemma:locally_split_closed_under_symmetric_powers} shows that the symmetric powers of the objects $(L_{i_k}^{-1} \otimes LYX) \ten{\ca{A}} H$ and $(L_{i_k}^{-1} \otimes LYX)^{\vee} \ten{\ca{A}} H$ are locally free, so they have duals. Both $\mathrm{Sym}^{j}(L_{i_k}^{-1} \otimes YX)$ and $\mathrm{Sym}^{j}\bigl((L_{i_k}^{-1} \otimes YX)^{\vee}\bigr)$ are finitely presentable objects of $[\ca{A}^{\op},\Ab]$, so Theorem~\ref{thm:detecting_duals} implies that $\mathrm{Sym}^{j}(L_{i_k}^{-1} \otimes LYX)$ and $\mathrm{Sym}^{j}\bigl((L_{i_k}^{-1} \otimes LYX)^{\vee}\bigr)$ have duals. By Theorem~\ref{thm:iterated_splitting}, there exists an Adams algebra $A$ in $\ca{C}$, an $A$-module $X^{\prime}$ and an isomorphism $(LYX)_A \cong  L_{i_1} \oplus \ldots \oplus L_{i_{q+p}} \oplus X^{\prime}$.
 
We will use Proposition~\ref{prop:non-zero_detection} to show that $X^{\prime} \cong 0$. To do this, we need to check that the Young symmetrizer $S_{(LYX)_A}^{(q+1,p+1)}$ is zero. Since Young symmetrizers are preserved by symmetric strong monoidal functors, it suffices to check that $LY(S_X^{(q+1,p+1)})$ is zero. By Lemma~\ref{lemma:detecting_zero_morphism}, this is the case if $H(S_X^{(q+1,p+1)}) \cong S_{HX}^{(q+1,p+1)}$ is zero. Since $HX$ is locally free of type $(i_1, \ldots, i_{p+q})$, the Young symmetrizer $S_{HX}^{(q+1,p+1)}$ vanishes by Lemma~\ref{lemma:Young_symmetrizer_zero}.
 \end{proof}
 
 Recall that an epimorphism $p \colon M \rightarrow \U$ in $\ca{C}$ is called \emph{locally split} if there exists a faithfully flat algebra $A$ such that the base change $p_A \colon M_A \rightarrow A$ is a split epimorphism (see \cite[Definition~5.2.1]{SCHAEPPI_COLIMITS}). If $\mathrm{Sym}^{j}(M^{\vee})$ has a dual for all $j \in \mathbb{N}$, then $p$ is locally split if and only if the algebra $A_p$ given by the colimit of the chain
 \[
 \xymatrix{ \U \ar[r] & M^{\vee} \ar[r] & \mathrm{Sym}^{2}(M^{\vee}) \ar[r] & \ldots }
 \]
 with connecting morphisms as in \cite[Lemma~5.3.3]{SCHAEPPI_COLIMITS} is an Adams algebra (see \cite[Proposition~5.3.4]{SCHAEPPI_COLIMITS}).
 
 \begin{prop}\label{prop:epimorphism_locally_split}
  Let $X \in \ca{A}$ be an object such that $HX \in \ca{K}_B$ is locally free of type $(i_1,\ldots, i_n)$. If either $(i_1,\ldots,i_n)$ is even or $2$ is a unit in $B$, then every epimorphism $LYX \rightarrow \U$ in $\ca{C}$ is locally split.
 \end{prop}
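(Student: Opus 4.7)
The plan is to apply \cite[Proposition~5.3.4]{SCHAEPPI_COLIMITS}, which, provided $\mathrm{Sym}^{j}(M^{\vee})$ has a dual for every $j \in \mathbb{N}$ (where $M \defl LYX$), reduces the local splitting of $p$ to the claim that the algebra $A_p$ obtained as the colimit of the chain $\U \rightarrow M^{\vee} \rightarrow \mathrm{Sym}^{2}(M^{\vee}) \rightarrow \ldots$ is an Adams algebra in $\ca{C}$. After checking this duality hypothesis, I would bridge the Adams condition for $A_p$ in $\ca{C}$ to an Adams condition for $A_{Hp}$ in $\ca{K}_B$ via the graded Nakayama functor $-\ten{\ca{A}} H$ of Lemma~\ref{lemma:H_graded_nakayama} combined with Theorem~\ref{thm:Nakayama_detection}, and finish by using the local freeness of $HX$ to verify the $\ca{K}_B$-version by hand.

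For the duality hypothesis: since $\ca{A}$ is rigid, $(YX)^{\vee} \cong Y(X^{\vee})$ is representable, hence each tensor power $((YX)^{\vee})^{\otimes j}$ is finitely presentable in $[\ca{A}^{\op},\Ab]$ (the Day convolution of representables is representable), and so is its symmetric retract $\mathrm{Sym}^{j}((YX)^{\vee})$. Both $L$ and $-\ten{\ca{A}} H$ are cocontinuous and symmetric strong monoidal, which yields $\mathrm{Sym}^{j}(M^{\vee}) \cong L\bigl(\mathrm{Sym}^{j}((YX)^{\vee})\bigr)$ and $\mathrm{Sym}^{j}(M^{\vee}) \ten{\ca{A}} H \cong \mathrm{Sym}^{j}((HX)^{\vee})$. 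Under either hypothesis of the proposition the object $\mathrm{Sym}^{j}((HX)^{\vee}) \in \ca{K}_B$ is locally free: when $2$ is a unit this is Lemma~\ref{lemma:locally_split_closed_under_symmetric_powers}, and when the type is even it follows from the direct sum formula for symmetric powers together with the identity $\mathrm{Sym}^{j}(L_{i}) \cong L_{i}^{\otimes j}$ available for any even $i$. In particular $\mathrm{Sym}^{j}((HX)^{\vee})$ has a dual, so Theorem~\ref{thm:detecting_duals} yields a dual for $\mathrm{Sym}^{j}(M^{\vee})$ in $\ca{C}$; Proposition~\ref{prop:ind_objects} then places $A_p$ inside $\Ind(\ca{C}^{\mathrm{dual}})$.

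The main obstacle is the final step. Because $-\ten{\ca{A}} H$ is a symmetric strong monoidal left adjoint, it carries the colimit defining $A_p$ to the analogous algebra $A_{Hp}$ built from the epimorphism $Hp\colon HX\rightarrow B$, so Theorem~\ref{thm:Nakayama_detection} reduces the Adams property of $A_p$ in $\ca{C}$ to that of $A_{Hp}$ in $\ca{K}_B$, and a second application of \cite[Proposition~5.3.4]{SCHAEPPI_COLIMITS} (whose hypotheses hold by the previous paragraph) further reduces the latter to showing that $Hp$ is locally split in $\ca{K}_B$. For this I would choose an Adams algebra $A^{\prime} \in \ca{K}_B$ with $HX_{A^{\prime}} \cong \bigoplus_{k=1}^{n} L_{i_k}$ and construct a section of $Hp_{A^{\prime}}\colon \bigoplus_{k=1}^{n} L_{i_k} \rightarrow A^{\prime}$ by hand in the graded module category $\ca{K}_{A^{\prime}}$: by adjunction $Hp_{A^{\prime}}$ corresponds to a tuple of graded elements $a_k \in A^{\prime}_{i_k}$, and since $Hp_{A^{\prime}}$ is epic these generate the unit ideal of $A^{\prime}$, so picking $b_k \in A^{\prime}_{-i_k}$ with $\sum_{k} a_{k} b_{k} = 1$ yields the $A^{\prime}$-linear section $1 \mapsto (b_{1},\ldots,b_{n})$. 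Since $A^{\prime}$ is faithfully flat over $B$, this exhibits $Hp$ as locally split and closes the argument.
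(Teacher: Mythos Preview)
Your argument is correct, but it takes a longer route than the paper's and overlooks a simplification at the end. Two remarks.

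First, a minor terminological slip: $\mathrm{Sym}^{j}\bigl((YX)^{\vee}\bigr)$ is in general a \emph{quotient} of $\bigl((YX)^{\vee}\bigr)^{\otimes j}$, not a retract (the symmetrizer does not split the projection unless $j!$ is invertible). Your conclusion that it is finitely presentable is nonetheless correct, since cokernels of maps between finitely presentable objects are finitely presentable; just adjust the justification.

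Second, the paper's proof is shorter on two counts. Rather than passing through \cite[Proposition~5.3.4]{SCHAEPPI_COLIMITS} twice together with Theorem~\ref{thm:Nakayama_detection}, it invokes \cite[Proposition~5.4.3(iii)]{SCHAEPPI_COLIMITS} once: that result says directly that if $F$ is a Nakayama functor, $Fp$ is split, and the relevant symmetric powers are dualizable, then $p$ is locally split. More to the point, the final step of your argument---base-changing to $A^{\prime}$, writing $Hp_{A^{\prime}}$ in coordinates, and solving $\sum a_k b_k = 1$---is unnecessary: the epimorphism $Hp \colon HX \rightarrow B$ already splits in $\ca{K}_B$ simply because $B$ is projective as a $B$-module. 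Your detour through the locally free structure of $HX$ works, but the projectivity of the unit object gives the splitting for free, without ever invoking the type $(i_1,\ldots,i_n)$ at this stage.
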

 
 \begin{proof}
  The assumption implies that $\mathrm{Sym}^{j}(LYX^{\vee}) \ten{\ca{A}} H$ has a dual for all $j \in \mathbb{N}$ (see Lemma~\ref{lemma:locally_split_closed_under_symmetric_powers}). From Theorem~\ref{thm:detecting_duals} it follows that $\mathrm{Sym}^{j}(LYX^{\vee})$ has a dual for all $j \in \mathbb{N}$. Since $-\ten{\ca{A}} H$ is a Nakayama functor (see Lemma~\ref{lemma:H_graded_nakayama}), it reflects epimorphisms between objects with duals. Moreover, the epimorphism
  \[
  LYX \ten{A} H \rightarrow \U \ten{A} H \cong B
  \]
  is split since $B$ is projective. Thus $p$ is a locally split epimorphism by \cite[Proposition~5.4.3(iii)]{SCHAEPPI_COLIMITS}.
 \end{proof}
 
 \begin{thm}\label{thm:existence_of_Adams_replacement}
 Suppose that $2$ is a unit in $B$. If every finitely generated projective object of $\ca{K}_B$ is locally free, then there exists a commutative algebra $B^{\prime}$ in $\ca{K}$ and a faithful and exact symmetric strong monoidal functor
 \[
 W \colon \sh(\ca{A}) \rightarrow \ca{K}_{B^{\prime}}
 \]
 which is compatible with the $\ca{K}$-enrichment. In particular, the homological functor $F$ has a flat replacement $F(W)$ (see Theorem~\ref{thm:Adams_replacement}). 
 \end{thm}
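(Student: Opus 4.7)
The plan is to follow the strategy sketched in the introduction: construct a faithfully flat commutative algebra $B^{\prime} \in \sh(\ca{A})$ whose unit in $\sh(\ca{A})_{B^{\prime}}$ is a projective generator. Given such $B^{\prime}$, the $\ca{K}$-enrichment on $\sh(\ca{A})$ transfers to $\sh(\ca{A})_{B^{\prime}}$, and combined with the projective generator this produces an equivalence of $\ca{K}$-enriched symmetric monoidal categories $\sh(\ca{A})_{B^{\prime}} \simeq \ca{K}_{B^{\prime\prime}}$, where $B^{\prime\prime}$ denotes the $\ca{K}$-valued endomorphism object of $B^{\prime}$ in $\sh(\ca{A})_{B^{\prime}}$. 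The free $B^{\prime}$-module functor $W \colon \sh(\ca{A}) \to \sh(\ca{A})_{B^{\prime}} \simeq \ca{K}_{B^{\prime\prime}}$ is then symmetric strong monoidal by construction, faithful and exact by faithful flatness of $B^{\prime}$, and compatible with the $\ca{K}$-enrichment; Theorem~\ref{thm:Adams_replacement} then yields the flat replacement.

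First I would exhibit a generating family of locally free objects in $\sh(\ca{A})$. For each $X \in \ca{A}$ one has $HX \cong LYX \ten{\ca{A}} H$, which is finitely generated projective in $\ca{K}_B$; by hypothesis it is locally free of some type $(i_1, \ldots, i_n)$. Since $2$ is a unit in $B$, Theorem~\ref{thm:detecting_locally_split_objects} supplies an Adams algebra $A_X \in \sh(\ca{A})$ with $(LYX)_{A_X} \cong \bigoplus_{k=1}^n L_{i_k}$. Thus each representable sheaf $LYX$ is locally free in $\sh(\ca{A})$, and the collection $\{LYX\}_{X \in \ca{A}}$ is a small generating set of $\sh(\ca{A})$ consisting of locally free objects.

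Next I construct $B^{\prime}$ by transfinite induction, following the pattern of \cite{SCHAEPPI_COLIMITS}. Set $B_0 \defl \U$. At a successor stage, for each isomorphism class of epimorphism $p \colon M \twoheadrightarrow B_\alpha$ in $\sh(\ca{A})_{B_\alpha}$ with $M$ a finite direct sum of shifts of base-changed generators $(LYX)_{B_\alpha}$, Proposition~\ref{prop:epimorphism_locally_split} (applied in the base-changed category, which is again $\ca{K}$-enriched with $2$ a unit) shows $p$ is locally split by an explicit Adams algebra $A_p$. Form $B_{\alpha+1}$ as the tensor product of $B_\alpha$ with all these $A_p$; this is again Adams by Theorem~\ref{thm:Nakayama_detection} and Proposition~\ref{prop:ind_objects}, since the tensor product of objects in $\Ind(\ca{C}^{\mathrm{dual}})$ stays in $\Ind(\ca{C}^{\mathrm{dual}})$ and the defining units compose accordingly. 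At limit ordinals take filtered colimits. A cardinality bound on the number of epimorphisms to handle (modelled on \cite[\S 5]{SCHAEPPI_COLIMITS}) makes the chain stabilize at some ordinal, producing an Adams algebra $B^{\prime}$ in which every such locally split epimorphism splits.

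To finish, one checks that $B^{\prime}$ is a projective generator of $\sh(\ca{A})_{B^{\prime}}$. For projectivity, any epimorphism $q \colon N \twoheadrightarrow B^{\prime}$ factors through a finite sum $M$ of shifts of locally free generators (using that $B^{\prime}$ is finitely presentable in $\sh(\ca{A})_{B^{\prime}}$ since $\U$ is finitely presentable in $\sh(\ca{A})$ and sheaves are closed under filtered colimits), the composite $M \to B^{\prime}$ splits by construction, and the splitting pulls back along $M \to N$ to split $q$. Generation is then immediate: the $(LYX)_{B^{\prime}}$ generate $\sh(\ca{A})_{B^{\prime}}$, and by construction each is a sum of shifts of $B^{\prime}$, so $\{L_i \odot B^{\prime}\}_{i \in G}$ generates, which is exactly what generation means in the $\ca{K}$-enriched sense. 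The main obstacle is the transfinite splitting construction together with the verification that Adams algebras are stable under the required tensor products and filtered colimits; these stability properties and the termination argument are exactly those developed in \cite{SCHAEPPI_COLIMITS} for fiber functors in Tannakian duality, so the present theorem is reduced to an application of that machinery, the new input being the local freeness of the representable sheaves supplied by Theorems~\ref{thm:detecting_duals} and \ref{thm:detecting_locally_split_objects}.
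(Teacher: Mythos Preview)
Your overall strategy matches the paper's: build a faithfully flat commutative algebra in $\sh(\ca{A})$ whose unit is a small-projective generator in the $\ca{K}$-enriched sense, then invoke \cite[Theorem~5.26]{KELLY_BASIC} and \cite[Theorem~5.1]{IM_KELLY}. The use of Theorem~\ref{thm:detecting_locally_split_objects} to see that each $LYX$ is locally free, and of Proposition~\ref{prop:epimorphism_locally_split} to see that epimorphisms from such objects onto the unit are locally split, is also exactly right.

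There is, however, a genuine gap in your generation step. Your transfinite construction only adjoins the algebras $A_p$ that split epimorphisms $M \twoheadrightarrow B_\alpha$; it never tensors in the algebras $A_X$ that trivialize the generators. Consequently the assertion ``by construction each $(LYX)_{B'}$ is a sum of shifts of $B'$'' is not justified: making the unit projective does not force locally free objects to become free. The paper handles the two requirements separately. Projectivity of the unit is obtained by a direct appeal to \cite[Proposition~5.2.5]{SCHAEPPI_COLIMITS} (which packages the transfinite splitting argument you sketch), yielding a faithfully flat algebra $A$. Generation is obtained by a second algebra $A'$, defined as the filtered colimit of all finite tensor products of the $A_X$; this is faithfully flat by \cite[Lemma~5.7]{SCHAEPPI_GEOMETRIC} and makes every $(LYX)_{A'}$ simultaneously free. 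The final algebra is $A \otimes A'$, over which the unit is both projective (inherited from $A$) and a $\ca{K}$-generator (since every base-changed representable is now a finite sum of the $L_i$). Your argument would be repaired by inserting this second algebra, or equivalently by tensoring each $B_\alpha$ with the relevant $A_X$'s alongside the $A_p$'s; but as written, the generation claim does not follow from the construction you describe.
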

 
 \begin{proof}
 Since every epimorphism $p \colon LYX \rightarrow \U$ is locally split by Proposition~\ref{prop:epimorphism_locally_split}, there exists by \cite[Proposition~5.2.5]{SCHAEPPI_COLIMITS} a faithfully flat algebra $A$ in $\ca{C}$ such that the unit $A \in \ca{C}_{A}$ is projective.
 
 By Theorem~\ref{thm:detecting_locally_split_objects}, each object $LYX$, $X \in \ca{A}$ is locally free of some type $(i_1, \ldots, i_n)$. Thus there exists an Adams algebra $A_X$ in $\ca{C}$ such that $(LYX)_{A_X} \cong \oplus_{k=1}^{n} L_{i_k}$. The filtered colimit of all the finite tensor products of these defines a commutative algebra $A^{\prime}$ in $\ca{C}$ such that $(LYX)_{A^{\prime}}$ is isomorphic to a finite direct sum of objects $L_i$, $i \in G$ for all $X$ simultaneously. The algebra $A^{\prime}$ is faithfully flat by \cite[Lemma~5.7]{SCHAEPPI_GEOMETRIC}.
 
 The composite
 \[
 \xymatrix@C=35pt{ \ca{P} \ar[r]^-{K}  & \ca{A} \ar[r]^-{Y} & [\ca{A}^{\op},\Ab]  \ar[r]^-{L} & \ca{C} \ar[r]^-{(-)_{A \otimes A^{\prime}}} & \ca{C}_{A \otimes A^{\prime}} }
 \]
 defines a $\ca{K}$-enrichment on $\ca{C}_{A\otimes A^{\prime}}$ such that the base change functor $(-)_{A\otimes A^{\prime}}$ is compatible with the enrichments on $\ca{C}$ and $\ca{C}_{A\otimes A^{\prime}}$. The unit of $\ca{C}_{A \otimes A^{\prime}}$ is projective and it generates $\ca{C}_{A\otimes A^{\prime}}$ as a $\ca{K}$-category since the object $L_i \in \ca{C}_{A\otimes A^{\prime}}$ is the copower of the unit by $L_i \in \ca{K}$. Moreover, the $\ca{K}$-functor represented by the unit preserves all conical colimits (since $A\otimes A^{\prime}$ and all its shifts by $L_i$ are finitely presentable and projective) and it preserves all copowers by $L_i$ (since these are absolute $\ca{K}$-colimits). Thus the object $A\otimes A^{\prime}$ is a small-projective generator in the sense of \cite[\S 5.5]{KELLY_BASIC}. Let $B^{\prime} \in \ca{K}$ denote the endomorphism algebra of the unit object of $\ca{C}_{A\otimes A^{\prime}}$. The canonical $\ca{K}$-functor
 \[
 \ca{K}_{B^{\prime}} \rightarrow \ca{C}_{A \otimes A^{\prime}}
 \]
 is symmetric strong monoidal by \cite[Theorem~5.1]{IM_KELLY} and an equivalence by \cite[Theorem~5.26]{KELLY_BASIC}. Composing the base change functor with the inverse of this equivalence yields the desired functor $W \colon \ca{C} \rightarrow \ca{K}_{B^{\prime}}$.
 
 That the functor $F(W)$ given by the composite~\ref{eqn:Adams_replacement_from_W} is a flat replacement of $F$ is immediate from Definition~\ref{dfn:Adams_replacement}.
 \end{proof}
 
\begin{rmk}\label{rmk:Hopf_algebroid}
 The proof of Theorem~\ref{thm:existence_of_Adams_replacement} shows that the right adjoint
 \[
 \ca{K}_{B^{\prime}} \rightarrow \ca{C}
 \]
 of $W$ is $\ca{K}$-cocontinuous (it is up to equivalence the functor which sends an $A \otimes A^{\prime}$-module to its underlying object). Moreover, the adjunction satisfies the projection formula by \cite[Proposition~3.8]{SCHAEPPI_GEOMETRIC}. Equivalently, the adjunction is strong coclosed. Thus the induced comonad on $\ca{K}_{B^{\prime}}$ is Hopf monoidal by \cite[Proposition~4.4]{CHIKHLADZE_LACK_STREET}, so it is given by tensoring with a flat commutative Hopf algebroid $( B^{\prime}, \Gamma)$ in $\ca{K}$. The functor $W$ is comonadic, so we obtain an equivalence
 \[
 \ca{C} \simeq \Comod(B^{\prime}, \Gamma)
 \]
 of symmetric monoidal categories.
\end{rmk}
 
 \begin{example}
 The commutative ring spectra $\mathrm{H} \mathbb{Z}[\frac{1}{2}]$ and $\mathrm{H} \mathbb{Z}_{(p)}$ for $p$ an odd prime have flat replacements. The coefficient rings are given by $\mathbb{Z}[\frac{1}{2}]$ and $\mathbb{Z}_{(p)}$ respectively, so $2$ is a unit. All finitely generated projective (graded) modules over these rings are free, so a flat replacement exists by Theorem~\ref{thm:existence_of_Adams_replacement}.
 
 Note that in these cases, we could also localize the arguments of Conner--Smith \cite[\S 3]{CONNER_SMITH} to show that specific flat replacements are given by $\mathrm{MU}[\frac{1}{2}]$ respectively $\mathrm{MU}_{(p)}$.
 \end{example}

\begin{example}
 Let $p$ be an odd prime, $n \in \mathbb{N}$. The $p$-local truncated Brown--Peterson spectrum $\mathrm{BP} \langle n \rangle$ has coefficient ring $\mathrm{BP}\langle n \rangle_{\ast}=\mathbb{Z}_{(p)}[v_1, \ldots, v_n]$, so they have flat replacements by Theorem~\ref{thm:existence_of_Adams_replacement}. Since $\mathrm{H}\mathbb{Z}_{(p)}$ is a $\mathrm{BP}\langle n \rangle$-algebra, every $\mathrm{BP}\langle n \rangle_{\ast}$-dual is in particular an $(\mathrm{H}\mathbb{Z}_{(p)})_{\ast}$-dual. The results of Conner--Smith \cite[\S 3]{CONNER_SMITH} therefore show that $\mathrm{MU}_{(p)}$ is a flat replacement of $\mathrm{BP} \langle n \rangle$.
\end{example}

\begin{example}\label{example:derived}
 Let $R=\mathbb{Q}[x,y] \slash (xy)$ and let $\ca{T}$ be the unbounded derived category $\ca{D}(R)$ of $R$-modules. Consider the $\mathbb{Z}$-grading with $\varepsilon(n)=(-1)^n$ and
 \[
 K \colon (\mathbb{Z},\varepsilon) \rightarrow \ca{D}(R)
 \]
 the functor which sends $n$ to the chain complex $R$ concentrated in degree $n$.
 
 Let $A=R \slash y \cong \mathbb{Q}[x]$. The $R$-algebra $A$ is not $\ca{D}(R)$-flat. Indeed, using the fact that $\mathbb{Q}[x,y]$ is a unique factorization domain, one checks that the sequence
 \[
 \xymatrix{ \ldots \ar[r]^x & R \ar[r]^y & R \ar[r]^x & R \ar[r]^y & R \ar[r] & R\slash y \ar[r] & 0 }
 \]
 is exact. Thus $H_{\ast}(A \mathop{\otimes^{L}} A)=\mathrm{Tor}_{\ast}^R(A,A)$ is given by the homology of the complex
 \[
 \xymatrix{ \ldots \ar[r]^0 & A \ar[r]^x & A \ar[r]^0 & A \ar[r]^x & A \ar[r]^0 & A \ar[r] & 0 } \smash{\rlap{,}}
 \]
 so it is equal to $A \slash x$ in degree $1$. The module $A \slash x \cong \mathbb{Q}$ is not flat over $H_{\ast} A=A$. If $A$ were $\ca{D}(R)$-flat, then $H_{\ast}(A \mathop{\otimes^{L}} A)$ would be a filtered colimit of finitely generated projective $A$-modules, hence flat.
 
 Since $2$ is a unit in $A$, $A$ is connected, and all finitely generated projective $A$-modules are free, there exists a $\ca{D}(R)$-flat replacement of $A$ by Theorem~\ref{thm:existence_of_Adams_replacement}.
\end{example}

 \begin{prop}\label{prop:no_flat_algebra_gives_flat_replacement}
 No flat $R$-algebra $R^{\prime}$ gives a flat replacement of the homology theory $A_{\ast}=H_{\ast} (A \mathop{\otimes^{L}}-)$ of Example~\ref{example:derived}.
 \end{prop}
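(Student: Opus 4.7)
The plan is to suppose for contradiction that a flat $R$-algebra $R'$ (concentrated in degree zero) gives a flat replacement of $A_{\ast}$ and then to exhibit one single morphism in $\sh(\ca{A}_H)$ which is simultaneously forced to be epimorphic (under $W$) and non-epimorphic (under $-\ten{\ca{A}_H} H$). Here $H$ denotes the normalisation of $\bar F$ for $F(X) = H_0(A \otimes^L X)$, and by Theorem~\ref{thm:Adams_replacement} the hypothesis provides an exact, faithful, symmetric strong monoidal left adjoint $W\colon \sh(\ca{A}_H) \to \ca{K}_{R'}$ compatible with the $\ca{K}$-enrichment.

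First I would identify a convenient test object: the cofibre $C_y \defl \mathrm{cone}(R \xrightarrow{y} R)$ satisfies $H(C_y) \cong A \oplus A[1]$ since $y$ acts as zero on $A$, so $C_y \in \ca{A}_H$. Next, I would extract an algebraic constraint on $R'$ from the connecting map $p \colon C_y \to R[1]$: the long exact sequence shows that $H(p) \colon A \oplus A[1] \to A[1]$ is the projection onto the second summand, so $p$ is an $H$-epimorphism and, by Theorem~\ref{thm:Adams_replacement}(i), also an $H(W)$-epimorphism. Flatness of $R'$ together with $\ker(y \colon R \to R) = xR$ gives $\ker(y \colon R' \to R') = xR'$, so the degree-$1$ component of $H(W)(p)$ is the inclusion $xR' \hookrightarrow R'$; surjectivity forces $xR' = R'$, so $x$ is a unit in $R'$, and combined with $xy = 0$ this yields $y = 0$ in $R'$. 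In particular $H(W)(C_y) \cong R' \oplus R'[1]$, and multiplication by $x$ on it is an isomorphism.

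To finish, I would apply both functors to the morphism $f \defl x \cdot \mathrm{id}_{C_y} \colon C_y \to C_y$. On the one hand $H(f)$ is multiplication by $x$ on $A \oplus A[1]$; its cokernel is $\mathbb{Q} \oplus \mathbb{Q}[1] \neq 0$, so $H(f)$ is \emph{not} an epimorphism in $\ca{K}_A$. On the other hand $H(W)(f) = W(LY(f))$ is the isomorphism described above and is in particular an epimorphism in $\ca{K}_{R'}$. Since $W$ is exact and faithful it reflects epimorphisms (exactness preserves cokernels, faithfulness detects zero objects), so $LY(f)$ must be an epimorphism in $\sh(\ca{A}_H)$. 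But $-\ten{\ca{A}_H} H$ is a left adjoint by Theorem~\ref{thm:Grothendieck_tensor_from_homology_theory}(iii), hence right exact, hence preserves epimorphisms, so $(-\ten{\ca{A}_H} H)(LY(f))$ is an epimorphism; this contradicts the computation $(-\ten{\ca{A}_H} H)(LY(f)) = H(f)$.

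The main technical point to check is the routine translation $W \circ LY = H(W)$ and $(-\ten{\ca{A}_H} H) \circ LY = H$ on $\ca{A}_H$, together with the flatness identification of $\ker(y \colon R' \to R')$; both follow from Lemma~\ref{lemma:HW_description} and the fact that left Kan extensions along a fully faithful inclusion restrict to the original functor. There is no deeper obstacle: the proof turns entirely on the asymmetry that forcing $x$ to become invertible in $R'$ is incompatible with the fact that $x$ is not invertible on $H(C_y)$.
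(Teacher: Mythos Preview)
Your proof is correct and, in one respect, sharper than the paper's. Both arguments proceed by (a) recognising the cone $C_y$ as an $H$-dual, (b) extracting an invertibility constraint on $x$ from the flat-replacement hypothesis, and (c) exhibiting a morphism in $\ca{A}_H$ which is an $H(W)$-epimorphism but not an $H$-epimorphism, contradicting Theorem~\ref{thm:Adams_replacement}(i). The difference lies in step (b): the paper first argues that $R''=R'/y$ is projective over $R'$ and flat over $R$, then uses the periodic resolution of $A$ to conclude that $x$ is a unit in $R''$, and finally lifts this to the relation $xz=1+yz'$ in $R'$; the witness in step (c) is then $(x\;\; y)\colon R\oplus R \to R$. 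You instead read off $xR'=R'$ directly from the degree-$1$ part of $H(W)(p)$ for the connecting map $p\colon C_y\to R[1]$, which immediately gives that $x$ is a unit in $R'$ (and hence $y=0$ there); your witness in step~(c) is then simply multiplication by $x$. This is a genuinely cleaner route: you avoid the projectivity discussion entirely and get a stronger conclusion ($x$ invertible in $R'$ rather than merely in $R'/y$). Your final paragraph, deducing the contradiction from $W$ reflecting epimorphisms and $-\ten{\ca{A}_H} H$ preserving them, is essentially a direct reproof of the relevant half of Theorem~\ref{thm:Adams_replacement}(i); you could equally well just cite (i) at that point, as the paper does.
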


\begin{proof}
 Let $R^{\prime}$ be a flat $R$-algebra and suppose that $R^{\prime}_{\ast}=H_{\ast}(R^{\prime} \mathop{\otimes^L}-)$ is a flat replacement of $A=R\slash y$. The complex
 \[
 \xymatrix{0 \ar[r] & R \ar[r]^y & R \ar[r] & 0}
 \]
 has finitely generated projective $A_{\ast}$-homology, so it is an $A_{\ast}$-dual. By Theorem~\ref{thm:Adams_replacement}~(i), every $A_{\ast}$-dual is an $R^{\prime}_{\ast}$-dual, so the homology of
 \[
 \xymatrix{0 \ar[r] & R^{\prime} \ar[r]^y & R^{\prime} \ar[r] & 0}
 \]
 is projective over $R^{\prime}$. In particular, $R^{\prime \prime} \defl R^{\prime} \slash y$ is a projective $R^{\prime}$-module. Since $R^{\prime}$ is flat over $R$, so is $R^{\prime \prime}$. From the long exact sequence of $R$-modules in Example~\ref{example:derived} it follows that the sequence
 \[
 \xymatrix{ \ldots \ar[r] & R^{\prime \prime} \ar[r]^{0} & R^{\prime \prime} \ar[r]^x & R^{\prime \prime} \ar[r]^{0} & R^{\prime \prime} \ar[r] & \ldots}
 \]
 is exact, so $x$ is a unit in $R^{\prime \prime}$. Thus there exist $z,z^{\prime} \in R^{\prime}$ such that $xz=1+yz^{\prime}$.
 
 The composite
 \[
 \xymatrix{ R^{\prime} \ar[r]^-{
 \left(
 \begin{smallmatrix}
  z\\ -z^{\prime}
  \end{smallmatrix}
  \right)
  } 
  & R^{\prime} \oplus R^{\prime} \ar[r]^-{
 (  \begin{smallmatrix}
  x && y
  \end{smallmatrix} )
  } & R^{\prime} }
 \]
 is equal to the identity, so $p \defl (  \begin{smallmatrix}  x && y  \end{smallmatrix} ) \colon R \oplus R \rightarrow R $, considered as a morphism of chain complexes concentrated in degree $0$, is an $R^{\prime}_{\ast}$-epimorphism. But $A \otimes p=( \begin{smallmatrix} x && 0 \end{smallmatrix} ) $ is not an epimorphism, so $p$ is not an $A_{\ast}$-epimorphism. By Theorem~\ref{thm:Adams_replacement}~(i), a morphism between $A_{\ast}$-duals is an $A_{\ast}$-epimorphism if and only if it is an $R^{\prime}_{\ast}$-epimorphism, a contradiction. Thus $R^{\prime}$ cannot be a flat replacement of $A$.
\end{proof}

 It would be interesting to find an explicit flat replacement of $A=R\slash y$. Is the commutative differential graded algebra
 \[
 A^{\prime} \defl \xymatrix{R \ar[r]^{y} & R}
 \]
 (concentrated in degree $0$ and $1$) a flat replacement of $A$?

 The connection between the original coefficient ring $B$ and the new coefficient ring $B^{\prime}$ is not very strong. It is however true that $B^{\prime}$ does not have ``more'' torsion than $B$, in the sense made precise in Proposition~\ref{prop:torsion} below.
 
  \begin{lemma}\label{lemma:detecting_monomorphisms}
 Let $f \colon A \rightarrow A^{\prime}$ be a morphism in $\ca{A}$ such that $Hf$ is a monomorphism. Then $LYf$ is a monomorphism in $\ca{C}$.
 \end{lemma}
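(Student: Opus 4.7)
The plan is to reduce the assertion to the Nakayama property of $-\ten{\ca{A}} H$ (Lemma~\ref{lemma:H_graded_nakayama}) by passing to duals. First, since $YA$ and $YA^{\prime}$ are finitely presentable in $[\ca{A}^{\op},\Ab]$ and $YA\ten{\ca{A}} H\cong HA$, $YA^{\prime}\ten{\ca{A}} H\cong HA^{\prime}$ have duals in $\ca{K}_B$ by hypothesis, Theorem~\ref{thm:detecting_duals} applies to show that $LYA$ and $LYA^{\prime}$ have duals in $\ca{C}$. In particular, the morphism $LYf$ lies in the full subcategory $\ca{C}^{\mathrm{dual}}$ of duals.

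Next I would invoke the general fact that on the subcategory of duals in any symmetric monoidal category, the contravariant functor $(-)^{\vee}$ is an equivalence and therefore interchanges monomorphisms and epimorphisms. Thus $LYf$ is a monomorphism in $\ca{C}$ if and only if its dual $(LYf)^{\vee}\colon (LYA^{\prime})^{\vee}\rightarrow (LYA)^{\vee}$ is an epimorphism in $\ca{C}$, and similarly $(Hf)^{\vee}$ is an epimorphism in $\ca{K}_B$ since $Hf$ is a monomorphism between objects with duals.

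The symmetric strong monoidal left adjoint $-\ten{\ca{A}} H\colon \ca{C}\rightarrow \ca{K}_B$ (Theorem~\ref{thm:Grothendieck_tensor_from_homology_theory}~(iii)) preserves duals; moreover, since the unit of $L$ is sent to an isomorphism by $-\ten{\ca{A}} H$ (see the proof of Theorem~\ref{thm:Grothendieck_tensor_from_homology_theory}~(iii)), there is a natural isomorphism $LY(-)\ten{\ca{A}} H\cong H(-)$ on $\ca{A}$. Combining these yields a natural isomorphism
\[
\bigl((LYf)^{\vee}\bigr)\ten{\ca{A}} H \;\cong\; \bigl((LYf)\ten{\ca{A}} H\bigr)^{\vee} \;\cong\; (Hf)^{\vee},
\]
and the right-hand side is an epimorphism in $\ca{K}_B$ by the previous paragraph.

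Finally, $(LYf)^{\vee}$ is a morphism between objects with duals in $\ca{C}$ whose image under the graded Nakayama functor $-\ten{\ca{A}} H$ is an epimorphism, so $(LYf)^{\vee}$ itself is an epimorphism in $\ca{C}$ by Lemma~\ref{lemma:H_graded_nakayama}. Dualising once more, $LYf$ is a monomorphism in $\ca{C}$, as desired. There is no serious obstacle; the only point worth checking carefully is that the duality arguments and the coherence $LY(-)\ten{\ca{A}} H\cong H(-)$ are applied on the subcategory $\ca{A}$ where all the required duals indeed exist, which is exactly what Theorem~\ref{thm:detecting_duals} provides.
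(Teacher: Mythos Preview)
Your argument has a genuine gap. The assertion that on dualizable objects $(-)^{\vee}$ ``interchanges monomorphisms and epimorphisms'' is correct only for monomorphisms and epimorphisms taken \emph{inside} the full subcategory $\ca{C}^{\mathrm{dual}}$ (respectively $\ca{K}_B^{\mathrm{dual}}$); it does not hold for monomorphisms and epimorphisms in the ambient abelian category. The step where this bites is ``$(Hf)^{\vee}$ is an epimorphism in $\ca{K}_B$ since $Hf$ is a monomorphism between objects with duals'': take $B=\mathbb{Z}$ (concentrated in degree~$0$) and $Hf$ equal to multiplication by $2$ on $B$. This is a monomorphism between dualizable $B$-modules, yet $(Hf)^{\vee}$ is again multiplication by $2$, which is not an epimorphism. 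Such an $Hf$ actually arises from the degree-$2$ self-map of the sphere when $H$ is integral singular homology, so the gap is not hypothetical.

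One half of your dualization claim is salvageable: in any closed symmetric monoidal abelian category $[-,\U]$ sends epimorphisms to monomorphisms, so if you could show that $(LYf)^{\vee}$ is an epimorphism then $LYf\cong(LYf)^{\vee\vee}$ would indeed be a monomorphism. The trouble is that the Nakayama property only \emph{detects} epimorphisms, and you have no epimorphism on the $\ca{K}_B$ side to feed it. The paper proceeds differently, using the triangulated structure rather than duality: it reduces to showing that every $g\colon Z\rightarrow A$ in $\ca{A}$ with $fg=0$ satisfies $LYg=0$; since $Hf$ is monic this forces $Hg=0$, whence the fibre $F_g\rightarrow Z$ lies in $\ca{A}$ and is an $H$-epimorphism, so $LY(F_g\rightarrow Z)$ is an epimorphism and $LYg=0$. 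This is essentially the argument of Lemma~\ref{lemma:detecting_zero_morphism} and never requires $(Hf)^{\vee}$ to be an epimorphism.
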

 
 \begin{proof}
 Let $j \colon K \rightarrow YA$ be the kernel of $Yf$ in $[\ca{A}^{\op},\Ab]$ and choose an epimorphism
 \[
(g_i)_{i \in I} \colon \bigoplus_{i \in I} YA_i \rightarrow K
 \]
 in $[\ca{A}^{\op},\Ab]$. Using the exactness of $L$, we find that it suffices to check that $Lg_i=0$ for all $i \in I$. Since $Lj$ is monic, this amounts to showing that $L(jg_i)=0$ for all $i \in I$. This reduces the problem to checking the following implication: if $g \colon Z \rightarrow A$ is a morphism in $\ca{A}$ such that $fg=0$, then $LY(g)=0$.
 
 Thus let $g \colon Z \rightarrow A$ be a morphism in $\ca{A}$ with $fg=0$. Then $HfHg=0$, so $Hg=0$ by the assumption that $Hf$ is a monomorphism. Thus $\mathrm{coker}(Hg) \cong HA$ has a dual. By Part~(v) of Proposition~\ref{prop:basic_properties_of_H-duals} it follows that the fiber $k \colon F_g \rightarrow Z$ of $g$ lies in $\ca{A}$. Moreover, the exact sequence
 \[
 \xymatrix{HF_g \ar[r]^{Hk} & HZ \ar[r]^{0} & HA}
 \]
 shows that $k$ is an $H$-epimorphism. Thus $LYk$ is an epimorphism by the definition of the additive Grothendieck topology on $\ca{A}$. We have $gk=0$, so $LYg \circ LYk=0$. This shows that $LYg=0$, which concludes the proof.
 \end{proof}

\begin{prop}\label{prop:torsion}
 Let $p$ be a prime. In the situation of Theorem~\ref{thm:existence_of_Adams_replacement}, if $B$ does not have any $p$-torsion, then $B^{\prime}$ does not have any $p$-torsion.
\end{prop}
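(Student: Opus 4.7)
The plan is to reduce the statement to Lemma~\ref{lemma:detecting_monomorphisms} applied to the multiplication-by-$p$ endomorphism of the unit object. Observe that $B$ has no $p$-torsion if and only if the morphism $p\cdot \id_B \colon B \to B$ is a monomorphism in $\ca{K}$, which (since the forgetful functor $\ca{K}_B \to \ca{K}$ is exact and reflects monomorphisms) is the same as saying that it is a monomorphism in $\ca{K}_B$. Since $\U = K(0) \in \ca{A}$ by Proposition~\ref{prop:basic_properties_of_H-duals}~(i) and $H$ is additive with underlying object $H\U = B$ in $\ca{K}_B$, additivity gives $H(p\cdot \id_\U) = p\cdot \id_B$.

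First I would invoke Lemma~\ref{lemma:detecting_monomorphisms}: since $Hf$ is a monomorphism in $\ca{K}_B$ for $f = p\cdot \id_\U$, it follows that $LYf = p\cdot \id_{LY\U} = p\cdot \id_{\U_\ca{C}}$ is a monomorphism in $\ca{C}$. Next, I would apply the exact symmetric strong monoidal functor $W \colon \ca{C} \to \ca{K}_{B'}$ from Theorem~\ref{thm:existence_of_Adams_replacement}. Since $W$ is strong monoidal, $W(\U_\ca{C})$ is the unit object of $\ca{K}_{B'}$, whose underlying object in $\ca{K}$ is $B'$ (by Lemma~\ref{lemma:HW_description}, the coefficient ring of $F(W)$ is $B' = H(W)(\U) = WLY(\U)$). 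Additivity of $W$ then gives $W(p\cdot \id_{\U_\ca{C}}) = p\cdot \id_{B'}$ in $\ca{K}_{B'}$, and exactness of $W$ ensures that this remains a monomorphism.

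Finally, the forgetful functor $\ca{K}_{B'} \to \ca{K}$ is exact, so it preserves the monomorphism, giving that $p\cdot \id_{B'} \colon B' \to B'$ is a monomorphism in $\ca{K}$ --- which is exactly the assertion that $B'$ has no $p$-torsion. There is no substantive obstacle here: the entire argument amounts to tracing the multiplication-by-$p$ endomorphism of the unit through the chain of additive, exact, and (where relevant) strong monoidal functors $\U \in \ca{T}$, $\U_\ca{C} \in \ca{C}$, $B' \in \ca{K}_{B'}$, $B' \in \ca{K}$, with Lemma~\ref{lemma:detecting_monomorphisms} being the only non-formal input.
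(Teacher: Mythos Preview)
Your proposal is correct and follows essentially the same approach as the paper: apply Lemma~\ref{lemma:detecting_monomorphisms} to the multiplication-by-$p$ map on the unit, then push the resulting monomorphism through the exact strong monoidal functor $W$. The paper's proof is just a terser version of what you wrote; your additional remarks (identifying $H(p\cdot\id_\U)$, invoking the forgetful functor $\ca{K}_{B'}\to\ca{K}$) simply make explicit what the paper leaves implicit.
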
 

\begin{proof}
 The assumption implies that multiplication with $p$ is a monomorphism in $\ca{K}_B$. By Lemma~\ref{lemma:detecting_monomorphisms} it follows that multiplication with $p$ is a monomorphism from the unit object of $\ca{C}$ to itself. Since the functor $W$ of Theorem~\ref{thm:existence_of_Adams_replacement} is exact, it follows that multiplication with $p$ is a monomorphism from $B^{\prime}$ to $B^{\prime}$ in $\ca{K}_{B^{\prime}}$, so $B^{\prime}$ does not have any $p$-torsion.
\end{proof}

 \subsection{Locally free objects in module categories}
 
 In this section we keep the assumption that $\ca{K}=(G,\varepsilon)\mbox{-}\Ab$ and we further assume that $\varepsilon$ is surjective, that is, there exist $i \in G$ with $\varepsilon(i)=-1$. Let $B$ be a commutative ring in $\ca{K}$. A $B$-module $M$ is called \emph{free} if it is isomorphic to $\oplus_{i=1}^n L_{i_k}$ for some $i_1, \ldots, i_n \in G$. The $B$-module $M$ is called \emph{locally free} if there exists a faithfully flat $B$-algebra $B^{\prime}$ such that the base change $M_{B^{\prime}} = B^{\prime} \ten{B} M$ is free. An \emph{ideal} of $B$ will always mean a homogeneous ideal, that is, a subobject of $B$ in the category $\ca{K}_B$ of $B$-modules in $\ca{K}$. A graded ring $B$ is called \emph{local} if it has a unique maximal ideal $\mathfrak{m}$.
 
 \begin{lemma}[Nakayama Lemma]\label{lemma:graded_Nakayama}
 If $(B,\mathfrak{m})$ is local and $M$ is a finitely generated $B$-module with $M \slash \mathfrak{m} M \cong 0$, then $M \cong 0$.
 \end{lemma}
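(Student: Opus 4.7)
The plan is to adapt the classical proof of Nakayama's Lemma. The first step is to record the graded analogue of the key consequence of localness: for any homogeneous $a \in \mathfrak{m}$ with $\deg(a)=0$, the element $1 - a \in B_0$ is a unit in $B$. This can be seen by a standard Zorn's Lemma argument: if $1 - a$ were not a unit, then the principal homogeneous ideal $(1-a) B$ would be proper and hence contained in some maximal homogeneous ideal, which by the local hypothesis must equal $\mathfrak{m}$; but then $1 = (1-a) + a \in \mathfrak{m}$, a contradiction.

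With that in hand, I would argue by contradiction. Since $M$ is finitely generated, there is an epimorphism $\oplus_{k=1}^{n} L_{i_k} \otimes B \twoheadrightarrow M$ in $\ca{K}_B$, equivalently a finite set of homogeneous generators of $M$; choose one of minimal cardinality, say $m_1, \ldots, m_n$, and assume $n \geq 1$. The assumption $M / \mathfrak{m} M \cong 0$ yields $M = \mathfrak{m} M$, so $m_1$ admits an expression $m_1 = \sum_{j=1}^{n} a_j m_j$ with $a_j \in \mathfrak{m}$ homogeneous. Because the equation is homogeneous of degree $\deg(m_1)$, we must have $\deg(a_j) = \deg(m_1) - \deg(m_j)$; in particular $a_1 \in \mathfrak{m} \cap B_0$. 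Rearranging gives $(1 - a_1) m_1 = \sum_{j=2}^{n} a_j m_j$, and multiplying by $(1-a_1)^{-1}$ (a unit by the first step) expresses $m_1$ as a $B$-linear combination of $m_2, \ldots, m_n$. This contradicts minimality of the generating set, so $M \cong 0$.

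The main thing to be careful about is the graded bookkeeping. One could try to mimic the determinant-trick proof used in the ungraded setting, but the entries of a relation matrix would then live in different degrees and the graded symmetric determinant carries extra signs coming from $\varepsilon$, making the argument awkward. Reducing to a minimal generating set sidesteps this by condensing the whole matrix identity into the single scalar relation above, and that is really the only graded-specific subtlety. Observe that neither the surjectivity of $\varepsilon$ nor any invertibility of $2$ is needed here; the lemma holds for arbitrary $(G,\varepsilon)$.
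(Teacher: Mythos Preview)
Your proof is correct and follows essentially the same approach as the paper: both arguments write one generator as an $\mathfrak{m}$-combination of all generators, observe that the diagonal coefficient has degree $0$ so that $1-a$ is a unit, and conclude that the generator is redundant. The paper phrases this as an iterative reduction of the generating set rather than a contradiction with minimality, and is slightly terser about why $(1-a_n)$ is a unit, but the content is the same.
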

 
 \begin{proof}
 Suppose $M$ is generated by $x_1,\ldots, x_n$. Since $\mathfrak{m} M=M$, there exist elements $a_1, \ldots, a_n$ such that $x_n= \sum_{i=1}^n a_i x_i$. Thus $(1-a_n) x_n= \sum_{i=1}^{n-1} a_i x_i$. The ideal generated by $(1-a_n)$ is not contained in $\mathfrak{m}$, so it is equal to $B$. In other words, $(1-a_n)$ is a unit. Thus $M$ is generated by $x_1, \ldots, x_{n-1}$. Iterating this procedure we find that $M \cong 0$. 
 \end{proof}
 
 \begin{lemma}\label{lemma:local_ring_free}
 If $(B,\mathfrak{m})$ is local, then every finitely generated projective $B$-module is free.
 \end{lemma}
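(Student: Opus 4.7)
The plan is to adapt the classical proof that finitely generated projective modules over a (local) ring are free, working carefully with the grading and using Lemma~\ref{lemma:graded_Nakayama}.

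First, I would verify that the graded residue ring $k \defl B/\mathfrak{m}$ is a \emph{graded field}, meaning every nonzero homogeneous element is a unit. Indeed, a nonzero homogeneous element $x \in k$ generates a nonzero homogeneous ideal $(x) \subseteq k$; its preimage in $B$ is a homogeneous ideal strictly containing $\mathfrak{m}$, hence equal to $B$, so $(x) = k$. Next I would check that every finitely generated graded $k$-module $M$ is free, i.e.\ isomorphic to $\oplus_{k=1}^n L_{i_k}$ for some $i_1,\dots,i_n \in G$. This follows by picking a maximal linearly independent family of homogeneous elements in $M$: linear independence gives an injection $\oplus_k L_{i_k} \hookrightarrow M$, and maximality combined with the fact that every nonzero homogeneous element of $k$ is invertible forces this injection to be surjective (any homogeneous element outside the span could be added to the family).

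Let $P$ be a finitely generated projective $B$-module. Then $P/\mathfrak{m}P$ is a finitely generated graded $k$-module, so by the previous step there is an isomorphism $P/\mathfrak{m}P \cong F/\mathfrak{m}F$ where $F \defl \oplus_{k=1}^n L_{i_k}$. I would lift a homogeneous basis of $P/\mathfrak{m}P$ to homogeneous elements of $P$, yielding a morphism $\varphi \colon F \rightarrow P$ whose reduction modulo $\mathfrak{m}$ is the chosen isomorphism. The cokernel $C$ of $\varphi$ is finitely generated and satisfies $C/\mathfrak{m}C \cong 0$, so Lemma~\ref{lemma:graded_Nakayama} gives $C \cong 0$; hence $\varphi$ is an epimorphism.

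Since $P$ is projective, $\varphi$ splits, so $F \cong P \oplus K$ for some finitely generated $B$-module $K$. Reducing modulo $\mathfrak{m}$ we obtain $F/\mathfrak{m}F \cong P/\mathfrak{m}P \oplus K/\mathfrak{m}K$, and by construction the projection $F/\mathfrak{m}F \rightarrow P/\mathfrak{m}P$ in this splitting is the isomorphism fixed above; hence $K/\mathfrak{m}K \cong 0$. A second application of Lemma~\ref{lemma:graded_Nakayama} yields $K \cong 0$, so $P \cong F$ is free. The main potential obstacle is verifying the ``graded field implies free module'' step in the presence of odd elements of $G$, but since a graded field has no nontrivial graded ideals the usual Zorn's lemma / maximal independent set argument goes through without modification.
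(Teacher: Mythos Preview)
Your proposal is correct and follows essentially the same approach as the paper: both show $B/\mathfrak{m}$ is a graded field, establish that finitely generated modules over a graded field are free, lift a basis of $P/\mathfrak{m}P$ to a map from a free module, and apply Nakayama twice (once for surjectivity, once to kill the kernel after splitting). The only cosmetic difference is that the paper phrases the graded-field step via a \emph{minimal generating set} (arguing the kernel of the resulting surjection vanishes by minimality) rather than your dual formulation via a maximal linearly independent set.
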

 
 \begin{proof}
 The quotient $B \slash \mathfrak{m} B$ is a graded field: each non-zero element is a unit. Note that every finitely generated module $N$ over a graded field is free. Indeed, if $x_1, \ldots, x_n$ is a minimal generating set of $N$ corresponding to the epimorphism $p \colon \oplus_{k=1}^n L_{i_k} \rightarrow N$, then $\mathrm{ker}(p)=0$ since $\sum_k \lambda_k x_k=0$ implies $\lambda_k=0$ for all $k$ by minimality.
 
 Let $M$ be a finitely generated projective $B$-module, $\bar{x}_1, \ldots \bar{x}_n$ a minimal generating set of $M \slash \mathfrak{m} M$. The preimages $x_1, \ldots, x_n$ in $M$ correspond to a morphism $p \colon \oplus_{k=1}^n L_{i_k} \rightarrow M$ which is an epimorphism by Nakayama. Since $M$ is projective, $p$ is split, so the kernel $K$ of $p$ is preserved by $B \slash \mathfrak{m} \ten{B} -$. The minimality of the generating set $\bar{x}_1, \ldots \bar{x}_n$ implies that $K \slash \mathfrak{m} K \cong 0$, so $K \cong 0$ by Nakayama.
 \end{proof}
 
 As in the ungraded case, we can form the localization $S^{-1} B$ of a graded ring $B$ at a multiplicative graded set $S$. Given $f \in B$, we write $B_f$ for the localization at $\{1,f,f^2,\ldots\}$.
 
 \begin{lemma}\label{lemma:free_spread_out}
 Let $M$ be a finitely generated projective $B$-module. Then for each maximal ideal $\mathfrak{m}$ in $B$, there exists $f \notin \mathfrak{m}$ such that $B_f \ten{B} M$ is free.
 \end{lemma}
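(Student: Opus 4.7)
The plan is to transport the classical Zariski-local argument for finitely presented projective modules into the graded setting, using Lemma~\ref{lemma:local_ring_free} as the local input. First, I would form the graded localization $B_{\mathfrak{m}} \defl S^{-1} B$ at the multiplicative set $S$ of homogeneous elements of $B$ not in $\mathfrak{m}$. Because $\mathfrak{m}$ is a (homogeneous) maximal ideal, $B_{\mathfrak{m}}$ is graded local with maximal ideal $\mathfrak{m} B_{\mathfrak{m}}$. Lemma~\ref{lemma:local_ring_free} then furnishes an isomorphism $\psi_0 \colon \bigoplus_{k=1}^n L_{i_k} \to M_{\mathfrak{m}}$ of $B_{\mathfrak{m}}$-modules for some $i_1, \ldots, i_n \in G$.

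Second, I would lift $\psi_0$ to a morphism defined over $B_f$ for some $f \notin \mathfrak{m}$. Since $\oplus_{k=1}^n L_{i_k}$ is finitely presentable, the canonical map $\colim_{f \notin \mathfrak{m}} \mathrm{Hom}_{B_f}(\oplus L_{i_k}, M_f) \to \mathrm{Hom}_{B_{\mathfrak{m}}}(\oplus L_{i_k}, M_{\mathfrak{m}})$ is a bijection, so a single $f \notin \mathfrak{m}$ suffices to produce $\psi \colon \oplus_{k=1}^n L_{i_k} \to M_f$ whose further localization at $\mathfrak{m}$ recovers $\psi_0$.

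Third, I would spread out the fact that $\psi_0$ is an isomorphism to a neighbourhood of $\mathfrak{m}$. The cokernel $C$ of $\psi$ is finitely generated over $B_f$ (as a quotient of $M_f$), and satisfies $C_{\mathfrak{m}} \cong 0$. Since graded localization is a filtered colimit, each of the finitely many generators of $C$ is annihilated by some homogeneous element of $B_f$ outside $\mathfrak{m}$; the product $g$ of these annihilators then gives $C_g \cong 0$, so $\psi_g$ is a (split) epimorphism. Its kernel $K$ is then a finitely generated direct summand of $\oplus_{k=1}^n L_{i_k}$ with $K_{\mathfrak{m}} \cong 0$, and the same spreading-out argument yields an $h \notin \mathfrak{m}$ (a multiple of $g$) with $K_h \cong 0$. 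Then $B_h \ten{B} M$ is free.

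No single step is especially difficult; the only thing to check with care is that the graded analogues of ``localisation is exact and commutes with finitely presented Hom'' and ``a finitely generated module which vanishes at a prime vanishes on a Zariski neighbourhood'' hold. Both are formal consequences of the fact that graded localisation is a filtered colimit in $\ca{K}_B$, exact, and restricts along the forgetful functor to the ungraded case; the same bookkeeping with homogeneous ideals and degrees makes the proofs go through verbatim.
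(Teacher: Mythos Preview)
Your proposal is correct and follows essentially the same approach as the paper: localize at $\mathfrak{m}$, apply Lemma~\ref{lemma:local_ring_free} to get freeness over $B_{\mathfrak{m}}$, then spread out using that $B_{\mathfrak{m}}$ is the filtered colimit of the $B_f$ with $f \notin \mathfrak{m}$. The paper's write-up is more compressed---it simply notes that the isomorphism over $B_{\mathfrak{m}}$ involves only finitely many denominators and so descends to some $B_f$---whereas you lift the map and then separately annihilate the cokernel and kernel; both arguments are standard and equivalent.
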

 
 \begin{proof}
 The graded set $B \setminus \mathfrak{m}$ is multiplicative and $S^{-1} \mathfrak{m} \subseteq S^{-1} B$ is the unique maximal ideal of $S^{-1} B$. Thus $S^{-1} B \ten{B} M$ is free by Lemma~\ref{lemma:local_ring_free}. The ring $S^{-1} B$ is the filtered colimit of the localizations $T^{-1} B$ where $T \subseteq S$ is finitely generated. The data of an isomorphism $\oplus_{k=1}^n L_{i_k} \rightarrow S^{-1}B \ten{B} M$ involves only finitely many denominators, so the isomorphism is defined over $T^{-1} B$ for some finitely generated multiplicative set $T$. Taking $f$ to be the product of the generators of $T$ yields the desired element in $B \setminus \mathfrak{m}$ such that $B_f \ten{B} M$ is free.
 \end{proof}
 
 Let $\alpha \colon \mathbb{Z} \slash 2 \rightarrow \{1, -1\}$ be the canonical isomorphism. Then the homomorphism $\alpha^{-1} \circ \varepsilon \colon G \rightarrow \mathbb{Z} \slash 2$ induces a symmetric strong monoidal left adjoint
 \[
 (\alpha^{-1} \circ \varepsilon)_{\ast} \colon \ca{K}=(G,\varepsilon)\mbox{-}\Ab \rightarrow (\mathbb{Z}\slash 2,\alpha)\mbox{-}\Ab
 \]
 which sends $(M_i)_{i \in G}$ to $(\oplus_{\varepsilon(i)=1} M_i, \oplus_{\varepsilon(j)=-1} M_j)$. We write $\bar{\U}$ for the object $(0,\mathbb{Z})$ in $(\mathbb{Z}\slash 2, \alpha)\mbox{-}\Ab$.
 
 \begin{lemma}\label{lemma:locally_Z_2_graded}
 There exists an Adams algebra $A$ in $\ca{K}$ and an equivalence $\ca{K}_A \simeq (\mathbb{Z}\slash 2,\alpha)\mbox{-}\Ab$ of symmetric monoidal categories such that the triangle
 \[
 \xymatrix{ & \ca{K} \ar[rd]^{(\alpha^{-1} \circ \varepsilon)_{\ast}} \ar[ld]_{(-)_{A}} \\ \ca{K}_A \ar[rr]_-{\simeq} && (\mathbb{Z}\slash 2, \alpha)\mbox{-}\Ab }
 \]
 commutes up to monoidal natural isomorphism.
 \end{lemma}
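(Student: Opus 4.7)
The natural candidate is the algebra $A := \bigoplus_{k \in K_0} L_k$ where $K_0 := \ker(\varepsilon) \subseteq G$. Because every element of $K_0$ is even, the Koszul signs on the canonical isomorphisms $L_k \otimes L_{k'} \cong L_{k+k'}$ are all trivial, so these isomorphisms assemble into a commutative multiplication on $A$ with unit $\eta \colon \U = L_0 \hookrightarrow A$. To show $A$ is Adams, I would filter by the directed poset of finite subsets $S \subseteq K_0$ containing $0$: set $A_S := \bigoplus_{k \in S} L_k$, which has dual $A_S^{\vee} \cong \bigoplus_{k \in S} L_{-k}$. The unit $\eta_S \colon \U \to A_S$ is the inclusion of the $L_0$-summand, and its dual is the split projection onto the summand $L_{-0} = L_0$, hence an epimorphism; the filtered colimit of this cone recovers $\eta \colon \U \to A$.

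For the equivalence $\ca{K}_A \simeq (\mathbb{Z}/2,\alpha)\mbox{-}\Ab$, note that an $A$-module structure on $M \in \ca{K}$ amounts to a coherent family of isomorphisms $\varphi_k \colon L_k \otimes M \cong M$ for $k \in K_0$ (invertibility is forced by $\varphi_0 = \mathrm{id}$ together with associativity), which gives $M_{g-k} \cong M_g$ for all $g \in G$ and $k \in K_0$. Thus the underlying graded abelian group of any $A$-module is constant on $K_0$-cosets, and since $\varepsilon$ is surjective, $\alpha^{-1} \circ \varepsilon$ descends to an isomorphism $G/K_0 \xrightarrow{\sim} \mathbb{Z}/2$. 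Fixing any odd $g_0 \in G$, I would define $\Phi \colon \ca{K}_A \to (\mathbb{Z}/2, \alpha)\mbox{-}\Ab$ by $M \mapsto (M_0, M_{g_0})$, with quasi-inverse $\Psi$ sending $(N_0, N_1)$ to the $A$-module whose degree-$g$ component is $N_{\alpha^{-1}(\varepsilon(g))}$ and whose $A$-action is given by the canonical identifications.

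The only nontrivial verification is that $\Phi$ is symmetric strong monoidal. The tensor $M \ten{A} M'$ collapses the Day convolution on $\ca{K}$ along the $K_0$-action: the identification $\varphi_k(L_k \otimes m) \otimes m' = m \otimes \varphi_k(L_k \otimes m')$ reduces the direct sum $\bigoplus_{g_1 + g_2 = g} M_{g_1} \otimes M'_{g_2}$ to a sum indexed by cosets in $G/K_0$, matching the Day convolution on $(G/K_0, \bar\varepsilon)\mbox{-}\Ab$, where $\bar\varepsilon$ is the induced homomorphism. The Koszul signs on both sides agree because $\varepsilon$ factors as $\alpha \circ \bar\varepsilon$ by construction. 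Commutativity of the stated triangle is then a direct computation: $(A \otimes M)_g = \bigoplus_{k \in K_0} M_{g-k}$, whose image under $\Phi$ has even component $\bigoplus_{g' \text{ even}} M_{g'}$ and odd component $\bigoplus_{g' \text{ odd}} M_{g'}$, exactly $(\alpha^{-1} \circ \varepsilon)_*(M)$, with the isomorphism natural in $M$ and monoidal.

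The main step requiring care is the symmetric monoidal coherence of $\Phi$: beyond the underlying functor equivalence, one has to check that associators, unitors, and---crucially---the braiding on $\ca{K}_A$ (inherited from $\ca{K}$) correspond under $\Phi$ to the Koszul braiding on $(\mathbb{Z}/2,\alpha)\mbox{-}\Ab$. This reduces to the factorization $\varepsilon = \alpha \circ \bar\varepsilon$, so it is essentially bookkeeping once the category-level equivalence and its compatibility with base change are in place.
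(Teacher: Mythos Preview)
Your proof is correct and in fact produces the same algebra as the paper: $A_i=\mathbb{Z}$ when $\varepsilon(i)=1$ and $A_i=0$ otherwise is precisely $\bigoplus_{k\in\ker\varepsilon}L_k$. The approaches differ in style. The paper argues abstractly: it notes that the right adjoint of $(\alpha^{-1}\circ\varepsilon)_\ast$ (sending $(N_0,N_1)$ to the $G$-graded object constant on $\ker\varepsilon$-cosets) is faithful and exact, invokes \cite[Proposition~3.8]{SCHAEPPI_GEOMETRIC} to deduce that the left adjoint is, up to equivalence, base change along some algebra $A$, and then applies Proposition~\ref{prop:Adams_iff_faithfully_flat} to conclude $A$ is Adams from faithful flatness. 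You instead write everything down explicitly: the algebra, the Adams filtration by finite partial sums, and the equivalence $\ca{K}_A\simeq(\mathbb{Z}/2,\alpha)\mbox{-}\Ab$ via evaluation at two coset representatives. Your route is more elementary and self-contained; the paper's is shorter but leans on external machinery. The one place your writeup remains a sketch is the symmetric monoidal coherence of $\Phi$ and of the triangle isomorphism, which you correctly flag; this is routine bookkeeping once one writes the coequalizer defining $M\ten{A}M'$ in each degree, but it is the bulk of the work in your approach.
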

 
 \begin{proof}
 The right adjoint sends $(N_0,N_1)$ to $(M_i)_{i \in G}$ where $M_i=N_0$ if $\varepsilon(i)=1$ and $M_i=N_1$ if $\varepsilon(i)=-1$. The right adjoint is thus faithful and exact. Since $\ca{K}$ is a Grothendieck tensor category, there exists an algebra $A$ and an equivalence $\ca{K}_A \simeq (\mathbb{Z}\slash 2,\alpha) \mbox{-}\Ab$ with the desired property by \cite[Proposition~3.8]{SCHAEPPI_GEOMETRIC} (note that Grothendieck tensor categories are called pre-geometric in \cite{SCHAEPPI_GEOMETRIC}).
 
 The explicit description of the right adjoint shows that the underlying object of $A$ is $(A_i)_{i \in G}$ where $A_i = \mathbb{Z}$ if $\varepsilon(i)=1$ and $A_i=0$ if $\varepsilon(i)=-1$. Moreover, the functor $(\alpha^{-1} \circ \varepsilon)_{\ast}$ is faithful and exact, so $A$ is an Adams algebra by Proposition~\ref{prop:Adams_iff_faithfully_flat}.
 \end{proof}
 
 The upshot of the above lemma is that each $L_i$ is locally isomorphic to $\U$ if $\varepsilon(i)=1$, respectively to $\bar{\U}$ if $\varepsilon(i)=-1$.
 
 In order to state the next result, we need the notion of the prime spectrum of a graded ring with its Zariski topology. A \emph{prime ideal} of a graded ring is an ideal $\mathfrak{p}$ such that $xy \in \mathfrak{p}$ implies $x \in \mathfrak{p}$ or $y \in \mathfrak{p}$. The set of all prime ideals is denoted by $\Spec(B)$. For $f \in B$, we write $\mathrm{D}(f)$ for the set $\{\mathfrak{p} \in \Spec(B) \vert f \notin \mathfrak{p} \}$. Clearly $\mathrm{D}(f) \cap \mathrm{D}(g)=\mathrm{D}(fg)$, so these sets form a basis of a topology, which we call the \emph{Zariski topology}. If $\Spec(B)=\bigcup_{j \in J} \mathrm{D}(f_j)$, then no prime ideal contains all the $f_j$, so the ideal generated by the $f_j$ must be all of $B$. It follows that there exists $b_1, \ldots b_n$ and $j_1, \ldots j_n \in J$ such that $1=\sum_{k=1}^n b_k f_{j_k}$, hence that $\Spec(B)=\bigcup_{k=1}^n \mathrm{D}(f_{j_k})$, so $\Spec(B)$ is quasi-compact. As for the ordinary Zariski topology of ungraded rings, we have the following connection between idempotents and connected components.
 
 \begin{lemma}\label{lemma:connected}
 If the degree zero component $B_0$ of $B$ has no non-trivial idempotents, then $\Spec(B)$ is connected.
 \end{lemma}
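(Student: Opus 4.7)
The plan is to argue by contrapositive: assuming $\Spec(B)$ is disconnected, I will construct a non-trivial idempotent in $B_0$. Suppose $\Spec(B) = U \sqcup V$ with $U, V$ open and non-empty. The complement of any open set has the form $\{\mathfrak{p} \supseteq I\}$ for a graded ideal $I$, so I can write $U = \mathrm{D}(I)$ and $V = \mathrm{D}(J)$. The covering $U \cup V = \Spec(B)$ forces $I + J$ to lie in no graded prime, hence $I + J = B$ (every proper graded ideal sits inside a maximal graded ideal, which is graded prime by the usual argument applied to the quotient $B/\mathfrak{m}$). Disjointness $U \cap V = \emptyset$ translates to $\mathrm{D}(IJ) = \emptyset$, i.e., $IJ$ lies in every graded prime.

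Since $I, J$ are graded and $1$ is of degree $0$, the relation $1 \in I + J$ can be realized as $1 = i + j$ with $i \in I \cap B_0$ and $j \in J \cap B_0$. Because $\varepsilon(0) = 1$, the Koszul signs are trivial on $B_0$, so $B_0$ is an ordinary commutative ring. I claim $ij$ is nilpotent: if not, the localization $B[(ij)^{-1}]$ is non-zero, hence contains a maximal graded ideal whose contraction is a graded prime of $B$ not containing $ij$, contradicting $IJ \subseteq \bigcap_{\mathfrak{p}} \mathfrak{p}$.

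The remaining argument takes place entirely in $B_0$ and is the classical construction of an idempotent from a relation $1 = i + j$ with $ij$ nilpotent. Expanding $(i+j)^n = 1$ and noting that each middle binomial term $\binom{n}{k} i^k j^{n-k}$ for $1 \leq k \leq n-1$ contains the nilpotent factor $ij$, one obtains $i^n + j^n = 1 - N$ with $N \in B_0$ nilpotent, so $i^n + j^n$ is a unit. For $n$ large enough, $(ij)^n = 0$ as well. Setting $e \defl i^n (i^n + j^n)^{-1} \in B_0$, a short calculation using $i^n j^n = 0$ gives $(i^n+j^n)^2 (e^2 - e) = i^{2n} - i^n(i^n + j^n) = -i^n j^n = 0$, hence $e^2 = e$. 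Non-triviality is forced by the non-emptiness of $U$ and $V$: if $e = 0$, then $i^n = 0$, so $i$ is nilpotent, making $j = 1 - i$ a unit and thus $J = B$, which would give $V = \Spec(B)$ and contradict $U \neq \emptyset$; the case $e = 1$ is symmetric.

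The only delicate point is verifying the graded versions of the classical commutative-algebra facts (existence of maximal graded ideals via Zorn, graded-maximal implies graded-prime, and equality of the graded-primes intersection with the ordinary nilradical for even elements) in the super-commutative setting fixed by the Koszul sign rule. However, since all of these facts are invoked only for elements in $B_0$ or for the even element $ij$, the signs never intrude, and the usual commutative-algebra arguments transfer verbatim; this is the main place to be careful but presents no substantive obstacle.
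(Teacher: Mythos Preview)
Your proof is correct and is precisely the standard argument the paper defers to: the paper's own proof is simply ``This follows verbatim the proof in the ungraded case, for example \cite[\href{https://stacks.math.columbia.edu/tag/00EE}{Tag 00EE}]{stacks-project},'' and you have spelled out exactly that argument, together with the (correct) observation that all the relevant elements live in the ordinary commutative ring $B_0$ so no Koszul signs interfere.
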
 
 
 \begin{proof}
 This follows verbatim the proof in the ungraded case, for example \cite[\href{https://stacks.math.columbia.edu/tag/00EE}{Tag 00EE}]{stacks-project}.
 \end{proof}
 
 We call a graded ring \emph{connected} if it has no non-trivial idempotents.
 
 \begin{lemma}\label{lemma:dimension_well_defined}
  Let $B$ be a $\mathbb{Z} \slash 2$-graded ring such that $2$ is a unit in $B$. If there exists an isomorphism
  \[
  B^p \oplus (B \otimes \bar{\U})^{q} \cong   B^{p^{\prime}} \oplus (B \otimes \bar{\U})^{q^{\prime}} \smash{\rlap{,}}
  \]
  then $p=p^{\prime}$ and $q=q^{\prime}$.
 \end{lemma}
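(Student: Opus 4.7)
The plan is to reduce modulo a homogeneous maximal ideal of $B$ and observe that the resulting graded residue field is forced to be concentrated in even degree when $2$ is a unit. Once this is established, the integers $p$ and $q$ can be recovered as the dimensions of the even and odd parts of the base change, completing the proof. We may assume $B \neq 0$ (the case $B=0$ is uninteresting). The poset of proper homogeneous ideals of $B$ is non-empty and closed under unions of chains, so by Zorn's lemma there exists a homogeneous maximal ideal $\mathfrak{m} \subseteq B$, and the quotient $k \defl B \slash \mathfrak{m}$ is a \emph{graded field}: every non-zero homogeneous element is a unit.

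The key step, which will be the main (though still short) obstacle, is to show that $k$ is concentrated in even degree. Suppose $u \in k_1$ is non-zero; then $u$ is a unit, hence so is $u^2$. On the other hand, the Koszul sign rule applied to two odd elements in a (graded) commutative ring gives $u \cdot u = - u \cdot u$ in $k$, so $2u^{2}=0$. Since $2$ is a unit in $B$ and hence in $k$, this forces $u^{2}=0$, a contradiction. Thus $k_{1} = 0$, and $k$ is an ordinary field sitting in even degree. The conceptual point here is that the interaction between the Koszul sign rule and invertibility of $2$ rules out any non-trivial odd elements in a graded residue field.

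It then remains to base-change the given isomorphism along $B \to k$, which yields an isomorphism
\[
k^{p} \oplus (k \otimes \bar{\U})^{q} \cong k^{p^{\prime}} \oplus (k \otimes \bar{\U})^{q^{\prime}}
\]
in $(\mathbb{Z}\slash 2,\alpha)\mbox{-}\Ab$. Since $k$ sits in even degree, $k \otimes \bar{\U}$ sits in odd degree (its even part being $k_{1}=0$ and its odd part being $k_{0}=k$). Hence the even component of the left-hand side is the $k$-vector space $k^{p}$ and its odd component is $k^{q}$, and similarly on the right. Well-definedness of dimension for vector spaces over $k$ now yields $p=p^{\prime}$ and $q=q^{\prime}$, as desired.
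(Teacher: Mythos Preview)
Your proof is correct and follows essentially the same route as the paper's own argument: reduce modulo a homogeneous maximal ideal to a graded field, use the Koszul sign rule together with invertibility of $2$ to show odd elements square to zero and hence vanish, then read off $p$ and $q$ as the dimensions of the even and odd parts. The paper's write-up is terser (``all odd degree elements are nilpotent, hence zero'' and ``the isomorphism is given by a block diagonal matrix''), but the content is the same.
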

 
 \begin{proof}
 We can mod out by any maximal ideal and reduce to the case where $B$ is a field. Since $2$ is a unit, all odd degree elements are nilpotent, hence zero, so $B=B_0$. Therefore the isomorphism is given by a block diagonal matrix with blocks of size $p \times p^{\prime}$ and $q \times q^{\prime}$. Since $B_0$ is a field, we must have $p=p^{\prime}$ and $q=q^{\prime}$.
 \end{proof}
 
 Given a prime ideal $\mathfrak{p} \subseteq B$, we write $B_{\mathfrak{p}}$ for the localization of $B$ at the multiplicative set $B \setminus \mathfrak{p}$. This is a local ring with maximal ideal $(B \setminus \mathfrak{p})^{-1} \mathfrak{p}$.
 
\begin{prop}\label{prop:projective_module_locally_free}
 Let $B$ be a commutative algebra in $\ca{K}$ such that $2$ is a unit in $B$. If $B$ is connected, then every finitely generated projective $B$-module is locally free: there exists a faithfully flat $B$-algebra $B^{\prime}$ such that $M_{B^{\prime}}$ is free.
\end{prop}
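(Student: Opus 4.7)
The plan is to reduce the problem to the $\mathbb{Z}\slash 2$-graded setting via Lemma~\ref{lemma:locally_Z_2_graded}, in which the ``type'' $(p,q) \in \mathbb{N}^{2}$ of a free module is well-defined by Lemma~\ref{lemma:dimension_well_defined}, then use Lemma~\ref{lemma:free_spread_out} together with quasi-compactness to cover $\Spec$ by distinguished opens on which $M$ is free, and finally use connectedness of $\Spec(B)$ to force the type to be globally constant. Concretely, I would first base change along the Adams algebra $A$ of Lemma~\ref{lemma:locally_Z_2_graded} and work with $B^{\sharp} \defl (\alpha^{-1} \circ \varepsilon)_{\ast} B$, letting $M^{\sharp}$ correspond to $M \ten{B} (B \otimes A)$ under the equivalence $\ca{K}_{B \otimes A} \simeq (\mathbb{Z}\slash 2, \alpha)\mbox{-}\Ab_{B^{\sharp}}$. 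Since $\varepsilon(0) = 1$, the degree-zero part $(B^{\sharp})_{0}$ is a subring of $B$ containing $B_{0}$; in particular $1/2 \in (B^{\sharp})_{0}$, and every idempotent of $(B^{\sharp})_{0}$ is an idempotent of $B$. The hypotheses therefore give that $2$ is a unit in $B^{\sharp}$ and, by Lemma~\ref{lemma:connected}, that $\Spec(B^{\sharp})$ is connected; moreover $M^{\sharp}$ is finitely generated projective over $B^{\sharp}$.

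Next, Lemma~\ref{lemma:free_spread_out} supplies, for each maximal ideal $\mathfrak{m}$ of $B^{\sharp}$, an element $f_{\mathfrak{m}} \notin \mathfrak{m}$ such that $(M^{\sharp})_{f_{\mathfrak{m}}}$ is free of the form $((B^{\sharp})_{f_{\mathfrak{m}}})^{p_{\mathfrak{m}}} \oplus ((B^{\sharp})_{f_{\mathfrak{m}}} \otimes \bar{\U})^{q_{\mathfrak{m}}}$. Quasi-compactness of $\Spec(B^{\sharp})$ extracts a finite subcover $\mathrm{D}(f_{1}), \ldots, \mathrm{D}(f_{n})$ with associated types $(p_{j}, q_{j})$. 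On each non-empty overlap $\mathrm{D}(f_{j}) \cap \mathrm{D}(f_{k}) = \mathrm{D}(f_{j} f_{k})$ both decompositions are visible in $(M^{\sharp})_{f_{j} f_{k}}$, and since $2$ remains a unit in the localization, Lemma~\ref{lemma:dimension_well_defined} forces $(p_{j}, q_{j}) = (p_{k}, q_{k})$. Hence the type is locally constant on $\Spec(B^{\sharp})$, so by connectedness it is constant, equal to some pair $(p,q)$.

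Finally, set $B^{\sharp\sharp} \defl \prod_{j=1}^{n} (B^{\sharp})_{f_{j}}$, a faithfully flat $B^{\sharp}$-algebra, so that $M^{\sharp} \ten{B^{\sharp}} B^{\sharp\sharp} \cong (B^{\sharp\sharp})^{p} \oplus (B^{\sharp\sharp} \otimes \bar{\U})^{q}$. Let $B^{\prime}$ be the commutative $(B \otimes A)$-algebra in $\ca{K}$ corresponding to $B^{\sharp\sharp}$ under the equivalence of Lemma~\ref{lemma:locally_Z_2_graded}; the composite $B \to B \otimes A \to B^{\prime}$ is then faithfully flat (the first map because $A$ is an Adams algebra in $\ca{K}$ and hence faithfully flat by Proposition~\ref{prop:Adams_iff_faithfully_flat}, the second because the symmetric monoidal equivalence transports faithful flatness). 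Under the induced equivalence $\ca{K}_{B^{\prime}} \simeq (\mathbb{Z}\slash 2, \alpha)\mbox{-}\Ab_{B^{\sharp\sharp}}$, the module $M \ten{B} B^{\prime}$ corresponds to $(B^{\sharp\sharp})^{p} \oplus (B^{\sharp\sharp} \otimes \bar{\U})^{q}$, which is the image of $(B^{\prime})^{p} \oplus (B^{\prime} \otimes L_{i_{0}})^{q}$ for any $i_{0} \in G$ with $\varepsilon(i_{0}) = -1$ (which exists by the standing surjectivity of $\varepsilon$), so $M \ten{B} B^{\prime}$ is free in $\ca{K}_{B^{\prime}}$. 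The principal obstacle is the constancy of the type established in the middle paragraph: both hypotheses enter essentially there, with $2 \in B^{\times}$ feeding Lemma~\ref{lemma:dimension_well_defined} to force agreement on overlaps and connectedness of $B$ upgrading local constancy to global constancy via Lemma~\ref{lemma:connected}; the remaining steps are bookkeeping about base change through the symmetric monoidal equivalence of Lemma~\ref{lemma:locally_Z_2_graded}.
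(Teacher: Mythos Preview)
Your proposal is correct and follows essentially the same strategy as the paper: cover $\Spec$ by distinguished opens on which the module is free, use Lemma~\ref{lemma:dimension_well_defined} and connectedness to force the type to be globally constant, and take the product of localizations (tensored with the algebra $A$ of Lemma~\ref{lemma:locally_Z_2_graded}) as the faithfully flat $B'$. The only organizational difference is that you first base change to the $\mathbb{Z}/2$-graded ring $B^{\sharp}$ and then run the $\Spec$ argument there, whereas the paper works directly with $\Spec(B)$ and tensors with $A$ only at the end; this forces you to make the extra (correct) observation that $(B^{\sharp})_0$ inherits the absence of nontrivial idempotents from $B$, which the paper's ordering avoids.
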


\begin{proof}
 Let $A$ be the faithfully flat algebra of Lemma~\ref{lemma:locally_Z_2_graded} with $\ca{K}_A \simeq (\mathbb{Z}\slash 2,\alpha)\mbox{-}\Ab$. For each $\mathfrak{p} \in \Spec(B)$, the module $A \otimes B_{\mathfrak{p}} \otimes M$ is isomorphic to the base change of $\U^p \oplus \bar{\U}^q$ for some $p$ and $q$ by Lemma~\ref{lemma:local_ring_free}. Moreover, the function which sends $\mathfrak{p}$ to $(p,q) \in \mathbb{N} \times \mathbb{N}$ is well-defined by Lemma~\ref{lemma:dimension_well_defined} and continuous by Lemma~\ref{lemma:free_spread_out}. Since $\Spec(B)$ is connected by Lemma~\ref{lemma:connected}, this function is therefore constant.
 
 For each $\mathfrak{p} \in \Spec(B)$, we can find $f \notin \mathfrak{p}$ such that $B_{f} \ten{B} M$ is free by Lemma~\ref{lemma:free_spread_out}. The ideal generated by all these $f$ is not contained in any maximal ideal, so it is all of $B$. Thus there exist $f_1, \ldots, f_n$ and $b_1, \ldots, b_n$ such that $1=\sum_{i=1}^n b_i f_i$ and $B_{f_i} \ten{B} M$ is free. Let $B^{\prime}=\prod_{i=1}^n A \otimes B_{f_i}$. This is flat as a finite product of flat $B$-algebras. If $B^{\prime} \ten{B} N \cong 0$, then we have in particular $B_{f_i} \ten{B} N \cong 0$ for all $i$, hence $B_{\mathfrak{p}} \ten{B} N \cong 0$ for all $\mathfrak{p} \in \Spec(B)$. As in the ungraded case, this implies that $N \cong 0$, so $B^{\prime}$ is faithfully flat.
 
 For each $i=1, \ldots, n$, the module $A \otimes B_{f_i} \ten{B} M$ is isomorphic to the scalar extension of $\U^{p} \oplus \bar{\U}^q$, so the $B^{\prime}$-module $B^{\prime} \ten{B} M$ is free.
 \end{proof}
 
 As in \S \ref{section:existence_theorem}, we assume that $\ca{P}$ is the free $\Ab$-category on some abelian group $G$, with strict monoidal structure given by addition in $G$ and symmetry given by the Koszul sign rules according to some homomorphism $\varepsilon \colon G \rightarrow \{1,-1\}$. We write $\ca{K}$ for the coefficient category $[\ca{P}^{\op},\Ab]$. Furthermore, we assume that the tensor triangulated category $\ca{T}$ is monoidal closed and that the internal hom objects are compatible with the triangulated structure.
 
 We fix a symmetric lax monoidal homological functor $F \colon \ca{T} \rightarrow \Ab$ and we let $H \defl \bar{F}^{\mathrm{norm}} \colon \ca{T} \rightarrow \ca{K}_B$, where $B=\bar{F} \U$ is the coefficient ring of $F$. We write $\ca{A}$ for the additive site of $H$-duals.
 
 \begin{thm}\label{thm:refined_existence_of_Adams_replacement}
 Suppose that $2$ is a unit in $B$ and $B$ is connected. Then there exists a commutative algebra $B^{\prime}$ in $\ca{K}$ and a faithful and exact symmetric strong monoidal functor
 \[
 W \colon \sh(\ca{A}) \rightarrow \ca{K}_{B^{\prime}}
 \]
 which is compatible with the $\ca{K}$-enrichment. In particular, the homological functor $F$ has a flat replacement $F(W)$ (see Theorem~\ref{thm:Adams_replacement}).
 \end{thm}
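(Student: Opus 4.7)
The strategy is a direct reduction to Theorem~\ref{thm:existence_of_Adams_replacement}: that theorem has exactly the same conclusion, and its only hypothesis beyond ``$2$ is a unit in $B$'' is that every finitely generated projective object of $\ca{K}_B$ is locally free. Thus the entire task is to upgrade the hypothesis ``$B$ is connected'' to the hypothesis ``every finitely generated projective $B$-module is locally free,'' and then quote Theorem~\ref{thm:existence_of_Adams_replacement} verbatim.

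For this upgrade, the key input is Proposition~\ref{prop:projective_module_locally_free}, which asserts exactly that locally free conclusion under the hypotheses that $2$ is a unit in $B$ and $B$ is connected. I would first dispose of the case in which $\varepsilon \colon G \to \{1,-1\}$ is surjective, where Proposition~\ref{prop:projective_module_locally_free} applies verbatim: for any finitely generated projective $M \in \ca{K}_B$ it produces a faithfully flat $B$-algebra $B^{\prime\prime}$ over which $M_{B^{\prime\prime}}$ is free.

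If instead $\varepsilon$ is not surjective, then $\varepsilon$ is trivial, so $\ca{K}$ is just $G$-graded abelian groups with trivial symmetry and every object is ``even.'' In this degenerate setting the argument of Proposition~\ref{prop:projective_module_locally_free} actually becomes simpler, since the reduction to the $(\mathbb{Z}\slash 2,\alpha)$-graded case via Lemma~\ref{lemma:locally_Z_2_graded} is no longer needed. Concretely, Lemma~\ref{lemma:local_ring_free} still shows that $M$ becomes free over each localization $B_{\mathfrak{p}}$ at a prime ideal $\mathfrak{p}$, Lemma~\ref{lemma:free_spread_out} spreads this freeness to a basic open $\mathrm{D}(f_{\mathfrak{p}})$, Lemma~\ref{lemma:connected} (whose proof does not require surjectivity of $\varepsilon$) shows that $\Spec(B)$ is connected so that the numerical invariants of $M$ are locally constant, and quasi-compactness of $\Spec(B)$ extracts a finite cover $\Spec(B)=\bigcup_{i=1}^n \mathrm{D}(f_i)$ on which $M$ becomes free. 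Taking $B^{\prime\prime} \defl \prod_{i=1}^n B_{f_i}$ yields a faithfully flat $B$-algebra over which $M$ is free. In either case, the local-freeness hypothesis is verified.

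With the hypothesis verified, Theorem~\ref{thm:existence_of_Adams_replacement} immediately supplies the commutative algebra $B^{\prime} \in \ca{K}$ and the faithful, exact, symmetric strong monoidal left adjoint $W \colon \sh(\ca{A}) \to \ca{K}_{B^{\prime}}$ compatible with the $\ca{K}$-enrichment, and the composite $F(W)$ of~\eqref{eqn:Adams_replacement_from_W} is a flat replacement of $F$ by Definition~\ref{dfn:Adams_replacement}. There is no genuine obstacle: the proof is essentially a bookkeeping reduction, with the only mildly subtle point being the minor remark that Proposition~\ref{prop:projective_module_locally_free}, although stated in the sub-section where $\varepsilon$ is assumed surjective, adapts trivially (indeed simplifies) when $\varepsilon$ is the constant homomorphism~$1$.
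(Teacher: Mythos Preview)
Your proposal is correct and follows exactly the paper's approach: the paper's proof is the single sentence ``This is immediate from Theorem~\ref{thm:existence_of_Adams_replacement} and Proposition~\ref{prop:projective_module_locally_free}.'' Your case analysis for non-surjective $\varepsilon$ is unnecessary, since Theorem~\ref{thm:refined_existence_of_Adams_replacement} is stated within the subsection whose opening paragraph imposes the standing assumption that $\varepsilon$ is surjective; but your observation that the argument simplifies in the trivial-$\varepsilon$ case is correct and harmless.
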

 
 \begin{proof}
 This is immediate from Theorem~\ref{thm:existence_of_Adams_replacement} and Proposition~\ref{prop:projective_module_locally_free}.
 \end{proof}

 If $\ca{T}$ is a triangulated category which satisfies Brown representability, then the above theorem assigns a $\ca{T}$-flat quasi-ring $E^{\prime} \in \ca{T}$ to each quasi-ring $E \in \ca{T}$ satisfying the two conditions that $2$ is a unit in $E_{\ast}$ and that $E_{\ast}$ is connected.

\bibliographystyle{amsalpha}
\bibliography{flat}

\end{document}